\documentclass[11pt, reqno]{amsart}

\usepackage[margin=0.9in]{geometry}
\usepackage[utf8]{inputenc}
\usepackage[T1]{fontenc}

\usepackage{subfiles}
\usepackage{comment}
\usepackage{float}
\usepackage{marginnote}
\usepackage{euscript}
\usepackage[dvipsnames]{xcolor}   		             		
\usepackage{graphicx}			
\usepackage{amssymb}
\usepackage{mathrsfs}
\usepackage{amsthm}
\usepackage{amsmath, amsrefs}
\usepackage[foot]{amsaddr}
\usepackage{mathtools}
\usepackage[cal=cm]{mathalfa}
\usepackage{bbm}
\usepackage{xypic, dynkin-diagrams}
\usepackage{longtable}
\usepackage[hypertexnames=false, hyperfootnotes=false, colorlinks=true,linktocpage=true, allcolors=highlight, bookmarksdepth = 2]{hyperref}
\usepackage{cleveref}
\usepackage{url}
\usepackage{float}
\usepackage[full]{textcomp}
\makeatletter
\newcommand{\citecomment}[2][]{\citen{#2}#1\citevar}
\newcommand{\citeone}[1]{\citecomment{#1}}
\newcommand{\citetwo}[2][]{\citecomment[,~#1]{#2}}
\newcommand{\citevar}{\@ifnextchar\bgroup{;~\citeone}{\@ifnextchar[{;~\citetwo}{]}}}
\newcommand{\citefirst}{\@ifnextchar\bgroup{\citeone}{\@ifnextchar[{\citetwo}{]}}}

\makeatother

\usepackage{tikz,tikz-cd,tikz-3dplot}
\usepackage{pgfplots}
\pgfplotsset{compat=1.18}
\usetikzlibrary{calc}
\usetikzlibrary{fadings}
\usetikzlibrary{decorations.pathmorphing}
\usetikzlibrary{decorations.pathreplacing}
\usetikzlibrary{patterns}
\usetikzlibrary{arrows,shadows,positioning, calc, decorations.markings, 
	hobby,quotes,angles,decorations.pathreplacing,intersections,shapes}
\usepgflibrary{shapes.geometric}
\usetikzlibrary{fillbetween,backgrounds}

\usepackage{xcolor}

\definecolor{highlight}{HTML}{3465a4}

\usepackage{anyfontsize}
\usepackage[lining]{libertine}%
\usepackage{courier}
\usepackage{csquotes}
\makeatletter
\renewcommand*\libertine@figurestyle{LF}
\makeatother
\usepackage[libertine,libaltvw,liby]{newtxmath}
\usepackage{microtype}

\usepackage{array}
\newcolumntype{H}{>{\setbox0=\hbox\bgroup}c<{\egroup}@{}}

\BibSpec{article}{%
+{}{\sc \PrintAuthors} {author}
+{,}{ } {title}
+{,}{ \textit} {journal}
+{,}{ } {volume}
+{}{ \PrintDate } {date}
+{,}{ } {pages}
+{,}{ } {status}
+{.}{} {transition}
+{}{ \, Preprint available at \tt} {eprint}
+{.}{} {transition}
}

\BibSpec{book}{%
+{}{\sc \PrintAuthors} {author}
+{,}{ } {title}
+{.}{ \textit} {series}
+{,}{ } {volume}
+{.}{ } {publisher}
+{,}{ } {place}
+{,}{ } {date}
+{.}{} {transition}
}

\BibSpec{thesis}{%
+{}{\sc \PrintAuthors} {author}
+{,}{ } {title}
+{.}{ } {type}
+{,}{ } {organization}
+{,}{ \PrintDateB } {date}
+{.}{ \url } {url}
+{.}{} {transition}
}

\BibSpec{misc}{%
+{}{\sc \PrintAuthors} {author}
+{,}{ } {title}
+{.}{ } {note}
+{,}{ \PrintDateB } {date}
+{.}{ \url } {url}
+{.}{} {transition}
}

\BibSpec{incollection}{
+{} {\sc \PrintAuthors} {author}
+{,} { } {title}
+{,} { in \textit} {booktitle}
+{,}{ } {series}
+{,}{ } {volume}
+{}{ \PrintDate } {date}
+{,} { pp.~} {pages}
+{,}{ } {publisher}
+{ }{ Preprint available at \tt} {eprint}
+{.}{} {transition}
}

\BibSpec{inproceedings}{
+{} {\sc \PrintAuthors} {author}
+{,} { } {title}
+{,} { in \textit} {booktitle}
+{,} { pp.~} {pages}
+{,} { \PrintDateB} {date}
+{,} { available at \eprint} {eprint}
+{.} {} {transition}
}

\BibSpec{collection.article}{
+{} {\sc \PrintAuthors} {author}
+{,} { } {title}
+{,} { in \it \PrintConference} {conference}
+{,} { pp.~} {pages}
+{.} {\PrintBook} {book}
+{,} { \PrintDateB} {date}
+{.} {} {transition}
}

\BibSpec{innerbook}{
+{.} { \emph} {title}
+{.} { } {part}
+{:} { \emph} {subtitle}
+{.} { } {series}
+{,} { } {volume}
+{.} { Edited by \PrintNameList} {editor}
+{.} { Translated by \PrintNameList}{translator}
+{.} { \PrintContributions} {contribution}
+{.} { } {publisher}
+{.} { } {organization}
+{,} { } {address}
+{,} { \PrintEdition} {edition}
+{,} { \PrintDateB} {date}
+{.} { } {note}
}

\tikzset{
	commutative diagrams/.cd, 
	arrow style=tikz, 
	diagrams={>=stealth}
}
\tikzset{
	arrow/.pic={\path[tips,every arrow/.try,->,>=#1] (0,0) -- +(0,4pt);},
	pics/arrow/.default={triangle 90}
}
\tikzset{->-/.style={decoration={
			markings,
			mark=at position .6 with {\arrow{latex}}},postaction={decorate}}
}
\tikzset{
	c/.style={every coordinate/.try}
}
\usepackage{enumerate}

\DeclareMathOperator{\Ad}{Ad}

\DeclareMathOperator{\GL}{GL}
\DeclareMathOperator{\age}{age}

\DeclareMathOperator{\vir}{vir}

\usepackage{xifthen}
\newcommand{\Gm}[1][]{%
	\ifthenelse{\isempty{#1}}%
	{\mathbb{C}^\times}
	{(\mathbb{C}^\times)^{#1}}
}

\newcommand{\rd}{\mathrm{d}}
\newcommand{\ri}{\mathtt{i}}
\newcommand{\Beta}{\mathrm{B}}
\newcommand{\re}{\mathrm{e}}

\newcommand{\de}{{\partial}}

\newcommand{\bbO}{\mathbb{O}}
\newcommand{\bbM}{\mathbb{M}}

\newcommand{\bbZ}{\mathbb{Z}}
\newcommand{\bbR}{\mathbb{R}}
\newcommand{\bbC}{\mathbb{C}}
\newcommand{\bbP}{\mathbb{P}}

\newcommand{\bbQ}{\mathbb{Q}}

\newcommand{\cU}{\mathfrak{U}}
\newcommand{\cY}{\mathcal{Y}}
\newcommand{\cZ}{\mathcal{Z}}
\newcommand{\cO}{\mathcal{O}}
\newcommand{\cT}{\mathcal{T}}

\newcommand{\cC}{\mathcal{C}}
\newcommand{\DD}{\mathcal{D}}
\newcommand{\LL}{\mathcal{L}}

\newcommand{\cK}{\mathcal{K}}
\newcommand{\cN}{\mathcal{N}}
\newcommand{\cW}{\mathcal{W}}

\newcommand{\cA}{\mathcal{A}}
\newcommand{\HH}{\mathcal{H}}
\newcommand{\cB}{\mathcal{B}}

\newcommand{\cX}{\mathcal{X}}

\newcommand{\cM}{\mathcal M}
\newcommand{\cQ}{\mathcal Q}

\def\beq{\begin{equation}}                     %
	\def\eeq{\end{equation}}                       %
\def\bea{\begin{eqnarray}}                     
	\def\eea{\end{eqnarray}}
\def\bary{\begin{array}} 
	\def\eary{\end{array}} 
\def\ben{\begin{enumerate}} 
	\def\een{\end{enumerate}}
\def\bit{\begin{itemize}} 
	\def\eit{\end{itemize}}
\def\nn{\nonumber}

\theoremstyle{plain}
\newtheorem{thm}{Theorem}[section]
\newtheorem*{thm*}{Theorem}
\newtheorem{lem}[thm]{Lemma}

\newtheorem{prop}[thm]{Proposition}
\newtheorem*{prop*}{Proposition}
\newtheorem{conj}[thm]{Conjecture}

\newtheorem*{conj*}{Conjecture}

\newtheorem{cor}[thm]{Corollary}
\newtheorem*{cor*}{Corollary}

\theoremstyle{definition}
\newtheorem{defn}[thm]{Definition}
\newtheorem{rmk}[thm]{Remark}

\theoremstyle{plain}
\newtheorem{innercustomthm}{Theorem}
\newenvironment{customthm}[1]
{\renewcommand\theinnercustomthm{#1}\innercustomthm}
{\endinnercustomthm}

\theoremstyle{plain}

\theoremstyle{plain}

\theoremstyle{definition}

\theoremstyle{plain}

\crefname{equation}{Eq.}{Eqs.}
\crefname{eqnarray}{Eq.}{Eqs.}
\crefname{algo}{algorithm}{algorithms}
\crefname{conj}{conjecture}{conjectures}
\crefname{lem}{lemma}{lemmas}
\crefname{thm}{theorem}{theorems}
\crefname{claim}{claim}{claims}
\crefname{rmk}{remark}{remarks}
\crefname{prop}{proposition}{propositions}
\crefname{section}{section}{sections}
\crefname{appendix}{appendix}{appendices}
\crefname{cor}{corollary}{corollaries}
\crefname{figure}{figure}{figures}
\crefname{table}{table}{tables}
\crefname{example}{example}{examples}
\crefname{prob}{problem}{problems}
\crefname{assm}{assumption}{assumptions}
\crefname{defn}{definition}{definitions}
\crefname{notation}{notation}{notations}
\crefname{speculation}{speculation}{speculations}
\crefname{construction}{construction}{constructions}
\crefname{observation}{observation}{observations}
\crefname{innercustomthm}{Theorem}{Theorems}
\crefname{innercustomconj}{Conjecture}{Conjectures}
\crefname{innercustomassumption}{assumption}{Assumption}
\crefname{innerpracticalresult}{practical result}{practical results}

\crefname{equation}{Eq.}{Eqs.}
\crefname{eqnarray}{Eq.}{Eqs.}
\crefname{algo}{Algorithm}{Algorithms}
\crefname{conj}{Conjecture}{Conjectures}
\crefname{lem}{Lemma}{Lemmas}
\crefname{thm}{Theorem}{Theorems}
\crefname{customthm}{Theorem}{Theorems}
\crefname{claim}{Claim}{Claims}
\crefname{rmk}{Remark}{Remarks}
\crefname{prop}{Proposition}{Propositions}
\crefname{section}{Section}{Sections}
\crefname{appendix}{Appendix}{Appendices}
\crefname{appendix}{Appendix}{Appendices}
\crefname{cor}{Corollary}{Corollaries}
\crefname{figure}{Figure}{Figures}
\crefname{table}{Table}{Tables}
\crefname{example}{Example}{Examples}
\crefname{prob}{Problem}{Problems}
\crefname{assm}{Assumption}{Assumptions}
\crefname{defn}{Definition}{Definitions}
\crefname{customconj}{Conjecture}{Conjectures}

\numberwithin{equation}{section}

\title{Integral and higher-genus quantum McKay correspondence in type D}
\author{Andrea Brini$^{1,2}$, Jingxiang Ma$^{1,3}$}
\address{$^1$ University of Sheffield, School of Mathematical and Physical Sciences, Hounsfield Road, Sheffield S3 7RH, United Kingdom}
\address{$^2$ On leave from CNRS, DR~13, Montpellier, France}
\address{$^3$ Department of Mathematics, Ohio State University, 231 West 18th Ave., Columbus, OH
43210, USA}

\email{a.brini@sheffield.ac.uk,ma.3149@osu.edu}

\allowdisplaybreaks
\begin{document}

\setcounter{tocdepth}{2}

\begin{abstract}
We establish Iritani's integral form of the $\bbC^\times$-equivariant Crepant Transformation Conjecture (CTC) for the minimal resolution of the surface singularity $\bbC^2/\mathbb{BD}_{4n}$, where $\mathbb{BD}_{4n}$ is the binary dihedral group of order $4n$. We then use Givental's quantisation formalism to prove the all-genus full-descendent CTC in that case. As an application, we deduce the Bryan--Cadman--Young degree-zero form of the orbifold Gromov--Witten/Donaldson--Thomas correspondence for the Calabi--Yau 3-orbifold $[\bbC^2/\mathbb{BD}_{4n}] \times \bbC$. 
\end{abstract}
\maketitle





\section{Introduction}

\subsection{Crepant transformations and Gromov--Witten theory}
Let $\cX$ and $\cY$ be Deligne--Mumford stacks with Gorenstein coarse moduli space $X$ and $Y$. Recall that $\cX$ and $\cY$ are said to be $K$-equivalent if there is a stack $\cZ$ and projective birational morphisms 
\[
\xymatrix{
& \cZ \ar[ld]_{\pi_\cX} \ar[rd]^{\pi_\cY}  \\
  \cX   \ar@{<-->}[rr] & & \cY\\
  } 
\]
such that $\pi_\cX^* K_\cX \simeq \pi_\cY^* K_\cY$.
A series of influential conjectures of Ruan  \cite{Ruan:2006hvs},  Bryan--Graber \cite{Bryan:2009wkv}, Coates--Corti--Iritani--Tseng \cite{Coates:2009afh} and Iritani \cite{Iritani:2009vat,Iritani:2010jow}, the \emph{Crepant Transformation Conjectures (CTC)}, assert that the Gromov--Witten theories of $\cX$ and $\cY$ should be non-trivially equivalent, and determine each other, as follows. The small orbifold quantum cohomology of $\cY$, viewed as an analytic family of rings over its degree-two Chen--Ruan cohomology $H_{\rm CR}^2(\cY,\bbC)$, should be isomorphic to the small orbifold quantum cohomology of $\cX$ upon analytic continuation near suitable roots of unity in the quantum parameters. When $\cX$ is furthermore an orbifold satisfying the Hard Lefschetz condition \cite{Bryan:2009wkv,Coates:2009afh}, this statement can be enhanced to a Frobenius manifold isomorphism of the big quantum cohomologies of $\cX$ and $\cY$. Even dispensing with the Hard Lefschetz condition, the equivalence of the curve counting theories of $\cX$ and $Y$ has a salient description as an isomorphism of polarised loop spaces of the respective cohomologies (in genus zero) and their canonical quantisation (in higher genus). This isomorphism is further conjecturally induced by a Fourier--Mukai transformation between the $K$-groups of $\cX$ and $\cY$. The genus zero conjectures have been established for toric targets \cite{Coates:2018ifu}, and for a handful of non-toric families of examples (such as symmetric products \cite{Bryan:2009wkv} or some non-abelian quotients of $\bbC^3$ \cite{Bryan:2009xda}).\\

The prototypical example of these Crepant Transformation Conjectures is the ``quantum McKay correspondence''. Let $\cX$ be the quotient stack $[\bbC^2/G]$, where $G<\mathrm{SL}(2;\bbC)$ is finite, and write $X$ for its coarse moduli space, and $Y$ for its $G\text{-}\mathrm{Hilb}(\bbC^2)$ resolution. The McKay correspondence classifies $G$ in terms of root systems of $\mathrm{ADE}_l$ types. In type $\mathrm{A}_l$, $G$ is a cyclic group, hence $X$ and $Y$ are toric varieties, and the Crepant Resolution Conjectures were proved in \cite{Coates:2009afh,Brini:2019wts}. The Ruan/Bryan--Graber conjecture was further proved in type $\mathrm{D}_l$ by Hu \cite{Hu:2013rfh}.

\subsection{Main results}

In this paper we upgrade Hu's result to establish the Coates--Corti--Iritani--Tseng version of the Crepant Transformation Conjecture \cite{Coates:2009afh} in type $\mathrm{D}_l$. We further leverage this to establish an \emph{integral} version of the quantum McKay correspondence through Iritani's $K$-group version of the CTC \cite{Iritani:2009vat,Iritani:2010jow}, as well as a \emph{higher genus} quantum McKay correspondence  using Givental's quantisation formalism for Gromov--Witten theory \cite{Givental:2001mre}. Below we state our results somewhat informally, but provide pointers in each case to the precise statements contained in the main body of the paper.\\

Let $G=\mathbb{BD}_{4l-8}<\mathrm{SL}(2;\bbC)$ be the binary dihedral group of order $4(l-2)$, $l\geq3$. We let $\cX=[\bbC^2/G]$, with $X=\bbC^2/G$ the $\mathrm{D}_l$ Kleinian singularity and $Y=G\text{-}\mathrm{Hilb}(\bbC^2)\to X$ its minimal resolution. We will also endow $X$ with the $\bbC^\times$-action induced by scalar multiplication on $\bbC^2$, together with its lifts to $\cX$ and $Y$. In contrast with type $\mathrm{A}_l$, the group $G$ is non-abelian, $\cX$ is not toric, and the exceptional curve $C$ corresponding to the trivalent node of the Dynkin diagram is fixed pointwise by the torus. For $\cZ$ either $\cX$ or $Y$, we write $\HH(\cZ) \coloneqq H_{\rm CR}^{\bullet}(\cZ, \bbC) \otimes \bbC((z^{-1}))$ for the formal loop space of its Chen--Ruan cohomology.
\begin{customthm}{A}[Integral quantum McKay correspondence in type $\mathrm{D}_l$, =\cref{thm:identity_general,thm:symplectic_general}]
\label{thm:A}
There is an explicit $\bbC\llbracket z^{-1}\rrbracket$-linear symplectic isomorphism $\mathbb{U} : \HH_\cX \to \HH_Y$ which identifies, after analytic continuation, the Lagrangian cones cut by the graph of the differential of the genus-0 descendent GW potentials of $\cX$ and $Y$,
\[
\mathbb{U}(\LL_\cX) = \LL_Y\,.
\] 
Moreover, $\mathbb{U}$ is induced through Iritani's $K$-group $\widehat{\Gamma}$-class framings by the $\bbC^\times$-equivariant derived McKay correspondence $K_{\bbC^\times}(\cX) \xrightarrow{\sim} K_{\bbC^\times}(Y)$.
\end{customthm}

Our second result extends the genus zero quantum McKay correspondence in type $\mathrm{D}_l$ to descendent higher genus Gromov--Witten theory.

\begin{customthm}{B}[Higher genus quantum McKay correspondence in type $\mathrm{D}_l$, =\cref{cor:cqcrc_typeD}]
\label{thm:B}
Under the same analytic continuation of \cref{thm:A}, there exists a triangular change-of-variables, determined by $\mathbb{U}$, identifying the genus-$g$ full-descendent Gromov--Witten potential of $\cX$ with that of $Y$.
\end{customthm}

Our final result combines \cref{thm:B} with the Gromov--Witten/stable-pairs correspondence of Pardon \cite{Pardon:2023cxy} and with the degree-zero Donaldson--Thomas crepant resolution correspondence \cite{Young:2010gfc,Bryan:2012rcb,Calabrese:2016crc} to establish the orbifold GW/DT correspondence for $\cX \times \bbC$, in the form predicted by Bryan--Young and Bryan--Cadman--Young.
\begin{customthm}{C}[Orbifold GW/DT correspondence, =\cref{cor:crc_threefold,thm:gwdt_orbifold,rmk:square}]
\label{thm:C}
For $\cX_3=\cX\times\bbC$ with the Calabi--Yau torus action, the Donaldson--Thomas partition function and the reduced disconnected Gromov--Witten partition function are related, up to classical terms, by
\[
{\rm DT}_{\cX_3}(\mathbf{q}) \;=\; M(-q)^{l+1}\,{\rm GW}'_{\cX_3}(x,\varepsilon)^2 
\,,
\]
under the exponential identification \eqref{eq:gwdt_variables} of their arguments, with $M(q)$ the MacMahon function. 
\end{customthm}

Our strategy follows closely in spirit the one employed in \cite{Brini:2019wts} for type $\mathrm{A}_l$, by leveraging the global mirror we introduced with Strachan in \cite{Brini:2025fbw}. By \cite{Hu:2013rfh,Brini:2025fbw}, the quantum cohomologies of $\cX$ and $Y$ are two charts of a single Frobenius manifold $\cM$, presented as a one-dimensional Landau--Ginzburg model on $\mathfrak{h}\times\bbC$, $\mathfrak{h}$ the Cartan subalgebra of $\mathfrak{so}_{2l}$. The Landau--Ginzburg superpotential of \cite{Brini:2025fbw} coincides with the (logarithm of) the spectral parameter of the Lax operator for the $\widehat{\mathfrak{so}_{2l}}$-relativistic Toda chain. The technical core of the paper rests on the realisation that the associated oscillating integrals (i.e. the twisted periods of $\cM$) are Euler-type integrals, as in \cite{Brini:2025fbw}. Their analytic continuation to the large radius limit point and orbifold point, and the relation to the respective $J$-functions, can be deduced from the monodromy properties of the Lauricella $F_D$ system and its exact reduction to Euler Beta integrals (\cref{sec: mirror_symmetry_typeD}). This yields closed-form expressions for the CCIT symplectomorphism (\cref{sec:Umat_typeD}), from which the statement of Iritani's integral CTC can be verified directly (\cref{sec:integral_McKay}). For \cref{thm:B}, \cite[Lemma~6.3]{Brini:2019wts} reduces \eqref{eq:sqcrc} to the equality of the canonical $R$-matrices of the $\bbC^\times$-equivariant Gromov--Witten theories of $\cX$ and $Y$. Both are computed by the same twisted periods, so they differ at most by a scalar on each idempotent. As in \cite{Brini:2019wts}, at the orbifold point the calibration is pinned by Tseng's orbifold quantum Riemann--Roch theorem \cite{Tseng:2010lhb}, while near the large radius limit point it is fixed by Givental's formula at the fixed points, and in both cases the scalar ambiguities in the $R$-calibration are read off the Stirling expansions of the twisted periods (\cref{sec:higher_genus}). Our analysis requires some additional care compared to the one of \cite{Brini:2019wts}, as the large radius limit point is not semi-simple, forcing the comparison to be performed on a punctured disk in the K\"ahler parameter associated to the point-wise fixed by the torus action. Finally, \Cref{thm:C} follows from \cref{thm:B} by composing, in the square of Bryan--Cadman--Young \cite{Bryan:2012rcb}, with the Gromov--Witten/stable-pairs correspondence of Pardon \cite{Pardon:2023cxy} on $Y\times\bbC$ and with the degree-zero Donaldson--Thomas crepant resolution correspondence of Calabrese \cite{Calabrese:2016crc}, in the form conjectured by Bryan--Young \cite{Young:2010gfc} (\cref{sec:gwdt}).

\subsection{Organisation}
The rest of the paper is organised as follows. \Cref{sec:preliminaries} collects the relevant background material on the classical McKay correspondence for $\mathrm{D}_l$, the equivariant Gromov--Witten theories of $\cX$ and $Y$, Givental's formalism, Iritani's integral structure and Givental's quantisation, and concludes with a discussion of the Crepant Transformation Conjectures in our specific case; the experienced reader will find no new material here. \Cref{sec: mirror_symmetry_typeD} recalls the global mirror of \cite{Brini:2025fbw} and expresses the flat coordinates of its Dubrovin connection as twisted periods. \Cref{sec:Umat_typeD} computes the CCIT symplectomorphism $\mathbb{U}$, which is subsequently identified in \cref{sec:integral_McKay} with the $K$-group isomorphism induced by the derived McKay correspondence, proving \cref{thm:A}. In \cref{sec:higher_genus} we perform the comparison of $R$-calibrations relevant to  \cref{thm:B}, and conclude with the proof of \cref{thm:C} in \cref{sec:gwdt}. \\
In \Cref{sec: twisted_periods} we collect foundational material on the twisted periods of Hurwitz Frobenius manifolds for use in \cref{sec: mirror_symmetry_typeD}. We also derive connection formulas for Lauricella functions in the resonant case in \cref{appendix: Lauricella_func}, which are needed for \cref{sec:Umat_typeD}, and have not to our knowledge previously appeared in the literature on generalised hypergeometric functions beyond the single-variable case.

\subsection*{Statement about Generative AI}
Per the current {\it zeitgeist} in the aftermath of the AIxMaths upheaval of the summer of 2026, we detail the use (or lack thereof) of LLMs for this project. We shall broadly follow what we believe to be the gold standard in the disclosure of uses of generative artificial intelligence, due to Fern\'andez-Real--Serra\footnote{\url{https://arxiv.org/pdf/2609.05669}}. \\
This paper started in Autumn 2024 as a collaborative project forming part of the second author's PhD thesis. An earlier incarnation of the main results of the paper was included, in part, as Chapter~3 of \cite{Ma:2026ths}, which eventually appeared online in July 2026. No use of AI was made in the design, calculations, and write-up of the material up to this point. Around the same time, we became aware of an obstruction to the symplecticity of the CCIT morphism in \cite{Ma:2026ths}, which we ascribed to an inconsistent choice of branches in the analytic continuation of twisted periods. To address this, we subjected our existing human-written {\it Wolfram Language} codebase to an AI-assisted review through interactive sessions with Claude Code (Fable 5.1). This quickly confirmed our suspicion about inconsistent branching conventions and identified for us the source of the discrepancy, which we were then able to rectify. We shall be explicit at three relevant points in the text about the exact nature of the original discrepancy and how Claude Code assisted us  (\cref{rmk:thesis_correction,rmk:thesis_correction_A} and the beginning of \cref{sec:Umat_typeD}). In the write-up stage, we used Claude Code and Codex (GPT-6) for minor typesetting assistance, numerical verifications in low rank of \cref{tab:identity_values,thm:identity_general}, and proof-reading checks. We also refer to \cref{rmk: primitive_eq2} for an AI-suggested correction of a missing term in \cite[Prop.~3.4.4]{Ma:2026ths}.

\subsection*{Acknowledgements} We thank R.~Cavalieri, T.~Coates, G.~Gugiatti, P.~Johnson, N.~Nabijou, D.~Ross, A.~Thompson and K.~van Gemst for discussions. In the final stages of the preparation of this paper, we received a manuscript by B.~Fang and Z.~Lan on the remodelling formalism for threefold $\mathrm{D}_l$-type resolutions, which has some limited overlap with the content of \cref{sec:Umat_typeD,sec:integral_McKay}. We wish to convey our appreciation to the authors for coordinating their submission with ours and kindly acknowledging the priority of the material in this paper and in \cite{Ma:2026ths}. A.~B. was supported by EPSRC through grants ref.~EP/S003657/2 and UKRI2394. J.~M. was supported by a PhD studentship of the EPSRC Doctoral Training Partnership EP/W524360/1.

\section{Preliminaries}
\label{sec:preliminaries}

\subsection{Classical McKay correspondence}
\label{sec:classical_McKay}
The Kleinian subgroup $G< \mathrm{SL}(2;\bbC)$ of type $\mathrm{D}_l$ is the binary dihedral group
\begin{equation*}
G  \coloneqq \mathbb{BD}_{4l-8},
\end{equation*}
generated by
\begin{equation*}
    a = \begin{pmatrix} \zeta & 0 \\ 0 & \zeta^{-1} \end{pmatrix}, \qquad b = \begin{pmatrix} 0 & 1 \\ -1 & 0 \end{pmatrix}, \qquad \zeta = \exp\left(\frac{\ri\pi}{l-2}\right),
\end{equation*}
subject to the relations $a^{l-2} = b^2$, $b^4 = I$, $b a b^{-1} = a^{-1}$. The $l+1$ conjugacy classes of $G$ are $[1]$, $[a^{r}]$ for $1\leq r\leq l-2$ (with $[a^{l-2}]=[-1]$), $[b]$ and $[ab]$, the last two being exchanged by the outer automorphism of $G$ fixing $a$ and sending $b\mapsto ab$. We enumerate them once and for all by the nodes of the affine Dynkin diagram, choosing representatives
\beq
\label{eq:class_enumeration}
g_0\coloneqq 1\,,\qquad g_m\coloneqq a^{m}\quad(1\leq m\leq l-2)\,,\qquad g_{l-1}\coloneqq b\,,\qquad g_l\coloneqq ab\,,
\eeq
so that the trivalent node carries $[-1]=[g_{l-2}]$ and the two legs carry $[b]$ and $[ab]$, in accordance with the $\bbZ_2$-symmetry of the diagram. Henceforth, all objects attached to the conjugacy classes are labelled by $m\in\{0,\dots,l\}$. The centralisers $C_m\coloneqq C(g_m)$ are $C_m = \langle a\rangle$ for $1\leq m\leq l-3$, $C_0 = C_{l-2} = G$, $C_{l-1} = \langle b\rangle$ and $C_l = \langle ab\rangle$, and the inverse of $g_m$ is conjugate to $g_{\bar m}$, where $\bar m = m$ for $m\leq l-2$, while $\overline{l-1} = l-1$, $\bar l = l$ for $l$ even and $\overline{l-1} = l$, $\bar l = l-1$ for $l$ odd (as $b^{-1} = a^{l-2}b$). 

Let $X\coloneqq\bbC^2/G$ be the corresponding Kleinian singularity. We will denote by $\cX\coloneqq [\bbC^2/G]$ the associated quotient stack, and $Y=G\text{-}\mathrm{Hilb}(\bbC^2)$ the minimal crepant desingularisation of $X$:
\beq
\xymatrix{
  \cX \ar[rd]^{\varpi} & & Y \ar[ld]_{\pi} \\
  &X&} 
\label{eq:crdiag}
\eeq
with $\varpi$ and $\pi$ denoting, respectively, the structure map and the resolution morphism. 

By the McKay correspondence \cite{McKay:1980ehc}, the irreducible representations of $G$ are in bijection with the vertices of the affine Dynkin diagram $\widehat{\mathrm{D}}_l$. There are four one-dimensional representations $\rho_0$, $\rho_1$, $\rho_{l-1}$, $\rho_l$, with $\rho_0$ trivial and
\beq
\label{eq:one_dimensional_representations}
\rho_1(a) = 1,\quad \rho_1(b) = -1,\qquad \rho_{l-1}(a) = \rho_l(a) = -1,\quad \rho_{l-1}(b) = -\rho_l(b) = \varepsilon,\qquad \varepsilon\coloneqq\begin{cases} 1 & l\ \text{even},\\ \ri & l\ \text{odd},\end{cases}
\eeq
and $l-3$ two-dimensional representations $\rho_2,\dots,\rho_{l-2}$,
\begin{equation*}
\rho_k(a) =
\begin{pmatrix}
\zeta^{k-1}&\\
&\zeta^{1-k}
\end{pmatrix},\qquad
\rho_k(b) =
\begin{pmatrix}
&1\\
(-1)^{k-1}&
\end{pmatrix},\qquad 2\leq k\leq l-2,
\end{equation*}
so that $\rho_2 = Q$ is the defining representation of $G$ on $\bbC^2$. The McKay graph, whose vertices are the irreducible representations and in which $\rho_v$ and $\rho_{v'}$ are joined by $\dim\mathrm{Hom}_G(\rho_{v'},Q\otimes\rho_v)$ edges, is the affine $\mathrm{D}_l$ Dynkin diagram of \cref{fig: dynkin_diagram_convention}.

\begin{figure}[!h]
\begin{tikzpicture}[scale=1.5]
  \node[circle, draw=black, fill=white, inner sep=1.5pt, label=below:{$\rho_0$}] (rho0) at (-0.8, 0.5) {};
  \node[circle, fill=black, inner sep=1.5pt, label=below:{$\rho_1$}] (rho1) at (-0.8, -0.5) {};
  
  \node[circle, fill=black, inner sep=1.5pt, label=below:{$\rho_2$}] (rho2) at (0, 0) {};
  \node[circle, fill=black, inner sep=1.5pt, label=below:{$\rho_3$}] (rho3) at (1, 0) {};
  \node[circle, fill=black, inner sep=1.5pt, label=below:{$\rho_{l-3}$}] (rhom3) at (3, 0) {};
  \node[circle, fill=black, inner sep=1.5pt, label=below:{$\rho_{l-2}$}] (rhom2) at (4, 0) {};
  
  \node[circle, fill=black, inner sep=1.5pt, label=below:{$\rho_{l-1}$}] (rhom1) at (4.8, 0.5) {};
  \node[circle, fill=black, inner sep=1.5pt, label=below:{$\rho_l$}] (rhom) at (4.8, -0.5) {};

  \draw[dotted] (rho0) -- (rho2);
  \draw (rho1) -- (rho2);
  \draw (rho2) -- (rho3);
  
  \draw (rho3) -- (1.5, 0);
  \draw[dotted] (1.5, 0) -- (2.5, 0);
  \draw (2.5, 0) -- (rhom3);
  
  \draw (rhom3) -- (rhom2);
  \draw (rhom2) -- (rhom1);
  \draw (rhom2) -- (rhom);
\end{tikzpicture}
  \caption{The affine Dynkin diagram of type $\widehat{\mathrm{D}}_l$.}
  \label{fig: dynkin_diagram_convention}
\end{figure}

At the level of the resolution $Y$, the exceptional locus of $\pi:Y \to X$ is a configuration of $l$ rational curves $E_1,\dots,E_l$ of self-intersection $-2$, whose dual graph is the Dynkin sub-diagram of type $\mathrm{D}_l$ labelled as in \cref{fig: dynkin_diagram_convention}: under this convention, $C\coloneqq E_{l-2}$ is the curve corresponding to the trivalent node, and the non-trivial representation $\rho_v$, $1\leq v\leq l$, corresponds to the exceptional curve $E_v$. 
By \cite{GonzalezSprinberg:1983gah,Ito:1999kyz}, $Y$ is the moduli space $G\text{-}\mathrm{Hilb}(\bbC^2)$ of $G$-clusters in $\bbC^2$, and the tautological bundles $\mathcal{R}_\rho$ on it, of rank $\dim\rho$ and with $\mathcal{R}_{\rho_0} = \cO_Y$, satisfy $\int_{E_{v'}}c_1(\mathcal{R}_{\rho_v}) = \delta_{vv'}$ for $1\leq v,v'\leq l$. 

\subsection{Equivariant cohomology}
\label{sec:equivariant_cohomology}
The scalar action of $\bbC^\times$ on $\bbC^2$ commutes with $G$ and descends to $X$, and lifts to $\cX$  and $Y$ via the $\bbC^\times$-equivariant diagram \eqref{eq:crdiag}. By construction, the fixed locus on $\cX$ is the origin, and on $Y$ it is contained in the exceptional locus, and it consists of $l-1$ isolated points\footnote{All intersection points of the rational configuration $E_1, \dots, E_l$ are $\bbC^\times$-fixed, hence $C=E_{l-2}$ is point-wise fixed since it is a $\bbP^1$ with three $\bbC^\times$-fixed points. As we shall show in \eqref{eq:localisation_normal_weights}, the pull-back of the $\bbC^\times$-action to $E_i$, $i \neq l-2$ is otherwise effective, hence the rest of the fixed locus consists of the remaining $l-1$ fixed points.} $(p_1, \dots, p_{l-3}, p_{l-1}, p_{l})$, together with the point-wise fixed curve $C$: here $p_i\in E_i$ denotes the fixed point of $E_i$ away from $E_{i+1}$ for $i\leq l-3$, and away from $C$ for $i\in\{l-1,l\}$.  
We shall write $\nu=c_1(\bbO_{B\bbC^\times }(1))$ for the generator of $H^2_{\bbC^\times}(\mathrm{pt})$, and extend scalars from $H^*_{\bbC^\times}(\mathrm{pt})\cong\bbQ[\nu]$ to $\bbC(\nu)$ throughout.

Since $Y$ retracts onto its exceptional locus, its cohomology is concentrated in degrees zero and two, and $H^*_{\bbC^\times}(Y)$ is the free $\bbQ[\nu]$-module of rank $l+1$ with basis $\mathbf{1}_Y$ and the equivariant Poincar\'e duals $[E_1],\dots,[E_l]$ of the exceptional curves, whose intersection matrix
\[\int_{E_i}[E_j] = -\cC_{ij}\]  
is minus the Cartan matrix of $\mathrm{so}_{2l}(\bbC)$, the simple Lie algebra of type $\mathrm{D}_l$. It will be convenient to instead use the basis dual to the curve classes,
\beq
\label{eq:phi_basis}
\phi_j \coloneqq -\sum_{k=1}^l (\cC^{-1})_{jk}\,[E_k]\,,\qquad \int_{E_i}\phi_j = \delta_{ij}\,,\qquad 1\leq i,j\leq l\,,
\eeq
so that, non-equivariantly, $\phi_v = c_1(\mathcal{R}_{\rho_v})$. A general point $\tau\in H^*_{\bbC^\times}(Y)$ is
\beq
\label{eq:tau_coords}
\tau = \tau_0\,\mathbf{1}_Y + \sum_{j=1}^l \tau_j\,\phi_j\,,\qquad \tau_j = \int_{E_j}\tau\,.
\eeq
In this basis the equivariant Poincar\'e pairing $(\alpha,\beta)_{Y}\coloneqq\int^{\bbC^\times}_Y\alpha\cup\beta$ reads
\beq
\label{eq:pairing_Y}
(\mathbf{1}_Y,\mathbf{1}_Y)_Y = \frac{1}{|G|\,\nu^2}\,,\qquad (\mathbf{1}_Y,\phi_j)_Y = 0\,,\qquad (\phi_i,\phi_j)_Y = -(\cC^{-1})_{ij}\,.
\eeq
\smallskip
Since $G$ acts freely on $\bbC^2\setminus\{0\}$, the inertia stack of $\cX$ is
\begin{equation*}
  I\cX = \bigsqcup_{m=0}^{l} \cX_m\,,\qquad \cX_0 = \cX\,,\qquad \cX_m = [\{0\}/C_m]\quad (1\leq m\leq l)\,,
\end{equation*}
so that the twisted sectors $\cX_m$, $m\geq1$, are points with stabiliser $C_m$. The involution $\iota\colon I\cX\to I\cX$, $(g)\mapsto(g^{-1})$, acts on the labels as $m\mapsto\bar m$. Each $g_m$, $m\geq1$, acts on $T_0\bbC^2$ with eigenvalues $\re^{\pm2\pi\ri\theta_m}$, $0<\theta_m<1$, so that its age is \[\age(g_m) = \theta_m + (1-\theta_m) = 1\,.\] The $\bbC^\times$-equivariant Chen--Ruan cohomology \cite{Chen:2004bok}
\begin{equation*}
  H^\ast_{\mathrm{CR},\bbC^\times}(\cX) \coloneqq H^\ast_{\bbC^\times}(I\cX)
\end{equation*}
is therefore the free $\bbQ[\nu]$-module of rank $l+1$ with basis the fundamental classes $\mathbf{1}_m$ of the sectors $\cX_m$, $0\leq m\leq l$, graded by the age-shifted degree $\deg_{\mathrm{CR}}\mathbf{1}_m = 2\age(g_m) = 2$ for $m\geq1$.
We write $\deg_0$ for the unshifted degree, and denote a general point of $H^\ast_{\mathrm{CR},\bbC^\times}(\cX)$ as
\beq
x = \sum_{m=0}^{l} x_m\,\mathbf{1}_{m}\,,\qquad x = (x_0,\dots,x_l)\,.
\label{eq: coord_orbifold_typeD}
\eeq
In this basis, the $\bbC^\times$-equivariant Chen--Ruan Poincar\'e pairing \[(\alpha,\beta)_{\cX}\coloneqq\int^{\bbC^\times}_{I\cX}\alpha\cup\iota^\ast\beta\]
reads
\beq
\label{eq:pairing_X}
(\mathbf{1}_0,\mathbf{1}_0)_{\cX} = \frac{1}{|G|\,\nu^2}\,,\qquad (\mathbf{1}_0,\mathbf{1}_m)_{\cX} = 0\,,\qquad (\mathbf{1}_m,\mathbf{1}_{m'})_{\cX} = \frac{\delta_{m',\bar m}}{|C_m|}\,,\qquad m,m'\geq1\,.
\eeq
As $\age(g) = \age(g^{-1})=1$ for all $1 \neq g\in G$, $\cX$ satisfies the Hard Lefschetz condition of \cite{Bryan:2009wkv,Coates:2009hmp}.

The character in the fundamental representation of $\mathrm{SU}(2)$ satisfies $|\chi_Q(g)| \leq 2$ for all $g \in G$, and we fix the branch of the square root as $\sqrt{\chi_Q(g)-2} \in \ri \bbR_+$. The $\bbC(\nu)$-linear map
\beq
\label{eq:McKay_cohomology}
H^\ast_{\mathrm{CR},\bbC^\times}(\cX)\longrightarrow H^\ast_{\bbC^\times}(Y)\,,\qquad \mathbf{1}_0\longmapsto \mathbf{1}_Y\,,\qquad \mathbf{1}_m\longmapsto \sum_{v=1}^l D_{v,m}\,\phi_v\,,\qquad D_{v,m}\coloneqq\frac{|(g_m)|}{|G|}\sqrt{\chi_Q(g_m)-2}\,\chi_{\rho_v}(g_m)\,,
\eeq
is an isometry of graded inner product spaces \cite{Bryan:2008cym} w.r.t. the Poincar\'e pairings \eqref{eq:pairing_Y} and \eqref{eq:pairing_X}. In the coordinates \eqref{eq:tau_coords} and \eqref{eq: coord_orbifold_typeD} it reads $\tau_0 = x_0$ and
\begin{equation}
\tau_v = \sum_{m=1}^{l} D_{v,m}\,x_m\,,\qquad 1\leq v\leq l\,.
\label{eq: coord_resolution_typeD}
\end{equation}


\subsection{Gromov--Witten invariants}
\label{sec:GW-QC}
We will use $\cZ$ to denote either $\cX$ or $Y$. Let $\cK_{g,n}(\cZ,\beta)$ be the moduli stack of genus-$g$, $n$-pointed twisted stable maps to $\cZ$ of class $\beta\in H_2(\cZ;\bbZ)$ \cite{Chen:2002jow,Abramovich:2008pcn}
It carries a perfect obstruction theory, evaluation morphisms ${\rm ev}_i\colon \cK_{g,n}(\cZ,\beta)\to I\cZ$ \cite[Sec.~4.4]{Abramovich:2008pcn} and cotangent line classes $\psi_i$ at the markings. The $\bbC^\times$-action on $\cZ$ induces one on $\cK_{g,n}(\cZ,\beta)$ with compact fixed locus, and the {$\bbC^\times$-equivariant Gromov--Witten invariants} of $\cZ$ are defined by virtual localisation \cite{Graber:1999eki}:
\begin{equation*}
  \big\langle \alpha_1\psi_1^{k_1},\dots,\alpha_n\psi_n^{k_n} \big\rangle^{\cZ,\bbC^\times}_{g,n,\beta}
  \;\coloneqq\;
  \sum_{F \subset \cK_{g,n}(\cZ,\beta)^{\bbC^\times}}
  \int_{[F]^{\vir}}
  \frac{i_F^\ast\!\Bigl(\prod_{i=1}^n {\rm ev}_i^\ast(\alpha_i)\psi_i^{k_i}\Bigr)}{e_{\bbC^\times}(N^{\vir}_F)}
  \;\in\; \bbC(\nu),\qquad \alpha_i\in H(\cZ),
\end{equation*}
where the sum runs over the connected components $F$ of the fixed locus, $i_F$ is the inclusion, and the Euler class of the virtual normal bundle is inverted in $\bbC(\nu)$. 

For $t\in H(\cZ)$, the {quantum product} $\star_t$ on $H(\cZ)$ is defined by
\begin{equation*}
  (\alpha_1 \star_t \alpha_2,\alpha_3)_{\cZ}
  \;\coloneqq\;
  \sum_{\beta} \sum_{n\ge 0} \frac{1}{n!}
  \big\langle \alpha_1,\alpha_2,\alpha_3,t,\dots,t \big\rangle^{\cZ,\bbC^\times}_{0,n+3,\beta},
\end{equation*}
and the {quantum cohomology} of $\cZ$ is the formal Frobenius manifold over $\bbC(\nu)$
\begin{equation*}
  \mathrm{QH}^\ast_{\bbC^\times}(\cZ)
  \;\coloneqq\;
  \big(H(\cZ), \star_t, (\cdot,\cdot)_{\cZ}, \mathbf{1}\big).
\end{equation*}
Let $\{\phi_\alpha\}$ be a basis of $H(\cZ)$, $\{\phi^\alpha\}$ the dual basis with respect to $(\cdot,\cdot)_{\cZ}$, $t^\alpha$ the corresponding linear coordinates, $\partial_\alpha\coloneqq\partial/\partial t^\alpha$, and $C^\gamma_{\alpha\beta}(t)$ the structure constants of the quantum product, 
\[\phi_\alpha \star_t\phi_\beta = \sum_\gamma C^\gamma_{\alpha\beta}(t)\phi_\gamma\,.\] 
Recall that the Dubrovin connection is the pencil of flat connections on the trivial bundle $TH(\cZ)$
\begin{equation*}
  \nabla_\alpha \coloneqq \partial_\alpha + \frac{1}{z}\, \phi_\alpha \star_t\,,
\end{equation*}
with $z\in \bbC^\times$. A canonical system of flat coordinates for $\nabla$, i.e. of functions whose gradients form a flat frame, is given by the components $J^{\cZ,\gamma}\coloneqq (J^{\cZ},\phi^\gamma)_{\cZ}$ of the $J$-function of $\cZ$
\beq
\label{eq: Jfunc_big}
J^{\cZ}(t, z) \coloneqq z \mathbf{1} + t + \sum_{\beta} \sum_{n \ge 0} \frac{1}{n!} \Big\langle t, \dots, t, \frac{\phi_\gamma}{z-\psi} \Big\rangle^{\cZ,\bbC^\times}_{0,n+1,\beta} \phi^\gamma = z\, S_{\cZ}(t, -z)^{\dagger} \mathbf{1}\,,
\eeq
which satisfy the quantum differential equation
\beq
\label{eq:QDE}
z\,\partial_\alpha\partial_\beta J^{\cZ,\gamma}(t,z) = \sum_\delta C^\delta_{\alpha\beta}(t)\,\partial_\delta J^{\cZ,\gamma}(t,z)\,.
\eeq


\subsection{Givental's symplectic formalism}
\label{sec:givental}
Following Givental \cite{Givental:2001mre,Coates:2007tho}, let
\begin{equation*}
  \HH_{\cZ} \coloneqq H(\cZ)(\!(z^{-1})\!)
\end{equation*}
be the loop space of $H(\cZ)$, equipped with the symplectic form
\begin{equation*}
  \Omega_{\cZ}(f, g) \coloneqq \operatorname{Res}_{z=0} \big( f(-z), g(z) \big)_{\cZ}\, \rd z, \qquad f,g \in \HH_{\cZ},
\end{equation*}
and with the polarisation $\HH_{\cZ} = \HH_{\cZ,+} \oplus \HH_{\cZ,-}$ by the Lagrangian subspaces
\begin{equation*}
  \HH_{\cZ,+} \coloneqq H(\cZ)[z], \qquad \HH_{\cZ,-} \coloneqq z^{-1}H(\cZ)[\![z^{-1}]\!],
\end{equation*}
which identifies $\HH_{\cZ}$ with $T^\ast \HH_{\cZ,+}$. Darboux coordinates are obtained by writing $f\in\HH_{\cZ}$ as
\begin{equation*}
f = \sum_{k\geq 0}\sum_\alpha q^{\alpha,k}\,\phi_\alpha\, z^{k} + \sum_{k\geq0}\sum_\alpha p_{\alpha,k}\,\phi^\alpha\,(-z)^{-k-1}\,.
\end{equation*}
The genus-zero descendent potential
\beq
\label{eq:F0}
\mathcal{F}_0^{\cZ}(\mathbf{t}) \coloneqq \sum_{\beta} \sum_{n \ge 0} \frac{1}{n!} \big\langle \mathbf{t}(\psi), \dots, \mathbf{t}(\psi) \big\rangle^{\cZ,\bbC^\times}_{0,n,\beta},
\qquad \mathbf{t}(z) = \sum_{k\geq 0}\sum_\alpha t^{\alpha,k}\phi_\alpha z^k \in \HH_{\cZ,+},
\eeq
is a formal function on $\HH_{\cZ,+}$, and {Givental's Lagrangian cone} $\LL_{\cZ}\subset\HH_{\cZ}$ is the graph of its differential,
\begin{equation*}
\LL_{\cZ}\coloneqq\Big\{(\mathbf{q},\mathbf{p})\in\HH_{\cZ}\;\Big|\; p_{\alpha,k} = \frac{\partial\mathcal{F}_0^{\cZ}}{\partial q^{\alpha,k}}\Big\}\,,
\end{equation*}
in the dilaton-shifted coordinates $\mathbf{q}(z) = \mathbf{t}(z)-z\mathbf{1}$. By the dilaton equation, $\LL_\cZ$ is a cone with vertex at the origin. In terms of the $\mathrm{End}(H(\cZ))$-valued series
\begin{equation*}
  S_{\cZ}(t, z)\alpha \coloneqq \alpha - \sum_{\beta} \sum_{n \ge 0} \frac{1}{n!}
  \Big\langle \alpha, t, \dots, t, \frac{\phi_\gamma}{z+\psi} \Big\rangle^{\cZ,\bbC^\times}_{0,n+2,\beta} \phi^\gamma\,,
\end{equation*}
$\LL_\cZ$ is furthermore ruled by the subspaces $zT_t$, $T_t\coloneqq S_{\cZ}(t,-z)^{-1}\HH_{\cZ,+}$, $t\in H(\cZ)$, each of which is tangent to $\LL_{\cZ}$ along $zT_t$ \cite{Coates:2007tho}. In particular
\begin{equation*}
J^{\cZ}(t,-z) = -z\,S_{\cZ}(t,-z)^{-1}\mathbf{1}\in\LL_{\cZ}\,,
\end{equation*}
and $\LL_{\cZ}$ is reconstructed from $J^{\cZ}$ by the genus-zero topological recursion relations. Viewed as an element of $\mathrm{End} \HH_\cZ$ by multiplication, the fundamental solution satisfies
\[
S_{\cZ}(t,z)\,S_{\cZ}(t,-z)^{T} = \mathrm{Id}
\]
and is therefore a symplectomorphism, the $S$-calibration of $\cZ$. 

\subsection{Integral structures and Iritani's crepant resolution conjecture}
\label{subsec:integral-structure}
Following Iritani \cite{Iritani:2009vat,Iritani:2010jow}, the solutions of the quantum differential equation \eqref{eq:QDE} carry a natural integral structure, governed by the topological $K$-group $K_{\bbC^\times}(\cZ)$ and by the $\widehat{\Gamma}$-class of $\cZ$, defined as follows. On a component $\cZ_v$ of $I\cZ$, the pull-back of the tangent bundle splits into eigenbundles $T\cZ|_{\cZ_v} = \bigoplus_{0 \le f < 1} (T\cZ)_{v,f}$ on which the stabiliser acts by $\re^{2\pi \ri f}$. If $\delta_{v,f,i}$ are their $\bbC^\times$-equivariant Chern roots, the {equivariant $\widehat{\Gamma}$-class} of $\cZ$ is
\begin{equation*}
  \widehat{\Gamma}_\cZ \coloneqq \bigoplus_v \prod_{0 \le f < 1} \prod_i \Gamma(1 - f + \delta_{v,f,i})\ \in H(\cZ)\,,
\end{equation*}
where $\Gamma$ is Euler's Gamma function, expanded in the Chern roots.
The \emph{integral structure framing} is the map
\beq
   \widehat{\Psi}_{\cZ}\colon K_{\bbC^\times}(\cZ)\to\HH_{\cZ}\,,\qquad \widehat{\Psi}_{\cZ}(V)\coloneqq (2\pi)^{-\frac{\dim \cZ}{2}}  z^{-\mu} \left( \widehat{\Gamma}_\cZ \cup (2\pi \ri)^{\frac{\deg_0}{2}} \iota^\ast\mathrm{ch}(V) \right),
\label{eq:integral_structure_framing}
\eeq
where $\iota\colon I\cZ\to I\cZ$ is the involution $(g)\mapsto(g^{-1})$ of the inertia stack, 
$\mathrm{ch}$ is the $\bbC^\times$-equivariant orbifold Chern character, $\deg_0$ is the unshifted degree operator on homogeneous classes, with $\deg_0\nu = 2$, and $\mu = \frac{1}{2}(\deg-\dim\cZ)$ is the grading operator, $\deg$ being the age-shifted degree. The flat sections $S_{\cZ}(t,z)\widehat{\Psi}_{\cZ}(V)$, $V\in K_{\bbC^\times}(\cZ)$, span a full lattice in the space of multivalued flat sections of $\nabla$, called the \emph{$K$-theoretic integral structure} of $\cZ$.

\subsection{Quantisation}
\label{sec:quantisation}
The {Fock space} $\mathbb{F}_{\cZ}$ is a space of formal functions of the form $\exp\big(\sum_{g\geq0}\varepsilon^{2g-2}f_g(\mathbf{q})\big)$ on $\HH_{\cZ,+}$. 
An infinitesimal symplectic transformation $A$ of $(\HH_{\cZ},\Omega_{\cZ})$ corresponds to the quadratic Hamiltonian $h_A(f)\coloneqq\frac{1}{2}\Omega_{\cZ}(Af,f)$, which is quantised to a differential operator $\widehat{A}\coloneqq\widehat{h_A}$ on $\mathbb{F}_{\cZ}$ by the rules
\begin{equation*}
\widehat{p_{\alpha,k}\,p_{\beta,l}} = \varepsilon^{2}\frac{\partial^2}{\partial q^{\alpha,k}\partial q^{\beta,l}}\,,\qquad
\widehat{p_{\alpha,k}\,q^{\beta,l}} = q^{\beta,l}\frac{\partial}{\partial q^{\alpha,k}}\,,\qquad
\widehat{q^{\alpha,k}\,q^{\beta,l}} = \varepsilon^{-2}\,q^{\alpha,k}q^{\beta,l}\,,
\end{equation*}
and a symplectomorphism $\cQ = \exp A$ of $\HH_{\cZ}$ to $\widehat{\cQ}\coloneqq\exp\widehat{A}$ \cite{Givental:2001mre}. In the neighbourhood of a point $t$ where the quantum product $\star_t$ is semisimple, the idempotent vector fields pairwise commute and induce a canonical coordinate chart $(u_1,\dots,u_{l+1})$, with
\[
\partial_{u_i}\star_t\partial_{u_j} = \delta_{ij}\partial_{u_i}\,.
\]
Writing $\Psi_{\cZ}(t)$ for the component matrix of the normalised idempotents $ (\partial_{u_i},\partial_{u_i})^{-1/2}_{\cZ} \partial_{u_i}$ in the basis $\{\phi_\alpha\}$,  an {$R$-calibration} of $\cZ$ at $t$ is a series
\begin{equation*}
R_{\cZ}(t,z) = \mathrm{Id}+\sum_{k\geq1}R_k(t)\,z^k\in\mathrm{End}(\bbC^{l+1})[\![z]\!]\,,\qquad R_{\cZ}(t,z)\,R_{\cZ}(t,-z)^{T} = \mathrm{Id}\,,
\end{equation*}
such that $\Psi_{\cZ}R_{\cZ}\re^{u/z}$ is a formal flat frame near $z=0$ of the Dubrovin connection. $R_{\cZ}$ is unique up to right multiplication by $\exp\big(\sum_{k\geq1}a_k z^{2k-1}\big)$ with $a_k$ constant diagonal matrices \cite{Givental:2001hvs,Teleman:2012urq}. In the context of torus equivariant Gromov--Witten theory 
of targets with isolated fixed points, or more generally with fixed components as in \cref{subsec:DZ_from_A}, a canonical choice is fixed by the quantum Riemann--Roch theorem of \cite{Givental:2001mre,Tseng:2010lhb}. 
Let $\mathcal{F}_g^{\cZ}(\mathbf{t})$ be the genus-$g$ analogue of \eqref{eq:F0}, and
\beq
\label{eq:descendent_potential}
\mathsf{Z}_{\cZ}(\mathbf{t})\coloneqq\exp\Big(\sum_{g\geq0}\varepsilon^{2g-2}\,\mathcal{F}_g^{\cZ}(\mathbf{t})\Big)
\eeq
the {total descendent potential} of $\cZ$. The {ancestor potential} of $\cZ$ at $t\in H(\cZ)$ is
\begin{equation*}
\mathcal{A}_{\cZ,t}(\bar{\mathbf{t}})\coloneqq\exp\Big(\sum_{g\geq0}\varepsilon^{2g-2}\sum_{\beta}\sum_{n,m\geq0}\frac{1}{n!\,m!}\big\langle\bar{\mathbf{t}}(\bar\psi),\dots,\bar{\mathbf{t}}(\bar\psi),t,\dots,t\big\rangle^{\cZ,\bbC^\times}_{g,n+m,\beta}\Big)\,,
\end{equation*}
where the classes $\bar\psi_i$ at the first $n$ markings are pulled back from $\overline{M}_{g,n}$ along the morphism forgetting the map and the last $m$ markings. The {Givental--Teleman theorem} \cite{Givental:2001hvs,Givental:2001mre,Teleman:2012urq}  expresses the total ancestor and descendent potentials at a semisimple point through the quantised $S$- and $R$-calibrations as
\beq
\label{eq:qgiv}
\mathcal{A}_{\cZ,t} \;=\; \re^{C_{\cZ}(t)-F_1^{\cZ}(t)}\widehat{\Psi_{\cZ}}\widehat{R_{\cZ}}\re^{\widehat{u/z}}\prod_{i=1}^{l+1}\mathsf{Z}_{\rm{pt}}(q^{i})\,, \qquad
\mathsf{Z}_{\cZ} \;=\; \re^{C_{\cZ}(t)}\;\widehat{S_{\cZ}^{-1}}\;\widehat{\Psi_{\cZ}}\;\widehat{R_{\cZ}}\;\re^{\widehat{u/z}}\prod_{i=1}^{l+1}\mathsf{Z}_{\rm{pt}}(q^{i})\,,
\eeq
where $\mathsf{Z}_{\rm{pt}}$ is the descendent potential of a point (the Witten--Kontsevich tau-function), $q^{i}$ are its variables in the $i$-th normalised idempotent direction, and $C_{\cZ}(t)$ is a scalar function. In particular, 
\beq
\label{eq:desc_anc}
\mathsf{Z}_{\cZ} = \re^{F_1^{\cZ}(t)}\;\widehat{S_{\cZ}^{-1}} \mathcal{A}_{\cZ,t}\,,
\eeq
where $F_1^{\cZ}(t)$ is the genus-one primary Gromov--Witten potential of $\cZ$.
\subsection{Crepant resolution conjectures}

\subsubsection{Ruan/Bryan--Graber}
Ruan's conjecture \cite{Ruan:2006hvs}, in the sharpened form of Bryan--Graber \cite{Bryan:2009wkv}, asserts that the quantum cohomologies of $\cX$ and $Y$ are related by analytic continuation and by the affine extension of \eqref{eq: coord_resolution_typeD}. Let
\begin{equation*}
F_0^{\cZ}(t)\coloneqq\sum_{\beta}\sum_{n\geq 3}\frac{1}{n!}\big\langle t,\dots,t\big\rangle^{\cZ,\bbC^\times}_{0,n,\beta}
\end{equation*}
be the genus-zero primary potential, so that $C_{\alpha\beta\gamma} = \partial_\alpha\partial_\beta\partial_\gamma F_0^{\cZ}$.
\begin{conj}[\cite{Bryan:2009wkv}]
\label{conj: BG}
The potential $F_0^{Y}(\tau)$ defines an analytic function of $\re^{\tau_1},\dots,\re^{\tau_l}$ near the large radius limit $\mathrm{Re}\,\tau_j\to-\infty$ and admits an analytic continuation to a neighbourhood of the point $\tau_v = 2\pi\ri\dim\rho_v/|G|$ which agrees (up to quadratic terms) with $F_0^{\cX}(x)$  under the change of variables $\tau_0 = x_0$ and
\begin{equation}
\label{eq: BG_affine}
    \tau_v = 2\pi\ri\frac{\dim\,\rho_v}{|G|}+\sum_{m=1}^{l} D_{v,m}\,x_{m}\,,\qquad 1\leq v\leq l\,.
\end{equation}
In particular, $\mathrm{QH}^\ast_{\bbC^\times}(\cX)\simeq\mathrm{QH}^\ast_{\bbC^\times}(Y)$ as Frobenius manifolds, after analytic continuation.
\end{conj}
\begin{thm}[Hu \cite{Hu:2013rfh}]
\label{thm:Hu}
\Cref{conj: BG} holds.
\end{thm}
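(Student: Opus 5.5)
Since \cref{thm:Hu} is Hu's theorem, cited here, I would reprove it by showing that both quantum cohomologies are charts of one Frobenius manifold. The plan has four steps.

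\textbf{Orbifold side.} Since $\cX$ satisfies Hard Lefschetz and every non-trivial sector has age $1$, the primary potential $F_0^{\cX}$ is cubic in $x_0$ plus a series in $x_1,\dots,x_l$. Its three-point part is determined by the pairing \eqref{eq:pairing_X} and by the equivariant integrals over $[\{0\}/C_m]$. The higher-point invariants are equivariant Hurwitz--Hodge integrals over moduli of $G$-covers of $\bbP^1$ branched at the markings. For a point target with $\bbC^2$ fibre and the scalar action, the obstruction bundle is $\bbE^\vee_\rho\otimes\cO(\nu)$ summed over the representation $Q$. Because $Q$ has trivial determinant and ages equal to $1$, this reduces to a top Chern class, which can be evaluated by character theory (summing over $G$-covers via the class equation). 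I would compute the resulting generating series as an explicit function, for instance through the relation to the $\mathrm{D}_l$ root system.

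\textbf{Resolution side.} Here $F_0^{Y}$ is computed by localisation. The only effective curve classes are $\beta=\sum d_iE_i$, and by deformation invariance, deforming $Y$ to the generic fibre of the semi-universal family, only classes that are positive roots of $\mathrm{D}_l$ contribute. Each contributes a $(-2)$-curve contribution $\propto \mathrm{Li}_3(\re^{\beta\cdot\tau})$ times an equivariant factor. This gives a closed form
\[
F_0^Y = \text{cubic} + c(\nu)\sum_{\alpha\in\Delta_+}\mathrm{Li}_3\big(\re^{\alpha(\tau)}\big),
\]
where the cubic term is fixed by \eqref{eq:pairing_Y}.

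\textbf{Continuation.} Next I would analytically continue the $\mathrm{Li}_3$ terms to the point $\tau_v=2\pi\ri\dim\rho_v/|G|$. At this point $\alpha(\tau)$ is a non-trivial root of unity for every root, since the highest root evaluates to $2\pi\ri$ and no other root is an integer multiple. I would then substitute \eqref{eq: BG_affine} and expand in $x$. Using $\mathrm{Li}_3(\re^{2\pi\ri\theta+w})$ and Fourier expansion in $\theta$, the sum over roots of polylog derivatives becomes, via the character orthogonality encoded in the matrix $D_{v,m}$ of \eqref{eq:McKay_cohomology}, a sum over conjugacy classes. This should match the Hurwitz--Hodge generating series coefficient by coefficient, modulo quadratic terms.

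\textbf{Main obstacle.} The hardest step is the orbifold Hurwitz--Hodge computation in all degrees. A direct evaluation is infeasible, so I would instead invoke WDVV reconstruction: the orbifold potential is determined by its three- and four-point invariants from the twisted sectors together with associativity. Since the continued $F_0^Y$ also satisfies WDVV, it then suffices to match the finitely many low-point invariants, which can be evaluated using Jarvis--Kimura formulas for $BG$ and the $\lambda_g$-type integrals. The isometry property of \eqref{eq:McKay_cohomology} guarantees that the pairings match, which yields the Frobenius manifold isomorphism.
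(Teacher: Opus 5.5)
The paper gives no proof of this statement: it is imported from Hu \cite{Hu:2013rfh} and used alongside the global mirror of \cref{thm:msQH}. Your proposal therefore has to stand on its own.

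Your resolution side is acceptable if you quote Bryan--Gholampour's root-system formula for $F_0^Y$. The continuation is harmless, because the structure constants are rational in the $\re^{\tau_v}$. There is one slip there. The highest root has coefficients $\dim\rho_v$, so it evaluates to $2\pi\ri(|G|-1)/|G|$, not $2\pi\ri$. Your conclusion $\re^{\alpha(\tau)}\neq1$ still holds, since $0<\sum_v d_v\dim\rho_v\leq|G|-1$ for every positive root.

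\textbf{The gap is your last step}, which is where the whole content of the theorem lies. WDVV together with the 3- and 4-point invariants does \emph{not} determine $F_0^{\cX}$.
\begin{itemize}
\item Every twisted class has age $1$, so $x_1,\dots,x_l$ have degree $0$. The twisted part $\nu\Phi(x_1,\dots,x_l)$ of the potential is therefore subject to no homogeneity constraint.
\item You are dealing with a non-conformal Frobenius manifold with flat unit, near a semisimple point: at $\mathrm{OP}$ the product is the class algebra of $G$.
\item Near such a point, solutions of WDVV are governed by the Darboux--Egoroff system. They depend on $\binom{l+1}{2}$ arbitrary functions of one variable.
\end{itemize}
You can see this freedom order by order. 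Take orthonormal idempotent coordinates $y_a$ at $\mathrm{OP}$, with $\partial_{y_a}\star\partial_{y_b}=\delta_{ab}r_a\partial_{y_a}$. Each polynomial $(r_ay_a-r_by_b)^d$ with $a\neq b$ is annihilated by the unit $\sum_a r_a^{-1}\partial_{y_a}$. It is also annihilated by the WDVV operator linearised at $\mathrm{OP}$, and that linearised operator is exactly what determines the new degree-$d$ coefficients in the WDVV recursion. So associativity fixes the $k$-point invariants only modulo a nonzero space, for every $k\geq4$. The extreme case is $[\bbC^2/\mathbb{Z}_2]$: there WDVV is vacuous, and each $\langle\mathbf{1}_1^{2k}\rangle$ is independent input. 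Matching finitely many low-point invariants of the continued $F_0^Y$ against those of $F_0^{\cX}$ therefore says nothing about the rest.

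To close the argument you need two things:
\begin{itemize}
\item an independent evaluation of infinitely many Hurwitz--Hodge integrals, enough to serve as functional initial data;
\item a reconstruction statement showing that WDVV propagates these data to all of $H_{\rm CR}(\cX)$.
\end{itemize}
Your sentence ``compute the resulting generating series as an explicit function'' names exactly this missing input, and your coefficient-by-coefficient matching in the third step presupposes it. One natural source of such data is the set of invariants with insertions only from the sectors $1\leq m\leq n$. These only see covers with monodromy in the normal subgroup $\langle a\rangle$, so they reduce to the type $\mathrm{A}_{2n-1}$ theory. You would still have to prove that these data plus associativity fix the dependence on $x_{l-1},x_l$.

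Two smaller points:
\begin{itemize}
\item The $k$-point integrand is $\nu\,c_{k-3}$ of the rank-$(k-2)$ obstruction bundle on the $(k-3)$-dimensional space $\overline{M}_{0,k}(BG)$, not a top Chern class.
\item On the resolution side, the generic fibre of the semi-universal deformation is not $\bbC^\times$-invariant. The ``only roots contribute'' step should therefore be quoted from Bryan--Gholampour, not derived from plain equivariant deformation invariance.
\end{itemize}
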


\subsubsection{Coates--Corti--Iritani--Tseng}

The Bryan--Graber form of Ruan's Crepant Resolution Conjecture can be phrased in terms of the symplectic geometry of the Givental spaces for $\cX$ and $Y$, which in particular extends beyond the case of Hard Lefschetz orbifolds. In this language, \cref{conj: BG} is the $z \to \infty$ limit of the following statement.
\begin{conj}[\cite{Coates:2009hmp}]
\label{conj:ccit}
There is a degree-preserving $\bbC\llbracket z^{-1}\rrbracket$-linear symplectic isomorphism $\mathbb{U}\colon \HH_{\cX}\to \HH_Y$ such that, under the analytic continuation of \cref{conj: BG}, \[\mathbb{U}(\LL_{\cX}) = \LL_Y\,.\]
\end{conj}
As explained in \cite{Coates:2009hmp}, the fact that the entries of the matrix of $\mathbb{U}$ are formal Taylor series in $z^{-1}$, rather than Laurent series, is a manifestation of the Hard Lefschetz condition, which is satisfied in our case. 

\subsubsection{Iritani}

Iritani's integral structure further bridges the classical McKay correspondence on $K$-groups and \cref{conj:ccit}, by proposing that the symplectomorphism $\mathbb{U}$ is induced by an isomorphism of $K$-groups through the framings \eqref{eq:integral_structure_framing}. In the $\bbC^\times$-equivariant form of \cite{Brini:2019wts} (though already completely implicit in \cite{Iritani:2009vat,Iritani:2010jow}), it reads as follows.
\begin{conj}[\cite{Iritani:2009vat,Iritani:2010jow}]
\label{conj:iritani-integral-structure}
The symplectic isomorphism $\mathbb{U}$ of \cref{conj:ccit} lifts to a $K_{\bbC^\times}(\mathrm{pt})$-linear isomorphism $\mathbb{U}_K\colon K_{\bbC^\times}(\cX)\to K_{\bbC^\times}(Y)$,
\beq
\label{diagram:induced_U_equivariant}
\xymatrix@C=3.5pc@R=3pc{
K_{\bbC^\times}(\cX)\ar[r]^{\mathbb{U}_K}\ar[d]^{\widehat{\Psi}_{\cX}}& K_{\bbC^\times}(Y)\ar[d]^{\widehat{\Psi}_{Y}}\\
\HH_{\cX}\ar[r]^{\mathbb{U}}& \HH_{Y}
}
\eeq
which preserves the Euler pairings.
\end{conj}

\subsubsection{Higher genus}

Let $\mathbb{U} = \mathbb{U}_-\mathbb{U}_0$  with $\mathbb{U}_-\in\mathrm{Id}+z^{-1}\mathrm{End}(H(Y))[\![z^{-1}]\!]$ and $\mathbb{U}_0\colon H(\cX)\to H(Y)$ the isomorphism \eqref{eq:McKay_cohomology}, and let $\widehat{\mathbb{U}}$ be the corresponding exponentiated quantised Hamiltonian.  Givental's higher genus reconstruction suggests that the Fock spaces associated to the polarisations $\LL_\cX$ and $\LL_Y$ should be related through a quantised version of \cref{conj:ccit}.
\begin{conj}[The Hard Lefschetz quantised CRC]
\label{conj:quantised_crc}
Under the analytic continuation of \cref{conj: BG},
\beq
\label{eq:sqcrc}
\mathsf{Z}_{Y} \;=\; \widehat{\mathbb{U}}\,\mathsf{Z}_{\cX}\,.
\eeq
\end{conj}
By \cite[Prop.~5.3]{Givental:2001mre}, \eqref{eq:sqcrc} states that the full descendent potentials of $Y$ and $\cX$ agree to all genera, up to genus-zero terms of degree at most two and a genus-one constant, under the triangular change of variables $\mathbf{t}_{\cX} = [\mathbb{U}^{-1}\mathbf{t}_Y]_+$, where $[\,\cdot\,]_+$ is the projection to $\HH_{\cX,+}$. 

A perhaps more salient phrasing of \eqref{eq:sqcrc} can be made at the level of ancestor potentials. As $\mathbb{U}$ maps the ruling $zT_{x}$ of $\LL_{\cX}$ to $zT_{\tau}$, 
the composition $S_{Y}\,\mathbb{U}\,S_{\cX}^{-1}$ preserves $\HH_+$ and equals $\mathbb{U}_0$. Therefore \eqref{eq:desc_anc} turns \eqref{eq:sqcrc} into the statement that the ancestor potentials of $Y$ and $\cX$ agree, up to a constant, under the same analytic continuation and linear change of variables as the genus-zero potentials of \cref{conj: BG}:
\begin{equation*}
\mathcal{A}_{Y,\tau}(\bar{\mathbf{t}}) = \mathcal{A}_{\cX,x}\big(\mathbb{U}_0^{-1}\bar{\mathbf{t}}\big)\,.
\end{equation*}

\section{Global mirror symmetry}
\label{sec: mirror_symmetry_typeD}
In this section we recall the Landau--Ginzburg mirror construction of \cite{Brini:2025fbw}, and leverage it to provide an Euler-type integral representation of flat coordinates for the Dubrovin connection of $\cX$ and $Y$, as done in \cite{Brini:2019wts,Brini:2018xtg} for type A resolutions.


We start by fixing some notation. Let $\mathfrak{g} = \mathfrak{so}_{2l}(\bbC)$ be the simple complex Lie algebra of type $\mathrm{D}_l$. We write $\mathfrak{h}$ for its Cartan subalgebra, $\cW$ for the Weyl group, $\alpha_1,\dots,\alpha_l$ for the simple roots and $\omega_1,\dots,\omega_l$ for the dual fundamental weights, with labelling consistent with \cref{fig: dynkin_diagram_convention}, and $\cC_{ab} = \langle\alpha_a,\alpha_b^\vee\rangle$ for the Cartan matrix. We write $(s_1,\dots,s_l)$ for the coroot coordinates on $\mathfrak{h}$, $s = \sum_{j=1}^l s_j\alpha_j^\vee$, and $s_{l+1}$ for a linear coordinate on an additional factor $\bbC$. The weights of the defining $2l$-dimensional vector representation are $\pm\epsilon_1,\dots,\pm\epsilon_l$, and we write $w_i\coloneqq\epsilon_i$ for their expansions in simple roots,
\beq
\label{eq:vector_weights}
w_i = \sum_{j=i}^{l-2}\alpha_j + \frac{\alpha_{l-1}+\alpha_l}{2}\,,\quad 1\leq i\leq l-2\,,\qquad
w_{l-1} = \frac{\alpha_{l-1}+\alpha_l}{2}\,,\qquad
w_{l} = \frac{\alpha_{l}-\alpha_{l-1}}{2}\,.
\eeq
Armed with these definitions, we introduce the Saito pair $(\log\lambda,\phi)$, $\phi\coloneqq\rd\log\mu$, given by the Landau--Ginzburg superpotential $\log\lambda(\mu)$ with
\beq
\label{eq:lambdaD}
\lambda(\mu) = \frac{\kappa_{l+1}\prod_{i=1}^l(\mu-\kappa_i)(\mu-\kappa_i^{-1})}{\mu^{l-2}(\mu^2-1)^2}\,,\qquad \kappa_i\coloneqq\re^{\langle\epsilon_i,s\rangle}\,,\qquad \log\kappa_{l+1}\coloneqq\frac{s_{l+1}}{2\nu}\,,
\eeq
and the primitive form $\phi$. As explained in \cite{Borot:2018hwm,Brini:2020fkq,Brini:2022vkq,Brini:2025fbw}, $\lambda$ is the spectral parameter of the Lax matrix of the $\widehat{\mathfrak{so}_{2l}}$ relativistic Toda chain for a degenerate leaf corresponding to vanishing Casimir invariant, and $\lambda=\lambda(\mu)$ is obtained from the corresponding characteristic polynomial in the defining $2l$-dimensional representation of $\mathfrak{so}_{2l}$.
In root coordinates, we have explicitly
\beq
\log\kappa_i = \sum_{j=1}^lG_{ij}s_j\,,\qquad G_{ij}\coloneqq\langle\epsilon_i,\alpha_j^\vee\rangle = \delta_{ij}-\delta_{i,j+1}+\delta_{i,l-1}\delta_{j,l}\,,\qquad (G^TG)_{ij} = \cC_{ij}\,.
\label{eq:kappa_to_yi}
\eeq
The Saito pair $(\log\lambda,\phi)$ defines a non-conformal Frobenius manifold structure $\mathrm{LG}(\log\lambda,\phi)$ on $\mathfrak{h} \times \bbC$ with flat identity, as follows. We define a pairing $\eta$ and product $\star$ on the space of holomorphic vector fields on $\mathfrak{h} \times \bbC$ by the Landau--Ginzburg formulas
\begin{align}
    \eta(\de_{s_i}, \de_{s_j}) =-\sum_{p | \lambda'(p)=0} {\rm {Res}}_{\mu = p} \frac{\de_{s_i}\lambda  \de_{s_j} \lambda}{\lambda\, \mu\, \lambda'} \frac{\rd\mu}{2\nu\mu}\,, \nonumber\\
   \eta(\de_{s_i} \star \de_{s_j}, \de_{s_k}) =-\sum_{p | \lambda'(p)=0} {\rm {Res}}_{\mu = p} \frac{\de_{s_i}\lambda  \de_{s_j} \lambda \de_{s_k} \lambda}{\lambda^2\, \mu\, \lambda'} \frac{\rd\mu}{2\nu\mu}\,.   
\label{eq:LGres}
\end{align}
In terms of the root coordinates we define the linear coordinates
\begin{equation*}
\tau_j = -\langle\alpha_j,s\rangle = -\sum_{k=1}^l \cC_{jk}s_k\,, \quad 1\leq j\leq l\,, \qquad x_0 \coloneqq s_{l+1}\,.
\end{equation*}
Here $x_0$ is the coordinate along the identity class, so that $\kappa_{l+1} = \re^{x_0/(2\nu)}$ and \eqref{eq:kappa_to_yi} reads
\beq
\label{eq:logkappa_weights}
\log\kappa_i = -\sum_{j=1}^l (w_i)_j\,\tau_j\,,  \qquad 1\leq i \leq l\,.
\eeq
With these identifications, we introduce the \emph{large radius limit point} and \emph{orbifold point} $\rm{LR}$, $\rm{OP} \in \mathfrak{h}\times\bbC$ via
\begin{align*}
{\rm LR}  = \{\mathrm{Re}(\tau_i)\to -\infty \},\nn \\
{\rm OP} = \bigg\{\tau_0 = 0\,,\ \tau_v  =2\pi\ri \frac{\dim\rho_v}{|G|}\bigg\}\,.
\end{align*}

The main result of \cite{Brini:2025fbw}, restricted to type $\mathrm{D}_l$, asserts that $\mathrm{LG}(\log\lambda,\phi)$ is a global mirror of the quantum cohomologies of $\cX$ and $Y$.

\begin{thm}[\cite{Brini:2025fbw}]
\label{thm:msQH}
The Landau--Ginzburg formulas \eqref{eq:LGres} define a global holomorphic non-degenerate flat pairing $\eta$ and global holomorphic compatible commutative associative algebra $\star$ on $\cT_{\mathfrak{h}\times \bbC}$, with linear coordinates on $\mathfrak{h}\times \bbC$ being flat for $\eta$. Moreover, there exist analytic neighbourhoods $\cU_Y$ and $\cU_{\cX}$ of the large radius limit and orbifold points such that
\[
\mathrm{QH}_{\bbC^\times}^*(Y) \cong \mathrm{LG}(\log\,\lambda,\phi)\Big|_{\cU_Y}\,, \qquad \mathrm{QH}_{\bbC^\times}^*(\cX) \cong \mathrm{LG}(\log\,\lambda,\phi)\Big|_{\cU_{\cX}}\,.
\]
\end{thm}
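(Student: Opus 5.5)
We recall how the identification of \cref{thm:msQH} is established; the strategy is the one of \cite{Brini:2025fbw}, adapted from type A in \cite{Brini:2019wts,Brini:2018xtg}.

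\emph{Frobenius structure on $\mathfrak{h}\times\bbC$.} We first check that the residue formulas \eqref{eq:LGres} define a Frobenius structure with flat coordinates $(s_1,\dots,s_{l+1})$. The Saito pair $(\log\lambda,\rd\log\mu)$ is a one-dimensional LG model on a family of rational curves, a Hurwitz-type space with a fixed pole profile at $\mu\in\{0,\infty,\pm1\}$. For such pairs, flatness of $\eta$ and associativity of $\star$ follow from the standard Dubrovin--Krichever argument: one verifies that the flat coordinates of $\eta$ are given by the periods of $\phi$ around the punctures. Here this is immediate, since $\partial_{s_j}\log\lambda=\sum_i G_{ij}\,\partial_{\log\kappa_i}\log\lambda$ has simple poles only at $\mu=\kappa_i^{\pm1}$. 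The residue pairing \eqref{eq:LGres} of two such log-derivatives can then be computed by deforming contours off the critical points onto the zeros and poles of $\lambda$. This yields a constant matrix, namely $-\cC^{-1}$ on $\mathfrak{h}$ and $1/(|G|\nu^2)$ in the $s_{l+1}$ direction, matching \eqref{eq:pairing_Y}. Globality and holomorphy hold on the locus where the critical points of $\lambda$ stay away from its poles, and both extend by Hartogs.

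\emph{Large radius chart.} We expand the LG three-point functions $\eta(\partial_i\star\partial_j,\partial_k)$ in $\re^{\tau_j}$ near $\mathrm{Re}\,\tau\to-\infty$. Since the residue sum is a rational symmetric function of the $\kappa_i^{\pm1}$, the expansion is explicit. We then compare it with $\partial^3F_0^Y$, computed by localisation. The genus-zero invariants of the $\mathrm{D}_l$ surface resolution are supported on multiples of positive roots, with multiple-cover contribution $\nu$-linear in the equivariant setting. Their structure constants are of the form $\sum_{\alpha>0}\langle\alpha,\cdot\rangle^3\,\frac{\re^{\langle\alpha,\tau\rangle}}{1-\re^{\langle\alpha,\tau\rangle}}$, up to normalisation. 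The main check is therefore that the LG residue reproduces precisely this root-system sum together with the classical triple intersections. This is a direct partial-fraction computation using \eqref{eq:vector_weights}: the positive roots of $\mathrm{D}_l$ are $\epsilon_i\pm\epsilon_j$, and factors $(1-\kappa_i^{\pm1}\kappa_j^{\pm1})$ emerge from the residues.

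\emph{Orbifold chart and the main obstacle.} We transport the identification of the large radius chart to $\mathrm{OP}$ by \cref{thm:Hu}. Bryan--Graber (Hu) states that $F_0^Y$ continues analytically to a neighbourhood of $\mathrm{OP}$ and agrees with $F_0^\cX$ under \eqref{eq: BG_affine}. The LG side is globally holomorphic, so its restriction to a neighbourhood $\cU_\cX$ of $\mathrm{OP}$ equals the analytic continuation of $F_0^Y$, and hence $F_0^\cX$, after the affine change of variables. The difficult step is justifying the analytic continuation without leaving the locus of holomorphy. We must show that the path from $\mathrm{LR}$ to $\mathrm{OP}$ avoids the discriminant where critical points of $\lambda$ collide with the poles $\mu=0,\pm1,\infty$. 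At $\mathrm{OP}$, the values $\kappa_i$ are roots of unity, determined by $\tau_v=2\pi\ri\dim\rho_v/|G|$. We would check directly that the critical points there are distinct from the poles, which gives a neighbourhood $\cU_\cX$. We expect this root-of-unity analysis of $\lambda'$ at $\mathrm{OP}$, together with matching the constant pairing to \eqref{eq:pairing_X} through the isometry \eqref{eq:McKay_cohomology}, to be the main technical point.
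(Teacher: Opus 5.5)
The paper does not reprove this statement. It is quoted from \cite{Brini:2025fbw}, and the introduction credits \cite{Hu:2013rfh,Brini:2025fbw} jointly for the two-chart picture. \cref{lem:periods_restrict} only records that $\cM$ is the Dubrovin dual of the natural Frobenius submanifold $\mathscr{H}^{\varsigma}$ of the Hurwitz space $H_{0;l-3,l-3,1,1}$. Your overall plan is compatible with this: match the residue formulas with the Bryan--Gholampour root-system form of $\mathrm{QH}^*_{\bbC^\times}(Y)$ near $\mathrm{LR}$, then transport to $\mathrm{OP}$ via \cref{thm:Hu}.

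The genuine gap is the holomorphy argument, and the step you flag as the main technical point fails. By \eqref{eq: OP_DN}, $\kappa_1=-1$ and $\kappa_l=1$ at $\mathrm{OP}$. So the zeros $\kappa_1^{\pm1}$, $\kappa_l^{\pm1}$ coalesce with the double poles $\mu=\mp1$, and $\lambda$ degenerates to $\lambda_0=\mu^n+\mu^{-n}$ (\cref{rmk:lambda_OP}). Since $\lambda_0$ has $2n$ critical points and $\lambda$ has $2l+2=2n+6$, critical points of $\lambda$ must run into $\mu=\pm1$ as you approach $\mathrm{OP}$. With $\kappa_l=\re^{h}$ and the other parameters at $\mathrm{OP}$, four of them lie at distance $O(|h|^{1/2})$ from $\mu=1$. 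Moreover, $\partial_{\log\kappa_1}\log\lambda$ and $\partial_{\log\kappa_l}\log\lambda$ vanish identically at $\mathrm{OP}$. So the residue formulas evaluated at $\mathrm{OP}$ itself would give a degenerate $\eta$, and the structure there exists only as a limit (cf.\ the boundary-layer computation of \cref{lem:collision}). Your check ``the critical points at $\mathrm{OP}$ avoid the poles'' is therefore false, and your holomorphy criterion excludes $\mathrm{OP}$.

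Hartogs cannot repair this, because the loci $\{\kappa_i^2=1\}$ are hypersurfaces. Your criterion also misses where the structure is genuinely singular. When two zeros collide, $\kappa_i^{\pm1}\kappa_j^{\pm1}=1$ with $i\neq j$, i.e.\ $Q^\alpha\coloneqq\re^{\langle\alpha,\tau\rangle}=1$ for a root $\alpha$, a critical value of $\lambda$ vanishes and $E\circ$ stops being invertible. Then $\star=E^{-1}\circ(\cdot)\circ(\cdot)$ acquires the poles $Q^\alpha/(1-Q^\alpha)$ of your own large-radius formula. These loci are divisors too, so ``global'' can only mean the complement of this arrangement, and no extension argument applies there.

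What you need is a reason why $\kappa_i\to\pm1$ is a removable singularity. Within your strategy this comes for free once the large-radius comparison is done:
\begin{itemize}
\item On the connected set where all zeros and poles of $\lambda$ are distinct, the residue formulas are analytic. By the identity theorem they coincide there with the root-system expression, which is rational in $Q_j=\re^{\tau_j}$ with poles only on $Q^\alpha=1$, $\alpha>0$.
\item That expression supplies the analytic extension across $\kappa_i^2=1$, together with associativity and flatness.
\item $\mathrm{OP}$ avoids every hyperplane $Q^\alpha=1$. Writing $\alpha=\sum_j d_j\alpha_j$, one has $0\le d_j\le\dim\rho_j$, since the highest root has the marks $\dim\rho_j$ as coefficients. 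Hence $\langle\alpha,\tau\rangle|_{\mathrm{OP}}=\frac{2\pi\ri}{|G|}\sum_j d_j\dim\rho_j$ with $0<\sum_j d_j\dim\rho_j\le|G|-1$.
\end{itemize}
Alternatively, after deforming contours onto the zeros of $\lambda$, you could show directly that the apparent poles at $\kappa_i=\pm1$ cancel between the residues at $\kappa_i$ and $\kappa_i^{-1}$ by $\varsigma$-symmetry.

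A related point: \eqref{eq:lambdaD} is not itself a Hurwitz space, since its zeros are paired as $\kappa_i^{\pm1}$ and its double poles are pinned at $\pm1$. So associativity does not follow from the Hurwitz-space construction as you state. You need either the $\varsigma$-fixed natural-submanifold argument of \cref{lem:periods_restrict}, or the a posteriori argument above.
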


We write $\cM\coloneqq\mathrm{LG}(\log\lambda,\phi)$ for this global mirror, so that $\mathrm{QH}^*_{\bbC^\times}(Y)$ and $\mathrm{QH}^*_{\bbC^\times}(\cX)$ are the charts $\cU_Y$ and $\cU_{\cX}$ of $\cM$. We shall set throughout $\hbar\coloneqq2\nu/z$, with $z$ the parameter of the Dubrovin connection in \eqref{eq:QDE}.



\subsection{$J$-functions as hypergeometric Euler integrals}
\label{sec: periods_qde}

We will now construct deformed flat coordinates of $\cM$, and hence of $\mathrm{QH}^*_{\bbC^\times}(Y)$ and $\mathrm{QH}^*_{\bbC^\times}(\cX)$, as twisted periods of the superpotential, as in \cite{Brini:2019wts,Brini:2018xtg} for type $\mathrm{A}_l$. The mechanism, recalled in \cref{sec: twisted_periods} after \cite{Dubrovin:2004dho,Milanov:2017gzt}, is that $\cM$ is the Dubrovin dual of a Frobenius submanifold of a Hurwitz space, and that twisted periods of a Hurwitz Frobenius manifold are deformed flat coordinates of its dual.

Let $\mathscr{H}\coloneqq H_{0;\,l-3,l-3,1,1}$ be the Hurwitz space \cite{Dubrovin:1996onf} of degree $2l$ rational functions on $\bbP^1$ with two poles of order $l-2$ and two double poles. Up to automorphisms of $\bbP^1$ its general element is
\begin{equation*}
\lambda(\mu) = a\,\frac{\prod_{k=1}^{2l}(\mu-\mu_k)}{\mu^{l-2}(\mu+1)^2(\mu-\mu_0)^2}\,,
\end{equation*}
so that $\dim\mathscr{H} = 2l+2$. With the primitive form $\phi = \rd\log\mu$, Dubrovin's construction \cite[Lect.~5]{Dubrovin:1996onf} endows $\mathscr{H}$ with the structure of a semisimple conformal Frobenius manifold $\mathrm{LG}(\lambda,\phi)$ of charge $d=1$ with Euler vector field $E = a\,\partial_a$; we denote by $\circ_t$ the induced product on $\cT_t\mathscr{H}$.
Its Dubrovin-dual Frobenius manifold \cite{Dubrovin:2004dho} is $\mathrm{LG}(\log\lambda,\phi)$,  which is non-conformal, semisimple, and with flat unit given by the Euler vector field $E$ of $\mathrm{LG}(\lambda,\phi)$ \cite{Morrison:2011mre}; we will write $\star_t$ for the associated algebra structure on $\cT_t\mathscr{H}$.

The holomorphic involution $\varsigma\colon\mu\mapsto1/\mu$ of $\bbP^1$ acts on $\mathscr{H}$ by $\lambda\mapsto\lambda\circ\varsigma$, and its fixed locus $\mathscr{H}^{\varsigma}$ is precisely the family \eqref{eq:lambdaD}. This sets $\mu_0=1$, and pairs up the zeroes as $\kappa_i^{\pm1}$, with furthermore $a=\kappa_{l+1}$. Through \eqref{eq:kappa_to_yi}, this identifies $\mathfrak{h}\times\bbC$ with $\mathscr{H}^{\varsigma}$, and, up to the overall factor in \eqref{eq:LGres}, $\cM$ with the Dubrovin-dual of $\mathrm{LG}(\lambda,\phi)$ restricted to $\mathscr{H}^{\varsigma}$: in the notation of \cref{sec: twisted_periods}, the pairing $\eta$ of \eqref{eq:LGres} is $1/(2\nu)$ times the dual metric $g$ of $\mathrm{LG}(\lambda,\phi)$ restricted to $\mathscr{H}^{\varsigma}$, and $\star$ is the restriction of the dual product.

Let $\hbar \in \bbC^\times$, $(\lambda)$ be the divisor of zeros and poles of $\lambda$, and $H^{\rm lf}_1(\bbP^1\setminus(\lambda);\lambda^{\hbar})$ the locally finite homology with coefficients in the rank-one local system of branches of $\lambda^{\hbar}$. For generic $\hbar$ its rank is $2l+2$, one less than the number of finite zeros and poles (\cref{prop:rank_twisted_homology}). For $\gamma$ in this group, the twisted period
\begin{equation*}
\Pi_\gamma \coloneqq \int_\gamma \lambda^{\hbar}\,\phi
\end{equation*}
is a multivalued function on $\mathscr{H}$, holomorphic away from the discriminant. Since $E(\lambda)=\lambda$ and $\nabla E$ is diagonal and constant in flat coordinates, \cref{prop: integral_dual} entails that its gradient \[\xi = (\partial_{t_1}\Pi_\gamma,\dots,\partial_{t_{2l+2}}\Pi_\gamma)\] in flat coordinates $t_\alpha$ of $\mathrm{LG}(\lambda,\phi)$ satisfies the differential system \eqref{eq: qDE_dual},
\begin{equation*}
\partial_\alpha\xi\cdot\mathcal{U} = \xi\cdot\big(\mathcal{V}+\hbar-\tfrac12\big)\cdot C_\alpha\,,
\end{equation*}
where $\mathcal{U} = E\,\circ$, $\mathcal{V} = \frac12\operatorname{Id} - \nabla E$ and $C_\alpha$ is the matrix of $\partial_{t_\alpha}\circ$; this is the system for deformed flat coordinates of the dual $\mathrm{LG}(\log\lambda,\phi)$ \cite[Prop.~5.1]{Dubrovin:2004dho}.

\begin{lem}
\label{lem:periods_restrict}
The involution $\varsigma$ is an automorphism of $\mathrm{LG}(\lambda,\phi)$, so that $\mathscr{H}^{\varsigma}$ is a natural Frobenius submanifold \cite{Strachan:2004chj}, and for every $\gamma\in H^{\rm lf}_1(\bbP^1\setminus(\lambda);\lambda^{\hbar})$ the restriction of $\Pi_\gamma$ to $\mathscr{H}^{\varsigma}\simeq\mathfrak{h}\times\bbC$ is a deformed flat coordinate of $\cM$: in the flat coordinates $s_1,\dots,s_{l+1}$,
\beq
\label{eq:qde_M}
\partial_{s_i}\partial_{s_j}\Pi_\gamma = \hbar\sum_{k=1}^{l+1} C^{k}_{ij}\,\partial_{s_k}\Pi_\gamma\,,
\eeq
with $C^k_{ij}$ the structure constants of the dual product restricted to $\mathscr{H}^{\varsigma}$.
\end{lem}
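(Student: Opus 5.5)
We first check that $\varsigma$ is an automorphism of $\mathrm{LG}(\lambda,\phi)$, then invoke Strachan's criterion, and finally translate the dual-system equation of \cref{prop: integral_dual} into \eqref{eq:qde_M} on the fixed locus.

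\emph{Step 1: $\varsigma$ is an automorphism.} The map $\lambda\mapsto\lambda\circ\varsigma$ preserves the Hurwitz space $\mathscr{H}$. Indeed, it sends the poles $\{0,\infty\}$ of order $l-2$ to each other and the double poles $\{-1,\mu_0\}$ to $\{-1,\mu_0^{-1}\}$, and a Möbius normalisation returns us to the standard form. Dubrovin's pairing, product and Euler field are all written as residues at the critical points of $\lambda$ of expressions built from $\lambda$, its variations and the form $\phi=\rd\log\mu$. Since $\varsigma^*\phi=-\phi$, and each formula contains $\phi$ quadratically (equivalently, the sign is absorbed by the orientation reversal of the residue cycles), these structures are preserved by $\varsigma$. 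We also use $E(\lambda\circ\varsigma)=(E\lambda)\circ\varsigma$. By \cite{Strachan:2004chj}, the fixed locus of a Frobenius-manifold involution is a natural Frobenius submanifold. This means $\mathscr{H}^\varsigma$ is flat and totally geodesic, the product is closed on $T\mathscr{H}^\varsigma$, and the normal bundle is an ideal. The same statements hold for the dual structure $(g,\star)$, because $\varsigma$ also commutes with $E\circ$ and hence with the dual product. The flat coordinates of the dual metric $g$ restricted to $\mathscr{H}^\varsigma$ are exactly the $s_i$ of \eqref{eq:kappa_to_yi}, up to the factor $2\nu$ in $\eta$.

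\emph{Step 2: the dual equation.} By \cref{prop: integral_dual} and \cite[Prop.~5.1]{Dubrovin:2004dho}, the system \eqref{eq: qDE_dual} for $\xi$ is equivalent to the statement that $\Pi_\gamma$ is a deformed flat coordinate of the dual manifold. In dual flat coordinates $p$ on $\mathscr{H}$ this reads $\partial_{p_a}\partial_{p_b}\Pi_\gamma = c\,\hbar\, C^{\star,c}_{ab}\partial_{p_c}\Pi_\gamma$, where $c$ is an overall normalisation constant. We choose $p$ adapted to the splitting: the $s_i$ along the fixed locus, together with anti-invariant flat coordinates normal to it. We then restrict the equation to indices $a,b$ tangent to $\mathscr{H}^\varsigma$. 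Because $\mathscr{H}^\varsigma$ is closed under $\star$, the tangent–tangent product has no normal components, so $C^{\star,c}_{ab}=0$ for normal $c$. The terms $\partial_{p_c}\Pi_\gamma$ with $c$ normal therefore drop out. Since $\mathscr{H}^\varsigma$ is totally geodesic, restricting second derivatives in tangent directions commutes with restricting the function, so $\partial_{s_i}\partial_{s_j}(\Pi_\gamma|_{\mathscr{H}^\varsigma})$ equals the restriction of $\partial_{s_i}\partial_{s_j}\Pi_\gamma$. This yields \eqref{eq:qde_M}, after rescaling $\hbar$ and $\eta$ so that the constants match the normalisation $\hbar=2\nu/z$ and $\eta=g/(2\nu)$.

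\emph{Main obstacle.} The hard part is Step 1, specifically the sign coming from $\varsigma^*\phi=-\phi$. We must also check that $\mu_0\mapsto\mu_0^{-1}$ together with the renormalisation really acts on flat coordinates as a linear involution, which requires computing its derivative explicitly. A linear involution is what makes the fixed locus flat. We would handle both points by writing the residue formulas directly and checking invariance term by term.
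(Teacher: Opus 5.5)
Your proposal is correct and follows the paper's route. It rests on three points: invariance of the residue formulas under $\varsigma$, transfer to $(g,\star)$ because $E$ is preserved, and restriction of the dual deformed-flatness system using that tangent--tangent products have no normal component. Your observation that ``$\phi$ enters quadratically'' is exactly the paper's cancellation of the sign of $\varsigma^*\phi=-\phi$ against that of $\partial_y(\lambda\circ\varsigma)$, and the restriction step is what the paper packages as \cref{prop:qDE_dual_sub_Hurwitz}.

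Two small corrections. First, the parenthetical about orientation reversal of the residue cycles is wrong: a biholomorphism preserves orientation and residues, and the sign is absorbed by the second factor of $\phi$. Second, the linearity of $\varsigma$ in flat coordinates, which you flag as the main obstacle, is automatic: $\varsigma$ is an isometry of a flat metric and is therefore affine in its flat coordinates.
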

\begin{proof}
Since $\varsigma^*\phi = -\phi$ and $\partial_{\log\mu}(\lambda\circ\varsigma) = -(\partial_{\log\mu}\lambda)\circ\varsigma$, the residue formulas for $\eta$ and $\circ$ are invariant under $\lambda\mapsto\lambda\circ\varsigma$, which also preserves $E$. Hence $T\mathscr{H}|_{\mathscr{H}^\varsigma}$ splits into the $\pm1$-eigenbundles of $\rd\varsigma$: the $(+1)$-eigenbundle is $T\mathscr{H}^\varsigma$, the $(-1)$-eigenbundle is its $\eta$-orthogonal complement, and every structure constant with two tangent and one normal index vanishes by invariance of $\eta$ and $\circ$. The same holds for the dual metric $g(X,Y)=\eta(E^{-1}\circ X,Y)$ and the dual product $X\star Y = E^{-1}\circ X\circ Y$, since $E$ is tangent. Restricting the dual system \eqref{eq: qDE_dual}, written in the flat coordinates of $\mathrm{LG}(\log\lambda,\phi)$, to $\mathscr{H}^\varsigma$ under these conditions gives \eqref{eq:qde_M}, by \cref{prop:qDE_dual_sub_Hurwitz} applied to $(g,\star)$.
\end{proof}

\begin{prop}
\label{prop:period_coordinates}
Let $\gamma_i\coloneqq[0,\kappa_i^{-1}]$ for $1\leq i\leq l$ and $\gamma_{l+1}\coloneqq[0,1]$. For $\lambda\in\mathscr{H}^{\varsigma}$ and generic $\hbar$, the segments \[\gamma_1,\dots,\gamma_{l+1},\varsigma(\gamma_1),\dots,\varsigma(\gamma_{l+1})\] form a basis of $H^{\rm lf}_1(\bbP^1\setminus(\lambda);\lambda^{\hbar})$. Moreover, the periods $\Pi_i\coloneqq\Pi_{\gamma_i}$, $1\leq i\leq l+1$, are local coordinates on $\cM$ near any semisimple point.
\end{prop}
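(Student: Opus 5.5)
The proof splits into a homological part (the basis claim) and an analytic part (the coordinate claim).

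\emph{Basis.} On $\mathscr{H}^{\varsigma}$ the finite zeros and poles of $\lambda$ are
\[
0,\quad \pm1,\quad \kappa_1^{\pm1},\dots,\kappa_l^{\pm1},
\]
and $\infty$ is also a pole. By \cref{prop:rank_twisted_homology}, for generic $\hbar$ the rank of $H^{\rm lf}_1(\bbP^1\setminus(\lambda);\lambda^{\hbar})$ is $2l+2$. I would use the standard fact for rank-one local systems on a punctured line: if one removed point $p_0$ is chosen, the locally finite segments joining $p_0$ to each of the other finite removed points form a basis for generic exponents. This is the usual Aomoto-type basis of twisted chains, and it can be checked by the cell decomposition of $\bbP^1$ with $0$-cells at the removed points.

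Taking $p_0=0$, the other finite removed points are $\kappa_i^{-1}$, $1$, $\kappa_i$ and $-1$. That gives $2l+3$ segments, one more than the rank. The $\gamma_i$ account for $\kappa_i^{-1}$ and $\gamma_{l+1}=[0,1]$ accounts for $1$. Since $\varsigma(0)=\infty$, the reflected chains $\varsigma(\gamma_i)=[\infty,\kappa_i]$ and $\varsigma(\gamma_{l+1})=[\infty,1]$ run from $\infty$ instead. I would relate them to the segments from $0$ using the twisted relation obtained from the boundary of a large disk. This relation expresses $[0,\infty]$ through the segments from $0$ to the finite points, with coefficients that are non-vanishing for generic $\hbar$ because the local monodromy exponents are nonintegral.

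With the ordering
\[
\gamma_1,\dots,\gamma_{l+1},\ \varsigma(\gamma_1),\dots,\varsigma(\gamma_{l+1}),
\]
the resulting transition matrix to a known basis is triangular up to such generic phase factors. Its determinant is a product of factors of the form $(1-\re^{2\pi\ri\hbar\,c})$, which vanish only for non-generic $\hbar$. An equivalent check, which I would use as a fallback, is the twisted intersection pairing: the intersection matrix of our family with the dual family of segments is non-degenerate for generic $\hbar$.

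\emph{Local coordinates.} By \cref{lem:periods_restrict}, each $\Pi_\gamma|_{\mathscr{H}^\varsigma}$ solves \eqref{eq:qde_M}. The solution space of this system has dimension $l+1$, and the gradient map from solutions to the flat frame is injective. The $\varsigma$-symmetry gives
\[
\Pi_{\varsigma(\gamma)}=\pm\,\Pi_\gamma \quad\text{on } \mathscr{H}^\varsigma,
\]
up to a branch phase, since $\varsigma^*\phi=-\phi$ and $\lambda\circ\varsigma=\lambda$ there. Hence the $2l+2$ periods span at most the $(l+1)$-dimensional solution space, and the $l+1$ periods $\Pi_1,\dots,\Pi_{l+1}$ must already span it. Near a semisimple point, the Jacobian of $(\Pi_1,\dots,\Pi_{l+1})$ is then a fundamental matrix of the flat frame, hence invertible, so the periods are local coordinates.

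The main obstacle is showing that $\Pi_1,\dots,\Pi_{l+1}$ are linearly independent solutions on $\mathscr{H}^\varsigma$, and not just on $\mathscr{H}$. I would do this by computing their leading asymptotics near the large radius limit point $\kappa_i\to 0$ or $\infty$. There the integrals reduce to Euler Beta integrals with distinct monomial leading behaviour $\prod_j\kappa_j^{\hbar c_j}$, with distinct exponents for generic $\hbar$, which forces independence. Independence then propagates by analytic continuation.
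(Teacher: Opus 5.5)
The genuine gap is in the coordinate claim. First, ``the $l+1$ periods must already span it'' does not follow from $\Pi_{\varsigma(\gamma)}=\pm\Pi_\gamma$ on $\mathscr{H}^\varsigma$. That relation only shows that all $2l+2$ restricted periods span the same space as $\Pi_1,\dots,\Pi_{l+1}$, not that this span is the whole solution space. The paper gets this missing input from the full Hurwitz space. By \cref{prop:period_map_full_Hurwitz} the $2l+2$ periods are coordinates on $\mathscr{H}$ near a semisimple point, and $\varsigma$ acts on them linearly by $\Pi_{\gamma_i}\mapsto-\Pi_{\varsigma(\gamma_i)}$. Hence $\mathscr{H}^\varsigma$ is the linear locus $\Pi_{\gamma_i}+\Pi_{\varsigma(\gamma_i)}=0$, and $\Pi_1,\dots,\Pi_{l+1}$ restrict to coordinates on it. Your substitute, independence from pairwise distinct leading exponents at ${\rm LR}$, fails as stated in type $\mathrm{D}$. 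Because the trivalent curve $C$ is pointwise fixed, the exponents at ${\rm LR}$ are not all distinct. With $n=l-2$, the recursion in the proof of \cref{lem:LR_general} gives $\Pi_n$ and $\Pi_{l-1}$ the same power $(\kappa_1\cdots\kappa_n)^{\hbar}$. For $\Pi_{l-1},\Pi_l,\Pi_{l+1}$ the last step is the resonant case $b_N=a=-\hbar$, so $\Pi_{l-1}$ differs from $\Pi_n$ only through the factor $\log(\kappa_{l-1}/\kappa_n)$ produced by \eqref{eq:acformula_log}. Moreover, each $\Pi_i$ contains all the earlier monomials, so there is no single ``leading monomial'' per period. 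What your argument actually needs is that the full matrix of asymptotic coefficients in the quasi-monomials attached to $P_1,\dots,P_{n-1},\mathbf{1}_C,\omega_C,P_{l-1},P_l$ is triangular with non-vanishing diagonal. That is the lower-triangular $\cA^{-1}$ of \eqref{eq:Ainv_general}, with diagonal $b_1,\dots,b_n,-\mathsf{q}^n,e,g$, and its $\omega_C$ column requires the resonant connection formula.

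With that repair your route closes, and it genuinely differs from the paper's. Linearly independent flat sections are pointwise independent on the connected locus where the connection is regular, so you even get invertibility of the Jacobian away from semisimple points. Note that constants also solve \eqref{eq:qde_M}, so that solution space is $(l+2)$-dimensional; the factor $\kappa_{l+1}^{\hbar}$ common to all periods is what excludes them. The price is importing the whole large-radius analysis of \cref{sec:Umat_typeD} into a statement the paper settles with a light equivariance argument.

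On the homological side, your count is off. From $0$ there are exactly $2l+2$ other finite zeros and poles ($\kappa_i^{\pm1}$ and $\pm1$), so the star already has the rank, not one more. More importantly, no transition matrix is needed. The given segments $[0,\kappa_i^{-1}]$, $[0,1]$, $[\infty,\kappa_i]$, $[\infty,1]$ already form a tree on the $2l+3$ points $0,\infty,1,\kappa_i^{\pm1}$, i.e.\ on all zeros and poles except $-1$. The same generic-exponent criterion you invoke for the star applies to this tree verbatim; this is the paper's one-line argument. Your detour does work, since the disk relation expresses $[0,\infty]$ with a coefficient of $[0,-1]$ that is a non-zero multiple of $1-\re^{-4\pi\ri\hbar}$, but it is unnecessary.
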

\begin{proof}
The $2l+2$ segments form a tree connecting all zeros and poles of $\lambda$ other than $-1$, hence a basis, the rank being $2l+2$. By \cref{prop:period_map_full_Hurwitz}, the $2l+2$ periods $\Pi_{\gamma_i}$, $\Pi_{\varsigma(\gamma_i)}$ are local coordinates on $\mathscr{H}$ near a semisimple point. As $\varsigma^*\phi=-\phi$, the involution acts on them by $\Pi_{\gamma_i}\mapsto-\Pi_{\varsigma(\gamma_i)}$, so its fixed locus $\mathscr{H}^\varsigma$ is cut out by $\Pi_{\gamma_i}+\Pi_{\varsigma(\gamma_i)}=0$, and $\Pi_1,\dots,\Pi_{l+1}$ restrict to coordinates on it.
\end{proof}

\begin{prop}
\label{prop:periods_flat_coordinates}
With $\hbar=2\nu/z$, the periods
\beq
\label{eq:Piper}
\Pi_i = \int_{\gamma_i}\lambda^{\hbar}\,\phi\,,\qquad 1\leq i\leq l+1\,,
\eeq
are a flat coordinate chart for the Dubrovin connection of $\cM$. In particular, they are a basis of solutions of the quantum differential equation \eqref{eq:QDE} of $\mathrm{QH}^*_{\bbC^\times}(Y)$ on $\cU_Y$ and of $\mathrm{QH}^*_{\bbC^\times}(\cX)$ on $\cU_{\cX}$.
\end{prop}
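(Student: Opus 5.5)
The statement is assembled from three ingredients already in place: \cref{lem:periods_restrict}, \cref{prop:period_coordinates}, and the mirror theorem \cref{thm:msQH}. The plan is to check, in order, that each $\Pi_i$ solves the flat-coordinate system of $\cM$, that $\rd\Pi_1,\dots,\rd\Pi_{l+1}$ are independent, and that this system matches \eqref{eq:QDE} in each chart.

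\emph{Flatness.} By \cref{lem:periods_restrict}, each $\Pi_i$, restricted to $\mathscr{H}^\varsigma\simeq\mathfrak{h}\times\bbC$, satisfies \eqref{eq:qde_M}. Here $C^k_{ij}$ are the structure constants of the dual product $\star$, which by the discussion preceding the lemma is the product of $\cM$ defined by \eqref{eq:LGres}. The one point needing care is normalisation. The pairing $\eta$ of \eqref{eq:LGres} is $1/(2\nu)$ times the restricted dual metric $g$, but this does not affect the structure constants $C^k_{ij}$. The remaining overall $2\nu$ is absorbed by the choice $\hbar=2\nu/z$. Concretely, the Dubrovin connection of $\cM$ with parameter $z$ has flat coordinates $f$ satisfying
\[
z\,\partial_{s_i}\partial_{s_j}f=\sum_k C^k_{ij}\,\partial_{s_k}f ,
\]
and \eqref{eq:qde_M} with $\hbar=2\nu/z$ is exactly this equation. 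I expect the only real friction in the proof to be the consistent bookkeeping of the factor $2\nu$ between the residue formula \eqref{eq:LGres}, the dual metric, and $\kappa_{l+1}=\re^{x_0/(2\nu)}$. I would handle it by checking the identity direction: $\partial_{x_0}\lambda^\hbar=\frac{\hbar}{2\nu}\lambda^\hbar$, which gives $\partial_{x_0}\Pi_i=\Pi_i/z$, as the unit axiom $z\partial_{x_0}f=f$ requires.

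\emph{Independence.} By \cref{prop:period_coordinates}, for generic $\hbar$ the $\Pi_i$ are local coordinates near semisimple points, so their differentials are linearly independent there. Semisimple points are dense, since $\cM$ is semisimple. The solution space of the flat-section system has rank $l+1=\dim\cM$, so these $l+1$ independent solutions form a basis, i.e.\ a flat coordinate chart. Independence extends from the semisimple locus by analytic continuation, because the Wronskian of a system of linear ODEs is either identically zero or nowhere zero on the domain where the system is holomorphic. If needed, I would make the same point by noting that the Wronskian satisfies a first-order equation with holomorphic coefficients.

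\emph{Transfer to the charts.} By \cref{thm:msQH}, the isomorphisms $\mathrm{QH}^*_{\bbC^\times}(Y)\cong\cM|_{\cU_Y}$ and $\mathrm{QH}^*_{\bbC^\times}(\cX)\cong\cM|_{\cU_\cX}$ identify products and flat metrics, with flat coordinates $(x_0,\tau_j)$ related linearly to $s$. Hence \eqref{eq:QDE} pulls back to \eqref{eq:qde_M}. Choosing branches of $\lambda^\hbar$ on $\cU_Y$ and $\cU_\cX$ then gives the claimed bases of solutions on each chart, which completes the proof.
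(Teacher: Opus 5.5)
Your proposal is correct and is essentially the paper's proof. It uses the same three inputs (\cref{lem:periods_restrict}, \cref{prop:period_coordinates}, \cref{thm:msQH}) and the same identity-direction normalisation $\partial_{\log\kappa_{l+1}}=2\nu\,\partial_{s_{l+1}}$ to fix $\hbar=2\nu/z$; your Wronskian remark only spells out what the paper calls immediate.

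There is one bookkeeping slip. As you note, rescaling the metric leaves $C^k_{ij}$ unchanged, so the factor $2\nu$ does not come from the metric. It comes from the unit: the dual product $\star$ has unit $E=\partial_{\log\kappa_{l+1}}=2\nu\,\partial_{x_0}$. Hence the structure constants of the quantum product are $2\nu$ times the $C^k_{ij}$ of \eqref{eq:qde_M}. Your displayed equation $z\,\partial_{s_i}\partial_{s_j}f=\sum_k C^k_{ij}\partial_{s_k}f$ must be read with these rescaled constants, which is exactly what your check $\partial_{x_0}\Pi_i=\Pi_i/z$ detects.
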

\begin{proof}
This is immediate from \Cref{lem:periods_restrict,prop:period_coordinates} and \cref{thm:msQH}. The value $\hbar=2\nu/z$ is fixed by the normalisation of the identity direction in \eqref{eq:lambdaD}, $\partial_{\log\kappa_{l+1}} = 2\nu\,\partial_{s_{l+1}}$, which matches the parameter $z$ of \eqref{eq:QDE}.
\end{proof}

We use the short-hand notation $\Pi \coloneqq (\Pi_1, \dots, \Pi_{l+1})$ for the column vector of periods.
 
Pochhammer integrals \eqref{eq:Piper} of products of complex powers of rational functions are Lauricella functions (see \cref{sec: lauricella_intro} for details). In the specific case of the periods \eqref{eq:Piper}, we get the following

\begin{prop}
\label{prop:hypergeometric_expression}
The integrals $\Pi_i$ can be expressed as:
\begin{align*}
     \Pi_i  = & \kappa_{l+1}^{\hbar} \Beta\big(-(l-2)\hbar,1+\hbar\big)\,\kappa_i^{(l-2)\hbar} \nonumber\\
     &F_D^{(2l+1)}\bigg(-(l-2)\hbar; -\hbar,\cdots,-\hbar,2\hbar,2\hbar,-\hbar,\cdots,-\hbar;1-(l-3)\hbar\nonumber\\
    &\bigg| \frac{1}{\kappa_1 \kappa_i}, \cdots, \frac{1}{\kappa_l\kappa_i},\frac{1}{\kappa_i}, -\frac{1}{\kappa_i},\frac{\kappa_l}{\kappa_i},\cdots,\widehat{\frac{\kappa_i}{\kappa_i}},\cdots, \frac{\kappa_1}{\kappa_i} \bigg),
\end{align*}
and
\begin{align*}
    \Pi_{l+1} = & \kappa_{l+1}^{\hbar}\Beta\big(-(l-2)\hbar,1-2\hbar\big)\nonumber\\
    &F_D^{(2l+1)}\bigg(-(l-2)\hbar; -\hbar,\cdots,-\hbar,2\hbar,-\hbar,\cdots,-\hbar;1-l\hbar 
    \bigg| \frac{1}{\kappa_1}, \cdots, \frac{1}{\kappa_l}, -1,\kappa_l,\cdots, \kappa_1 \bigg).
\end{align*}
Here $\Beta(x,y)\coloneqq\Gamma(x)\Gamma(y)/\Gamma(x+y)$ denotes Euler's Beta function, and $F_D^{(N)}(a; b_1,\dots, b_N; c | x_1,\dots,x_N)$ is the generalised hypergeometric Lauricella series $F_D^{(N)}$  in $N$-variables \eqref{eq:lau_series}.

\end{prop}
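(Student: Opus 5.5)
The plan is to reduce each period \eqref{eq:Piper} to Euler's integral representation of the Lauricella function,
\[
F_D^{(N)}(a;b_1,\dots,b_N;c\,|\,x_1,\dots,x_N) = \frac{1}{\Beta(a,c-a)}\int_0^1 t^{a-1}(1-t)^{c-a-1}\prod_{j=1}^N(1-x_jt)^{-b_j}\,\rd t ,
\]
which holds for $\mathrm{Re}\,c>\mathrm{Re}\,a>0$ and $|x_j|<1$ (see \cref{sec: lauricella_intro}). The reduction uses a linear change of variables that sends the segment $\gamma_i$ to $[0,1]$.

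\emph{The periods $\Pi_i$, $1\leq i\leq l$.} I substitute $\mu = t/\kappa_i$, so that $\gamma_i=[0,\kappa_i^{-1}]$ becomes $[0,1]$ and $\phi=\rd\mu/\mu=\rd t/t$. I then normalise every factor of \eqref{eq:lambdaD} to the form $(1-xt)$:
\begin{itemize}
\item $\mu-\kappa_j = -\kappa_j\bigl(1-t/(\kappa_j\kappa_i)\bigr)$ for all $j$, giving variables $1/(\kappa_j\kappa_i)$ with exponent $b=-\hbar$;
\item $\mu-\kappa_j^{-1} = -\kappa_j^{-1}(1-\kappa_j t/\kappa_i)$ for $j\neq i$, giving variables $\kappa_j/\kappa_i$ with $b=-\hbar$;
\item $\mu-\kappa_i^{-1} = -\kappa_i^{-1}(1-t)$, which supplies the factor $(1-t)^{\hbar}$;
\item $(\mu^2-1)^{-2} = (1-t/\kappa_i)^{-2}(1+t/\kappa_i)^{-2}$, giving variables $\pm1/\kappa_i$ with $b=2\hbar$;
\item $\mu^{-(l-2)\hbar}=\kappa_i^{(l-2)\hbar}t^{-(l-2)\hbar}$, which together with $\rd t/t$ supplies $t^{a-1}$ with $a=-(l-2)\hbar$.
\end{itemize}
The factor $(1-t)^{\hbar}$ means $c-a-1=\hbar$, so $c=1-(l-3)\hbar$. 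This matches the Beta prefactor $\Beta(-(l-2)\hbar,1+\hbar)$ and the parameters in the statement.

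The constant prefactors then cancel. Their moduli give $\prod_j\kappa_j\cdot\prod_{j\neq i}\kappa_j^{-1}\cdot\kappa_i^{-1}=1$, and the signs give $(-1)^{2l}=1$. What is left is $\kappa_{l+1}^{\hbar}\kappa_i^{(l-2)\hbar}$, as claimed.

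\emph{The period $\Pi_{l+1}$.} No rescaling is needed: I take $\mu=t$ on $[0,1]$. The factor $(\mu-1)^{-2\hbar}$ becomes $(1-t)^{-2\hbar}$, so $c-a=1-2\hbar$ and $c=1-l\hbar$. The remaining factors give variables $1/\kappa_j$ and $\kappa_j$ with $b=-\hbar$, and $(1+t)^{-2\hbar}$ gives the variable $-1$ with $b=2\hbar$. Again the constants multiply to $1$.

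\emph{Main obstacle: validity and branches.} The Euler representation requires $\mathrm{Re}\,a>0$ and $\mathrm{Re}(c-a)>0$, i.e. $\mathrm{Re}\,\hbar<0$ with $|\hbar|$ small. I would establish the identity there and extend it by analytic continuation in $\hbar$ (Pochhammer-contour regularisation). Similarly, the series converges only for all $|x_j|<1$, so I would prove the identity on a region of $\kappa$'s where this holds, e.g. near a real chamber with $1<\kappa_1<\dots$ suitably ordered, and continue from there.

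The delicate point is fixing the branch of $\lambda^{\hbar}$ on $\gamma_i$ so that pulling out constants such as $(-\kappa_j)^{\hbar}$ introduces no stray phases. I would resolve this by taking the branch on $\gamma_i$ to be the one making $\prod(1-x_jt)^{-b_j}$ equal to $1$ at $t=0$, and taking the principal branches of $t^{a-1}$ and $(1-t)^{c-a-1}$. I would then check that the pairs $(-\kappa_j)(-\kappa_j^{-1})$ combine before exponentiation, so that the prefactor is exactly $\kappa_{l+1}^{\hbar}\kappa_i^{(l-2)\hbar}$ in this convention. This convention is the one that must be carried consistently into the later analytic continuations.
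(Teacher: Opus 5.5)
Your substitutions are correct and are exactly the computation behind the statement, which the paper treats as a direct instance of the Euler representation of $F_D^{(N)}$ in \cref{sec: lauricella_intro}. The parameters $a=-(l-2)\hbar$ and $c=1-(l-3)\hbar$ (resp.\ $c=1-l\hbar$) check out, as do the $2l+1$ arguments and the cancellation $\prod_j(-\kappa_j)(-\kappa_j^{-1})=1$. Your branch normalisation (integrand $\sim\kappa_{l+1}^{\hbar}\kappa_i^{(l-2)\hbar}t^{-(l-2)\hbar}$ as $t\to0$) is the sheet prescription the paper adopts in \cref{sec:branches}.

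The step that fails is your plan to prove the identity first on a region of $\kappa$'s where all $|x_k|<1$, and then continue in $\kappa$. For $\Pi_{l+1}$ no such region exists. Both $\kappa_j$ and $\kappa_j^{-1}$ occur among the arguments, so they cannot both lie in the open unit disk, and the argument $-1$ lies on the unit circle for every $\kappa$. Moreover, the real chamber $1<\kappa_1<\cdots$ you suggest is exactly where the zeros $\kappa_j^{-1}\in(0,1)$ of $\lambda$ lie on $\gamma_{l+1}=[0,1]$. Equivalently, it is where the arguments $\kappa_j$ sit on the cut $[1,\infty)$ of $F_D$. For $\Pi_i$, $i\leq l$, such a region does exist (take $\kappa_i$ of largest modulus). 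However, the paper uses the formula along $\rho$, where $|\kappa_k/\kappa_i|>1$ for $k<i$, so the continued $F_D$ is what is meant there as well.

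The fix is to continue in the $x$-variables rather than in $\kappa$. For $\mathrm{Re}\,c>\mathrm{Re}\,a>0$, take the Euler integral with $(1-x_kt)^{-b_k}$ continued along $t\in[0,1]$ from the value $1$ (equivalently, with principal branches). It is holomorphic on the simply connected domain $\prod_k\big(\mathbb{C}\setminus[1,\infty)\big)$. This domain contains the unit polydisk, where the integral equals the series \eqref{eq:lau_series}, so the integral is the principal branch of the continued $F_D^{(N)}$ on the whole domain. Your substitution is an identity of integrals that needs no smallness of the $x_k$. It therefore yields both formulas wherever no argument lies on $[1,\infty)$, for instance throughout the sector $-\pi<\arg\kappa_1<\cdots<\arg\kappa_l<0$ used along $\rho$. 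This holds first for $-1<\mathrm{Re}\,\hbar<0$, and then for generic $\hbar$ by continuation in $\hbar$.
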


\section{The CCIT symplectomorphism from mirror symmetry}
\label{sec:Umat_typeD}
\subsection{Analytic continuation}
\label{sec:branches}
In the previous section, we have constructed three flat coordinate charts for the Dubrovin connection over $\cM$:
\ben
\item locally over $\cU_Y$: the components $J^{Y}(\tau, z)$ of the $J$-function of $Y$ with respect to a linear basis of $H(Y)$;
\item locally over $\cU_\cX$: the components $J^{\cX}(x, z)$ of the $J$-function of $\cX$ with respect to a linear basis of $H(\cX)$;
\item globally over $\cM$: the (multi-valued) twisted periods $\Pi = (\Pi_i)_{i=1}^{l+1}$ of the superpotential \eqref{eq:lambdaD}.
\een
%
We will use (3) to intertwine between (1) and (2). In the coordinates $(\kappa_i)_{i=1}^{l+1}$, the large radius limit point of $Y$ corresponds to taking
\begin{align*}
    \frac{1}{\kappa_k}\ll 1\,, \qquad \frac{\kappa_{i+1}}{\kappa_i}\ll 1\,,\qquad 1\leq k\leq l\,,\quad 1\leq i\leq l-1\,,
\end{align*}
while the orbifold point sits at
\begin{align}
\label{eq: OP_DN}
\log\kappa_k|_{\rm OP}=-\pi\ri 
\begin{cases}
1 & k=1\,, \\
\frac{2l-2k-1}{2(l-2)}\,, & 2\leq k\leq l-1\,, \\
0 & k=l\,.
\end{cases}
\end{align}
We shall first fix an analytic continuation path $\rho$ from ${\rm LR}$ to ${\rm OP}$ as follows. Picking $R\gg0$ and $0<\delta<\pi/l$, define $\rho\colon[0,1]\to\cM$, keeping $x_0$ constant, by
\beq
\label{eq:rho}
\log\kappa_k(\rho(s)) = (1-s)\,(l+1-k)\,(R-\ri\delta)+s\,\log\kappa_k\big|_{\rm OP}\,,\qquad 1\leq k\leq l\,.
\eeq
For $0\leq s<1$ one has $|\kappa_1|>\dots>|\kappa_l|>1$. In the representation of $\Pi_i$ in \cref{prop:hypergeometric_expression}, the arguments $1/(\kappa_k\kappa_i)$, $\pm1/\kappa_i$ and $\kappa_k/\kappa_i$ with $k>i$ therefore have modulus less than one along $\rho$, whereas $\kappa_k/\kappa_i$ with $k<i$ have modulus greater than one: the continuation of $\Pi_i$ along $\rho$ is obtained by expanding in the former and applying the connection formula of \cref{prop: acformula_generic} in the latter, as done in \cref{subsec:LR_general}. Moreover, since $-\pi<\arg\kappa_1<\dots<\arg\kappa_l<0$, the zeros $\kappa_k^{-1}$ of $\lambda$ stay in the upper half plane in the order of the contours $\gamma_k$, and never meet $[0,1]$. 

As the integrand $\lambda^{\hbar}$ is multivalued, the sheet on which each $\Pi_j$ is analytically continued along $\rho$ has to be carefully fixed in a consistent manner \footnote{Failure to do so initially led us to slight errors in the expansion of $\Pi_j$ and its derivatives near $\rm{LR}$ and ${\rm OP}$. We acknowledge a debugging analysis of our {\it Wolfram Language} code through an interactive session with Anthropic Claude Code, which was essential in helping us single out the error of our ways here.} between $\rm{LR}$ and $\rm{OP}$. Near $\mu = 0$ one has $\lambda\sim \kappa_{l+1}\,\mu^{-(l-2)}$, so that along $\gamma_j = [0,1/\kappa_j]$, parametrised as $\mu = s/\kappa_j$ with $s\in(0,1)$, we have
\begin{equation*}
\lambda^{\hbar} \sim \big(\kappa_{l+1}\kappa_j^{\,l-2}\big)^{\hbar} s^{-(l-2)\hbar}\,,
\end{equation*}
hence $\arg\lambda\to (l-2)\,\mathrm{Im}\log\kappa_j + \mathrm{Im}\log\kappa_{l+1}$ as $\mu\to 0$ along $\gamma_j$. Along the path \eqref{eq:rho}, on which $-\pi<\arg\kappa_1<\dots<\arg\kappa_l< 0$, no further crossing of branch cuts occurs, and this prescription is transported to $\rm{OP}$. By \eqref{eq: OP_DN}, the resulting sheets for $\Pi_k$ differ from the principal one by integer powers of $\re^{-2\pi\ri\hbar}$.

With these conventions for branches and analytic continuation path, flatness of $\Pi$ implies the following
\begin{prop}
\label{prop:period_comparison_U}
There exist unique invertible $(l+1)\times(l+1)$ matrices $\cA,\cB$ with entries meromorphic in $\hbar$ such that, in the bases of $H(Y)$ and $H(\cX)$ fixed below,
    \begin{equation*}
    \cA\,\Pi\big|_{\cU_Y} = J^{Y}\,, \qquad
    \Pi\big|_{\cU_\cX} = \cB\,J^{\cX}\,.
\end{equation*}
\end{prop}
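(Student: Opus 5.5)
Both $\Pi=(\Pi_1,\dots,\Pi_{l+1})$ and the components of $J^{\cZ}$ will be shown to be bases of the same $(l+1)$-dimensional space of solutions of one linear system. The change of basis between them is then a matrix whose entries are constant in the point of $\cM$.

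\emph{Step 1: the two solution spaces coincide.} By \cref{prop:periods_flat_coordinates}, with $\hbar=2\nu/z$, each $\Pi_i$ solves the quantum differential equation \eqref{eq:QDE} of $\mathrm{QH}^*_{\bbC^\times}(Y)$ on $\cU_Y$. By \cref{thm:msQH}, \eqref{eq:qde_M} transported through the mirror isomorphism is exactly \eqref{eq:QDE}, once the flat coordinates $s_i$ are identified linearly with $(\tau_0,\tau_1,\dots,\tau_l)$. The components $J^{Y,\gamma}$ solve the same system. The space of solutions $f$ of \eqref{eq:QDE} is identified, via $f\mapsto\rd f$, with flat sections of the Dubrovin connection $\nabla$ (in the form $\partial+\hbar\,C$) together with an additive constant. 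This space has dimension at most $l+2$.

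\emph{Step 2: remove the additive constant.} To kill the constant and land in an $(l+1)$-dimensional space I will use the string/identity direction. Since $\kappa_{l+1}=\re^{x_0/(2\nu)}$, \cref{prop:hypergeometric_expression} gives $\Pi_i\propto\re^{\hbar x_0/(2\nu)}=\re^{x_0/z}$, so $z\,\partial_{x_0}\Pi_i=\Pi_i$. The string equation gives the same relation for $J^Y$: $z\partial_{\tau_0}J^Y=J^Y$ for the equivariant $J$-function. Imposing this relation, a solution is determined by its gradient, so the joint solution space is a rank-$(l+1)$ space of multivalued flat sections over the local ring of meromorphic functions in $\hbar$.

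\emph{Step 3: define $\cA$, $\cB$ and check their properties.} Both $\{\Pi_i\}$ and $\{J^{Y,\gamma}\}$ are bases of this space. For $\Pi$ this follows from \cref{prop:period_coordinates}: the $\Pi_i$ are local coordinates, so their gradients are independent. For $J^Y$ it holds because $J^Y=zS^\dagger\mathbf{1}$ with $S$ invertible. Hence $\cA\coloneqq$ the transition matrix, obtained by evaluating the Wronskian-type matrices $(\partial_{s_k}\Pi_i)$ and $(\partial_{s_k}J^{Y,\gamma})$, is uniquely determined and invertible. It is independent of the point because both frames are flat. The same argument on $\cU_\cX$, after analytically continuing $\Pi$ along the path $\rho$ of \eqref{eq:rho} with the stated branch conventions, produces $\cB$. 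Uniqueness follows from linear independence of the frames.

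\emph{Step 4: meromorphy in $\hbar$ (main obstacle).} This follows from the Beta/Lauricella representation of \cref{prop:hypergeometric_expression}. The entries of $\cA$ can be computed as $\lim$-coefficients of the $\Pi_i$ in the asymptotic expansion at LR. These coefficients are products of Gamma values at affine functions of $\hbar$, times the $\hbar$-holomorphic Lauricella continuation coefficients of \cref{prop: acformula_generic}. The invertibility of $\cA$ and $\cB$ then holds for generic $\hbar$, hence as meromorphic matrices. The delicate point is the resonant case at LR, where exponents $\kappa_k^{(\cdot)\hbar}$ collide and logarithmic terms appear because of the non-semisimple fixed curve $C$. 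There I would invoke the resonant connection formulas of \cref{appendix: Lauricella_func} to ensure the coefficients still depend meromorphically on $\hbar$ rather than picking up essential singularities.
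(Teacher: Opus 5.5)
Your proposal is correct and takes essentially the paper's approach. The paper justifies the statement only by ``flatness of $\Pi$'': two bases of the same rank-$(l+1)$ solution space of the quantum differential equation differ by a matrix independent of the point on $\cM$. Your string relation $z\partial_{x_0}\Pi=\Pi$, $z\partial_{\tau_0}J^Y=J^Y$ makes explicit how the additive constant is discarded, and meromorphy in $\hbar$ is confirmed a posteriori by the closed Beta/Lauricella forms of \cref{subsec:B_general,subsec:LR_general}, exactly as you indicate. The one omission is that Step 4 handles $\cA$ only; for $\cB$ it is easier still, since by \eqref{eq:B_structure} its entries are the values and first derivatives of $\Pi$ at $\rm OP$, which \cref{lem:collision,prop:B_general_closed,lem:OP_values} express through Beta functions of $\hbar$.
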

\Cref{prop:period_comparison_U} implies that, under analytic continuation of $J^{\cX}$ along $\rho$ traversed from $\mathrm{OP}$ to $\mathrm{LR}$,
\begin{equation*}
J^{Y} = \mathbb{U} \, J^{\cX}\,,
\end{equation*}
with $\mathbb{U} \coloneqq \cA \cB$.

\begin{rmk}
   Varying the choice of path, and hence the determination of the twisted periods $\Pi$, results in a transformation of $\cA$ by an associated monodromy matrix. Explicitly, let $\rho$ and $\rho'$ be two paths from $\rm{OP}$ to $\rm{LR}$, then there exists $\bbM$ such that
    \begin{equation*}
    \Pi^{\rho} = \bbM\,\Pi^{\rho'}\,,
    \end{equation*}
    resulting in
    \begin{equation*}
    \cA_{\rho'} = \cA_{\rho}\,\bbM.
    \end{equation*}
\end{rmk}

Following \cite{Brini:2019wts}, we will achieve the comparison between twisted periods and $J^{Y}$ near $\mathrm{LR}$ by comparing monodromies. For this purpose we fix once and for all bases of $H_{\bbC^\times}(Y)$ and $H_{\bbC^\times}(\cX)$, and from now on identify linear maps between $\HH_{\cX}$ and $\HH_{Y}$ with their matrices in these bases, acting on the vectors of components. In particular, $\mathbb{U}=\cA\cB$ sends the components of $J^{\cX}$ to those of $J^{Y}$. For monodromy calculations, the most convenient choice is the localised basis \eqref{eq:loc_basis_general} of $H_{\bbC^\times}(Y)$ introduced below, together with the ordered basis $(\mathbf{1}_1,\dots,\mathbf{1}_l,\mathbf{1}_0)$ of \eqref{eq: coord_orbifold_typeD} for $H_{\bbC^\times}(\cX)$, untwisted class last. 

We start with the resolution $Y$. Throughout, we set $n\coloneqq l-2$, so that $|G|=4n$, with the fixed points $(p_1, \dots, p_{n-1}, p_{l-1}, p_{l})$ labelled as in \cref{sec:equivariant_cohomology}. For example, in the $\mathrm{D}_4$ case we have
\begin{figure}[H]
    \centering
    \begin{tikzpicture}[scale=1.5]

    \tikzset{fixedpoint/.style={fill=black, circle, inner sep=1.2pt}}

    \draw[thick] (-2.5, 0) -- (2.5, 0) node[right] {$C$};


    \draw[thick] (-1.5, -1) -- (-1.5, 1.5);
    \node[fixedpoint] at (-1.5, 1) {};
    \node[left=2pt] at (-1.5, 1) {$p_1$};

    \draw[thick] (0, -1) -- (0, 1.5);
    \node[fixedpoint] at (0, 1) {};
    \node[left=2pt] at (0, 1) {$p_3$};

    \draw[thick] (1.5, -1) -- (1.5, 1.5);
    \node[fixedpoint] at (1.5, 1) {};
    \node[left=2pt] at (1.5, 1) {$p_4$};

\end{tikzpicture}
\end{figure}

Denoting by $i\colon Y^{\bbC^\times}\hookrightarrow Y$ the inclusion of the fixed locus, the localisation map
\begin{equation*}
i^*\colon H_{\bbC^\times}^\ast(Y)\to H_{\bbC^\times}^\ast(Y^{\bbC^\times}) \cong H^\ast_{\bbC^\times}(C)\oplus\bigoplus_{i\neq l-2}H^\ast_{\bbC^\times}(p_i)
\end{equation*}
is an isomorphism of vector spaces over $\bbC(\nu)$. 

Take $\{\varphi_i \coloneqq [E_i]\}_{i=1}^l\cup \{\mathbf{1}_Y\}$ as the basis of $H_{\bbC^\times}^\ast(Y)$, as in \cref{sec:equivariant_cohomology}. The restriction of $\varphi_i$ to $E_i$ is $c_1^{\bbC^\times}(\cN_{E_i/Y})$. Let $P_i,i\neq l-2$ be the fundamental class of $p_i$, $\mathbf{1}_C$ be the fundamental class of $C$ and $\omega_C = c_1(\cO_C(1))$. We will refer to 
\beq
\label{eq:loc_basis_general}
\big(P_1,\dots,P_{n-1},\mathbf{1}_C,\omega_C,P_{l-1},P_l\big)
\eeq
as the localised basis of $H_{\bbC^\times}(Y)$. The image of $\mathbf{1}_Y$ is just the identity 
\begin{equation*}
i^*(\mathbf{1}_Y) = \mathop{\sum_{1\leq j\leq l}}_{j\neq l-2}\; P_j+ \mathbf{1}_C,
\end{equation*}
and the image of $\varphi_i$ can be computed from the $\bbC^\times$-weights of $\cN_{E_i/Y}$ at the fixed points. Notice that
\begin{enumerate}
    \item[(W1)] at every $\bbC^\times$-fixed point of $Y$, the two weights of the tangent representation add to $2\nu$;
    \item[(W2)] if an irreducible curve has two fixed points, then tangent weights along the curve add to $0$.
\end{enumerate}
The images of $\varphi_i$ are therefore computed as
\beq
\label{eq:localisation_normal_weights}
i^*(\varphi_i) = 
\begin{cases}
    w_i^L\;P_i+ w_i^R\; P_{i+1}&1\leq i\leq l-4\\
    4\nu\; P_i + 1\;\omega_C& i= l-3,\,l-1,\, l\\
    -2\;\omega_C+ 2\nu\;\mathbf{1}_C& i=l-2
\end{cases},
\eeq
where $\{w_i^L,w_i^R\}_{i=1}^{l-4}$ are the unique solutions to
\begin{equation*}
2\nu- w_i^L = -2\nu + w_i^R\;,\; w_i^R + w_{i+1}^L = 2\nu\;,\;w_{l-4}^R = -2\nu.
\end{equation*}
We now compute the components of $J^{Y}$ in the localised basis \eqref{eq:loc_basis_general}. The string and divisor equations applied to \eqref{eq: Jfunc_big} give
\begin{equation*}
J^{Y}(\tau, z) = z\, \re^{\tau/z}\bigg(\mathbf{1}_{Y}+ \sum_{\beta\neq 0}\re^{\langle\beta,\tau\rangle}\sum_{\gamma}\Big\langle\frac{\phi_\gamma}{z(z-\psi)}\Big\rangle^{Y,\bbC^\times}_{0,1,\beta}\phi^\gamma\bigg)\,,\qquad \langle\beta,\tau\rangle\coloneqq\int_\beta \tau = \sum_j d_j\,\tau_j\,,
\end{equation*}
from which we see that 
\begin{equation*}
J^{Y}(\tau,z) \sim z\,\re^{\tau/z}\mathbf{1}_Y\;,\; \tau \to \rm{LR}. 
\end{equation*}
Writing $\tau$ in the localised basis yields
\beq
\label{eq:localisation_morphism}
i^*(\tau) =:\mathop{\sum_{1\leq j\leq l}}_{j\neq l-2}L^j(\tau)\; P_j + L^{l-2}(\tau)\;\mathbf{1}_C
+ L^C(\tau)\;\omega_C\,.
\eeq
Then, expanding
\beq
J^{Y}(\tau,z) =\mathop{\sum_{1\leq j\leq l}}_{j\neq l-2}J^{Y}_j\;P_j + J^{Y}_{l-2}\;\mathbf{1}_C + J^{Y}_C\;\omega_C
\label{eq:JYcomps}
\eeq
we have
\beq
\label{eq:asymp_points}
J^{Y}_j \sim
\begin{cases}
    z\, \re^{L^j(\tau)/z}&j\neq l-2\\
    z\, \re^{L^{l-2}(\tau)/z}&j= l-2
\end{cases},
\eeq
and
\beq
\label{eq:asymp_curve}
J^{Y}_C \sim L^C(\tau)\, \re^{L^{l-2}(\tau)/z},
\eeq
since $\omega_C^2 = 0$. The components of $J^{Y}$ therefore triangularise the monodromy of the Dubrovin connection around ${\rm LR}$, which is unipotent with a single Jordan block of size two, and the matrix $\cA$ can then be computed by decomposing the asymptotics of the twisted periods with respect to the generalised eigenvectors of the large-radius monodromy.

We now turn to $\cX$. Note that the orbifold point ${\rm OP}$ is a regular point for the quantum differential equation of $\cX$, meaning that any flat frame for the Dubrovin connection of $\cM$ is analytic at $x=0$. Since the components of $J^{\cX}$ at $\rm{OP}$ satisfy
\begin{align*}
J^{\cX}(0,z) &= z\;\mathbf{1}_0,\\
\frac{\partial J^{\cX}}{\partial x_m}(0,z)&= \mathbf{1}_m,\qquad 1\leq m\leq l,
\end{align*}
to compute $\cB$ it suffices to study the Jacobian matrix of the twisted periods at $\rm{OP}$. Explicitly, writing $x_0$ for the coordinate along $\mathbf{1}_0$ and normalising the overall scaling of the superpotential as $\kappa_{l+1} = \re^{x_0/(2\nu)}$, so that $\Pi_j\propto \re^{x_0/z}$,
\beq
\label{eq:B_structure}
\cB_{j,0} = \frac{1}{z}\,\Pi_j\big|_{\rm OP}\,,\qquad
\cB_{j,m} = \frac{\partial \Pi_j}{\partial x_m}\Big|_{\rm OP} = -\sum_{i=1}^l\sum_{v=1}^l (w_i)_v\, D_{v,m}\;\frac{\partial \Pi_j}{\partial \log\kappa_i}\Big|_{\rm OP}\,,\qquad 1\leq m\leq l,
\eeq
where $D_{v,m}$ is given in \eqref{eq:McKay_cohomology} and $w_i$ are the weights \eqref{eq:vector_weights}, by \eqref{eq:logkappa_weights} and \eqref{eq: coord_resolution_typeD}.

\begin{rmk}
\label{rmk:lambda_OP}
The superpotential is subject to a confluence of its zeroes and poles at $\rm{OP}$. From \eqref{eq: OP_DN}, we get
\begin{equation*}
\lambda_0\coloneqq\lambda\big|_{\rm{OP}} = \mu^{n}+\mu^{-n}\,.
\end{equation*}
In particular $\kappa_1$ and $\kappa_l$ collide with the double poles $\mu=-1$ and $\mu = 1$ of $\lambda$, and the remaining $\kappa_k$, $2\leq k\leq l-1$, are zeros of $\lambda|_{\rm OP}$, i.e. $\kappa_k^{2(l-2)} = -1$.
\end{rmk}

\subsection{Computing $\cB$}
\label{subsec:B_general}
We now compute $\cB$. Throughout, all periods are taken on the sheets fixed at the beginning of this section. By \eqref{eq:B_structure}, the column $m=0$ of $\cB$ is the vector of values $\Pi_j|_{\rm OP}/z$, and the columns $1\leq m\leq l$ are the product of the Jacobian matrix $\big(\partial\Pi_j/\partial\log\kappa_i\big)\big|_{\rm OP}$ of the twisted periods in the flat logarithms with the Jacobian matrix $\big(\partial\log\kappa_i/\partial x_m\big) = -\big(\sum_v(w_i)_v D_{v,m}\big)$ of the flat logarithms in the orbifold coordinates. Differentiating \eqref{eq:lambdaD} under the integral sign,
\beq
\label{eq:period_derivative}
\frac{\partial \Pi_j}{\partial\log\kappa_i} = \hbar\int_{\gamma_j}\lambda^{\hbar}\Big(-\frac{\kappa_i}{\mu-\kappa_i}+\frac{\kappa_i^{-1}}{\mu-\kappa_i^{-1}}\Big)\frac{\rd\mu}{\mu}\,,
\eeq
the moving endpoint of $\gamma_i$ not contributing since $\lambda$ vanishes there. All integrals below are Euler integrals in which the exponents $-n\hbar$ at $\mu=0$ and $\hbar$ at the zeros of $\lambda$ may be treated as independent: they are evaluated where they converge, and the resulting identities between meromorphic functions of $\hbar$ hold by analytic continuation.

At $\rm OP$ the directions $\log\kappa_i$ are of two kinds. For $i\in\{1,l\}$ the pair of zeros $\kappa_i^{\pm1}$ of $\lambda$ coalesces with a double pole (\cref{rmk:lambda_OP}): the bracket in \eqref{eq:period_derivative} vanishes identically at $\rm OP$, and the derivative is carried by a boundary layer around the collision point, which we compute as a limit along $\rho$ in \cref{subsubsec:coalescing}. For $2\leq i\leq l-1$ the integrand is regular at $\rm OP$, and we evaluate it in \cref{subsubsec:noncoalescing} by expanding the bracket in the Fourier modes on the roots of $\mu^{2n}=-1$. The Jacobian matrix of the flat logarithms, computed in the same modes, selects a single pair of them in each twisted sector (\cref{prop:telescoping}), and the closed form of $\cB$ is assembled in \cref{subsubsec:B_assembled}.

\subsubsection{Coalescing directions}
\label{subsubsec:coalescing}
\begin{lem}
\label{lem:collision}
Let $\beta(\hbar) \coloneqq -\hbar\,2^\hbar\,\Beta\big(\hbar,\tfrac12-\hbar\big)$. For $i\in\{1,l\}$,
\beq
\label{eq:collision}
\frac{\partial \Pi_j}{\partial\log\kappa_i}\Big|_{\rm OP} = \beta(\hbar)
\begin{cases}
 \cos(\pi\hbar) & (j,i)=(l,l)\,, \\
 -\ri\sin(\pi\hbar) & (j,i)=(l+1,l)\,, \\
 \re^{-\pi\ri n\hbar}\cos(\pi\hbar) & (j,i)=(1,1)\,, \\
 0 & \mathrm{otherwise}\,,
 \end{cases}
\eeq
the limits $\kappa_l\to 1$ and $\kappa_1\to-1$ being taken along $\rho$.
\end{lem}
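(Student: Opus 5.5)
We compute the derivative \eqref{eq:period_derivative} as a limit along $\rho$, isolating the boundary layer at the collision point. Treat $i=l$ first, so $\kappa_l\to1$ and the zeros $\kappa_l^{\pm1}$ merge with the double pole at $\mu=1$. Write $\kappa_l=\re^{\epsilon}$ with $\epsilon\to0$ along $\rho$. Near $\mu=1$ the superpotential factorises as
\[
\lambda(\mu)=\lambda_{\rm reg}(\mu)\,\frac{(\mu-\re^{\epsilon})(\mu-\re^{-\epsilon})}{(\mu-1)^2},
\]
with $\lambda_{\rm reg}$ regular and non-vanishing at $\mu=1$. Its value there is obtained from \cref{rmk:lambda_OP}: since $\lambda_0=\mu^n+\mu^{-n}$, we have $\lambda_{\rm reg}(1)=\lambda_0(1)=2$. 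This is where the factor $2^\hbar$ in $\beta(\hbar)$ will come from.

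The bracket in \eqref{eq:period_derivative} equals
\[
-\frac{\kappa_l}{\mu-\kappa_l}+\frac{\kappa_l^{-1}}{\mu-\kappa_l^{-1}} = -\frac{(\kappa_l-\kappa_l^{-1})\,\mu}{(\mu-\kappa_l)(\mu-\kappa_l^{-1})}.
\]
This is $O(\epsilon)$ away from $\mu=1$ but of order $\epsilon^{-1}$ in a window of width $\epsilon$ around it. Rescale $\mu=1+\epsilon u$. In the limit the integrand becomes
\[
-2\hbar\,2^\hbar\left(\frac{u^2-1}{u^2}\right)^{\hbar}\frac{\rd u}{u^2-1},
\]
and the contributions from outside the window vanish as $\epsilon\to0$. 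This last point needs a dominated-convergence argument, done for $\mathrm{Re}\,\hbar$ in the range where all exponents converge and then extended by analytic continuation, as stated before \cref{subsubsec:coalescing}.

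Which periods receive a contribution depends on whether $\gamma_j$ passes through the window. Of the contours, only $\gamma_l=[0,\kappa_l^{-1}]$ and $\gamma_{l+1}=[0,1]$ reach $\mu=1$, and for $i=l$ every other $\gamma_j$ gives $0$. For $j=l$ the rescaled contour is $u\in(-\infty,-1]$, running up to the zero $\kappa_l^{-1}\approx1-\epsilon$. For $j=l+1$ it is $(-\infty,0]$, which contains the segment $[-1,0]$ between the zero and the pole, where $u^2-1<0$.

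Substituting $v=1/u^2$ turns the two integrals into Euler Beta integrals. On $(-\infty,-1]$ one obtains
\[
\tfrac12\Beta\big(\hbar,\tfrac12-\hbar\big)
\]
multiplied by a phase. On $[-1,0]$ the base $(u^2-1)/u^2$ is negative, so a factor $\re^{\pm\pi\ri\hbar}$ appears, fixed by the sheet convention along $\rho$. Combined with the $(-\infty,-1]$ piece this produces $-\ri\sin(\pi\hbar)$. For $j=l$ the same branch bookkeeping, in which $\arg\kappa_l\to0^-$ moves the zero slightly off the real axis, should produce the symmetric combination $\cos(\pi\hbar)$. I expect the main obstacle to be exactly this tracking of phases: one has to determine on which side of the real axis the zeros $\kappa_l^{\pm1}$ approach $1$ along $\rho$, where $\arg\kappa_l<0$ and $\delta>0$, and which determination of $\lambda^\hbar$ each half of the window inherits from the prescription at $\mu=0$. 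If the direct bookkeeping becomes unwieldy, I would fall back on fixing the signs by a numerical check in low rank.

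The case $i=1$, with $\kappa_1\to-1$ colliding with the double pole at $\mu=-1$, is handled by the same rescaling $\mu=-1+\epsilon u$, and $\lambda_{\rm reg}(-1)$ is again read off from $\lambda_0$. Only $\gamma_1=[0,\kappa_1^{-1}]$ ends near $-1$. The segment $[0,1]$ does not meet $-1$, so $\gamma_{l+1}$ contributes nothing. The additional prefactor $\re^{-\pi\ri n\hbar}$ comes from the sheet of $\Pi_1$: by \eqref{eq: OP_DN} we have $\log\kappa_1\to-\pi\ri$, and the normalisation $(\kappa_{l+1}\kappa_1^{n})^\hbar$ at $\mu=0$ then yields $\re^{-\pi\ri n\hbar}$ relative to the $j=l$ computation. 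The vanishing of all remaining entries follows because their contours stay a fixed distance from the collision point, where the bracket is $O(\epsilon)$ and the integrand is dominated uniformly.
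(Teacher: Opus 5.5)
Your strategy is the paper's: write $\lambda=\lambda_0 R$, with $\lambda_0(\pm1)^{\hbar}$ supplying $2^{\hbar}$ (and $\re^{-\pi\ri n\hbar}$ on the sheet of $\Pi_1$), rescale the window around the collision point, and reduce to Beta integrals by $v=u^{-2}$. The remaining entries vanish because the bracket tends to zero uniformly on contours that avoid the collision point. Your limiting integrand is correct.

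\textbf{The gap is in the step that produces the constants}, which is the actual content of the lemma. You leave it open, and you attribute it to the wrong mechanism.
\begin{itemize}
\item \emph{No phase on $(-\infty,-1)$.} The sheet inherited from $\mu=0$ along $\gamma_l$ is the principal one, since $\log\kappa_l,\log\kappa_{l+1}\to0$ at $\rm OP$ and $\lambda_0>0$ on $(0,1)$. The base $(u^2-1)/u^2\in(0,1)$ is positive there. The substitution $v=u^{-2}$ therefore gives $\tfrac12\Beta(\tfrac12,\hbar)$, not $\tfrac12\Beta(\hbar,\tfrac12-\hbar)$ times a phase.
\item \emph{Where $\cos(\pi\hbar)$ really comes from.} It is not a ``symmetric combination'' produced by branch bookkeeping. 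The contour $\gamma_l$ ends at the zero, so the side of the real axis on which the zero sits is irrelevant for that entry. The factor is the Gamma reflection $\Gamma(\tfrac12-\hbar)\Gamma(\tfrac12+\hbar)=\pi/\cos(\pi\hbar)$, that is, $\Beta(\tfrac12,\hbar)=\cos(\pi\hbar)\,\Beta(\hbar,\tfrac12-\hbar)$. Following your plan literally, you would be looking for a phase equal to $\cos(\pi\hbar)$, and that cannot succeed.
\end{itemize}

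\textbf{The $(l+1,l)$ entry.} Here the side does matter, but it is forced by $\rho$, not a $\pm$ to be settled numerically.
\begin{itemize}
\item Along $\rho$ one has $\arg\kappa_l<0$, so the zero $\kappa_l^{-1}$ lies slightly above the real axis and $[0,1]$ passes below $u=-1$.
\item Crossing below $u=-1$, $\arg(1-u^{-2})$ increases by $\pi$, so on $(-1,0)$ one gets $(1-u^{-2})^{\hbar-1}=-\re^{\pi\ri\hbar}(u^{-2}-1)^{\hbar-1}$.
\item That piece contributes $-\re^{\pi\ri\hbar}\cdot\tfrac12\Beta(\hbar,\tfrac12-\hbar)$. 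This is where ``$\Beta(\hbar,\tfrac12-\hbar)$ times a phase'' actually occurs; you attached it to the wrong half of the contour.
\item Adding the $(-\infty,-1)$ piece gives $\tfrac12\Beta(\hbar,\tfrac12-\hbar)\big(\cos(\pi\hbar)-\re^{\pi\ri\hbar}\big)=-\tfrac{\ri}{2}\sin(\pi\hbar)\,\Beta(\hbar,\tfrac12-\hbar)$. Passing above $u=-1$ would flip the sign.
\end{itemize}

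Multiplying by the prefactor $-2\hbar\,2^{\hbar}$ gives $\beta(\hbar)\cos(\pi\hbar)$ and $-\ri\,\beta(\hbar)\sin(\pi\hbar)$. The $(1,1)$ entry then follows from the identical local computation at $\mu=-1$, as you say.
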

\begin{proof}
Write $\kappa_l = \re^{h}$, with the other parameters at their values at $\rm{OP}$. Then $\lambda = \lambda_0(\mu)\,R_h(\mu)$ with $R_h(\mu) = (\mu-\re^{h})(\mu-\re^{-h})/(\mu-1)^2$, and by \eqref{eq:period_derivative}
\begin{equation*}
\frac{\partial\Pi_j}{\partial h} = \hbar\int_{\gamma_j}\lambda^{\hbar}\,\Big(-\frac{\re^{h}}{\mu-\re^{h}}+\frac{\re^{-h}}{\mu-\re^{-h}}\Big)\frac{\rd\mu}{\mu}\,.
\end{equation*}
As $h\to0$ the bracket tends to zero uniformly away from $\mu = 1$, so that only a neighbourhood of the collision contributes, and there $\lambda$ can be replaced by $\lambda_0(1)R_h(\mu)$. In the scaling variable $\mu = \re^{hu}$ one has $R_h\to 1-u^{-2}$ and the bracket tends to $-\frac{2}{h}\frac{1}{u^2-1}$, whence
\begin{equation*}
\frac{\partial\Pi_j}{\partial h}\Big|_{\rm OP} = -2\hbar\,\lambda_0(1)^{\hbar}\int_{u(\gamma_j)}(1-u^{-2})^{\hbar-1}\,\frac{\rd u}{u^2}\,.
\end{equation*}
For $\Pi_l$ the contour $\gamma_l = [0,\re^{-h}]$ corresponds to $u\in(-\infty,-1)$, and the substitution $v = u^{-2}$ gives $\frac12 \Beta(\frac12,\hbar) = \frac12\cos(\pi\hbar)\,\Beta(\hbar,\frac12-\hbar)$ by the reflection formula $\Gamma(\frac12-\hbar)\Gamma(\frac12+\hbar) = \pi/\cos(\pi\hbar)$. For $\Pi_{l+1}$ the contour $[0,1]$ corresponds to $u\in(-\infty,0)$, passing below $u=-1$ since $\arg\kappa_l<0$ along $\rho$. On $(-1,0)$ one has $(1-u^{-2})^{\hbar-1} = -\re^{\pi\ri\hbar}(u^{-2}-1)^{\hbar-1}$, which contributes in addition $-\re^{\pi\ri\hbar}\cdot\frac12\Beta(\hbar,\frac12-\hbar)$, and $\cos(\pi\hbar)-\re^{\pi\ri\hbar} = -\ri\sin(\pi\hbar)$. Since $\lambda_0(1)^{\hbar} = 2^{\hbar}$, this gives the first two lines of \eqref{eq:collision}. The statement for $\kappa_1\to-1$ is identical, with $\gamma_1 = [0,-1]$ ending at the collision point and $\lambda_0(-1)^{\hbar} = 2^{\hbar}\re^{-\pi\ri n\hbar}$ on the sheet of $\Pi_1$. Finally, for contours not passing through the collision point the bracket tends to zero uniformly along the contour, and the corresponding derivatives vanish.
\end{proof}

\subsubsection{The non-coalescing directions}
\label{subsubsec:noncoalescing}
For $2\leq i\leq l-1$, $\kappa_i|_{\rm OP}$ is a simple zero of $\lambda_0$ and \eqref{eq:period_derivative} at $\rm OP$ reads
\beq
\label{eq:noncoalescing_derivative}
\frac{\partial \Pi_j}{\partial\log\kappa_i}\Big|_{\rm OP} = \hbar\int_{\gamma_j}\lambda_0^{\hbar}\Big(-\frac{\kappa_i}{\mu-\kappa_i}+\frac{\kappa_i^{-1}}{\mu-\kappa_i^{-1}}\Big)\frac{\rd\mu}{\mu}\,.
\eeq
The bracket has simple poles at two of the roots of $\mu^{2n}=-1$, and we expand it in the following basis.
\begin{lem}
\label{lem:modes}
For $0\leq s\leq 2n-1$ set
\begin{equation*}
G_s(\mu) \coloneqq \frac{2n\,\mu^{\,2n-1-s}}{\mu^{2n}+1}\,,\qquad
c_s(\kappa) \coloneqq \frac{\kappa^{-(s+1)}-\kappa^{\,s+1}}{2n}\,.
\end{equation*}
Then for every $\kappa$ with $\kappa^{2n}=-1$,
\beq
\label{eq:mode_expansion}
-\frac{\kappa}{\mu-\kappa}+\frac{\kappa^{-1}}{\mu-\kappa^{-1}} \;=\; \sum_{s=0}^{2n-1} c_s(\kappa)\,G_s(\mu)\,,
\eeq
with $c_{2n-1}\equiv 0$, $c_s(\pm1)=0$, $c_{2n-2-s}=c_s$ and $G_s(1/\mu)=G_{2n-2-s}(\mu)$.
\end{lem}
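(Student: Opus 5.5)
The identity \eqref{eq:mode_expansion} is a statement about rational functions of $\mu$, so I would prove it by partial fractions over the roots of $\mu^{2n}+1$. Let $\zeta_r$, $0\leq r\leq 2n-1$, be the roots of $\mu^{2n}=-1$. Since $\kappa^{2n}=-1$ forces $\kappa^{-2n}=-1$, both $\kappa$ and $\kappa^{-1}$ are among the $\zeta_r$.

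The first step is the partial fraction decomposition
\begin{equation*}
\frac{1}{\mu-\zeta}=\frac{1}{2n\zeta^{2n-1}}\sum_{t=0}^{2n-1}\frac{\zeta^{t}\,\mu^{2n-1-t}}{\mu^{2n}+1}\cdot 2n\cdot\frac{1}{2n}\,,
\end{equation*}
valid for any root $\zeta$. It comes from the factorisation $(\mu^{2n}-\zeta^{2n})/(\mu-\zeta)=\sum_t \zeta^t\mu^{2n-1-t}$ together with $\mu^{2n}-\zeta^{2n}=\mu^{2n}+1$. In terms of the modes $G_t$ this reads
\begin{equation*}
\frac{1}{\mu-\zeta}=\frac{1}{2n}\sum_t \zeta^{t}G_t(\mu)\,.
\end{equation*}

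Multiplying by $\zeta$ and taking $\zeta=\kappa$ and $\zeta=\kappa^{-1}$, the left-hand side of \eqref{eq:mode_expansion} becomes
\begin{equation*}
\frac{1}{2n}\sum_t\big(\kappa^{-(t+1)}-\kappa^{t+1}\big)G_t(\mu)=\sum_t c_t(\kappa)\,G_t(\mu)\,,
\end{equation*}
which is exactly the claimed expansion.

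The remaining properties then follow one by one.
\begin{itemize}
\item For $c_{2n-1}\equiv 0$: we have $c_{2n-1}=(\kappa^{-2n}-\kappa^{2n})/(2n)=0$, since both powers equal $-1$.
\item For $c_s(\pm1)=0$: this is immediate from the definition of $c_s$.
\item For $c_{2n-2-s}=c_s$: write $\kappa^{-(2n-1-s)}=\kappa^{s+1}\kappa^{-2n}=-\kappa^{s+1}$ and $\kappa^{2n-1-s}=-\kappa^{-(s+1)}$. Then $c_{2n-2-s}=(-\kappa^{s+1}+\kappa^{-(s+1)})/(2n)=c_s$.
\item For $G_s(1/\mu)=G_{2n-2-s}(\mu)$: compute directly,
\begin{equation*}
G_s(1/\mu)=\frac{2n\,\mu^{-(2n-1-s)}}{\mu^{-2n}+1}=\frac{2n\,\mu^{s+1}}{1+\mu^{2n}}=G_{2n-2-s}(\mu)\,,
\end{equation*}
since $2n-1-(2n-2-s)=s+1$.
\end{itemize}

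The only point needing care is the sign and index bookkeeping in the partial fraction step. I would check it by evaluating the residue at $\mu=\zeta$. Using $\zeta^{2n}=-1$, the residue of $G_t$ at $\zeta$ is $\zeta^{-t}/(\zeta^{2n}\cdot\zeta^{-1})\cdot\ldots$, and the residue of the full sum $\frac{1}{2n}\sum_t\zeta^tG_t$ at $\zeta$ must come out to $1$. For $\zeta'\neq\zeta$ it must vanish, which holds by the geometric sum $\sum_t(\zeta/\zeta')^t=0$. Both sides are also $O(1/\mu)$ at infinity, so matching residues at every pole proves the identity.
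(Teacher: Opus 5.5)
Your proof is correct and follows the paper's argument: partial fractions over the roots of $\mu^{2n}+1$ give $\frac{1}{\mu-\varrho}=\frac{1}{2n}\sum_{s}\varrho^{s}G_s(\mu)$, which you apply at $\varrho=\kappa^{\pm1}$, and the remaining identities are read off from the definitions. Two slips are cosmetic only. Your first display carries a stray prefactor $\frac{1}{2n\zeta^{2n-1}}$ that should be deleted, since the factorisation of $\mu^{2n}-\zeta^{2n}$ gives the identity with no prefactor. In your closing check, the residue of $G_t$ at $\zeta$ is simply $\zeta^{-t}$, which reduces that check to the geometric sum you mention.
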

\begin{proof}
Partial fractions with respect to the roots of $\mu^{2n}+1$ give \[\frac{1}{\mu-\varrho}=\frac{1}{2n}\sum_{s}\varrho^{\,s}G_s(\mu)\] for every such root $\varrho$. The expansion \eqref{eq:mode_expansion} follows upon application to $\varrho=\kappa^{\pm1}$, and the remaining assertions are immediate from the definitions. 
\end{proof}

The mode integrals along the contours $\gamma_j$ are Beta functions.
\begin{lem}
\label{lem:beta_modes}
For $0\leq s\leq 2n-2$ put
\begin{equation*}
f_s\coloneqq \frac{2n-1-s}{2n}\in(0,1)\,, \qquad
B_s \coloneqq \Beta\big(f_s-\tfrac{\hbar}{2},\,1-f_s-\tfrac{\hbar}{2}\big)\,,
\end{equation*}
so that $B_{2n-2-s}=B_s$. Then, the following identities hold:
\begin{align}
\label{eq:beta_modes}
\int_0^{\infty}\lambda_0^{\hbar}\,G_s\,\frac{\rd\mu}{\mu} &= B_s =
\int_0^{1}\lambda_0^{\hbar}\,\big(G_s+G_{2n-2-s}\big)\,\frac{\rd\mu}{\mu}\,,\\
\label{eq:beta_modes_regular}
\int_{\gamma_j}\lambda_0^{\hbar}\,G_s\,\frac{\rd\mu}{\mu} &= -\big(\kappa_j^{\,n}\big)^{\hbar}\,\kappa_j^{\,s+1}\,\Beta\Big(f_s-\frac{\hbar}{2},\hbar\Big)\,,\qquad 2\leq j\leq l-1\,,\\
\label{eq:beta_modes_first}
\int_{\gamma_1}\lambda_0^{\hbar}\,G_s\,\frac{\rd\mu}{\mu} &= (-1)^{s+1}\,\re^{-\pi\ri n\hbar}\int_0^{1}\lambda_0^{\hbar}\,G_s\,\frac{\rd\mu}{\mu}\,.
\end{align}
\end{lem}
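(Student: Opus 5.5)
The plan is to reduce each integral to a standard Euler Beta integral by a monomial substitution. At $\mathrm{OP}$, \cref{rmk:lambda_OP} gives $\lambda_0=\mu^n+\mu^{-n}=\mu^{-n}(\mu^{2n}+1)$. Hence, by the definition of $G_s$ in \cref{lem:modes}, the integrand in all three identities is
\[
\lambda_0^{\hbar}\,G_s\,\frac{\rd\mu}{\mu}=2n\,\mu^{-n\hbar}\,(\mu^{2n}+1)^{\hbar-1}\,\mu^{2n-2-s}\,\rd\mu .
\]
On each contour I will write $\mu=\varrho\, r$ with $\varrho\in\{1,\kappa_j^{-1},-1\}$ and $r$ real, and then substitute $t=r^{2n}$. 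The Jacobian $\rd r=\tfrac{1}{2n}t^{1/(2n)-1}\rd t$ cancels the factor $2n$. The powers of $r$ combine into
\[
t^{\frac{2n-2-s}{2n}-\frac{\hbar}{2}+\frac{1}{2n}-1}=t^{f_s-\frac{\hbar}{2}-1}.
\]
As stated before \cref{lem:collision}, I work in the range of $\hbar$ where these integrals converge and extend the identities by analytic continuation in $\hbar$.

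For \eqref{eq:beta_modes}, take $\varrho=1$ on $(0,\infty)$. This gives $\int_0^\infty t^{f_s-\hbar/2-1}(1+t)^{\hbar-1}\rd t$, which is $\Beta(f_s-\tfrac\hbar2,\,1-f_s-\tfrac\hbar2)=B_s$ by the standard second-kind Euler integral $\int_0^\infty t^{a-1}(1+t)^{-a-b}\rd t=\Beta(a,b)$. For the second equality, split $(0,\infty)=(0,1]\cup[1,\infty)$. On the second piece apply $\mu\mapsto1/\mu$, using $\lambda_0(1/\mu)=\lambda_0(\mu)$, $G_s(1/\mu)=G_{2n-2-s}(\mu)$ from \cref{lem:modes}, and the fact that $\rd\mu/\mu$ changes sign, so that reversing the orientation restores $(0,1)$. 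The symmetry $B_{2n-2-s}=B_s$ follows from $f_{2n-2-s}=1-f_s$.

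For \eqref{eq:beta_modes_regular}, $\kappa_j^{-1}$ is also a root of $\mu^{2n}=-1$, so on $\gamma_j$, with $\mu=\kappa_j^{-1}r$ and $r\in[0,1]$, one has $\mu^{2n}+1=1-r^{2n}$. The integral therefore becomes the first-kind integral $\int_0^1t^{f_s-\hbar/2-1}(1-t)^{\hbar-1}\rd t=\Beta(f_s-\tfrac\hbar2,\hbar)$. The prefactor comes from two sources. First, $\mu^{-n\hbar}=(\kappa_j^n)^{\hbar}r^{-n\hbar}$, on the sheet fixed in \cref{sec:branches}. Second, $\kappa_j^{-(2n-2-s)}\cdot\kappa_j^{-1}=\kappa_j^{-2n}\kappa_j^{s+1}=-\kappa_j^{s+1}$. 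For \eqref{eq:beta_modes_first}, set $\mu=-r$ on $\gamma_1=[0,-1]$. Then $\mu^{2n}+1$ is unchanged, $\mu^{-n\hbar}$ contributes $\re^{-\pi\ri n\hbar}$ on the sheet of $\Pi_1$, and $(-1)^{2n-2-s}\,\rd\mu=(-1)^{s+1}\rd r$. The resulting integral is literally the one over $(0,1)$.

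The step I expect to need most care is the branch bookkeeping rather than the integrals. I have to check that the determination of $\lambda_0^{\hbar}$ near $\mu=0$ prescribed in \cref{sec:branches} is the one giving $(\kappa_j^n)^{\hbar}$ with $\log\kappa_j$ as in \eqref{eq: OP_DN}, and $\re^{-\pi\ri n\hbar}$ for $\gamma_1$, consistently with \cref{lem:collision}. I also have to check that $(1\mp r^{2n})^{\hbar-1}$ stays on its principal branch along the whole segment. This holds because the rescaled contours meet no zero or pole of $\lambda_0$ in their interiors.
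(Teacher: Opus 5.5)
Your proposal is correct and follows essentially the same route as the paper, which only says the identities follow by monomial substitutions and omits the details. Your rescaling $\mu=\varrho r$, $t=r^{2n}$, reducing everything to Euler integrals of the first and second kind, together with the sheet factors $(\kappa_j^{n})^{\hbar}$ and $\re^{-\pi\ri n\hbar}$ from \cref{sec:branches}, supplies exactly those details; the exponents, the sign from $\kappa_j^{-2n}=-1$, and the factor $(-1)^{s+1}$ all check out.
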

\begin{proof}
The proof is elementary by appropriate monomial substitutions
in each case; we omit the details.
\end{proof}


\begin{prop}
\label{prop:B_general_closed}
Let $2\leq i\leq l-1$. Then
\begin{align}
\label{eq:single_beta}
\frac{\partial\Pi_j}{\partial\log\kappa_i}\Big|_{\rm OP} &= -\hbar\,\big(\kappa_j^{\,n}\big)^{\hbar}\sum_{s=0}^{2n-2}c_s(\kappa_i)\,\kappa_j^{\,s+1}\,\Beta\Big(f_s-\frac{\hbar}{2},\hbar\Big)\,,\qquad 2\leq j\leq l-1\,,\\
\label{eq:B_colcontour}
\frac{\partial\Pi_l}{\partial\log\kappa_i}\Big|_{\rm OP} = \frac{\partial\Pi_{l+1}}{\partial\log\kappa_i}\Big|_{\rm OP} &= \frac{\hbar}{2}\sum_{s=0}^{2n-2}c_s(\kappa_i)\,B_s\,,\\
\label{eq:reflection}
\frac{\partial\Pi_1}{\partial\log\kappa_i}\Big|_{\rm OP} &= \frac{\hbar}{2}\,\re^{-\pi\ri n\hbar}\sum_{s=0}^{2n-2}c_s(-\kappa_i)\,B_s = -\,\re^{-\pi\ri n\hbar}\,\frac{\partial\Pi_l}{\partial\log\kappa_{l+1-i}}\Big|_{\rm OP}\,.
\end{align}
\end{prop}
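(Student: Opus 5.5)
The plan is to substitute the mode expansion of \cref{lem:modes} into \eqref{eq:noncoalescing_derivative} and integrate term by term, using \cref{lem:beta_modes} for each contour. Since $2\leq i\leq l-1$, the parameter $\kappa_i|_{\rm OP}$ satisfies $\kappa_i^{2n}=-1$ by \cref{rmk:lambda_OP}, so \eqref{eq:mode_expansion} applies. It gives
\[
\frac{\partial\Pi_j}{\partial\log\kappa_i}\Big|_{\rm OP}=\hbar\sum_{s=0}^{2n-2}c_s(\kappa_i)\int_{\gamma_j}\lambda_0^{\hbar}\,G_s\,\frac{\rd\mu}{\mu}\,,
\]
where the $s=2n-1$ term drops because $c_{2n-1}\equiv0$. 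Each integral is understood as an Euler integral, continued meromorphically in $\hbar$ as stipulated before \cref{subsubsec:coalescing}.

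\textbf{Contours $\gamma_j$ with $2\leq j\leq l-1$.} Inserting \eqref{eq:beta_modes_regular} gives \eqref{eq:single_beta} directly. The only thing to check is that the sheet of $\lambda_0^{\hbar}$ near $\mu=0$ fixed in \cref{sec:branches} is $(\kappa_j^{n})^{\hbar}$ times the principal one, which is exactly the normalisation in \eqref{eq:beta_modes_regular}.

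\textbf{Contour $\gamma_{l+1}=[0,1]$.} I would first use the symmetry $c_{2n-2-s}=c_s$ to rewrite the sum as
\[
\tfrac12\sum_s c_s\,(G_s+G_{2n-2-s}).
\]
The second equality in \eqref{eq:beta_modes} then gives $\tfrac{\hbar}{2}\sum_s c_s(\kappa_i)B_s$.

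\textbf{Contour $\gamma_l=[0,\kappa_l^{-1}]$.} At $\rm OP$ this contour becomes $[0,1]$ as well, since $\kappa_l\to1$. I must check that its sheet agrees with that of $\Pi_{l+1}$, i.e.\ that no phase arises. This holds because $\log\kappa_l|_{\rm OP}=0$, and because for $i\neq l$ the integrand is regular at $\mu=1$ (the double pole of $\lambda$ there is cancelled by the coalescing zeros $\kappa_l^{\pm1}$). Hence the limit along $\rho$ equals the $[0,1]$ integral, proving \eqref{eq:B_colcontour}.

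\textbf{Contour $\gamma_1=[0,-1]$.} Apply \eqref{eq:beta_modes_first}. Using $c_s(-\kappa)=(-1)^{s+1}c_s(\kappa)$, the sign $(-1)^{s+1}$ is absorbed into $c_s(-\kappa_i)$. Then the same symmetrisation as for $\gamma_{l+1}$ gives the first equality in \eqref{eq:reflection}.

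\textbf{Second equality in \eqref{eq:reflection}.} I would show that $-\kappa_i|_{\rm OP}$ equals $\kappa_{l+1-i}|_{\rm OP}$ up to inversion. From \eqref{eq: OP_DN},
\[
\log\kappa_i+\log\kappa_{l+1-i}=-\pi\ri\,\frac{(2l-2i-1)+(2i-3)}{2n}=-2\pi\ri\,\frac{l-2}{2n}=-\pi\ri\,.
\]
Hence $\kappa_{l+1-i}=-\kappa_i^{-1}$, so $c_s(-\kappa_i)=-c_s(\kappa_{l+1-i})$, since $c_s(\kappa^{-1})=-c_s(\kappa)$. The index $l+1-i$ also lies in $[2,l-1]$, so \eqref{eq:B_colcontour} applies to it, and this yields the stated relation.

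\textbf{Main obstacle.} The delicate point is justifying term-by-term integration and the branch bookkeeping. Individual modes $G_s$ can have non-integrable behaviour where the full bracket does not, and the sheets on $\gamma_1$ and $\gamma_l$ are obtained as limits along $\rho$ through the coalescence. I would handle convergence by treating the exponents at $0$ and at the zeros as independent, so that each mode integral converges in some open region and the identity extends meromorphically. The phase on $\gamma_1$ would be tracked by continuing $\arg\lambda$ from $\mu\to0$ as prescribed in \cref{sec:branches}; this is what produces the factor $\re^{-\pi\ri n\hbar}$.
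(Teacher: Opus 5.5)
Your proposal is correct and follows the paper's proof essentially step for step. You substitute the mode expansion of \cref{lem:modes}, integrate term by term via \cref{lem:beta_modes}, and pair $s$ with $2n-2-s$ using $c_{2n-2-s}=c_s$ for the contours ending at $\pm1$. You then absorb $(-1)^{s+1}$ into $c_s(-\kappa_i)$ and use $-\kappa_i=\kappa_{l+1-i}^{-1}$ together with $c_s(\kappa^{-1})=-c_s(\kappa)$. Your extra checks on the sheet of $\gamma_l$ at $\rm OP$ and on convergence of the individual mode integrals (the latter handled in the paper only by noting that the endpoint singularity is integrable for $\mathrm{Re}\,\hbar>0$) are consistent with, and slightly more explicit than, the paper's argument.
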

\begin{proof}
Insert \eqref{eq:mode_expansion} into \eqref{eq:noncoalescing_derivative} and integrate term by term by \cref{lem:beta_modes}. This gives \eqref{eq:single_beta} directly, including the case $j=i$ as the endpoint pole is integrable for $\mathrm{Re}\,\hbar>0$. Pairing $s$ with $2n-2-s$, using $c_{2n-2-s}=c_s$, yields \eqref{eq:B_colcontour}.
The first expression in \eqref{eq:reflection} follows then from \eqref{eq:beta_modes_first}, as $(-1)^{s+1}c_s(\kappa)=c_s(-\kappa)$. Finally, the last equality in \eqref{eq:reflection} is $-\kappa_i=\kappa_{l+1-i}^{-1}$, by \eqref{eq: OP_DN}, together with $c_s(\kappa^{-1})=-c_s(\kappa)$.
\end{proof}

By \eqref{eq:B_structure} and \cref{lem:modes}, the directions $2\leq i\leq l-1$ enter $\cB$ only through the mode coefficients of the Jacobian of the change-of-variables $\log \kappa \to \log \kappa(x)$:
\beq
\label{eq:B_via_M}
\cB^{\rm reg}_{j,m}=\hbar\sum_{s=0}^{2n-2}M_{m,s}\,I_j(s)\,,\qquad
M_{m,s}:=-\sum_{i=2}^{l-1}\sum_{v=1}^{l}(w_i)_v\,D_{v,m}\,c_s(\kappa_i)\,,\qquad
I_j(s):=\int_{\gamma_j}\lambda_0^{\hbar}G_s\frac{\rd\mu}{\mu}\,,
\eeq
where $\cB^{\rm reg}$ omits the collision directions $i=1,l$ of \cref{lem:collision}. These do not enter the columns $1\leq m\leq n$, since $\sum_v(w_i)_v\chi_{\rho_v}(a^{m})=0$ for $i\in\{1,l\}$: a full period of the powers of $\re^{2\pi\ri m/(2n)}\neq1$ for $i=1$, and half the difference of two characters agreeing on $a$ for $i=l$.
\begin{prop}
\label{prop:telescoping}
With the sectors labelled as in \eqref{eq:class_enumeration}, for all $0\leq s\leq 2n-2$
\begin{equation*}
M_{l-1,s}=M_{l,s}=0\,,\qquad
M_{m,s}=\frac{(-1)^{m}}{2n}\Big(\delta_{s,\,m-1}+\delta_{s,\,2n-1-m}\Big)\quad(1\leq m\leq n)\,.
\end{equation*}
\end{prop}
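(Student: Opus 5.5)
We compute the inner sums $A_{i,m}\coloneqq\sum_{v=1}^l (w_i)_v\,D_{v,m}$ in closed form from the character table of \cref{sec:classical_McKay}. We then perform the sum over $i$ against $c_s(\kappa_i)$ as a discrete Fourier orthogonality relation on the roots of $\mu^{2n}=-1$. By \eqref{eq:vector_weights}, for $2\leq i\leq n$ the weight $w_i$ has coefficient $1$ on $\alpha_i,\dots,\alpha_n$, coefficient $\tfrac12$ on $\alpha_{l-1}$ and $\alpha_l$, and $0$ on $\alpha_1$. Also $w_{l-1}=\tfrac12(\alpha_{l-1}+\alpha_l)$. Hence
\[
\sum_v (w_i)_v\,\chi_{\rho_v}(g)=\sum_{k=i}^{n}\chi_{\rho_k}(g)+\tfrac12\big(\chi_{\rho_{l-1}}(g)+\chi_{\rho_l}(g)\big),\qquad 2\leq i\leq l-1,
\]
with an empty first sum for $i=l-1$.

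\emph{The sectors $m=l-1,l$.} Here $g_m\in\{b,ab\}$. The two-dimensional characters vanish on $b$ and $ab$, since $\rho_k(b)$ and $\rho_k(ab)$ are off-diagonal. Moreover, by \eqref{eq:one_dimensional_representations}, $\chi_{\rho_{l-1}}+\chi_{\rho_l}$ vanishes on both elements. So $A_{i,m}=0$ for every $2\leq i\leq l-1$, and therefore $M_{l-1,s}=M_{l,s}=0$ identically in $s$.

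\emph{The sectors $1\leq m\leq n$.} Here $g_m=a^m$. Write $\zeta=\re^{\ri\pi/n}$, so that $\chi_{\rho_k}(a^m)=\zeta^{m(k-1)}+\zeta^{-m(k-1)}$ and $\chi_{\rho_{l-1}}(a^m)=\chi_{\rho_l}(a^m)=(-1)^m=\zeta^{mn}$. The character sum then becomes a geometric series over a symmetric window. Using $\zeta^{2n}=1$,
\[
\sum_{j=i-1}^{2n-i+1}\zeta^{mj}=\frac{\zeta^{m(i-1)}-\zeta^{-m(i-2)}}{1-\zeta^m}.
\]
This formula also covers $i=l-1=n+1$. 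We multiply by the prefactor $\frac{|(a^m)|}{|G|}\sqrt{\chi_Q(a^m)-2}=\frac{2}{4n}\cdot 2\ri\sin\frac{\pi m}{2n}$, using the branch fixed in \cref{sec:equivariant_cohomology}. Since $1-\zeta^m=-2\ri\,\zeta^{m/2}\sin\frac{\pi m}{2n}$, the sines cancel, leaving
\[
A_{i,m}=-\frac{1}{2n}\big(\zeta^{m(i-3/2)}-\zeta^{-m(i-3/2)}\big).
\]

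\emph{Orthogonality.} By \eqref{eq: OP_DN}, $\kappa_i=\exp\!\big(-\pi\ri\,\tfrac{2n-2i+3}{2n}\big)=-\zeta^{\,i-3/2}$ for $2\leq i\leq l-1$. Thus $A_{i,m}=\frac{(-1)^{m+1}}{2n}(\kappa_i^{m}-\kappa_i^{-m})$, which has the same shape as $c_{m-1}(\kappa_i)$. Therefore
\[
M_{m,s}=\frac{(-1)^m}{(2n)^2}\sum_{i=2}^{l-1}(\kappa_i^{m}-\kappa_i^{-m})(\kappa_i^{-(s+1)}-\kappa_i^{s+1}).
\]
As $i$ ranges over $2,\dots,n+1$, the set $\{\kappa_i^{\pm1}\}$ runs exactly once over all $2n$ roots of $\mu^{2n}=-1$, because the exponents $\pm(2i-3)$ exhaust the odd residues mod $4n$. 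Expanding the product, the sum over $i$ becomes a sum over all roots $\varrho$ of $\varrho^{m-s-1}-\varrho^{m+s+1}$. Orthogonality $\sum_\varrho\varrho^{k}=2n\,\delta_{k\equiv0\ (2n)}\cdot(-1)^{k/2n}$ yields $2n$ when $s=m-1$. It also yields a contribution when $m+s+1=2n$, with sign $(-1)\cdot(-1)=+1$, and this happens exactly at $s=2n-1-m$. The ranges $1\leq m\leq n$ and $0\leq s\leq 2n-2$ exclude all other coincidences. This gives $M_{m,s}=\frac{(-1)^m}{2n}(\delta_{s,m-1}+\delta_{s,2n-1-m})$.

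The main obstacle is the phase bookkeeping in the last two steps: the branch of $\sqrt{\chi_Q-2}$, the half-integer power $\zeta^{m/2}$, and the sign $\varrho^{2n}=-1$ in the orthogonality relation must conspire to the overall $(-1)^m$ with no stray $\pm\ri$. I would first check these signs numerically for $\mathrm{D}_4$ and $\mathrm{D}_5$ before writing out the general case.
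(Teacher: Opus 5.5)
Your argument follows the paper's route. You evaluate $S_i(a^m)=\sum_v(w_i)_v\chi_{\rho_v}(a^m)$ in closed form; your geometric series over $i-1\le j\le 2n-i+1$ is the paper's telescoped ratio of sines. You then cancel $\sin\frac{\pi m}{2n}$ against $\sqrt{\chi_Q(a^m)-2}$ and finish with orthogonality on the roots of $\mu^{2n}=-1$. The case $m\in\{l-1,l\}$ and the case $1\le m\le n-1$ are correct as you wrote them, phases included.

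The gap is at $m=n$. You take $|(a^m)|=2$ throughout, but $a^n=-1$ is central, so $|(a^n)|=1$. The isometry \eqref{eq:McKay_cohomology} forces this: for $\mathrm{D}_4$, $-\sum_{v,v'}D_{v,2}D_{v',2}(\cC^{-1})_{vv'}=\tfrac18=(\mathbf{1}_2,\mathbf{1}_2)_{\cX}$ holds only with class size $1$. Your $A_{i,n}$ is therefore twice too large.

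At $m=n$ the two orthogonality contributions fall on the same index $s=n-1$, and the root sum equals $4n$ there. So the correct value is $M_{n,s}=\frac{(-1)^n}{2n}\delta_{s,n-1}$, while you obtain $\frac{(-1)^n}{n}\delta_{s,n-1}$. Your factor of two reproduces the displayed formula read literally, where $\delta_{s,m-1}+\delta_{s,2n-1-m}=2\delta_{s,n-1}$. That literal reading double counts: the bracket must be taken as the indicator of $\{m-1,2n-1-m\}$, equivalently divided by $1+\delta_{m,n}$.

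This is what the paper's sentence about class sizes producing the normalisation yields, and it is what is used downstream. Since $I_{l+1}(n-1)=\tfrac12 B_{n-1}$ by \eqref{eq:beta_modes}, the factor $1/(1+\delta_{m,n})$ in \cref{prop:B_closed_form} is consistent only with $M_{n,n-1}=(-1)^n/(2n)$. The $\mathrm{D}_4$ check you proposed would have caught this directly: there $M_{2,1}=\tfrac14$, not $\tfrac12$.

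The repair is one line. As written, though, your proof establishes a false identity at $m=n$ through a compensating error rather than proving the intended statement.
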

\begin{proof}[Proof (sketch)]
Write \[S_i(g)\coloneqq\sum_v(w_i)_v\chi_{\rho_v}(g)\,,\] so that, by \eqref{eq:McKay_cohomology},
\[
M_{m,s} = -\frac{|(g_m)|}{|G|}\sqrt{\chi_Q(g_m)-2}\;\sum_{i=2}^{l-1}S_i(g_m)\,c_s(\kappa_i)\,.
\]
For $m\in\{l-1,l\}$ every $S_i(g_m)$ vanishes, the two-dimensional characters being zero at $b$ and $ab$, with moreover $\rho_{l-1}(b)=-\rho_l(b)$. For $1\leq m\leq n$, the sum $S_{l-j}(a^{m})$, $1\leq j\leq n$, telescopes to
\[
S_{l-j}(a^{m}) = (-1)^{m}\frac{\sin\Big(\frac{(2j-1)\pi m}{2n}\Big)}{\sin\Big(\frac{\pi m}{2n}\Big)}\,,
\]
whose denominator cancels against $\sqrt{\chi_Q(a^{m})-2} = 2\ri\sin\big(\tfrac{\pi m}{2n}\big)$. On the other hand, by \eqref{eq: OP_DN},
\[
c_s(\kappa_{l-j}) = \frac{\ri}{n}\,\sin\bigg(\frac{(2j-1)\pi(s+1)}{2n}\bigg)\,.
\]
Summing the product of the two sines over $j$ is a discrete orthogonality relation, which is non-zero only for $s+1\in\{m,2n-m\}$. The class sizes $|(a^{m})|$ produce the normalisation $(-1)^{m}/(2n)$.
\end{proof}

\subsubsection{The closed form of $\cB$}
\label{subsubsec:B_assembled}
The only missing bit is the column $m=0$ of $\cB$, given by the vector of values of the periods at $\rm OP$. Using \cref{lem:beta_modes}, we get the following
\begin{lem}
\label{lem:OP_values}
At $\rm OP$,
\begin{equation*}
\begin{gathered}
\Pi_j\big|_{\rm OP} = \frac{(\kappa_j^{\,n})^{\hbar}}{2n}\,\Beta\big(-\tfrac{\hbar}{2},1+\hbar\big)\,,\qquad 2\leq j\leq l-1\,,\\
\Pi_l\big|_{\rm OP} = \Pi_{l+1}\big|_{\rm OP} = \frac{1}{4n}\,\Beta\big(-\tfrac{\hbar}{2},-\tfrac{\hbar}{2}\big)\,,\qquad
\Pi_1\big|_{\rm OP} = \re^{-\pi\ri n\hbar}\,\Pi_{l+1}\big|_{\rm OP}\,.
\end{gathered}
\end{equation*}
\end{lem}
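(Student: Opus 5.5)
The plan is to evaluate $\Pi_j|_{\rm OP}=\int_{\gamma_j}\lambda_0^{\hbar}\,\rd\mu/\mu$ directly, with $\lambda_0=\mu^{n}+\mu^{-n}$ by \cref{rmk:lambda_OP}. Each integral will be reduced to a Beta integral by the monomial substitution $v=\mu^{2n}$, or a variant of it, on the sheets fixed along $\rho$. All identities are first proved where the Euler integrals converge, and then extended to meromorphic identities in $\hbar$, exactly as in \cref{subsec:B_general}.

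\emph{The contours $\gamma_l=\gamma_{l+1}=[0,1]$.} At $\rm OP$ one has $\kappa_l=1$, so $\gamma_l$ and $\gamma_{l+1}$ coincide. I write $\lambda_0^{\hbar}=\mu^{-n\hbar}(1+\mu^{2n})^{\hbar}$ and substitute $v=\mu^{2n}$, giving
\[
\frac{1}{2n}\int_0^1 v^{-\hbar/2-1}(1+v)^{\hbar}\,\rd v .
\]
Since $\lambda_0$ is invariant under $\mu\mapsto1/\mu$ and $\rd\mu/\mu$ only changes sign, the integral over $[0,1]$ is half of the integral over $[0,\infty)$. The latter equals
\[
\frac{1}{2n}\int_0^\infty v^{-\hbar/2-1}(1+v)^{\hbar}\,\rd v=\frac{1}{2n}\Beta\big(-\tfrac{\hbar}{2},-\tfrac{\hbar}{2}\big),
\]
which yields the factor $\frac{1}{4n}$. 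This is the $s$-independent analogue of \eqref{eq:beta_modes}.

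\emph{The contour $\gamma_1=[0,-1]$.} I parametrise $\mu=-t$ with $t\in[0,1]$. Then $\lambda_0(-t)=(-1)^n\lambda_0(t)$ and $\rd\mu/\mu=\rd t/t$. On the sheet of $\Pi_1$ fixed near $\mu=0$ (with $\arg\kappa_1\to-\pi$), the phase is $\re^{-\pi\ri n\hbar}$, as already used in \cref{lem:collision} and \eqref{eq:beta_modes_first}. This gives $\Pi_1|_{\rm OP}=\re^{-\pi\ri n\hbar}\Pi_{l+1}|_{\rm OP}$.

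\emph{The contours $\gamma_j$, $2\leq j\leq l-1$.} Here $\kappa_j^{-1}$ is a simple zero of $\lambda_0$ with $\kappa_j^{-2n}=-1$. I substitute $\mu=t\kappa_j^{-1}$, so that
\[
\lambda_0=\kappa_j^{n}\,t^{-n}(1-t^{2n}),
\]
and then $v=t^{2n}$. The result is
\[
\frac{(\kappa_j^{n})^{\hbar}}{2n}\int_0^1 v^{-\hbar/2-1}(1-v)^{\hbar}\,\rd v=\frac{(\kappa_j^{n})^{\hbar}}{2n}\,\Beta\big(-\tfrac{\hbar}{2},1+\hbar\big).
\]
The determination of $(\kappa_j^n)^{\hbar}$ is the one transported along $\rho$ from the asymptotics $\lambda^{\hbar}\sim(\kappa_{l+1}\kappa_j^{l-2})^{\hbar}s^{-(l-2)\hbar}$, with $x_0=0$.

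The main obstacle is branch bookkeeping rather than the integrals themselves. One must check that along each contour $\arg\lambda_0$ does not jump relative to the determination at $\mu=0$. For $\gamma_j$ this holds because $1-t^{2n}>0$ on $(0,1)$. For $[0,1]$, $\lambda_0$ is positive. For $[0,-1]$ the constant phase $(-1)^n$ is fixed by the limiting value $\arg\kappa_1\to-\pi$ along $\rho$, rather than by the principal branch.
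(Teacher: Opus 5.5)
Your proposal is correct and is essentially the paper's argument. The paper obtains the lemma from the same monomial substitutions that underlie \cref{lem:beta_modes}, on the sheets fixed in \cref{sec:branches}: the rescaling $\mu=t\kappa_j^{-1}$ followed by $v=t^{2n}$, the $\mu\mapsto1/\mu$ symmetry that turns the integral over $[0,1]$ into half a complete Beta integral, and the phase $\re^{-\pi\ri n\hbar}$ on the sheet of $\Pi_1$. One point is left implicit both in your proposal and in the paper: taking the limit along $\rho$ at the coalescing endpoints $\mu=\pm1$ is harmless for the values themselves, because the boundary layer contributes only $O(h)$ there, unlike for the derivatives in \cref{lem:collision}.
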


The remaining columns are obtained through a straightforward, if tedious, calculation of the product \eqref{eq:B_structure} of the Jacobian matrices of \cref{lem:collision,prop:B_general_closed} and of \cref{prop:telescoping}.
\begin{prop}
\label{prop:B_closed_form}
Let $\varepsilon=\rho_{l-1}(b)=-\rho_l(b)$ be as in \cref{sec:classical_McKay}, and $\beta(\hbar)$ as in \cref{lem:collision}. Then, for $1\leq m\leq n$,
\beq
\label{eq:B_single_beta}
\cB_{j,m} = \frac{\hbar\,\Beta\big(1-\tfrac{m}{2n}-\tfrac{\hbar}{2},\tfrac{m}{2n}-\tfrac{\hbar}{2}\big)}{2n\,(1+\delta_{m,n})}
\begin{cases}
-\dfrac{(-1)^{m}\,\big(\kappa_j^{\,n}\big)^{\hbar}}{\sin(\pi\hbar)}\Big[\kappa_j^{\,m}\,\sin\tfrac{\pi(m-n\hbar)}{2n}-\kappa_j^{-m}\,\sin\tfrac{\pi(m+n\hbar)}{2n}\Big] & 2\leq j\leq l-1\,,\\[10pt]
(-1)^{m} & j\in\{l,l+1\}\,,\\[6pt]
\re^{-\pi\ri n\hbar} & j=1\,,
\end{cases}
\eeq
and, for $m\in\{l-1,l\}$, with the upper sign for $m=l-1$,
\begin{equation*}
\cB_{j,m} = \frac{\beta(\hbar)}{2\sqrt2}
\begin{cases}
\pm\,\ri\,\varepsilon\cos(\pi\hbar) & j=l\,,\\[3pt]
\pm\,\varepsilon\sin(\pi\hbar) & j=l+1\,,\\[3pt]
\ri\,\re^{-\pi\ri n\hbar}\cos(\pi\hbar) & j=1\,,\\[3pt]
0 & 2\leq j\leq l-1\,.
\end{cases}
\end{equation*}
\end{prop}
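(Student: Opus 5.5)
The plan is to evaluate \eqref{eq:B_structure} column by column, splitting the sum over $i$ into the collision directions $i\in\{1,l\}$ (handled by \cref{lem:collision}) and the regular directions $2\leq i\leq l-1$ (handled by \eqref{eq:B_via_M} and \cref{prop:telescoping}).

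\emph{Columns $1\leq m\leq n$.} Here the collision directions drop out, since $\sum_v (w_i)_v D_{v,m}=0$ for $i\in\{1,l\}$, as noted after \eqref{eq:B_via_M}. Hence $\cB_{j,m}=\cB^{\rm reg}_{j,m}$. By \cref{prop:telescoping}, the sum over $s$ in \eqref{eq:B_via_M} collapses to the two modes $s=m-1$ and $s=2n-1-m$, each with weight $(-1)^m/(2n)$. When $m=n$ these coincide, and the single surviving term accounts for the factor $1/(1+\delta_{m,n})$.
\begin{itemize}
\item For $j\in\{l,l+1\}$, insert $I_j(s)$ from \eqref{eq:beta_modes}. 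One has $\int_0^1\lambda_0^\hbar(G_s+G_{2n-2-s})\,\rd\mu/\mu=B_s$ and $B_{m-1}=B_{2n-1-m}$, with $f_{m-1}=1-\tfrac{m}{2n}$. This yields $\hbar(-1)^mB_{m-1}/(2n(1+\delta_{m,n}))$, which is the stated Beta prefactor. Care is needed with the $\tfrac12$ in \eqref{eq:B_colcontour} versus the pairing of the two modes.
\item For $j=1$, use \eqref{eq:beta_modes_first}. The signs $(-1)^{s+1}$ at $s=m-1$ and $s=2n-1-m$ both equal $(-1)^m$, which cancels the $(-1)^m$ from $M$ and leaves $\re^{-\pi\ri n\hbar}$.
\item For $2\leq j\leq l-1$, use \eqref{eq:beta_modes_regular}. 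This produces
\[
-(\kappa_j^n)^\hbar\big[\kappa_j^{m}\Beta(1-\tfrac{m}{2n}-\tfrac\hbar2,\hbar)+\kappa_j^{2n-m}\Beta(\tfrac{m}{2n}-\tfrac\hbar2,\hbar)\big].
\]
Since $\kappa_j^{2n}=-1$, the second monomial is $-\kappa_j^{-m}$. To bring this into the form of the statement, factor out $\Beta(1-\tfrac{m}{2n}-\tfrac\hbar2,\tfrac{m}{2n}-\tfrac\hbar2)$ and apply the reflection formula $\Gamma(x)\Gamma(1-x)=\pi/\sin\pi x$ to the quotients. With $x=\tfrac{m}{2n}+\tfrac\hbar2$ and $x=\hbar$, the ratio $\Beta(a,\hbar)/\Beta(a,1-a-\hbar)$ for $a=f_{m-1}-\tfrac\hbar2$ becomes $\Gamma(\hbar)\Gamma(1-\hbar)/(\Gamma(1-a-\hbar)\Gamma(a+\hbar))$ times a ratio of sines. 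This gives $\sin\tfrac{\pi(m-n\hbar)}{2n}/\sin\pi\hbar$, and symmetrically for the other term. Tracking the signs of these sine factors is the main bookkeeping risk.
\end{itemize}

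\emph{Columns $m\in\{l-1,l\}$.} By \cref{prop:telescoping}, $M_{m,s}=0$, so only the collision directions contribute. We need $\sum_v (w_i)_v D_{v,m}$ for $i=1,l$. By \eqref{eq:vector_weights}, $w_l=\tfrac12(\alpha_l-\alpha_{l-1})$. Only the one-dimensional characters are nonzero at $b$ and $ab$, and $\rho_{l-1}(b)=-\rho_l(b)=\varepsilon$, while $\rho_1(b)=-1$. This gives, for $i=l$,
\[
-\tfrac12\tfrac{|(b)|}{|G|}\sqrt{-2}\,(\mp2\varepsilon),
\]
with $|(b)|=n$, $|G|=4n$, and $\sqrt{\chi_Q(b)-2}=\ri\sqrt2$. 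For $i=1$ one obtains a $\rho_1$ contribution, $w_1$ having all coefficients at least $\tfrac12$ with the $E_1$ coefficient equal to $1$, so that this contribution is sign independent. Multiplying by \cref{lem:collision} then gives the $j=l$, $j=l+1$ and $j=1$ entries and zero otherwise, with prefactor $\beta(\hbar)/(2\sqrt2)$.

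The main obstacle is the consistent evaluation of these character sums, including the $\pm\ri\varepsilon$ phases and the handling of $l$ odd versus $l$ even. I would check them numerically in the case $\mathrm{D}_4$.
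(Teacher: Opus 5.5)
Your route is the paper's: $\cB$ is the product \eqref{eq:B_structure} of the collision derivatives of \cref{lem:collision}, the mode integrals $I_j(s)$ of \cref{lem:beta_modes} and the mode coefficients $M_{m,s}$ of \cref{prop:telescoping}. Your computations are correct for $1\leq m\leq n-1$, including the reflection step producing $\sin\frac{\pi(m\mp n\hbar)}{2n}/\sin(\pi\hbar)$ and the cancellation of $(-1)^m$ in row $j=1$. They are also correct for the collision columns: $S_1(g_m)=-1$, $S_l(g_m)=\mp\varepsilon$, and the prefactor $\frac{|(b)|}{|G|}\,\ri\sqrt{2}=\frac{\ri}{2\sqrt{2}}$.

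The step that does not hold as written is your explanation of the factor $(1+\delta_{m,n})^{-1}$. If the modes $s=m-1$ and $s=2n-1-m$ each carry weight $(-1)^m/(2n)$ and coincide at $m=n$, the two Kronecker deltas add. Taken literally, \cref{prop:telescoping} then gives $M_{n,s}=\frac{(-1)^n}{n}\,\delta_{s,n-1}$. Since $G_{n-1}$ is its own partner, $I_l(n-1)=I_{l+1}(n-1)=\frac{1}{2}B_{n-1}$. You would therefore obtain $\cB_{l,n}=\hbar(-1)^nB_{n-1}/(2n)$, twice the stated value, with the same doubling in every row. The coincidence of modes by itself does not produce the factor.

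The missing ingredient is the conjugacy class size in $D_{v,m}$. The element $g_n=a^n=-1$ is central, so $|(g_n)|=1$, while $|(g_m)|=2$ for $1\leq m\leq n-1$. Redoing the orthogonality sum in the proof of \cref{prop:telescoping} gives
\[
\sum_{j=1}^{n}\sin\frac{(2j-1)\pi m}{2n}\,\sin\frac{(2j-1)\pi(s+1)}{2n}=\frac{n}{2}\big(\delta_{s,m-1}+\delta_{s,2n-1-m}\big)\,,\qquad M_{m,s}=\frac{|(g_m)|\,(-1)^m}{4n}\big(\delta_{s,m-1}+\delta_{s,2n-1-m}\big)\,.
\]
Hence $M_{n,s}=\frac{(-1)^n}{2n}\,\delta_{s,n-1}$. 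In $\mathrm{D}_4$ a direct computation gives $M_{2,1}=\frac{1}{4}$, not $\frac{1}{2}$.

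With this weight, the single mode $s=n-1$ reproduces the stated entries at $m=n$:
\begin{itemize}
\item in rows $l$ and $l+1$, $\hbar(-1)^nB_{n-1}/(4n)$;
\item in row $1$, $\hbar\,\re^{-\pi\ri n\hbar}B_{n-1}/(4n)$;
\item for $2\leq j\leq l-1$, the stated bracket collapses to $\hbar\frac{(-1)^n}{2n}I_j(n-1)$, using $\kappa_j^{-n}=-\kappa_j^{n}$ and $\Beta\big(\frac12-\frac{\hbar}{2},\hbar\big)/\Beta\big(\frac12-\frac{\hbar}{2},\frac12-\frac{\hbar}{2}\big)=1/\big(2\sin\frac{\pi\hbar}{2}\big)$.
\end{itemize}
So the $m=n$ column has to be obtained by carrying the class sizes through the telescoping step, not from the coincidence of modes.
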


\begin{rmk}
\label{rmk:thesis_correction}
For $l=3$, \cref{prop:B_closed_form} corrects the expression for the matrix $\cB$ given in \cite[Prop.~3.5.8, Eq.~(3.150)]{Ma:2026ths}. This correction was obtained after an AI-assisted review of our {\it Wolfram Language} code using interactive sessions in Claude Code (Fable 5.1).
\end{rmk}

\subsection{Computing $\cA$}
\label{subsec:LR_general}
We keep the notation of \cref{subsec:B_general} and the localised basis \eqref{eq:loc_basis_general}, and set $\tau_0=0$, i.e.\ $\kappa_{l+1}=1$: the common factor $\re^{\tau_0/z}=\kappa_{l+1}^{\hbar}$ of $J^{Y}$ and $\Pi$ is immaterial for $\cA$.

\subsubsection{Localisation and the asymptotics of $J^{Y}$}
The constraints on the weights in \eqref{eq:localisation_normal_weights} are solved by
\[
w_i^L = 2\nu\,(n+1-i)\,,\qquad w_i^R = -2\nu\,(n-1-i)\,,\qquad 1\leq i\leq n-2\,.
\]
Contracting \eqref{eq:localisation_normal_weights} with the inverse Cartan matrix of type $\mathrm{D}_l$,
the localisation of the basis \eqref{eq:phi_basis}
reads
\beq
\label{eq:loc_phi_general}
i^*(\phi_k) =
\begin{cases}
-2\nu\displaystyle\sum_{i=1}^{n-1}\mathfrak{n}_{ik}\,P_i-2\nu k\,\mathbf{1}_C+\delta_{k,n}\,\omega_C-2\nu k\,(P_{l-1}+P_l) & 1\leq k\leq n\,,\\[10pt]
-\nu n\Big(\displaystyle\sum_{i=1}^{n-1}P_i+\mathbf{1}_C+P_l\Big)-\nu l\,P_{l-1} & k=l-1\,,\\[10pt]
-\nu n\Big(\displaystyle\sum_{i=1}^{n-1}P_i+\mathbf{1}_C+P_{l-1}\Big)-\nu l\,P_{l} & k=l\,,
\end{cases}
\eeq
where
\[
\mathfrak{n}_{ik}\coloneqq
\begin{cases}
n & i\leq k\,,\\
k & i>k\,.
\end{cases}
\]

\begin{prop}
\label{prop:JZ_asymptotics_general}
The leading asymptotics at $\rm LR$ of the components \eqref{eq:JYcomps} of $J^{Y}$ are
\beq
\label{eq:J_asymp_general}
J^{Y}_j \sim z\,
\begin{cases}
(\kappa_1\cdots\kappa_{j-1})^{\hbar}\,\kappa_j^{(n+1-j)\hbar} & 1\leq j\leq n\,,\\[6pt]
\Big(\dfrac{\kappa_1\cdots\kappa_{l-1}}{\kappa_l}\Big)^{\hbar} & j=l-1\,,\\[10pt]
(\kappa_1\cdots\kappa_{l})^{\hbar} & j=l\,,
\end{cases}
\eeq
and
\[
J^{Y}_C \sim \tau_n\,(\kappa_1\cdots\kappa_{n})^{\hbar} = \log\Big(\frac{\kappa_{l-1}}{\kappa_n}\Big)\,(\kappa_1\cdots\kappa_{n})^{\hbar}\,.
\]
\end{prop}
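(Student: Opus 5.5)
The plan is to combine the leading asymptotics \eqref{eq:asymp_points} and \eqref{eq:asymp_curve} with an explicit evaluation of the localised coordinates $L^j(\tau)$ and $L^C(\tau)$ of \eqref{eq:localisation_morphism}. By the string and divisor equations, the curve-class corrections are $O(\re^{\langle\beta,\tau\rangle})$ with $\beta\neq0$ effective, so they vanish as $\mathrm{Re}\,\tau_j\to-\infty$. We get $J^Y_j\sim z\,\re^{L^j(\tau)/z}$ and $J^Y_C\sim L^C(\tau)\,\re^{L^{l-2}(\tau)/z}$, with $\omega_C^2=0$ killing the higher terms in the expansion of $\re^{\tau/z}$ along $C$. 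Everything therefore reduces to computing $i^*(\tau)=\sum_k\tau_k\,i^*(\phi_k)$ with $\tau_0=0$, using \eqref{eq:loc_phi_general}.

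First, read off the coefficients from \eqref{eq:loc_phi_general}:
\[
L^j(\tau)=-2\nu\sum_{k=1}^n\mathfrak{n}_{jk}\tau_k-\nu n(\tau_{l-1}+\tau_l),\qquad 1\le j\le n-1.
\]
The coefficient of $\mathbf{1}_C$ follows the same pattern with $\mathfrak{n}_{nk}=k$. The coefficients at $P_{l-1}$ and $P_l$ carry the asymmetric term $-\nu l$, and the coefficient of $\omega_C$ is just $L^C=\tau_n$. Dividing by $z$ and writing $2\nu/z=\hbar$ turns each exponent into $\hbar$ times a linear form in $\tau$.

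Next, convert these linear forms to the $\kappa$ variables using \eqref{eq:logkappa_weights}, $\log\kappa_i=-\sum_j(w_i)_j\tau_j$, together with the explicit weights \eqref{eq:vector_weights}. The cleanest route is to invert: $\tau_j=-\langle\alpha_j,s\rangle$ with $\alpha_j=\epsilon_j-\epsilon_{j+1}$ for $j\le l-1$ and $\alpha_l=\epsilon_{l-1}+\epsilon_l$. This gives $\tau_j=\log\kappa_{j+1}-\log\kappa_j$ for $j\le l-1$ and $\tau_l=-\log(\kappa_{l-1}\kappa_l)$, which one should double-check against the sign conventions. Substituting these and using telescoping sums, such as $\sum_{k\ge j}\tau_k$ with $\mathfrak{n}_{jk}=n$ for $k\ge j$ and $k$ for $k<j$, one verifies for each $j$ that $L^j/z$ equals $\hbar\log\big((\kappa_1\cdots\kappa_{j-1})\kappa_j^{n+1-j}\big)$. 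The cases $j=l-1,l$ are handled the same way, with the $\pm\nu$ asymmetry producing the factor $\kappa_l^{\mp1}$.

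The main obstacle is bookkeeping rather than concept. The index range $1\le j\le n$ in \eqref{eq:J_asymp_general} covers the points $P_1,\dots,P_{n-1}$ together with $C$ as $j=n$, and the boundary terms must telescope correctly. Errors in the sign of $\tau_l$ or of the $\kappa_{l}^{\pm}$ exponents are the likely pitfall, so I would check the formula directly in the $\mathrm{D}_4$ case ($n=2$) before writing the general telescoping. Finally, $L^C=\tau_n=\log\kappa_{n+1}-\log\kappa_n=\log(\kappa_{l-1}/\kappa_n)$, and the prefactor $\re^{L^{l-2}/z}$ is the $j=n$ expression, $(\kappa_1\cdots\kappa_n)^{\hbar}$ since $\kappa_n^{(n+1-n)\hbar}=\kappa_n^\hbar$, which gives the stated asymptotics of $J^Y_C$.
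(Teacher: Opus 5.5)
Your proposal is correct and follows essentially the same route as the paper: you read off $L^j$ and $L^C=\tau_n$ from \eqref{eq:loc_phi_general}, insert them into \eqref{eq:asymp_points}--\eqref{eq:asymp_curve}, and convert to the $\kappa$'s via \eqref{eq:logkappa_weights}. The only cosmetic difference is that you invert to $\tau_j=\log(\kappa_{j+1}/\kappa_j)$ and $\tau_l=-\log(\kappa_{l-1}\kappa_l)$, whereas the paper writes each $\log\kappa_r$ in terms of the $\tau$'s; your signs agree with the paper's conventions, so the telescoping goes through as you describe.
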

\begin{proof}
By \eqref{eq:asymp_points}--\eqref{eq:asymp_curve},
\[
J^{Y}_j\sim z\,\exp\Big(\frac{L^j(\tau)}{z}\Big)\,,\qquad J^{Y}_C\sim L^C(\tau)\,\exp\Big(\frac{L^{l-2}(\tau)}{z}\Big)\,,
\]
and the $L$'s are read off from \eqref{eq:loc_phi_general}:
\[
L^j(\tau) = -2\nu\Big[\sum_{k<j}k\,\tau_k+n\sum_{k=j}^{n}\tau_k+\frac{n}{2}(\tau_{l-1}+\tau_l)\Big]\quad(j\leq n)\,,\qquad L^C(\tau) = \tau_n\,,
\]
and similarly for $j=l-1,l$. Converting to the $\kappa$'s by \eqref{eq:logkappa_weights} and \eqref{eq:vector_weights}, i.e.
\[
\log\kappa_r = -\sum_{k=r}^{n}\tau_k-\frac{1}{2}(\tau_{l-1}+\tau_l)\quad(r\leq n)\,,\qquad \log\kappa_{l-1} = -\frac{1}{2}(\tau_{l-1}+\tau_l)\,,\qquad \log\kappa_l = \frac{1}{2}(\tau_{l-1}-\tau_l)\,,
\]
gives \eqref{eq:J_asymp_general}.
\end{proof}

\subsubsection{The comparison matrix $\cA$ at large radius}
The leading large-radius asymptotics of the periods are obtained from the reduced Lauricella forms
\begin{align}
\label{eq:Lauricella_reduced_general}
\Pi_i &= \Beta(-n\hbar,1+\hbar)\,\,\kappa_i^{n\hbar}\,F_D^{(i-1)}\Big(-n\hbar;-\hbar,\dots,-\hbar;1-(n-1)\hbar\,\Big|\,\frac{\kappa_{i-1}}{\kappa_i},\dots,\frac{\kappa_1}{\kappa_i}\Big)\,,\qquad 1\leq i\leq l\,,\nonumber\\
\Pi_{l+1} &= \Beta(-n\hbar,1-2\hbar)\,\,F_D^{(l+1)}\Big(-n\hbar;2\hbar,-\hbar,\dots,-\hbar;1-l\hbar\,\Big|\,-1,\kappa_l,\dots,\kappa_1\Big)
\end{align}
of \cref{prop:hypergeometric_expression}, by applying the connection formulas of \cref{appendix: Lauricella_func} recursively to the largest variable: \cref{prop: acformula_generic} in the generic case, and \eqref{eq:acformula_log} in the resonant case $b_N=a$, which occurs for the last three rows.
The resulting monomials are precisely the exponentials of \cref{prop:JZ_asymptotics_general}, and $\cA^{-1}$ is read off from their coefficients (cf \cite[Lem.~5.8]{Brini:2019wts}). 

The outcome is the following closed form. Set
\begin{align}
\label{eq:LR_entries_general}
a_k &:= \frac{\mathsf{q}^{n}}{z}\,\Beta\big(-(n+1-k)\hbar,\,(n-k)\hbar\big)\,,\qquad 1\leq k\leq n-1\,,\nonumber\\
b_i &:= \frac{\mathsf{q}^{i-1}}{z}\,\Beta\big(1+\hbar,\,-(n+1-i)\hbar\big)\,,\qquad 1\leq i\leq n\,,\qquad\text{so that}\qquad b_n = -\frac{\pi\,\mathsf{q}^{n-1}}{z\sin(\pi\hbar)}\,,\nonumber\\
c &:= \frac{\mathsf{q}^n\big(\pi\ri - H_{\hbar}-H_{-1-\hbar}\big)}{z}\,,\qquad d := -\frac{2\,\mathsf{q}^n H_{-1-\hbar}}{z}\,,\qquad e := \frac{\mathsf{q}^{n+1}}{z}\,\Beta(1+\hbar,\hbar)\,,\nonumber\\
f &:= \frac{\mathsf{q}^{n}}{z}\,\Beta(\hbar,-2\hbar)\,,\qquad g := \frac{(1-\hbar)\,\mathsf{q}^{l}}{2\hbar\, z}\,\Beta(1-2\hbar,1+\hbar)
\,,
\end{align}
where
\beq
\label{eq:def_q}
\mathsf{q} \coloneqq \re^{2\pi\ri\nu/z} = \re^{\pi\ri\hbar}\,,\qquad H_{x} \coloneqq \psi(1+x) + \gamma = \frac{\Gamma'(1+x)}{\Gamma(1+x)} + \gamma\,,
\eeq
$H_x$ being the harmonic number function.

\begin{prop}
\label{lem:LR_general}
In the basis \eqref{eq:loc_basis_general} of $H(Y)$, ordering the rows of $\cA^{-1}$ as $(\Pi_1,\dots,\Pi_{l+1})$, the matrix $\cA^{-1}$ is lower triangular, with entries
\beq
\label{eq:Ainv_general}
(\cA^{-1})_{ij} =
\begin{cases}
a_j & 1\leq j<n,\ i>j\,, \\
b_i & i=j \leq n\,, \\
c & i=n+1, j=n\,, \\
d & i\in\{l,l+1\}, j=n\,, \\
-\mathsf{q}^n & n+1\leq i\leq l+1, j=n+1\,, \\
e & i=j=l\,, \\
f & i=l+1, j=l\,, \\
g & i=j=l+1\,, \\
0 & \text{otherwise}\,.
\end{cases}  
\eeq
\end{prop}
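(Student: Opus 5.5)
We determine $\cA^{-1}$ by matching the large-radius asymptotics of the periods against those of $J^Y$. Since $\cA\,\Pi = J^Y$, the identity $\Pi = \cA^{-1}J^Y$ says that the $i$-th row of $\cA^{-1}$ lists the coefficients of $\Pi_i$ along the components $J^Y_1,\dots,J^Y_{n-1},J^Y_{l-2},J^Y_C,J^Y_{l-1},J^Y_l$. By \cref{prop:JZ_asymptotics_general} these components have pairwise distinct leading monomials in the $\kappa$'s, except for the Jordan pair $(J^Y_{l-2},J^Y_C)$, which differ only by the factor $\log(\kappa_{l-1}/\kappa_n)$. The flat sections are determined by their monodromy type at LR: each $J^Y_j$ is an eigenvector of the monodromy, and $J^Y_C$ is the generalised eigenvector. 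So it suffices to expand each $\Pi_i$ along $\rho$ near LR and read off the coefficient of each leading monomial, including the coefficient of the logarithm for the Jordan block.

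First I would treat the rows $1\le i\le n$, using the reduced forms \eqref{eq:Lauricella_reduced_general}. Here $\Pi_i$ involves only the variables $\kappa_{k}/\kappa_i$ with $k<i$, all of modulus $>1$ along $\rho$. I would apply the connection formula \cref{prop: acformula_generic} recursively to the largest variable $\kappa_1/\kappa_i$, then to $\kappa_2/\kappa_i$, and so on. At each step there are two terms: one continues with $F_D^{(N-1)}$ in the remaining variables, and the other yields the monomial $\kappa_1^{\hbar}\cdots$. After $i-1$ steps this produces the monomials of $J^Y_j$ for $j\le i$. The Gamma-function prefactors telescope: at step $k$ the exponent parameters shift by $\hbar$, which gives Beta functions of the form $\Beta(-(n+1-k)\hbar,(n-k)\hbar)$. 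The sheet choices along $\rho$ (with $\arg\kappa_k\in(-\pi,0)$) turn the factors $(-1)^{\pm\hbar}$ into the powers of $\mathsf{q}$. The diagonal entry $b_i$ comes from the last step, and since it never involves a later variable the matrix is lower triangular. Dividing by the normalisation $z$ of $J^Y_j$ gives the factor $1/z$.

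For the rows $i\in\{n+1,l,l+1\}$ the Lauricella parameter $b_N=-\hbar$ attached to $\kappa_n/\kappa_i$ (respectively $\kappa_{l-1}^{\pm1}$) satisfies the resonance $b_N=a+\dots$ that collapses two exponents. Here I would use the logarithmic connection formula \eqref{eq:acformula_log}. This produces a term $\log(\kappa_{l-1}/\kappa_n)\,(\kappa_1\cdots\kappa_n)^\hbar$, whose coefficient is the entry $-\mathsf{q}^n$ in column $n+1$ (the $J^Y_C$ column), plus a nonlogarithmic term with digamma constants. Extracting that constant, after subtracting the contribution of the $\log$ coefficient times the normalisation of $J^Y_{l-2}$, gives $c$ and $d$ via $H_\hbar$ and $H_{-1-\hbar}$; the $\pi\ri$ in $c$ comes from the branch of $\log(-1)$ along $\rho$. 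The remaining entries $e,f,g$ would come from the final generic steps in the variables $-1$ and $\kappa_l^{\pm1}$. For $\Pi_{l+1}$, the parameter $2\hbar$ at $-1$ should produce $\Beta(1-2\hbar,1+\hbar)$ and the prefactor $(1-\hbar)/(2\hbar)$ by Gamma recurrences.

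The main obstacle is the resonant step: getting the constant terms $c,d$ exactly right requires tracking consistent branches of $\log$ and of $\lambda^\hbar$ through the degenerate connection formula. I would check this numerically in low rank ($l=4,5$) against a direct quadrature of $\Pi_i$ along $\rho$, and confirm triangularity by checking that no term with monomial $\kappa_k^\hbar$ for $k>i$ ever appears in $\Pi_i$.
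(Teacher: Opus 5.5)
Your plan is essentially the paper's proof: you match the leading large-radius asymptotics of each $\Pi_i$ against the localised components of $J^{Y}$, with the logarithm detecting the $\omega_C$ column. You then peel off the largest variable recursively with \cref{prop: acformula_generic} to obtain the $a_k$ and $b_i$, and switch to the resonant formula \eqref{eq:acformula_log} once $a=b_N=-\hbar$ in the rows $\Pi_{l-1},\Pi_l,\Pi_{l+1}$, which yields $-\mathsf{q}^n$, $c$ and $d$.

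Two details need correcting. First, $e$ comes from the non-logarithmic branch of \eqref{eq:acformula_log} itself, not from a further generic step. Second, $g$ does not follow from Gamma recurrences: after the generic step in $\kappa_l$, the $P_l$ branch of $\Pi_{l+1}$ still carries the factor ${}_2F_1(2\hbar,2\hbar;1\,|\,-1)$, which you must evaluate with Kummer's theorem as $\Gamma(1+\hbar)/\big(\Gamma(1+2\hbar)\Gamma(1-\hbar)\big)$. The paper's ``similar argument'' also leaves this step implicit.
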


\begin{proof}
We write 
\[\Pi_j \simeq \sum_{\alpha\neq\omega_C} z\,(\cA^{-1})_{j,\alpha}\,\exp\Big(\frac{L^{\alpha}(\tau)}{z}\Big)+(\cA^{-1})_{j,\omega_C}\log\Big(\frac{\kappa_{l-1}}{\kappa_n}\Big)\exp\Big(\frac{L^{l-2}(\tau)}{z}\Big)\]
for the leading asymptotics, where $\alpha$ runs over the localised basis \eqref{eq:loc_basis_general} and $L^{\alpha}(\tau)$ is the coefficient of $\alpha$ in \eqref{eq:localisation_morphism}, so that $L^{P_k}=L^{k}$ and $L^{\mathbf 1_C}=L^{l-2}$. We identify the monomials with the fixed components using \cref{prop:JZ_asymptotics_general}:
\begin{align}
\label{eq:monomials_general}
P_k &\leftrightarrow (\kappa_1\cdots\kappa_{k-1})^{\hbar}\kappa_k^{(n+1-k)\hbar}\,,\qquad
\mathbf{1}_C \leftrightarrow (\kappa_1\cdots\kappa_n)^{\hbar}\,,\qquad
\omega_C \leftrightarrow \log\Big(\frac{\kappa_{l-1}}{\kappa_n}\Big)(\kappa_1\cdots\kappa_n)^{\hbar}\,,\nonumber\\
P_{l-1}&\leftrightarrow \Big(\frac{\kappa_1\cdots\kappa_{l-1}}{\kappa_l}\Big)^{\hbar}\,,\qquad
P_{l}\leftrightarrow (\kappa_1\cdots\kappa_{l})^{\hbar}\,.
\end{align}

Fix $1\leq i\leq l$ and put, for $0\leq K\leq \min(i-1,n-1)$,
\begin{equation*}
\mathsf{a}_K := -(n-K)\hbar\,,\qquad \mathsf{c}_K := 1-(n-1-K)\hbar\,,\qquad
F_i^{(K)} := F_D^{(i-1-K)}\Big(\mathsf{a}_K;-\hbar,\dots,-\hbar;\mathsf{c}_K\,\Big|\,\frac{\kappa_{i-1}}{\kappa_i},\dots,\frac{\kappa_{K+1}}{\kappa_i}\Big)\,,
\end{equation*}
so that \[\Pi_i = \Beta(\mathsf{a}_0,1+\hbar)\,\kappa_i^{n\hbar}F_i^{(0)}\] by \eqref{eq:Lauricella_reduced_general}. We claim that for $0\leq K\leq\min(i-1,n-1)$
\beq
\label{eq:induction_general}
\Pi_i \;\simeq\; \sum_{k=1}^{K} z\,a_k\,(\kappa_1\cdots\kappa_{k-1})^{\hbar}\kappa_k^{(n+1-k)\hbar}
\;+\;\mathsf{q}^{K}\,\Beta(1+\hbar,\mathsf{a}_K)\,\,(\kappa_1\cdots\kappa_K)^{\hbar}\,\kappa_i^{(n-K)\hbar}\,F_i^{(K)}\,.
\eeq
For $K=0$ this is the definition. The inductive step is \cref{prop: acformula_generic}, applied to the largest argument $\kappa_{K+1}/\kappa_i$ of $F_i^{(K)}$: its first term reproduces the summand $k=K+1$ of \eqref{eq:induction_general}, carrying the monomial attached to $P_{K+1}$ in \eqref{eq:monomials_general}, and its second term reproduces \eqref{eq:induction_general} for $K+1$. The Beta-function bookkeeping is elementary and we leave it to the reader; the connection formula itself is derived in \cref{appendix: Lauricella_func}.

Consider now $i\leq n$. Here \eqref{eq:induction_general} with $K = i-1$ has
\[F_i^{(i-1)} = F_D^{(0)} = 1\,,\]
and its last term is
\[\frac{\mathsf{q}^{i-1}\,\Gamma(1+\hbar)\Gamma(-(n+1-i)\hbar)}{\Gamma(1-(n-i)\hbar)}\,(\kappa_1\cdots\kappa_{i-1})^{\hbar}\kappa_i^{(n+1-i)\hbar} = z\,b_i\]
times the monomial of $P_i$ (or of $\mathbf{1}_C$ when $i=n$). By induction, this gives the first $n$ rows of \eqref{eq:Ainv_general}. 
A similar argument, using the resonant formula \eqref{eq:acformula_log} for the branch carrying $\log(-x_N)$, establishes the rows $\Pi_{l-1}$, $\Pi_l$ and $\Pi_{l+1}$; the logarithm is the monomial attached to $\omega_C$ in \eqref{eq:monomials_general}. Finally, all monomials produced by the recursion are among \eqref{eq:monomials_general}, so no other component of $J^{Y}$ is picked up.
\end{proof}

\begin{rmk}
\label{rmk:thesis_correction_A}
For $l=3$, \cref{lem:LR_general} corrects the entry $(\cA^{-1})_{\Pi_4,P_3}=g$ of \cite[Prop.~3.5.8, Eq.~(3.148)]{Ma:2026ths}, which was evaluated with the phase $(-x)^{-a}=\re^{\pi\ri a}x^{-a}$ in the connection formula in place of $(-x)^{-a}=\re^{-\pi\ri a}x^{-a}$. The convention $\arg x\in(-\pi,0)$ here is dictated by the choice of sector $-\pi<\arg\kappa_1<\dots<\arg\kappa_l<0$ of \cref{sec:branches}. This correction was obtained after an AI-assisted interactive review of our human-written {\it Wolfram Language} code using Claude Code (Fable 5.1). All other entries agree.
\end{rmk}

\section{The integral quantum McKay correspondence}
\label{sec:integral_McKay}
Recall the commutative diagram \eqref{diagram:induced_U_equivariant} proposed by Iritani. In this section, we prove that the analytic-continuation map $\mathbb{U}$ of \cref{sec:Umat_typeD} is induced through this diagram via a natural $K$-theory equivalence.

\subsection{The $K$-theory correspondence $\mathbb{U}_K$}
\label{subsec:UK}

Recall that the resolution $Y=G\text{-}\mathrm{Hilb}(\bbC^2)$ parametrises \emph{$G$-clusters}, i.e.\ $G$-invariant zero-dimensional subschemes $\xi\subset\bbC^2$ such that $H^0(\cO_\xi)$ is the regular representation of $G$. Let $\mathscr Z\subset Y\times\bbC^2$ be the universal subscheme, and let $\mathrm{pr}_1\colon Y\times\bbC^2\to Y$ and $\mathrm{pr}_2\colon Y\times\bbC^2\to\bbC^2$ be the projections, with $G$ acting trivially on $Y$.

\begin{thm}[{\cite{GonzalezSprinberg:1983gah, Kapranov:2000ksd}}]
\label{thm:derived_McKay}
The Fourier--Mukai functor
\[
\Phi:D^b(\cX)\simeq D^b_G(\bbC^2)\longrightarrow D^b(Y),
\qquad
\Phi(F)=\bigl(R\,\mathrm{pr}_{1*}(\cO_{\mathscr Z}\otimes^{\mathbf L}\mathrm{pr}_2^*F)\bigr)^G
\]
is an equivalence of derived categories. In particular, it induces an isomorphism of Grothendieck groups
\[
\Phi_K:K(\cX)\xrightarrow{\sim}K(Y).
\]
\end{thm}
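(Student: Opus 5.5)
The statement is the Gonzalez-Sprinberg--Verdier / Kapranov--Vasserot (and Bridgeland--King--Reid) derived McKay correspondence. The plan is to deduce it from the Bridgeland--King--Reid criterion rather than reprove it from scratch.

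\emph{Setting up the kernel.} First, identify $D^b(\cX)$ with $D^b_G(\bbC^2)$, the $G$-equivariant derived category, since $\cX=[\bbC^2/G]$. The universal family $\mathscr Z\subset Y\times\bbC^2$ is finite and flat over $Y$ and $G$-invariant. Hence $\cO_{\mathscr Z}$ is a $G$-equivariant kernel. For each $y\in Y$ it restricts to the structure sheaf $\cO_{\xi_y}$ of the corresponding cluster. It is more convenient to study the functor in the opposite direction,
\[
\Psi\colon D^b(Y)\to D^b_G(\bbC^2), \qquad \Psi(-)=R\,\mathrm{pr}_{2*}\bigl(\cO_{\mathscr Z}\otimes \mathrm{pr}_1^*(-)\bigr),
\]
with $G$ acting trivially on $Y$. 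Then $\Phi$ is, up to the usual twist by the canonical bundle and dualisation of the kernel, its adjoint. It therefore suffices to show that $\Psi$ is an equivalence.

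\emph{Bridgeland's criterion.} The plan is to apply Bridgeland's criterion for Fourier--Mukai functors to be fully faithful, using the skyscrapers $\cO_y$ as a spanning class. Since $\Psi(\cO_y)=\cO_{\xi_y}$, full faithfulness reduces to two computations:
\[
\mathrm{Hom}_G(\cO_{\xi_y},\cO_{\xi_y})=\bbC, \qquad \mathrm{Ext}^i_G(\cO_{\xi_y},\cO_{\xi_{y'}})=0 \quad \text{for } y\neq y' \text{ and all } i.
\]
The first holds because $H^0(\cO_\xi)$ is the regular representation, so its $G$-invariant part is one-dimensional. For the second, the $\mathrm{Ext}^i$ vanish for $i\notin[0,2]$. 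Equivariant Serre duality on $\bbC^2$, with $\omega_{\bbC^2}\cong\cO$ as a $G$-sheaf because $G<\mathrm{SL}(2;\bbC)$, gives $\mathrm{Ext}^2\cong(\mathrm{Hom})^\vee$. The case $i=0$ vanishes for distinct clusters with no common invariant quotient. The Euler characteristic $\chi_G$ depends only on the classes of regular representations, so $\chi_G=0$, and this forces $\mathrm{Ext}^1=0$ as well.

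\emph{The main obstacle.} The delicate point is that these vanishings are only immediate where the clusters have disjoint support. The expected difficulty is clusters supported at the origin, i.e.\ points of the exceptional curves. In the Bridgeland--King--Reid argument this is handled by the intersection theorem, which bounds the dimension of $\{(y,y')\colon \mathrm{Ext}^*_G\neq0\}\subset Y\times Y$ by $\dim Y+1$. Since $\dim Y=2$, the criterion then holds in this dimension.

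\emph{Essential surjectivity and the $K$-theory statement.} Once $\Psi$ is fully faithful, it is an equivalence. This follows because $\omega_Y$ is trivial (the resolution is crepant) and $D^b_G(\bbC^2)$ is indecomposable, so a fully faithful functor commuting with Serre functors is an equivalence. The $K$-theory isomorphism $\Phi_K$ then follows formally, since exact equivalences induce isomorphisms of Grothendieck groups. The same argument runs $\bbC^\times$-equivariantly because $\mathscr Z$ is $\bbC^\times$-invariant, which is the version needed later in the paper. Alternatively, for $\Phi_K$ alone one could cite Gonzalez-Sprinberg--Verdier directly: there $\Phi_K$ sends $\rho_v\otimes\cO_{\bbC^2}$ to the tautological bundles $\mathcal R_{\rho_v}$, and these form bases on both sides.
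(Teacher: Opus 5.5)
You verify the Bridgeland--King--Reid criterion for $\Psi$ correctly in outline, but your reduction from $\Phi$ to $\Psi$ has a gap. The paper itself gives no argument here; it only cites Gonzalez-Sprinberg--Verdier and Kapranov--Vasserot.

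\emph{Why the reduction fails.} The left adjoint $\Psi^{L}$ of $\Psi$ is the transform $D^b_G(\bbC^2)\to D^b(Y)$ with kernel $\cO_{\mathscr Z}^\vee\otimes\mathrm{pr}_2^*\omega_{\bbC^2}[2]\cong\omega_{\mathscr Z/Y}$. This is the relative dualising sheaf of the finite flat map $\mathscr Z\to Y$. By contrast, $\Phi$ has kernel $\cO_{\mathscr Z}$. No twist by canonical bundles identifies the two, because $\mathscr Z$ is not Gorenstein: for instance $I_{p_2}=(x^2y,xy^2,x^{2n},y^{2n})$ from \cref{lem:fixed_clusters} is not a complete intersection. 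The two functors genuinely differ:
\begin{itemize}
\item $\Psi^{L}(\cO_{\bbC^2}\otimes\rho)=\mathcal R_\rho^\vee$, whereas $\Phi(\cO_{\bbC^2}\otimes\rho)=\mathcal R_{\rho^\vee}$;
\item for $\rho=\rho_1$ these two line bundles have opposite first Chern classes.
\end{itemize}
So proving that $\Psi$ is an equivalence only shows that $\Psi^{L}=\Psi^{-1}$ is one. That is neither the functor in the statement nor the one giving $\mathbb U_K([\cO_\rho])=[\mathcal R_\rho]$ later in the paper.

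\emph{The missing step.} Use Grothendieck duality. Three facts are needed: $\mathrm{pr}_1$ is finite on $\mathscr Z$; $\mathrm{pr}_1^!\cO_Y=\mathrm{pr}_2^*\omega_{\bbC^2}[2]$; and $G$-invariants commute with $R\mathcal{H}om(-,\cO_Y)$. Together they give $\mathbb D_Y\circ\Phi\circ\mathbb D_{\cX}\cong\Psi^{L}$, where $\mathbb D=R\mathcal{H}om(-,\cO)$. The $\mathbb D$'s are anti-autoequivalences of $D^b$ on these smooth spaces, so $\Phi$ is an equivalence if and only if $\Psi$ is. Alternatively, you can argue for $\Phi$ directly by tilting. $\Phi$ sends the generator $\cO_{\bbC^2}\otimes\bbC[G]$ to $\mathrm{pr}_{1*}\cO_{\mathscr Z}=\bigoplus_\rho\mathcal R_\rho\otimes\rho$. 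This is a tilting bundle on $Y$ with the same endomorphism algebra $\bbC[x,y]\rtimes G$.

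\emph{Smaller points.}
\begin{itemize}
\item Your ``main obstacle'' does not arise in dimension two. Your invariant-part argument already gives $\mathrm{Hom}_G(\cO_\xi,\cO_{\xi'})=0$ for \emph{all} distinct clusters, whatever their supports. A nonzero $G$-map sends $1$ to a nonzero invariant, i.e.\ a nonzero constant. Hence it is surjective, hence an isomorphism by equality of lengths, which forces $I_\xi=I_{\xi'}$.
\item Combine this with Serre duality and $\chi_G=0$. The latter holds because $\chi$ is constant over the connected $Y\times Y$ and vanishes for disjoint free orbits. This settles every pair $y\neq y'$, including clusters at the origin, without the intersection theorem.
\item You also misdescribe the intersection theorem. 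The upper bound $\dim Y+1$ on the non-vanishing locus comes from $\dim Y\times_X Y\leq\dim Y+1$. The intersection theorem supplies the competing lower bound; that is how BKR argue in dimension three.
\item In the essential-surjectivity step, the relevant condition is $\Psi(\cO_y)\otimes\omega_{\bbC^2}\cong\Psi(\cO_y)$. That is, you need $G$-equivariant triviality of $\omega_{\bbC^2}$, not triviality of $\omega_Y$.
\end{itemize}
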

Since the $\bbC^\times$-action is compatible with this construction, $\Phi_K$ admits an equivariant $K_{\bbC^\times}(\mathrm{pt})$-linear lift
\[
\Phi_{\bbC^\times ,K}:K_{\bbC^\times}(\cX)\xrightarrow{\sim}K_{\bbC^\times}(Y)\,.
\]
For an irreducible $G$-representation $\rho$, endowed with the trivial $\bbC^\times$-action, set
\[
\cO_\rho:=\cO_{\bbC^2}\otimes\rho,
\qquad
\mathcal R_\rho:=\bigl(\mathrm{pr}_{1*}\cO_{\mathscr Z}\otimes\rho^\vee\bigr)^G
\cong\Phi(\cO_{\rho^\vee}).
\]
Here $\rho^\vee$ denotes the dual $G$-representation.

\begin{defn}
\label{def:McKay_convention}
Using $K_{\bbC^\times}(\cX)\cong K_{\bbC^\times}(\mathrm{pt})\otimes_{\bbZ}R(G)$, with $R(G)$ the representation ring of $G$, let $d_G$ be the $K_{\bbC^\times}(\mathrm{pt})$-linear involution which dualises only the $G$-representation:
\[
d_G\bigl([\chi\otimes\cO_\rho]\bigr)
=[\chi\otimes\cO_{\rho^\vee}]
\]
for every $\bbC^\times$-character $\chi$. Define
\[
\mathbb U_K:=\Phi_{\bbC^\times,K}\circ d_G:
K_{\bbC^\times}(\cX)\xrightarrow{\sim}K_{\bbC^\times}(Y),
\qquad
\mathbb U_K([\cO_\rho])=[\mathcal R_\rho].
\]
\end{defn}

\begin{rmk}
\label{rmk:McKay_FM}
The map $\mathbb U_K$ is induced by an equivariant Fourier--Mukai equivalence. Indeed $d_G$ can be identified with $\Ad(s)$, where $s=\mathrm{diag}(1,-1)$ normalises $G$. Since conjugation by $s$ sends each conjugacy class to its inverse and $s$ commutes with the scalar action, pullback along the induced automorphism of $\cX$ lifts $d_G$ to a $\bbC^\times$-equivariant autoequivalence. Composing this lift with the $\bbC^\times$-equivariant lift of $\Phi$ gives the required equivalence.
\end{rmk}

As in \eqref{eq:def_q}, we write $\mathsf{q} = \re^{2\pi\ri\nu/z}$ for the image of the class of the weight-$\nu$ character of $\bbC^\times$, i.e.\ of the generator of $K_{\bbC^\times}(\mathrm{pt})=\bbZ[\mathsf{q},\mathsf{q}^{-1}]$.

\subsection{Induced cohomological correspondence from $\mathbb{U}_K$}
\label{subsec:K_general}
We now compute the cohomological correspondence $\mathbb{U}_K^{\sharp}$ induced by the geometric correspondence $\mathbb{U}_K$ of \cref{def:McKay_convention}, that is, the linear map $\HH_{\cX}\to\HH_{Y}$ determined by $\mathbb{U}_K^{\sharp}\circ\widehat{\Psi}_{\cX}=\widehat{\Psi}_{Y}\circ\mathbb{U}_K$. We first compute the change from the tautological basis to the line-bundle basis.

On $H(Y)$ and $H(\cX)$ we use the bases \eqref{eq:loc_basis_general} and $(\mathbf{1}_1,\dots,\mathbf{1}_l,\mathbf{1}_0)$, respectively. The $K$-theory bases are
\[
\begin{aligned}
K_{\bbC^\times}(Y)&:\quad (\cO(\phi_1),\dots,\cO(\phi_l),\cO_Y),\\
K_{\bbC^\times}(\cX)&:\quad (\cO_{\rho_1},\dots,\cO_{\rho_l},\cO_{\rho_0}),
\end{aligned}
\]
where $c_1^{\bbC^\times}(\cO(\phi_k)) = \phi_k$ and $\cO_\rho := \cO_{\bbC^2}\otimes\rho$.

The representations are labelled\footnote{For $\mathrm{D}_3$, the cyclic labelling $G = \bbZ_4 = \langle b\rangle$, $\rho_k(b) = \ri^{\,k}$ uses the opposite choice $\varepsilon = -\ri$. Its representations $(\rho_1,\rho_2,\rho_3)$ and sectors $(\mathbf{1}_1,\mathbf{1}_2,\mathbf{1}_3)$ correspond to $(\rho_l,\rho_1,\rho_{l-1})$ and $(\mathbf{1}_3,\mathbf{1}_1,\mathbf{1}_2)$ in our notation. This convention matches the one used, as $\mathrm D_3=\mathrm A_3$, in \cite[Thm.~4.1]{Brini:2019wts}.} as in \cref{sec:classical_McKay}. Recall from \eqref{eq:one_dimensional_representations} that $\varepsilon=\rho_{l-1}(b)=-\rho_l(b)$ equals $1$ for $n$ even and $\ri$ for $n$ odd. Changing its sign exchanges the two legs and the sectors $l-1,l$.

\subsubsection{Localisation and change of basis of $K_{\mathbb{C}^\times}(Y)$}
\label{subsubsec:taut_general}
The tautological bundles $\mathcal R_\rho$ give another natural basis of $K_{\bbC^\times}(Y)$. We compare the ordered bases
\beq
\label{eq:K_bases_general}
\begin{gathered}
\big([\cO(\phi_1)],\dots,[\cO(\phi_l)],[\cO_Y]\big),\\
\big([\mathcal{R}_{\rho_1}],\dots,[\mathcal{R}_{\rho_l}],[\mathcal{R}_{\rho_0}]\big),
\end{gathered}
\eeq
where $\mathcal{R}_{\rho_0} = \cO_Y$.

\begin{prop}
\label{prop:taut_general}
In $K_{\bbC^\times}(Y)$,
\begin{align}
\label{eq:taut_general}
[\mathcal{R}_{\rho_v}] &= [\cO(\phi_v)]\quad (v\in\{1,l-1,l\})\,,\qquad [\mathcal{R}_{\rho_0}] = [\cO_Y]\,,\nonumber\\
[\mathcal{R}_{\rho_v}] &= \mathsf{q}^{v-1}\,[\cO(\phi_v)]+\mathsf{q}^{-(v-1)}\,[\cO_Y]\quad (2\leq v\leq n)\,.
\end{align}
Equivalently, in the ordered bases \eqref{eq:K_bases_general},
\begin{equation*}
\big([\mathcal{R}_{\rho_1}],\dots,[\mathcal{R}_{\rho_l}],[\mathcal{R}_{\rho_0}]\big)
= \big([\cO(\phi_1)],\dots,[\cO(\phi_l)],[\cO_Y]\big)\,\mathsf{M}_l\,,
\end{equation*}
where, writing $\mathsf{E}_{ij}$ for the elementary matrix,
\begin{equation*}
\mathsf{M}_l := \mathrm{diag}\big(1,\mathsf{q},\mathsf{q}^{2},\dots,\mathsf{q}^{n-1},1,1,1\big) + \sum_{v=2}^{n}\mathsf{q}^{-(v-1)}\,\mathsf{E}_{l+1,v}\,.
\end{equation*}
\end{prop}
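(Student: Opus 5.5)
We prove \cref{prop:taut_general} by equivariant localisation. Since $i^*\colon K_{\bbC^\times}(Y)\otimes\bbC(\mathsf{q})\to K_{\bbC^\times}(Y^{\bbC^\times})\otimes\bbC(\mathsf{q})$ is injective, and $K_{\bbC^\times}(Y)$ is free of rank $l+1$ over $K_{\bbC^\times}(\mathrm{pt})$, it suffices to compare the restrictions of both sides to the fixed points $p_i$, $i\neq l-2$, and to the fixed curve $C$. On the points, restrictions are just characters. On $C$, a class is determined by its rank-one restriction's character together with its degree along $C$, since $K_{\bbC^\times}(C)\cong K(\bbP^1)\otimes K_{\bbC^\times}(\mathrm{pt})$.

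\textbf{Step 1: the one-dimensional representations.} For $v\in\{1,l-1,l\}$, $\mathcal{R}_{\rho_v}$ is a line bundle. Its non-equivariant first Chern class is $\phi_v$, since $\int_{E_{v'}}c_1(\mathcal{R}_{\rho_v})=\delta_{vv'}$. Its $\bbC^\times$-linearisation is fixed by the fibre at a $G$-cluster over a general point of $\bbC^2\setminus\{0\}$. There the $\rho_v$-isotypic part of $H^0(\cO_\xi)$ is spanned by a semi-invariant of minimal degree. One then checks that the normalisation $c_1^{\bbC^\times}(\cO(\phi_v))=\phi_v$ of \eqref{eq:loc_phi_general} agrees with it. Concretely, the weight of $\mathcal{R}_{\rho_v}$ at each fixed point should equal $i^*\phi_v$ read off from \eqref{eq:loc_phi_general} at that point. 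I expect this to follow from the fact that both are linearisations of the same line bundle agreeing at one point, for instance at $p_{l-1}$ or $p_l$, where the cluster is monomial and its weights are explicit.

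\textbf{Step 2: the two-dimensional representations.} For $2\leq v\leq n$, $\mathcal{R}_{\rho_v}$ has rank two and determinant of class $\phi_v$. The claim is that in $K$-theory it splits as $\mathsf{q}^{v-1}\cO(\phi_v)+\mathsf{q}^{-(v-1)}\cO_Y$. To prove this I would use the standard description of the tautological bundles on $G$-$\mathrm{Hilb}$ as generated by the $\rho_v$-isotypic components of the coordinate ring, via the Gonzalez-Sprinberg--Verdier description. The isotypic part of $\bbC[x,y]$ for $\rho_v$ contains a copy of $\rho_v$ in polynomial degree $v-1$, namely $(x^{v-1},y^{v-1})$ up to the $b$-twist. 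This gives a $\bbC^\times$-equivariant map $\cO_Y\otimes\rho_v\otimes\mathsf{q}^{\pm(v-1)}\to\mathcal{R}_{\rho_v}\otimes\rho_v$ that is generically surjective. Alternatively, and more robustly, I would verify the identity fixed point by fixed point. At each isolated $p_i$, the $G$-cluster is a monomial ideal, and the two weights of the fibre of $\mathcal{R}_{\rho_v}$ are the degrees of the two basis monomials of the $\rho_v$-isotypic part of $H^0(\cO_\xi)$. One checks these are $\{-(v-1)\cdot(\text{unit}),\,(v-1)\cdot(\text{unit})+i^*\phi_v|_{p_i}\}$ in the conventions $\mathsf{q}=\re^{2\pi\ri\nu/z}$. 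On $C$, one compares the restriction $\mathcal{R}_{\rho_v}|_C$, which splits as $\cO_C\oplus\cO_C(\delta_{v,n})$ with weights computed at one of the three fixed points on $C$, with the right-hand side restricted via $i^*\phi_v|_C=-2\nu v\,\mathbf{1}_C+\delta_{v,n}\omega_C$.

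\textbf{Main obstacle.} I expect the crux to be determining the monomial $G$-clusters at the fixed points and reading off the equivariant weights correctly, consistently with the sign and normalisation conventions of \eqref{eq:loc_phi_general} and $\mathsf{q}$. The group is non-abelian, so the isotypic decomposition needs the matrices $\rho_v(a),\rho_v(b)$ explicitly. I would first do $\mathrm{D}_4$ as a check and then do the general case by the pattern $\mathfrak{n}_{ik}$. The matrix form $\mathsf{M}_l$ is then an immediate rewriting of \eqref{eq:taut_general} in the ordered bases \eqref{eq:K_bases_general}.
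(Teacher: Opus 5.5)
Your outline matches the paper's proof: injectivity of $i^*$ from freeness of $K_{\bbC^\times}(Y)$, then matching fibre characters at the isolated fixed points (read off from the Ito--Nakajima clusters), then matching on $C$. The step on $C$, however, has a hole at exactly the point where non-local information is needed. Because $C$ is pointwise fixed, $\mathcal{R}_{\rho_v}|_C$ has the same fibre character at every point of $C$. Evaluating at one of the three special points therefore only gives the set of weights $\{-(v-1)\nu,-(v+1)\nu\}$, and the determinant only gives the total degree $\delta_{v,n}$. Your criterion ``character plus degree'' cannot distinguish $\mathsf{q}^{1-n}[\cO_C]+\mathsf{q}^{-n-1}[\cO_C(1)]$ from $\mathsf{q}^{1-n}[\cO_C(1)]+\mathsf{q}^{-n-1}[\cO_C]$. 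These have the same fibre character and degree but differ in $K_{\bbC^\times}(C)\cong K(\bbP^1)\otimes K_{\bbC^\times}(\mathrm{pt})$, and only the first is the restriction of $\mathsf{q}^{n-1}[\cO(\phi_n)]+\mathsf{q}^{1-n}[\cO_Y]$. You assert the splitting $\cO_C\oplus\cO_C(\delta_{v,n})$ with the degree on the weight $-(v+1)\nu$ summand, but nothing in your plan proves it. The paper gets it from the whole family of clusters $I_t$ along $C$. The graded pieces of $R/I_t$ in degrees $\leq n$ are independent of $t$, so every summand coming from them is trivial. The degree-$(n+1)$ piece of $\mathcal{R}_{\rho_n}|_C$ is the cokernel of the tautological $\cO_C(-1)\hookrightarrow\cO_C^{\oplus 2}$, namely $\cO_C(1)$.

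Your route (a) would also close this gap, and in fact give the rank-two formula globally, once ``generically surjective'' is replaced by the right property. As stated it cannot hold: you map a rank-two bundle into the rank-four $\rho_v$-isotypic part. The correct property is that the degree-$(v-1)$ copy $\langle x^{v-1},y^{v-1}\rangle\cong\rho_v$ lies in no cluster ideal. This holds at every fixed point by the cluster data. The locus where it fails is closed and $\bbC^\times$-stable, and the scalar action contracts $Y$ onto the exceptional fibre, so if that locus were nonempty it would contain a fixed point. Hence this copy spans a sub-line-bundle $\cO_Y\otimes\chi_{-(v-1)\nu}\subset\mathcal{R}_{\rho_v}$, with quotient $\det\mathcal{R}_{\rho_v}\otimes\chi_{(v-1)\nu}$. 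Then $\det\mathcal{R}_{\rho_v}\cong\cO(\phi_v)$ equivariantly by your Step 1 argument (same underlying line bundle, linearisations agreeing at one fixed point).

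Finally, a factual correction: the fixed clusters at $p_1$, $p_{l-1}$, $p_l$ are not monomial, e.g.\ $I_{p_1}=(xy,x^{2n}+y^{2n})$ and $I_{p_{l-1}}=(x^2y^2,x^n+\varepsilon y^n)$. What you actually need is that they are homogeneous, which is automatic for the scalar torus. The weights are then the degrees of the copies of $\rho_v$ in $R/I$.
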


To prove \cref{prop:taut_general}, we first compute the images of the tautological classes under the $K$-theoretic analogue
\[
i^*\colon K_{\bbC^\times}(Y)\longrightarrow K_{\bbC^\times}(Y^{\bbC^\times})
\]
of the localisation map of \cref{sec:Umat_typeD}, and compare them with the images of the line-bundle combinations in \eqref{eq:taut_general}. We use the following cluster data, with the one-dimensional labels of \eqref{eq:one_dimensional_representations}. We assume $n\geq2$, and let $R=\bbC[x,y]$, $\mathfrak m=(x,y)$ and $f_\pm=x^n\pm\varepsilon y^n$.

\begin{lem}
\label{lem:fixed_clusters}
The $\bbC^\times$-fixed clusters have ideals
\[
\begin{aligned}
I_{p_1}&=(xy,\;x^{2n}+y^{2n}),\\
I_{p_i}&=(x^iy,\;xy^i,\;x^2y^2,\;x^{2n+2-i},\;y^{2n+2-i})\qquad(2\leq i\leq n),\\
I_{p_{l-1}}&=(x^2y^2,\;f_+),\qquad I_{p_l}=(x^2y^2,\;f_-),\\
I_t&=(x^2y^2,\;x^{n+1}-txy^n,\;y^{n+1}-(-1)^ntx^ny)+\mathfrak m^{n+2}\qquad(t\in\bbC).
\end{aligned}
\]
Here $p_i=E_{i-1}\cap E_i$ for $2\leq i\leq n-1$, while $p_n\coloneqq E_{n-1}\cap C$ is a point of the fixed curve, and $t$ is an affine coordinate on $C$ with $I_\infty=I_{p_n}$. 
\end{lem}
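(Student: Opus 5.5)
The plan is to characterise $\bbC^\times$-fixed $G$-clusters as $G$-invariant \emph{homogeneous} ideals $I\subset R$ with $R/I\cong\bbC[G]$ as $G$-modules. Fixedness under scalar multiplication means $I$ is graded, so $R/I=\bigoplus_d (R/I)_d$. Each $R_d$ is the $G$-module $\mathrm{Sym}^d Q$, whose decomposition into irreducibles is explicit: $a$ acts diagonally on the monomials $x^py^q$ by $\zeta^{p-q}$, and $b$ swaps $x^py^q\leftrightarrow \pm y^px^q$. In low degrees:
\begin{itemize}
\item $R_0=\rho_0$ and $R_1=\rho_2$;
\item $\mathrm{Sym}^d Q$ for $d<n$ splits as $\rho_{d+2}$-type two-dimensional pieces, together with the invariants $(xy)^k$, which transform by $\rho_0$ or $\rho_1$ according to parity;
\item in degree $n$ the span of $x^n,y^n$ splits into the one-dimensional pieces $f_\pm$, carrying $\rho_{l-1},\rho_l$.
\end{itemize}

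First I would verify directly that each listed ideal is $G$-invariant and homogeneous. I would then compute a monomial (or near-monomial) basis of each quotient and check that its character equals that of the regular representation: dimension $4n$, with each $\rho_k$ appearing $\dim\rho_k$ times. For example, $R/(xy,x^{2n}+y^{2n})$ has basis $1,x^j,y^j$ for $1\le j\le 2n-1$ plus $x^{2n}$, which gives $4n$ elements. The $a$-weights pair up $x^j,y^j$ into $\rho$'s, and the top class $x^{2n}\equiv -y^{2n}$ is $\rho_1$.

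For $I_{p_i}$, $I_{p_{l-1}}$, $I_{p_l}$ and $I_t$ the same bookkeeping applies. In each case I would count the surviving monomials degree by degree. Since $\mathfrak m^{n+2}\subset I$ and $x^2y^2\in I$, only finitely many short strings survive.

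Next comes completeness. I would show that the fixed locus consists exactly of these. The previous section already tells us it is $l-1$ isolated points plus $C\cong\bbP^1$, so it suffices to show that the listed clusters are pairwise distinct and to identify which exceptional curves they lie on. Distinctness is clear from the ideals. For the incidence, I would use the McKay-cluster description: a cluster $\xi$ lies on $E_v$ iff the socle or generator structure of $I/\mathfrak m I$ contains $\rho_v$. Equivalently, by Ito--Nakamura, $\xi\in E_v$ iff $\rho_v\subset I/(\mathfrak m I+\mathfrak m^{\perp})$. Reading off the representation types of minimal generators then gives the assignment:
\begin{itemize}
\item $I_{p_1}$: generators $xy$ (type $\rho_1$) and a degree-$2n$ one;
\item $I_{p_i}$: two generator types, $\rho_{i-1}$ and $\rho_i$, hence the intersection $E_{i-1}\cap E_i$;
\item $I_t$: a two-dimensional generator space of type $\rho_n$, which gives the one-parameter family on $C=E_{n}$ (recall $n=l-2$).
\end{itemize}
The limit $t\to\infty$, after rescaling the generators, recovers $I_{p_n}$.

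The main obstacle I expect is the family $I_t$. I need to check that for every $t$ the quotient has length exactly $4n$, i.e.\ that the two degree-$(n+1)$ generators, together with $x^2y^2$ and $\mathfrak m^{n+2}$, cut out a flat family. I also need the sign $(-1)^n$ to be exactly the one making the span $\langle x^{n+1}-txy^n,\ y^{n+1}-(-1)^ntx^ny\rangle$ $b$-stable. I would handle this by computing $b$ on both generators explicitly. Then I would count in degrees $n$ and $n+1$: degree $n+1$ has dimension $n+2$, and among its monomials the ones divisible by $x^2y^2$ together with the two generators span $n+1$ dimensions, so exactly one class survives, for every $t$. That count gives constant length, and hence flatness.
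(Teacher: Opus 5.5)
Your route is correct and genuinely different from the paper's. The paper proves the lemma by citation alone. It reads the list of $\bbC^\times$-fixed clusters, and the representatives in \cref{tab:fixed_clusters}, off Ito--Nakamura's explicit classification of $\mathrm{D}_n$ clusters (their Thm.~13.6, Sec.~13.7 and Table~8).

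You instead proceed in three steps:
\begin{itemize}
\item you verify each listed ideal directly (homogeneous, $G$-stable, quotient $\cong\bbC[G]$);
\item you get completeness from the description of $Y^{\bbC^\times}$ as $l-1$ points plus $C$ in \cref{sec:equivariant_cohomology}, which rests on (W1)--(W2) and not on clusters, so there is no circularity;
\item you use only the Ito--Nakamura incidence criterion to attach the labels $p_i$.
\end{itemize}
This buys a check of the convention-dependent signs (the $\varepsilon$ in $f_\pm$, the $(-1)^n$ in $I_t$) in the paper's own conventions. It also directly produces the degree data of \cref{tab:fixed_clusters} that the next lemma uses. The citation is shorter but leaves that matching of conventions to the reader. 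Note that for the curve it is the flatness step, not mere distinctness, that does the work. Flatness makes $t\mapsto I_t$ a non-constant morphism $\bbP^1\to Y^{\bbC^\times}$, whose image must then be all of $C$.

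Three slips need fixing.
\begin{itemize}
\item Your degree-$(n+1)$ count for $I_t$ is off by one. Besides the $n-2$ monomials divisible by $x^2y^2$, only the two generators are imposed, and they are always independent. So the span is $n$-dimensional and \emph{two} classes survive for every $t$: the second copy of $\rho_n$, the first being $x^{n-1},y^{n-1}$. With one surviving class the length would be $4n-1$ and $I_t$ would not be a $G$-cluster at all. With the corrected count your constant-Hilbert-function argument goes through.
\item For $d<n$ the two-dimensional summands of $\mathrm{Sym}^dQ$ are $\rho_{d+1},\rho_{d-1},\dots$, not $\rho_{d+2}$. The pair $x^py^q,x^qy^p$ carries $\rho_{|p-q|+1}$.
\item In the incidence criterion the relevant quotient is $V(I)=I/(\mathfrak m I+\mathfrak n)$, with $\mathfrak n$ the ideal generated by $G$-invariants of positive degree, not ``$\mathfrak m^{\perp}$''. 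Discarding invariant generators matters. The generator $x^2y^2$ never contributes. The invariant $x^{2n}+y^{2n}$ drops out, so that $V(I_{p_1})=\rho_1$ and $p_1$ lies on $E_1$ only. For $I_{p_2}$ the degree-$2n$ generators contribute only $\rho_1$, which places $p_2$ at $E_1\cap E_2$ as required.
\end{itemize}
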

\begin{proof}
This is {\cite[Thm.~13.6 and Sec.~13.7]{Ito:1999kyz}}. From \cite[Table~8]{Ito:1999kyz}, 
polynomial representatives for bases of the irreducible copies in $R/I_{p_i}$, $i\in\{1,\dots,n,l-1,l\}$, are given in \cref{tab:fixed_clusters}.
\end{proof}

\begin{table}[h]
\[
\begin{array}{c|lcc}
\text{Representation}&\text{Representatives}&\text{Degree}&\text{Condition}\\ \hline
\rho_0&1&0&\text{all }i\\ \hline
\rho_1&x^{2n}-y^{2n}&2n&i=1\\
&xy&2&i\neq1\\ \hline
\rho_{l-1}&f_+&n&i\neq l-1\\
&xyf_-&n+2&i=l-1\\ \hline
\rho_l&f_-&n&i\neq l\\
&xyf_+&n+2&i=l\\ \hline
\rho_v&x^{v-1},\;y^{v-1}&v-1&\text{all }i\\
&x^{2n+1-v},\;y^{2n+1-v}&2n+1-v&i\leq v\\
&x^vy,\;xy^v&v+1&i>v
\end{array}
\]
\caption{Representatives and their degrees for the bases of $R/I_{p_i}$.}
\label{tab:fixed_clusters}
\end{table}

We are now ready to compute the fibre characters.

\begin{lem}
\label{lem:taut_localisation}
At the isolated fixed points $p_i$, where $i\in\{1,\dots,n-1,l-1,l\}$, the fibre characters are
\begin{align*}
[\mathcal R_{\rho_0}|_{p_i}] &= 1,\\
[\mathcal R_{\rho_1}|_{p_i}] &=
\begin{cases}
\mathsf q^{-2n} & i=1,\\
\mathsf q^{-2} & i\neq1,
\end{cases}\\
[\mathcal R_{\rho_v}|_{p_i}] &= \mathsf q^{-n-2\delta_{iv}}
\qquad (v\in\{l-1,l\}),\\
[\mathcal R_{\rho_v}|_{p_i}] &= \mathsf q^{1-v}+
\begin{cases}
\mathsf q^{-(2n+1-v)} & 1\leq i\leq v,\\
\mathsf q^{-(v+1)} & \text{otherwise},
\end{cases}
\qquad (2\leq v\leq n).
\end{align*}
On the fixed curve $C=E_n$, writing $\chi_w$ for the character of $\bbC^\times$ of weight $w$, the rank-one bundles restrict as
\begin{align*}
\mathcal R_{\rho_0}|_C &\cong\cO_C,\\
\mathcal R_{\rho_1}|_C &\cong\cO_C\otimes\chi_{-2\nu},\\
\mathcal R_{\rho_v}|_C &\cong\cO_C\otimes\chi_{-n\nu}
\qquad (v\in\{l-1,l\}),
\end{align*}
and the rank-two bundles restrict as
\beq
\label{eq:taut_on_C_general}
\mathcal R_{\rho_v}|_C\cong
\cO_C\otimes\chi_{-(v-1)\nu}\oplus
\cO_C(\delta_{vn})\otimes\chi_{-(v+1)\nu},
\qquad 2\leq v\leq n.
\eeq
In particular, the only summand of non-zero degree is the summand of weight $-(n+1)\nu$ of $\mathcal R_{\rho_n}|_C$, which is $\cO_C(1)$.
\end{lem}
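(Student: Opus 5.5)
The fibre of $\mathcal R_\rho$ at a $G$-cluster $\xi$ is $\mathrm{Hom}_G(\rho,H^0(\cO_\xi))$. With the convention $\mathcal R_\rho=(\mathrm{pr}_{1*}\cO_{\mathscr Z}\otimes\rho^\vee)^G$, at a $\bbC^\times$-fixed cluster this is the $\rho$-isotypic part of $R/I_\xi$, with its induced $\bbC^\times$-action. So the plan is to read every fibre character directly off \cref{tab:fixed_clusters}. Since scalar multiplication acts on $x,y$ with weight $\nu$, a homogeneous polynomial of degree $d$ spans a character $\mathsf q^{\pm d}$. I fix the sign by the convention in which the coordinate functions carry $\mathsf q^{-1}$, so degree $d$ contributes $\mathsf q^{-d}$. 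This is consistent with $\mathcal R_{\rho_0}|_{p}=1$, from the constant $1$, and I would check it against the tangent weights (W1), where the cotangent weights add to $-2\nu$.

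For a $G$-irreducible copy of $\rho_v$ spanned by the pair $(x^k,y^k)$ or $(x^ky,xy^k)$, the identification $\mathrm{Hom}_G(\rho_v,-)$ picks out one line per copy. Its character is that of the common degree of the spanning pair. I need to check this is legitimate: $\bbC^\times$ commutes with $G$, so each isotypic copy is a single $\bbC^\times$-eigenspace.

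With this in hand I go through the table case by case. The trivial representation gives $1$. For $\rho_1$ the representative has degree $2n$ if $i=1$ and $2$ otherwise. For $\rho_{l-1}$ and $\rho_l$ the representative has degree $n$, or $n+2$ when $i=v$. For $2\le v\le n$ there are two copies of $\rho_v$: one in degree $v-1$, and a second in degree $2n+1-v$ (if $i\le v$) or $v+1$ (if $i>v$). In each case I confirm that the listed monomials are non-zero and independent in $R/I_{p_i}$ using the ideals of \cref{lem:fixed_clusters}. I also confirm that they transform correctly under $a,b$ via the explicit $\rho_k$. The table already certifies this, so it is a sanity check rather than new work.

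For the fixed curve $C$, the group $\bbC^\times$ acts trivially on $C$, so each restriction $\mathcal R_\rho|_C$ splits by weight into ordinary vector bundles on $C\cong\bbP^1$. I compute these weights uniformly on the family $I_t$. The same monomials $1$, $xy$, $f_\pm$, $x^{v-1},y^{v-1}$, $x^vy,xy^v$ survive modulo $I_t$ for every $t$, including $t=\infty$ where $I_\infty=I_{p_n}$. This is because $\mathfrak m^{n+2}\subset I_t$ and the generators only mix monomials of degree $n+1$. This gives the weights $0$, $-2\nu$, $-n\nu$, and $-(v-1)\nu$ together with $-(v+1)\nu$. All summands except the degree-$(n+1)$ piece of $\rho_n$ are spanned by a section that is nonvanishing for every $t\in\bbP^1$, so they are trivial line bundles.

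The main obstacle is the remaining summand: the copy of $\rho_n$ in degree $n+1$. There the cluster relations $x^{n+1}=txy^n$ and $y^{n+1}=(-1)^ntx^ny$ make the spanning section vary with $t$. The section $(x^ny,xy^n)$ is nonvanishing for $t\in\bbC$. At $t=\infty$ it degenerates, and near that point the other chart uses $(x^{n+1},y^{n+1})$, glued to it by the factor $t$. The transition function is therefore $t^{\pm1}$, and I must fix the sign of the degree. I would do this with the normalisation $\int_{E_v}c_1(\mathcal R_{\rho_v})=\delta_{vv'}$ recalled in \cref{sec:classical_McKay}: it forces $\deg\mathcal R_{\rho_n}|_C=1$, so this summand is $\cO_C(1)$. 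As a cross-check, $\deg \mathcal R_{\rho_v}|_C=0$ for $v\neq n$, which agrees with all the other summands being trivial.
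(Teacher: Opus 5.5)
Your proof is correct, and up to the last step it is the paper's proof. Base change gives the fibre $\mathrm{Hom}_G(\rho,R/I_\xi)$. The fixed-point characters are read off the degrees of the tabulated representatives, with $x,y$ of weight $-\nu$. On $C$, every summand except the degree-$(n+1)$ copy of $\rho_n$ is trivial, because the low-degree graded pieces of $R/I_t$ do not depend on $t$.

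You diverge only in identifying the remaining line bundle $L$. The paper computes $L$ intrinsically. The degree-$(n+1)$ part of $R/(x^2y^2)$, spanned by $x^{n+1},y^{n+1},xy^n,x^ny$, is $\rho_n\otimes\bbC^2$. The two relations of $I_t$ cut out $\rho_n\otimes\langle(s,-u)\rangle$. Hence $L$ is the cokernel of the tautological inclusion $\cO_C(-1)\hookrightarrow\cO_C^{\oplus2}$, i.e.\ $\cO_C(1)$.

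You instead extract only $\deg L=\pm1$ from the gluing factor $t$. You then import the Gonzalez-Sprinberg--Verdier normalisation $\int_{E_{v'}}c_1(\mathcal R_{\rho_v})=\delta_{vv'}$ to fix the sign. That is legitimate: the paper takes this normalisation as a cited input, and you have shown the other summand of $\mathcal R_{\rho_n}|_C$ is trivial. The cost is that the lemma now rests on that theorem, whereas the paper's computation re-derives $\int_C c_1(\mathcal R_{\rho_n})=1$ directly from the cluster equations.

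The external appeal is also avoidable with data you already have. Both frames $(x^{n+1},y^{n+1})$ and $(xy^n,(-1)^nx^ny)$ are images of constant sections of the trivial rank-two bundle $\mathrm{Hom}_G(\rho_n,\langle x^{n+1},y^{n+1},xy^n,x^ny\rangle)$. They are therefore global sections of $L$, each with a single simple zero, at $t=0$ and $t=\infty$ respectively. Equivalently, $L$ is a quotient of a trivial bundle, so $\deg L\geq0$, which rules out the transition function $t^{-1}$.
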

\begin{proof}
Since $\mathscr Z\to Y$ is finite flat, base change gives, for any cluster $\xi=\mathrm{Spec}(R/I_\xi)$,
\[
\mathcal R_\rho|_\xi\cong\bigl((R/I_\xi)\otimes\rho^\vee\bigr)^G
\cong\mathrm{Hom}_G(\rho,R/I_\xi).
\]

If $\xi$ is $\bbC^\times$-fixed, scalar invariance makes $I_\xi$ homogeneous, so $R/I_\xi$ inherits the polynomial grading. Since $x,y$ have torus weight $-\nu$ and $\rho$ has trivial $\bbC^\times$-action, each copy of $\rho$ in degree $d$ contributes $\mathsf q^{-d}$ to the fibre character. The characters at the isolated fixed points therefore follow from the degrees in \cref{lem:fixed_clusters,tab:fixed_clusters}.

For the restrictions to $C$, consider the family $I_t$ in \cref{lem:fixed_clusters}. For every $t\in\bbP^1$, we have $(R/I_t)_d=R_d/(x^2y^2R_{d-4})$ for $0\leq d\leq n$, with $R_k=0$ for $k<0$, while the quotient vanishes in degrees above $n+1$.
The graded pieces in degrees at most $n$ are thus constant. They contain the four one-dimensional representations, both copies of $\rho_v$ for $2\leq v<n$, and the degree-$(n-1)$ copy of $\rho_n$. Each of these copies gives a trivial line bundle in the corresponding tautological restriction.

It remains to determine the degree-$(n+1)$ summand of $\mathcal R_{\rho_n}|_C$. Modulo $x^2y^2$, the degree-$(n+1)$ part of $R$ is $V_1\oplus V_2$, where
\[
V_1=\langle x^{n+1},y^{n+1}\rangle\cong\rho_n,
\qquad
V_2=\langle xy^n,x^ny\rangle\cong\rho_n.
\]
The quotient $(R/I_t)_{n+1}$ is obtained by imposing the two degree-$(n+1)$ relations defining $I_t$. The $G$-equivariant isomorphism $V_1\to V_2$ given by $x^{n+1}\mapsto xy^n$ and $y^{n+1}\mapsto(-1)^nx^ny$ gives $V_1\oplus V_2\cong\rho_n\otimes\bbC^2$, where $G$ acts trivially on $\bbC^2$. Let $e_1,e_2$ be the basis of $\rho_n$ corresponding to $x^{n+1},y^{n+1}$ in $V_1$. In homogeneous coordinates $[s:u]$ on $C\cong\bbP^1$, with $t=u/s$, the relations correspond to
\[
\begin{aligned}
sx^{n+1}-uxy^n&\longleftrightarrow e_1\otimes(s,-u),\\
sy^{n+1}-(-1)^nux^ny&\longleftrightarrow e_2\otimes(s,-u).
\end{aligned}
\]
They therefore span
\[
\rho_n\otimes\operatorname{span}_{\bbC}\{(s,-u)\}\subset\rho_n\otimes\bbC^2.
\]
The lines $\operatorname{span}_{\bbC}\{(s,-u)\}$ form the subbundle $\cO_C(-1)\subset\cO_C^{\oplus2}$. Applying $\mathrm{Hom}_G(\rho_n,-)$ to the degree-$(n+1)$ quotient gives the required line bundle $L$, which fits into
\[
0\longrightarrow\cO_C(-1)\longrightarrow\cO_C^{\oplus2}\longrightarrow L\longrightarrow0,
\qquad L\cong\cO_C(1).
\]
Restoring the torus character $\chi_{-d\nu}$ in degree $d$ gives \eqref{eq:taut_on_C_general} and the rank-one restrictions.
\end{proof}

\begin{proof}[Proof of \cref{prop:taut_general}]
By \cref{thm:derived_McKay}, $K_{\bbC^\times}(Y)$ is free over $K_{\bbC^\times}(\mathrm{pt})$, so the restriction map $i^*$ is injective by equivariant localisation. The restrictions of the line-bundle combinations in \eqref{eq:taut_general}, obtained from \eqref{eq:loc_phi_general}, agree with those in \cref{lem:taut_localisation}. This proves the equalities in $K_{\bbC^\times}(Y)$.
\end{proof}

\subsubsection{The induced cohomological correspondence}
\label{subsubsec:induced_cohomological_matrix}
In the following, we write the ordered cohomology bases as
\begin{align*}
\mathbf e_Y &:= (P_1,\dots,P_{n-1},\mathbf1_C,\omega_C,P_{l-1},P_l),\\
\mathbf e_{\cX} &:= (\mathbf1_1,\dots,\mathbf1_l,\mathbf1_0),
\end{align*}
and the ordered $K$-theory bases
\begin{align*}
\mathcal E_Y &:= ([\cO(\phi_1)],\dots,[\cO(\phi_l)],[\cO_Y]),\\
\mathcal E_{\cX} &:= ([\cO_{\rho_1}],\dots,[\cO_{\rho_l}],[\cO_{\rho_0}]).
\end{align*}
Writing the framings \eqref{eq:integral_structure_framing} as $\widehat\Psi(V)=\big(z^{-\frac{\deg}{2}}\widehat\Gamma\big)\cup\big(\frac{2\pi\ri}{z}\big)^{\frac{\deg_0}{2}}\iota^*\mathrm{ch}_{\bbC^\times}(V)$ up to the factor $(z/2\pi)^{\dim\cZ/2}=z/(2\pi)$, common to $Y$ and $\cX$, we define the matrices $\Gamma_{\cX}$, $\Gamma_Y$, $\mathrm{ch}_{\cX}$ and $\mathrm{ch}_Y$ by
\begin{align*}
\big(z^{-\frac{\deg}{2}}\widehat\Gamma(T_{\cX})\big)\cup\mathbf e_{\cX}
&=\mathbf e_{\cX}\Gamma_{\cX},\\
\big(z^{-\frac{\deg}{2}}\widehat\Gamma(T_Y)\big)\cup\mathbf e_Y
&=\mathbf e_Y\Gamma_Y,\\
\big(\tfrac{2\pi\ri}{z}\big)^{\frac{\deg_0}{2}}\iota^*\mathrm{ch}(\mathcal E_{\cX})
&=\mathbf e_{\cX}\mathrm{ch}_{\cX},\\
\big(\tfrac{2\pi\ri}{z}\big)^{\frac{\deg_0}{2}}\mathrm{ch}(\mathcal E_Y)
&=\mathbf e_Y\mathrm{ch}_Y.
\end{align*}
Here the Chern character and multiplication by the Gamma class are applied componentwise to the ordered bases, $\cup$ is the componentwise product on the inertia stack, $\deg$ is the age-shifted degree and $\deg_0$ the ordinary one (both with $\deg\nu=2$), and $\iota$ is the involution of the inertia stack. In these bases the matrix of $\mathbb{U}_K^{\sharp}$, denoted by the same symbol as agreed in \cref{sec:Umat_typeD}, is determined by
\[
\mathbb{U}_K^{\sharp}(\mathbf e_{\cX})=\mathbf e_Y\,\mathbb{U}_K^{\sharp}.
\]

\begin{prop}
\label{prop:induced_cohomological_matrix}
With $\mathsf M_l$ as in \cref{prop:taut_general},
\beq
\label{eq:MtoU}
\mathbb{U}_K^{\sharp}
=\Gamma_Y\,\mathrm{ch}_Y\,\mathsf M_l
\,\mathrm{ch}_{\cX}^{-1}\,\Gamma_{\cX}^{-1}.
\eeq
\end{prop}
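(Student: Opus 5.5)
The identity \eqref{eq:MtoU} is essentially a bookkeeping consequence of the definitions, so the plan is to unwind the defining relation $\mathbb{U}_K^{\sharp}\circ\widehat{\Psi}_{\cX}=\widehat{\Psi}_{Y}\circ\mathbb{U}_K$ on the chosen bases. Write $\widehat\Psi_{\cX}$ and $\widehat\Psi_Y$ in the factorised form stated just before the proposition. The common prefactor $z/(2\pi)$ appears on both sides and cancels. What remains for $\cZ\in\{\cX,Y\}$ is the composite of two operations. The first sends a $K$-class to $\big(\tfrac{2\pi\ri}{z}\big)^{\deg_0/2}\iota^*\mathrm{ch}$; for $Y$ the involution $\iota$ is trivial, since $IY=Y$. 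The second is cup product with $z^{-\deg/2}\widehat\Gamma$.

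The first key step is to record the action of each map on bases. By definition, $\widehat\Psi_{\cX}(\mathcal E_{\cX})=\tfrac{z}{2\pi}\,\mathbf e_{\cX}\,\Gamma_{\cX}\,\mathrm{ch}_{\cX}$. To see this, first expand $\big(\tfrac{2\pi\ri}{z}\big)^{\deg_0/2}\iota^*\mathrm{ch}(\mathcal E_{\cX})=\mathbf e_{\cX}\mathrm{ch}_{\cX}$. Then apply the $\bbC(\nu)((z^{-1}))$-linear operator of cup product with $z^{-\deg/2}\widehat\Gamma$, whose matrix on $\mathbf e_{\cX}$ is $\Gamma_{\cX}$. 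Linearity lets the operator pass through the coefficient matrix, giving $\mathbf e_{\cX}\Gamma_{\cX}\mathrm{ch}_{\cX}$. The same argument gives $\widehat\Psi_Y(\mathcal E_Y)=\tfrac{z}{2\pi}\,\mathbf e_Y\Gamma_Y\mathrm{ch}_Y$.

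One subtlety needs checking here. The operator $z^{-\deg/2}$ acts on classes that carry $\nu$-dependence, and it must commute with scalars. This holds because the grading operator $\mu$ in \eqref{eq:integral_structure_framing} acts on the cohomology classes only, with $z$ and $\nu$ entering through homogeneous coefficients. Equivalently, since the combination $\hbar=2\nu/z$ is degree zero, the matrices $\Gamma_\cZ$ are well defined over the coefficient field.

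Next comes the action of $\mathbb{U}_K$ on the $K$-bases. By \cref{def:McKay_convention}, $\mathbb{U}_K([\cO_{\rho_v}])=[\mathcal R_{\rho_v}]$, so $\mathbb{U}_K(\mathcal E_{\cX})$ is the ordered tautological basis. By \cref{prop:taut_general}, this equals $\mathcal E_Y\mathsf M_l$, with the ordering $(\rho_1,\dots,\rho_l,\rho_0)$ matching. Since $\mathbb{U}_K$ and $\widehat\Psi_Y$ are $K_{\bbC^\times}(\mathrm{pt})$-linear, we get
\[
\widehat\Psi_Y\mathbb{U}_K(\mathcal E_{\cX})=\tfrac{z}{2\pi}\,\mathbf e_Y\Gamma_Y\mathrm{ch}_Y\mathsf M_l .
\]
Here $\mathsf q$ acts through its Chern character $\re^{2\pi\ri\nu/z}$ under the normalisation $\big(\tfrac{2\pi\ri}{z}\big)^{\deg_0/2}$; this is consistent with the identification of $\mathsf q$ made after \cref{rmk:McKay_FM}.

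Finally, write $\mathbb{U}_K^{\sharp}(\mathbf e_{\cX})=\mathbf e_Y\mathbb{U}_K^{\sharp}$ and use linearity once more. The defining relation then reads
\[
\mathbf e_Y\,\mathbb{U}_K^{\sharp}\,\Gamma_{\cX}\mathrm{ch}_{\cX}=\mathbf e_Y\,\Gamma_Y\mathrm{ch}_Y\mathsf M_l .
\]
It remains to show that $\Gamma_{\cX}$ and $\mathrm{ch}_{\cX}$ are invertible; \eqref{eq:MtoU} follows immediately after that. $\Gamma_{\cX}$ is invertible because $\widehat\Gamma$ is a unit, with constant term a product of nonzero values $\Gamma(1-f)$, and $z^{-\deg/2}$ is diagonal. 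For $\mathrm{ch}_{\cX}$, restricted to twisted sectors it is essentially the character table of $G$ twisted by $\iota$, rescaled on the untwisted sector by powers of $\nu/z$. It is therefore invertible by orthogonality of characters, which is equivalently the orbifold Chern character isomorphism.

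The main obstacle is only this invertibility and the consistency of conventions ($\iota^*$ on $\cX$, and the treatment of $\mathsf q$). Everything else is formal.
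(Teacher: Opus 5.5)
Your proposal is correct and follows the paper's route: the paper also composes the two framings in the bases $\mathcal E_{\cX},\mathcal E_Y$ and $\mathbf e_{\cX},\mathbf e_Y$ and uses \cref{prop:taut_general} for $\mathbb U_K$, while you additionally write out the bookkeeping and the invertibility of $\Gamma_{\cX}\mathrm{ch}_{\cX}$. Two harmless slips: with the paper's convention $\deg\nu=\deg_0\nu=2$ the grading operators do act on $\nu$ (this is exactly what sends $\mathsf q$ to $\re^{2\pi\ri\nu/z}$, which you use later; only cup product with the fixed class $z^{-\deg/2}\widehat\Gamma$ needs to be linear), and $\mathrm{ch}_{\cX}$ carries no $\nu/z$ rescaling on the untwisted sector, since it is just the character table with rows permuted by $\iota$.
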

\begin{proof}
This is shown by composing the framings in the bases $\mathcal E_{\cX}$, $\mathcal E_Y$ and $\mathbf e_{\cX}$, $\mathbf e_Y$, using \cref{prop:taut_general} for $\mathbb{U}_K$.
\end{proof}
\begin{prop}
\label{prop:chern_character_matrices}
In the bases and normalisation of \cref{prop:induced_cohomological_matrix},
\beq
\label{eq:chX_general}
(\mathrm{ch}_{\cX})_{m,\rho}=\chi_\rho(g_m^{-1}),
\eeq
where the rows are ordered by $m=1,\dots,l,0$ and the columns by $\rho_1,\dots,\rho_l,\rho_0$, with $g_0=1$, $g_r=a^r$ for $1\leq r\leq n$, $g_{l-1}=b$ and $g_l=ab$. In particular, the last row is $(\dim\rho)_\rho$.

For $\mathrm{ch}_Y$, the following table lists the components of each bundle column in the basis $\mathbf e_Y$:
\beq
\label{eq:chY_general}
\begin{array}{c|ccccc}
 & P_i\ (i\leq n-1) & \mathbf{1}_C & \omega_C & P_{l-1} & P_l\\ \hline
\cO(\phi_k),\ k\leq n & \mathsf{q}^{-2\mathfrak{n}_{ik}} & \mathsf{q}^{-2k} & \frac{2\pi\ri}{z}\,\delta_{k,n}\,\mathsf{q}^{-2n} & \mathsf{q}^{-2k} & \mathsf{q}^{-2k}\\[2pt]
\cO(\phi_{l-1}) & \mathsf{q}^{-n} & \mathsf{q}^{-n} & 0 & \mathsf{q}^{-l} & \mathsf{q}^{-n}\\[2pt]
\cO(\phi_{l}) & \mathsf{q}^{-n} & \mathsf{q}^{-n} & 0 & \mathsf{q}^{-n} & \mathsf{q}^{-l}\\[2pt]
\cO_Y & 1 & 1 & 0 & 1 & 1
\end{array}\,,
\eeq
with $\mathfrak n_{ik}$ as in \eqref{eq:loc_phi_general}.
\end{prop}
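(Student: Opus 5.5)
We compute the two matrices separately, directly from the definitions, keeping track of the normalisation $\big(\tfrac{2\pi\ri}{z}\big)^{\deg_0/2}$ and of the identification $\mathsf q=\re^{2\pi\ri\nu/z}$.

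\emph{The orbifold side.} Here $I\cX=\bigsqcup_m\cX_m$, and each twisted sector $\cX_m$, $m\geq1$, is a point. On such a sector the orbifold Chern character of $\cO_\rho=\cO_{\bbC^2}\otimes\rho$ is the trace of $g_m$ on the fibre $\rho$, with no $\nu$-dependence since $\rho$ carries the trivial torus action. On the untwisted sector $\cX_0$ it is $\dim\rho$, since $\mathrm{ch}^{\bbC^\times}$ of the trivialised bundle $\cO_{\bbC^2}\otimes\rho$ on $[\bbC^2/G]$ has no higher terms: the torus weight of $\rho$ vanishes. The involution $\iota^*$ sends the component on $\cX_m$ to the one on $\cX_{\bar m}$, so the coefficient of $\mathbf 1_m$ is the trace of $g_{\bar m}=g_m^{-1}$ (up to conjugacy), namely $\chi_\rho(g_m^{-1})$. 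All these classes have $\deg_0=0$, so the factor $(2\pi\ri/z)^{\deg_0/2}$ acts trivially. This gives \eqref{eq:chX_general}, including the last row $(\dim\rho)_\rho$.

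\emph{The resolution side.} $Y$ has trivial inertia, so $\iota^*$ is the identity. We compute $\mathrm{ch}^{\bbC^\times}(\cO(\phi_k))=\re^{\phi_k}$ via localisation, substituting the restrictions \eqref{eq:loc_phi_general}. At an isolated point $P_i$, the restriction $i^*\phi_k$ is a multiple $c\,\nu$ of $P_i$, and in the localised basis $P_i$ acts as an idempotent. Hence $\re^{\phi_k}$ restricts to $\re^{c\nu}P_i$. Applying $(2\pi\ri/z)^{\deg_0/2}$ with $\deg_0\nu=2$ rescales $\nu\mapsto 2\pi\ri\nu/z$, turning $\re^{c\nu}$ into $\mathsf q^{c}$. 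For example, $c=-2\nu\mathfrak n_{ik}$ gives $\mathsf q^{-2\mathfrak n_{ik}}$. The legs $P_{l-1},P_l$ give $\mathsf q^{-2k}$, or for $k=l-1,l$ the values $\mathsf q^{-n}$ and $\mathsf q^{-l}$, read off \eqref{eq:loc_phi_general}.

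On $C$, the restriction is $-2\nu k\,\mathbf 1_C+\delta_{kn}\omega_C$. Since $\omega_C^2=0$ and $\omega_C$ has $\deg_0=2$, we get
\[
\re^{-2k\nu}\big(\mathbf 1_C+\delta_{kn}\omega_C\big)\;\mapsto\;\mathsf q^{-2k}\mathbf 1_C+\tfrac{2\pi\ri}{z}\,\delta_{kn}\,\mathsf q^{-2n}\,\omega_C,
\]
using $k=n$ in the second term. The row for $\cO_Y$ is $i^*\mathbf 1_Y$, which is $1$ in every entry except $0$ on $\omega_C$.

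\emph{Main obstacle.} The one point needing care is the product structure on the localised basis. We must check that $P_i\cup P_i=P_i$ and $\mathbf 1_C\cup\omega_C=\omega_C$ hold in $H_{\bbC^\times}(Y^{\bbC^\times})$ with the normalisation implicit in \eqref{eq:localisation_normal_weights}. This is what lets the exponential be computed entrywise, since the localisation map is a ring isomorphism onto $H_{\bbC^\times}(Y^{\bbC^\times})$ after tensoring with $\bbC(\nu)$. A second check is that the sign and $\nu$-conventions of \eqref{eq:loc_phi_general} (torus weight $-\nu$ on $x,y$) are consistent with \cref{lem:taut_localisation}. As a cross-check, for $v\in\{1,l-1,l\}$ the entries should reproduce the fibre characters of $\mathcal R_{\rho_v}$ found there.
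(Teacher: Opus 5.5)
Your proposal is correct and follows the paper's own argument. On the orbifold side you take the trace of $g_m$ on $\rho$ and then apply $\iota^*$. On $Y$ you exponentiate the localised restrictions \eqref{eq:loc_phi_general}, using $\omega_C^2=0$, and rescale by $\nu\mapsto 2\pi\ri\nu/z$. The ``obstacle'' you flag is automatic: $i^*$ is a ring map into $H_{\bbC^\times}(Y^{\bbC^\times})$, where $P_i$ and $\mathbf 1_C$ are the units of the summands. Also, in your example you mean $c=-2\mathfrak n_{ik}$, not $c=-2\nu\mathfrak n_{ik}$.
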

\begin{proof}
On the sector indexed by $g_m$, the orbifold Chern character of $\cO_\rho$ is $\chi_\rho(g_m)$. Applying $\iota^*$ replaces $g_m$ by $g_m^{-1}$, giving \eqref{eq:chX_general}.

On $Y$, the scaled Chern character of $\cO(\phi_k)$ restricts to $\exp(2\pi\ri\,i^*\phi_k/z)$. Substituting \eqref{eq:loc_phi_general}, using $\exp(-4\pi\ri\nu j/z)=\mathsf q^{-2j}$ and $\omega_C^2=0$, gives \eqref{eq:chY_general}.
\end{proof}

\begin{prop}
\label{prop:gamma_class_matrices}
In the bases and normalisation of \cref{prop:induced_cohomological_matrix}, the Gamma-class multiplication matrices are
\beq
\label{eq:GammaX_general}
\Gamma_{\cX} = \mathrm{diag}\Big(\Big\{\frac{\Gamma(1-\frac{r}{2n}+\frac{\nu}{z})\Gamma(\frac{r}{2n}+\frac{\nu}{z})}{z}\Big\}_{r=1}^{n},\;\frac{\Gamma(\frac14+\frac{\nu}{z})\Gamma(\frac34+\frac{\nu}{z})}{z},\;\frac{\Gamma(\frac14+\frac{\nu}{z})\Gamma(\frac34+\frac{\nu}{z})}{z},\;\Gamma\Big(1+\frac{\nu}{z}\Big)^2\Big)\,,
\eeq
and
\beq
\label{eq:GammaY_general}
\begin{aligned}
\Gamma_Y = \mathrm{diag}\Big(&
\big\{\Gamma\big(1+\tfrac{2(n+1-i)\nu}{z}\big)
\Gamma\big(1-\tfrac{2(n-i)\nu}{z}\big)\big\}_{i=1}^{n-1}, \\ & \Gamma\big(1+\tfrac{2\nu}{z}\big)
\begin{pmatrix}1 & 0\\ -\frac{2}{z}H_{2\nu/z} & 1\end{pmatrix},\; \Gamma\big(1+\tfrac{4\nu}{z}\big)\Gamma\big(1-\tfrac{2\nu}{z}\big),\;
\Gamma\big(1+\tfrac{4\nu}{z}\big)\Gamma\big(1-\tfrac{2\nu}{z}\big)\Big)\,.
\end{aligned}
\eeq
Here $H$ is the harmonic-number function of \eqref{eq:def_q}, and the $2\times2$ block acts on $(\mathbf1_C,\omega_C)$.
\end{prop}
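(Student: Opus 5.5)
The plan is to compute both Gamma classes sector by sector (for $\cX$) and fixed component by fixed component (for $Y$), using the definition of $\widehat{\Gamma}_\cZ$ in \cref{subsec:integral-structure} together with the weight rules (W1)--(W2). Then apply the operator $z^{-\deg/2}$. Concretely, $z^{-\deg/2}$ replaces every equivariant Chern root $\delta$ by $\delta/z$ and multiplies a basis vector by $z^{-(\deg-\deg_{\rm ref})/2}$ relative to the reference class of its component. We divide out the common factor $z/(2\pi)$ as in \cref{prop:induced_cohomological_matrix}, and take as reference classes $\mathbf 1_0$ on $\cX$ and the localised classes $P_i$, $\mathbf 1_C$ on $Y$ (viewed as idempotent multiples). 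Since $\widehat\Gamma$ is a product of restrictions to components, both matrices are block diagonal in the bases $\mathbf e_\cX$, $\mathbf e_Y$. Only the curve $C$ can produce an off-diagonal block, because it is the only positive-dimensional fixed component.

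\emph{The orbifold $\cX$.} On the untwisted sector $T\cX=\bbC^2$ carries the scalar action. Its two Chern roots are both $\nu$, consistently with (W1), and $f=0$, which gives $\Gamma(1+\nu/z)^2$ with no extra power of $z$. On $\cX_m$, $m\geq1$, the tangent space splits into the eigenlines $\re^{2\pi\ri\theta_m}$ and $\re^{2\pi\ri(1-\theta_m)}$, each with Chern root $\nu$. This yields $\Gamma(1-\theta_m+\nu/z)\Gamma(\theta_m+\nu/z)$. The age-shifted degree of $\mathbf 1_m$ is $2$, which supplies the factor $1/z$. Finally I read off $\theta_m$: for $g_r=a^r$ the eigenvalues are $\zeta^{\pm r}=\re^{\pm \pi\ri r/n}$. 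The entry is symmetric under $\theta\leftrightarrow 1-\theta$, and matching \eqref{eq:GammaX_general} requires the normalisation $\theta=r/(2n)$. This is the one point where I must double-check the convention for $\zeta$ against $\cref{sec:classical_McKay}$, since it determines whether the angle is $r/n$ or $r/(2n)$. For $b$ and $ab$ the eigenvalues are $\pm\ri$, so $\theta=1/4$, which gives $\Gamma(\tfrac14+\tfrac{\nu}{z})\Gamma(\tfrac34+\tfrac{\nu}{z})/z$.

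\emph{The resolution $Y$: isolated points.} At $p_i$, $i\leq n-1$, the tangent weights are the two weights along $E_{i-1}$ and $E_i$. By (W1)--(W2) and the explicit solution $w_i^L=2\nu(n+1-i)$, $w_i^R=-2\nu(n-1-i)$, they are $2(n+1-i)\nu$ and $-2(n-i)\nu$; these sum to $2\nu$. This gives the first $n-1$ entries after $\nu\mapsto\nu/z$. At $p_{l-1}$ and $p_l$ the leg curves $E_{l-1}$, $E_l$ meet $C$, whose normal weight is $2\nu$. Hence the weight along the leg at $p$ is $-2\nu$ by (W2), and the complementary weight is $4\nu$ by (W1). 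This gives $\Gamma(1+4\nu/z)\Gamma(1-2\nu/z)$.

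\emph{The fixed curve $C$, the main step.} Here $TY|_C=T_C\oplus N_{C/Y}=\cO_C(2)\oplus\cO_C(-2)\otimes\chi_{2\nu}$, since $C$ is pointwise fixed with self-intersection $-2$ and (W1) forces the normal weight $2\nu$. The Chern roots are $2\omega_C$ and $2\nu-2\omega_C$. I will expand $\Gamma(1+2\omega_C)\Gamma(1+2\nu-2\omega_C)$ to first order using $\omega_C^2=0$. With $\Gamma'(1)=-\gamma$ this gives
\[
\Gamma(1+2\nu)\big(1-2\omega_C(\gamma+\psi(1+2\nu))\big)=\Gamma(1+2\nu)\big(1-2H_{2\nu}\,\omega_C\big).
\]
Applying $z^{-\deg/2}$ rescales $\nu\mapsto\nu/z$, and, because $\omega_C$ has degree two more than $\mathbf 1_C$, turns the coefficient into $-\tfrac{2}{z}H_{2\nu/z}$. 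Multiplication on $\mathbf 1_C$ and $\omega_C$ then gives the lower-triangular $2\times 2$ block of \eqref{eq:GammaY_general}.

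The main obstacle I expect is this curve block together with the bookkeeping of the degree shifts. One has to fix the sign convention for the normal weight and the relative $z$-powers consistently, so that the off-diagonal entry is $-\tfrac{2}{z}H_{2\nu/z}$ rather than a variant with $\gamma$ dropped or with the opposite sign. I would check this against the $l=3$ type-A computation of \cite{Brini:2019wts}.
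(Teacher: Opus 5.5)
Your proposal is correct and follows essentially the same route as the paper. On $\cX$ you use the eigenvalue exponents of each sector with torus weights $(\nu,\nu)$, the factor $z^{-1}$ coming from the age shift; at the isolated fixed points of $Y$ you read the tangent weights off (W1)--(W2); on the curve block you expand to first order in $\omega_C$ with Chern roots $2\omega_C$, $2\nu-2\omega_C$ and $\psi(1)=-\gamma$. Your hedge about $\zeta$ is unnecessary: since $\zeta^{\pm r}=\re^{\pm 2\pi\ri\, r/(2n)}$, the exponents $r/(2n)$ and $1-r/(2n)$ follow directly from the convention of \cref{sec:classical_McKay}.
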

\begin{proof}
On the orbifold side, $g_r=a^r$ acts with eigenvalue exponents $r/(2n)$ and $1-r/(2n)$ for $1\leq r\leq n$, while $b$ and $ab$ act with exponents $1/4$ and $3/4$. The torus weights at the origin are $(\nu,\nu)$. Substitution in the Gamma class, with the age-shifted scaling of \cref{prop:induced_cohomological_matrix}, gives \eqref{eq:GammaX_general}; the factor $z^{-1}$ occurs precisely on the non-trivial sectors.

On $Y$, the tangent weights at $p_i$ are $2\nu(n+1-i)$ and $-2\nu(n-i)$, and those at either leg point are $4\nu$ and $-2\nu$. These give the scalar blocks of \eqref{eq:GammaY_general}. Along $C$, the tangent and normal Chern roots are $2\omega_C$ and $2\nu-2\omega_C$. Since $\omega_C^2=0$, the scaled Gamma class restricts to
\[
\Gamma\!\left(1+\frac{2\omega_C}{z}\right)
\Gamma\!\left(1+\frac{2\nu-2\omega_C}{z}\right)
=\Gamma\!\left(1+\frac{2\nu}{z}\right)
\left(1-\frac{2H_{2\nu/z}}{z}\omega_C\right),
\]
using $\psi(1)=-\gamma$. Multiplication by this class in the basis $(\mathbf1_C,\omega_C)$ gives the stated curve block.
\end{proof}

\subsection{Identification of the analytic-continuation map}
\label{subsec:analytic_continuation_identification}
We now show that the analytic-continuation map $\mathbb{U}$ of \cref{sec:Umat_typeD} coincides with $\mathbb{U}_K^{\sharp}$, i.e.\ that the diagram \eqref{diagram:induced_U_equivariant} commutes. Since $\mathbb{U}=\cA\cB$ by definition (see after \cref{prop:period_comparison_U}), by \eqref{eq:MtoU} this is equivalent to
\beq
\label{eq:master_identity}
\cT_{\rm OP} = \cT_{\rm LR}\,,\qquad \cT_{\rm OP} := \cB\,\Gamma_{\cX}\,\mathrm{ch}_{\cX}\,,\qquad \cT_{\rm LR} := \cA^{-1}\,\Gamma_Y\,\mathrm{ch}_Y\,\mathsf{M}_l\,,
\eeq
an identity between two $(l+1)\times(l+1)$ matrices whose rows are indexed by the periods $\Pi_1,\dots,\Pi_{l+1}$ and whose columns are indexed by $(\rho_1,\dots,\rho_l,\rho_0)$. The $j$-th row of $\cT_{\rm OP}$ pairs the orbifold-point data of $\Pi_j$ with the framed classes of $\cO_{\bbC^2}\otimes\rho$, and the $j$-th row of $\cT_{\rm LR}$ pairs the large-radius data of $\Pi_j$ with the framed classes of $\mathcal{R}_\rho$. The identity \eqref{eq:master_identity} states that these agree. We prove \eqref{eq:master_identity} by computing both sides in closed form, using \cref{lem:LR_general} for $\cA^{-1}$ and \cref{prop:B_closed_form,lem:OP_values} for $\cB$. Throughout,
\begin{equation*}
\mathsf{G} := \frac{\pi\,\Gamma(1+\hbar)}{z}\,,\qquad D_x := \frac{1}{\mathsf{q}^{2x}-1}\,,\qquad \langle k\rangle\in\{0,\dots,2n-1\}\ \text{the residue of}\ k\ \text{modulo}\ 2n\,,
\end{equation*}
and, for each $\rho$, $\mathsf{C}(\rho)$ denotes the multiset of exponents such that $\chi_\rho(a^{r}) = \sum_{c\in\mathsf{C}(\rho)}\zeta^{cr}$, with $\zeta = \re^{2\pi\ri/(2n)}$ the root of unity of \cref{sec:classical_McKay}: $\mathsf{C}(\rho_0) = \mathsf{C}(\rho_1) = \{0\}$, $\mathsf{C}(\rho_v) = \{v-1,\,1-v\}$ for $2\leq v\leq n$, and $\mathsf{C}(\rho_{l-1}) = \mathsf{C}(\rho_l) = \{n\}$; note that $(-1)^{r} = \zeta^{nr}$. We shall use the two discrete Fourier transforms
\begin{align}
\label{eq:DFT1}
\sum_{r=0}^{2n-1}\frac{\zeta^{kr}}{1-\zeta^{r}\mathsf{q}} &= \frac{2n\,\mathsf{q}^{\langle -k\rangle}}{1-\mathsf{q}^{2n}}\,,\\
\label{eq:DFT2}
\sum_{r=0}^{2n-1}\frac{\zeta^{kr}}{(1-\zeta^{r}\mathsf{q})(1-\zeta^{-r}\mathsf{q})} &= \frac{2n\big(\mathsf{q}^{\langle k\rangle}+\mathsf{q}^{2n-\langle k\rangle}\big)}{(1-\mathsf{q}^{2})(1-\mathsf{q}^{2n})}\,,
\end{align}
valid as identities of rational functions of $\mathsf{q}$: for $|\mathsf{q}|<1$ expand the left-hand sides in geometric series, $\sum_{\ell\geq0}\zeta^{r\ell}\mathsf{q}^{\ell}$ and $\sum_{\ell,\ell'\geq0}\zeta^{r(\ell-\ell')}\mathsf{q}^{\ell+\ell'}$, and use $\sum_{r}\zeta^{r(k+\ell)} = 2n\,\delta_{\langle\ell\rangle,\langle -k\rangle}$; in the second case the pairs $(\ell,\ell')$ with $\ell'-\ell\equiv k$ are $(\ell,\ell+\langle k\rangle+2nt)$ and $(\ell'+2n-\langle k\rangle+2nt,\ell')$, $t\geq0$.

\begin{lem}
\label{lem:T_OP}
Let $l\geq 4$. For $2\leq j\leq l-1$ put $\mathsf{p} := l-1-j\in\{0,\dots,n-1\}$. Then, for every $\rho$,
\begin{align*}
(\cT_{\rm OP})_{j,\rho} &= \frac{\ri\,\mathsf{G}\,\mathsf{q}^{-\mathsf{p}}}{1-\mathsf{q}^{2n}}\sum_{c\in\mathsf{C}(\rho)}\Big(\mathsf{q}^{\langle \mathsf{p}-n-c\rangle}+\mathsf{q}^{\langle \mathsf{p}+n+c\rangle}\Big)\,,\qquad 2\leq j\leq l-1\,,\\
(\cT_{\rm OP})_{j,\rho} &= \frac{\ri\,\mathsf{G}}{1-\mathsf{q}^{2n}}\sum_{c\in\mathsf{C}(\rho)}\Big(\mathsf{q}^{\langle n+c\rangle}+\mathsf{q}^{2n-\langle n+c\rangle}\Big)+\ri\,\mathsf{G}\,\sigma_j(\rho)\,,\qquad j\in\{l,l+1\}\,,\\
(\cT_{\rm OP})_{1,\rho} &= \frac{\ri\,\mathsf{G}\,\mathsf{q}^{-n}}{1-\mathsf{q}^{2n}}\sum_{c\in\mathsf{C}(\rho)}\Big(\mathsf{q}^{\langle c\rangle}+\mathsf{q}^{2n-\langle c\rangle}\Big)+\ri\,\mathsf{G}\,\mathsf{q}^{-n}\,\sigma_1(\rho)\,,
\end{align*}
where $\sigma_l(\rho_{l-1}) = -\sigma_l(\rho_l) = -1$, $\sigma_{l+1}(\rho_{l-1}) = -\sigma_{l+1}(\rho_l) = (\mathsf{q}^{2}-1)/(\mathsf{q}^{2}+1)$, $\sigma_1(\rho_0) = -\sigma_1(\rho_1) = -1$, and $\sigma_j(\rho) = 0$ in all other cases.
\end{lem}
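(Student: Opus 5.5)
The plan is to compute $\cT_{\rm OP}=\cB\,\Gamma_{\cX}\,\mathrm{ch}_{\cX}$ row by row, expanding the matrix product as a sum over sectors:
\[
(\cT_{\rm OP})_{j,\rho}=\sum_{m=1}^{l}\cB_{j,m}(\Gamma_{\cX})_{mm}\chi_\rho(g_m^{-1})+\cB_{j,0}(\Gamma_{\cX})_{00}\dim\rho\,.
\]
The sum then splits into three pieces: the untwisted column, supplied by \cref{lem:OP_values}; the cyclic sectors $1\leq m\leq n$, supplied by \eqref{eq:B_single_beta}; and the two sectors $m\in\{l-1,l\}$, supplied by the $\beta(\hbar)$-part of \cref{prop:B_closed_form}. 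The idea is that the cyclic sectors and the untwisted column together reassemble into a single discrete Fourier sum over $r=0,\dots,2n-1$ of the type \eqref{eq:DFT1}--\eqref{eq:DFT2}. The $b,ab$ sectors should account exactly for the correction terms $\sigma_j(\rho)$.

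\textbf{Gamma-function simplification.} With $\hbar=2\nu/z$, I will pair the Beta factor $\Beta(1-\tfrac{m}{2n}-\tfrac{\hbar}{2},\tfrac{m}{2n}-\tfrac{\hbar}{2})$ in $\cB_{j,m}$ with $(\Gamma_{\cX})_{mm}=\Gamma(1-\tfrac{m}{2n}+\tfrac{\hbar}{2})\Gamma(\tfrac{m}{2n}+\tfrac{\hbar}{2})/z$. The reflection formula turns the product of the four Gammas into
\[
\frac{\pi^2}{\sin\pi(\tfrac{m}{2n}-\tfrac{\hbar}{2})\,\sin\pi(\tfrac{m}{2n}+\tfrac{\hbar}{2})}\,,
\]
divided by $\Gamma(1-\hbar)$. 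Combining this with the prefactor $\hbar/\sin(\pi\hbar)$ and using $\hbar\Gamma(\hbar)\Gamma(1-\hbar)$-type identities should leave the common factor $\mathsf G=\pi\Gamma(1+\hbar)/z$. The remaining sine ratios rewrite, in terms of $\zeta^{m}$ and $\mathsf q=\re^{\pi\ri\hbar}$, as rational functions $1/((1-\zeta^{m}\mathsf q)(1-\zeta^{-m}\mathsf q))$ or $1/(1-\zeta^{\pm m}\mathsf q)$, up to monomials. The untwisted term behaves the same way: $\Beta(-\tfrac{\hbar}{2},-\tfrac{\hbar}{2})\Gamma(1+\tfrac{\hbar}{2})^2$ and $\Beta(-\tfrac{\hbar}{2},1+\hbar)\Gamma(1+\tfrac{\hbar}{2})^2$ reduce to $\mathsf G$ times the $r=0$ summand.

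\textbf{Recognising the Fourier sums.} Next I will use that $\chi_\rho(a^{-m})=\sum_{c\in\mathsf C(\rho)}\zeta^{-cm}$, and that the class sizes $|(a^m)|=2$ for $m<n$ and $1$ for $m=n$, together with the factor $1/(1+\delta_{m,n})$, let me symmetrise $m\leftrightarrow 2n-m$. The sum over $m=1,\dots,n$ plus the untwisted term then becomes a full sum over $r\in\bbZ/2n$.

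For the rows $2\leq j\leq l-1$, the row factor is $(\kappa_j^n)^\hbar\kappa_j^{\pm m}$, with $\kappa_j$ a $2n$-th root of $-1$ indexed by $\mathsf p=l-1-j$ via \eqref{eq: OP_DN}. This shifts the Fourier index by $\pm(\mathsf p+n)$ (the extra $n$ coming from $(-1)^m=\zeta^{nm}$), and \eqref{eq:DFT1} yields the two terms $\mathsf q^{\langle\mathsf p-n-c\rangle}$ and $\mathsf q^{\langle\mathsf p+n+c\rangle}$, with $(\kappa_j^n)^\hbar$ producing $\mathsf q^{-\mathsf p}$. For the rows $j\in\{l,l+1,1\}$, the entries are $(-1)^m$ or $\re^{-\pi\ri n\hbar}=\mathsf q^{-n}$, the denominator is the product of both sines, and \eqref{eq:DFT2} gives the symmetric combination $\mathsf q^{\langle k\rangle}+\mathsf q^{2n-\langle k\rangle}$ with $k=n+c$, respectively $k=c$.

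\textbf{The $b$ and $ab$ sectors.} These only affect rows $1,l,l+1$. There $\beta(\hbar)(\Gamma_\cX)_{mm}$ with exponents $\tfrac14,\tfrac34$ simplifies by the duplication formula, since $2^\hbar\Beta(\hbar,\tfrac12-\hbar)\Gamma(\tfrac14+\tfrac\hbar2)\Gamma(\tfrac34+\tfrac\hbar2)$ collapses to a multiple of $\mathsf G$ divided by $\cos$. I will multiply by $\chi_\rho(b^{-1}),\chi_\rho((ab)^{-1})$, which vanish on the two-dimensional $\rho_v$, and use $\varepsilon$ and the $\bar m$ rule for inverses. The result should be the signs $\sigma_j$: in row $l+1$ the factor $\tan(\pi\hbar)$ gives $(\mathsf q^2-1)/(\mathsf q^2+1)$ up to $\ri$, and in row $1$ the one-dimensional characters produce the $\rho_0,\rho_1$ pattern. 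For row $1$ I must also check the cancellation on $\rho_{l-1},\rho_l$, where the opposite signs $\pm$ should cancel against the character values.

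\textbf{Main obstacle.} I expect the hardest part to be the phase and branch bookkeeping: matching $(\kappa_j^n)^\hbar$ on the chosen sheet, the $\ri$ from $\sqrt{\chi_Q-2}$ and the $\langle\cdot\rangle$ residues to exactly the stated monomial exponents. I would first verify the formula numerically for $l=4,5$ to fix sign conventions, and then do the general bookkeeping.
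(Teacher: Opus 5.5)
Your proposal is correct and follows the paper's own argument. You expand $\cB\,\Gamma_{\cX}\,\mathrm{ch}_{\cX}$ over sectors, collapse the Gamma products by reflection (and by duplication in the collision columns $m\in\{l-1,l\}$, which yield the $\sigma_j$), and evaluate the resulting sums by \eqref{eq:DFT1} for $2\leq j\leq l-1$ and by \eqref{eq:DFT2} for $j\in\{1,l,l+1\}$. Your observations that the untwisted column is exactly the $r=0$ summand, and that the weight $1/(1+\delta_{m,n})$ symmetrises $m\leftrightarrow 2n-m$, fill in steps the paper leaves implicit. The class sizes should not be reinserted at that stage, since they already sit inside $\cB$.
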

\begin{proof}
Write \[(\cT_{\rm OP})_{j,\rho} = \sum_{m=0}^{l}\cB_{j,m}(\Gamma_{\cX})_{m}\,\chi_\rho(g_m^{-1})\,,\] with $\cB_{j,m}$ given by \cref{prop:B_closed_form} for $1\leq m\leq l$ and by \cref{lem:OP_values} for $m=0$. For the rows $2\leq j\leq l-1$, multiplying the Beta factor \eqref{eq:B_single_beta} by the Gamma-class entry \eqref{eq:GammaX_general} and applying Euler's reflection formula collapses every $\Gamma$-function to a trigonometric one. Substituting $\kappa$ from \eqref{eq: OP_DN} and writing the characters as \[(-1)^{r}\chi_\rho(a^{r}) = \sum_{c\in\mathsf{C}(\rho)}\zeta^{(n+c)r}\] turns the sum over $r$ into a discrete Fourier transform, which \eqref{eq:DFT1} evaluates to the stated residues. The rows $j=l,l+1$ are the same computation with the second case of \eqref{eq:B_single_beta}, with \eqref{eq:DFT2} in place of \eqref{eq:DFT1}. In the row $j=1$, the third case of \eqref{eq:B_single_beta} and \cref{lem:OP_values} differ from the rows $j=l,l+1$ by the factor $(-1)^{m}\mathsf{q}^{-n}=\zeta^{nm}\mathsf{q}^{-n}$ in the columns $0\leq m\leq n$, which shifts the residues by $n$. In all three rows the terms $\sigma_j(\rho)$ come from the collision columns $m\in\{l-1,l\}$, given by the second display of \cref{prop:B_closed_form}. The intervening steps are elementary trigonometry and we omit them.
\end{proof}

\begin{lem}
\label{lem:T_LR}
For $\rho$ an irreducible representation write $\mathrm{ch}_k(\rho) := \sum_d\mathsf{q}^{-d}$, the sum running over the degrees $d$ in which $\rho$ occurs in the cluster at $p_k$ (\cref{lem:taut_localisation}), $\mathrm{ch}_C(\rho)$ for the same sum over the degrees on $C$, and $\mathrm{ch}_{l-1}(\rho)$, $\mathrm{ch}_l(\rho)$ for the sums at $p_{l-1}$, $p_l$. Then
\begin{equation*}
(\cT_{\rm LR})_{j,\rho} = 2\ri\,\mathsf{G}\,\mathsf{q}^{n}\Big[\sum_{k=1}^{\min(j-1,n-1)}\big(D_{n-k}-D_{n+1-k}\big)\,\mathrm{ch}_k(\rho) + \Delta_j(\rho)\Big]\,,
\end{equation*}
where
\begin{equation*}
\Delta_j(\rho) = \begin{cases}
-D_{n+1-j}\,\mathrm{ch}_j(\rho)\,, & 1\leq j\leq n\ (\text{with}\ \mathrm{ch}_n := \mathrm{ch}_C)\,,\\[2pt]
-D_1\,\mathrm{ch}_C(\rho) - \mathsf{q}^{-n-1}\delta_{\rho,\rho_n}\,, & j = l-1\,,\\[2pt]
-(\mathsf{q}^{2}+1)D_1\,\mathrm{ch}_C(\rho) - \mathsf{q}^{-n-1}\delta_{\rho,\rho_n} + \mathsf{q}^{2}D_1\,\mathrm{ch}_{l-1}(\rho)\,, & j = l\,,\\[2pt]
-(\mathsf{q}^{2}+1)D_1\,\mathrm{ch}_C(\rho) - \mathsf{q}^{-n-1}\delta_{\rho,\rho_n} + \mathsf{q}^{2}D_2\,\mathrm{ch}_{l-1}(\rho)+\mathsf{q}^{4}D_2\,\mathrm{ch}_{l}(\rho)\,, & j = l+1\,.
\end{cases}
\end{equation*}
\end{lem}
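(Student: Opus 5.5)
The plan is to compute $\cT_{\rm LR}=\cA^{-1}\Gamma_Y\,\mathrm{ch}_Y\,\mathsf M_l$ by multiplying from the right. First I will form $\mathrm{ch}_Y\mathsf M_l$. By \cref{prop:taut_general}, its columns are the localised Chern characters of the tautological bundles $\mathcal R_\rho$ in the basis $\mathbf e_Y$. By \cref{lem:taut_localisation} and \eqref{eq:chY_general}, these entries are exactly $\mathrm{ch}_k(\rho)$ in the $P_k$ rows, $\mathrm{ch}_C(\rho)$ in the $\mathbf 1_C$ row, and $\mathrm{ch}_{l-1}(\rho),\mathrm{ch}_l(\rho)$ in the leg rows. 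The $\omega_C$ row carries only $\frac{2\pi\ri}{z}\mathsf q^{-(n+1)}\delta_{\rho,\rho_n}$, coming from the $\cO_C(1)$ summand of weight $-(n+1)\nu$ in $\mathcal R_{\rho_n}|_C$. This is immediate from the two propositions and reduces everything to the row vectors of $\cA^{-1}\Gamma_Y$.

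Next I will compute $\cA^{-1}\Gamma_Y$ row by row, using the lower-triangular form \eqref{eq:Ainv_general} and the diagonal-plus-curve-block form \eqref{eq:GammaY_general}, with $2\nu/z=\hbar$. The key simplification is that each product of an entry of \eqref{eq:LR_entries_general} with the corresponding Gamma factor collapses, via Euler reflection, to $\mathsf G$ times a rational function of $\mathsf q=\re^{\pi\ri\hbar}$.

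For $P_k$ with $k\leq n-1$, the product $a_k\,\Gamma(1+(n+1-k)\hbar)\Gamma(1-(n-k)\hbar)$ becomes
\[
\frac{\mathsf q^n}{z}\,\Gamma(1+\hbar)\cdot\frac{\pi}{\hbar}\Big(\frac{1}{\sin\pi(n-k)\hbar}-\frac{1}{\sin \pi(n+1-k)\hbar}\Big)
\]
up to normalisation. Writing $1/\sin(\pi x\hbar)=2\ri\,\mathsf q^{x}D_x$, this yields the telescoping coefficient $2\ri\mathsf G\mathsf q^n(D_{n-k}-D_{n+1-k})$ once the $\mathsf q^x$ prefactors are absorbed. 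I must verify this prefactor bookkeeping carefully, since the statement shows no residual $\mathsf q$-powers. The diagonal entries $b_j$ give $-D_{n+1-j}$ by the same reflection argument, which produces the rows $j\leq n$. There the sum stops at $\min(j-1,n-1)$ by triangularity, and the $j=n$ diagonal lands on $\mathbf 1_C$.

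For the last three rows, the contributions come from the entries $c,d,-\mathsf q^n,e,f,g$. The curve block mixes $\mathbf 1_C$ and $\omega_C$, so the $\mathbf 1_C$ coefficient of a row becomes $(\cA^{-1})_{j,\mathbf 1_C}-\frac{2}{z}H_{\hbar}(\cA^{-1})_{j,\omega_C}$, all multiplied by $\Gamma(1+\hbar)$. The harmonic numbers in $c$ and $d$ must cancel against $H_\hbar$, using the reflection identity $H_{-1-\hbar}-H_{\hbar}=\pi\cot(\pi\hbar)$ (suitably signed), which leaves $-D_1$ or $-(\mathsf q^2+1)D_1$. This cancellation is the main obstacle: it is the only place where transcendental $\psi$-terms must disappear, and the $\pi\ri$ in $c$ together with the sign convention for $\arg$ from \cref{sec:branches} must combine exactly. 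I would check it first for $l=4$ numerically.

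The $\omega_C$ coefficient $-\mathsf q^n\Gamma(1+\hbar)$, paired with the row entry $\frac{2\pi\ri}{z}\mathsf q^{-(n+1)}\delta_{\rho,\rho_n}$, gives the $-\mathsf q^{-n-1}\delta_{\rho,\rho_n}$ term after factoring out $2\ri\mathsf G\mathsf q^n$. Finally, the leg entries $e,f,g$ times $\Gamma(1+2\hbar)\Gamma(1-\hbar)$ reduce by reflection and duplication to $\mathsf q^2D_1$, $\mathsf q^2D_2$ and $\mathsf q^4D_2$. In particular $g$ requires $\Beta(1-2\hbar,1+\hbar)\Gamma(1+2\hbar)\Gamma(1-\hbar)(1-\hbar)/(2\hbar)$ to simplify to a ratio of sines, namely $\pi/\sin(2\pi\hbar)$ up to factors. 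Assembling these contributions gives $\Delta_{l-1},\Delta_l,\Delta_{l+1}$.
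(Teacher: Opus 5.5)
Your route is the paper's: multiply out $\cA^{-1}\Gamma_Y\,\mathrm{ch}_Y\mathsf M_l$ entry by entry, read the columns of $\mathrm{ch}_Y\mathsf M_l$ off \cref{prop:taut_general,lem:taut_localisation}, reflect all $\Gamma$-products, and cancel the harmonic numbers of $c,d$ against the curve block via $\psi(1+\hbar)-\psi(-\hbar)=-\pi\cot(\pi\hbar)$. Most of your steps check out:
\begin{itemize}
\item the $b_j$ entries;
\item the curve block, which gives $\mathsf G\mathsf q^n(\ri-\cot\pi\hbar)=-2\ri\mathsf G\mathsf q^nD_1$ in row $l-1$ and $-2\mathsf G\mathsf q^n\cot\pi\hbar=-2\ri\mathsf G\mathsf q^n(\mathsf q^2+1)D_1$ in rows $l,l+1$;
\item the $\omega_C$ term;
\item the entries $e,f,g$. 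For $g$ no duplication formula is needed: $\Gamma(2-\hbar)=(1-\hbar)\Gamma(1-\hbar)$ and reflection suffice.
\end{itemize}

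The one step that fails as written is the off-diagonal entry $a_k$. Your intermediate is a difference of cosecants with a factor $\pi/\hbar$. Via $1/\sin(\pi x\hbar)=2\ri\,\mathsf q^xD_x$ it produces $\hbar^{-1}\big(\mathsf q^{n-k}D_{n-k}-\mathsf q^{n+1-k}D_{n+1-k}\big)$. The two unequal powers of $\mathsf q$ there cannot be absorbed into a common prefactor, so you would not reach the telescoping coefficient.

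What you are missing is the denominator $\Gamma(-\hbar)$ of $\Beta(-(n+1-k)\hbar,(n-k)\hbar)$. It reflects to $1/\Gamma(-\hbar)=-\Gamma(1+\hbar)\sin(\pi\hbar)/\pi$, and there is no $1/\hbar$. With $u=n+1-k$ one gets
\[
a_k\,\Gamma(1+u\hbar)\,\Gamma(1-(u-1)\hbar)=\mathsf G\,\mathsf q^{n}\,\frac{\sin\pi\hbar}{\sin\pi u\hbar\,\sin\pi(u-1)\hbar}=\mathsf G\,\mathsf q^{n}\big(\cot\pi(u-1)\hbar-\cot\pi u\hbar\big).
\]
Then $\cot\pi x\hbar=\ri(1+2D_x)$ gives $2\ri\,\mathsf G\,\mathsf q^n(D_{n-k}-D_{n+1-k})$, with no residual powers of $\mathsf q$. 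The absorption mechanism you describe is the correct one for the diagonal entries $b_j$, not for $a_k$. There the factor $\Gamma(1-(n-j)\hbar)$ cancels the Beta denominator outright. The prefactor $\mathsf q^{j-1}$ then combines with the $\mathsf q^{n+1-j}$ from $1/\sin\pi(n+1-j)\hbar$ to give $\mathsf q^n$.
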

\begin{proof}
By \cref{prop:taut_general}, the column of $\Gamma_Y\mathrm{ch}_Y\mathsf{M}_l$ indexed by $\rho$ is the framed class of $\mathcal{R}_\rho$. The entry $(\cT_{\rm LR})_{j,\rho}$ is the sum over the localised basis of $(\cA^{-1})_{j\alpha}$ times the $\alpha$-component of this framed class. For the rows $\Pi_{l-1},\Pi_l,\Pi_{l+1}$ the harmonic numbers in the entries $c$ and $d$ of \eqref{eq:LR_entries_general} cancel against the off-diagonal entry of the curve block of $\Gamma_Y$ by the reflection formula $\psi(1+\hbar)-\psi(-\hbar)=-\pi\cot(\pi\hbar)$. Rewriting $\sin\pi u\hbar = \mathsf{q}^{-u}(\mathsf{q}^{2u}-1)/(2\ri)$ then turns every term into a monomial in $\mathsf{q}$ times one of the $D_x$, and summing over the fixed points gives the stated expressions. The remaining algebra is elementary.
\end{proof}

\begin{thm}
\label{thm:identity_general}
The analytic-continuation map $\mathbb{U}$ of \cref{sec:Umat_typeD} is induced by the equivariant McKay correspondence $\mathbb{U}_K$ through the integral-structure framings, 
\[
\mathbb{U} = \cA\cB = \Gamma_Y\,\mathrm{ch}_Y\,\mathsf M_l\,\mathrm{ch}_{\cX}^{-1}\,\Gamma_{\cX}^{-1}=\mathbb{U}_K^{\sharp}\,.
\]
Equivalently, $\cT_{\rm OP}=\cT_{\rm LR}$.
\end{thm}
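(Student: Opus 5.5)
The reduction is already in place: since $\mathbb{U}=\cA\cB$ and, by \cref{prop:induced_cohomological_matrix}, $\mathbb{U}_K^\sharp=\Gamma_Y\mathrm{ch}_Y\mathsf M_l\mathrm{ch}_\cX^{-1}\Gamma_\cX^{-1}$, the theorem is equivalent to the matrix identity \eqref{eq:master_identity}. Both $\cA^{-1}\Gamma_Y\mathrm{ch}_Y\mathsf M_l$ and $\cB\Gamma_\cX\mathrm{ch}_\cX$ are invertible, because every factor is. So I would prove $\cT_{\rm OP}=\cT_{\rm LR}$ entry by entry, taking as input the closed forms of \cref{lem:T_OP} and \cref{lem:T_LR}. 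The case $l=3$ I would treat separately, via the identification $\mathrm{D}_3=\mathrm{A}_3$ of the footnote in \cref{subsec:K_general}, where the statement is \cite[Thm.~4.1]{Brini:2019wts}. Alternatively, it can be checked directly from the same formulas with $n=1$.

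For $l\geq4$ both sides carry the common prefactor $\ri\mathsf G$. After clearing it, each entry becomes an identity between Laurent polynomials in $\mathsf q$ divided by $(1-\mathsf q^{2n})$, possibly also $(1-\mathsf q^2)$. I would first make the right-hand side explicit. Insert the fibre characters $\mathrm{ch}_k(\rho)$, $\mathrm{ch}_C(\rho)$, $\mathrm{ch}_{l-1}(\rho)$, $\mathrm{ch}_l(\rho)$ from \cref{lem:taut_localisation} into \cref{lem:T_LR}. The sum $\sum_k(D_{n-k}-D_{n+1-k})\mathrm{ch}_k(\rho)$ telescopes, because $\mathrm{ch}_k(\rho_v)$ depends on $k$ only through whether $k\leq v$ or $k>v$. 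After telescoping only boundary terms at $k=v$ and at $k=\min(j-1,n-1)$ survive, with coefficients of the form $\mathsf q^{-d}D_x$. Writing $D_x=\mathsf q^{-2x}\sum_{t\ge0}\dots$ is unnecessary; it is cleaner to multiply through by $(\mathsf q^{2n}-1)$ and use $D_x-D_y=\frac{\mathsf q^{2y}-\mathsf q^{2x}}{(\mathsf q^{2x}-1)(\mathsf q^{2y}-1)}$, or to keep $D$'s and compare partial fractions.

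Next I would organise the comparison by the column $\rho$, in four classes: $\rho_0,\rho_1$ (exponent multiset $\{0\}$), the legs $\rho_{l-1},\rho_l$ (exponent $\{n\}$), the generic $\rho_v$ with $2\leq v\leq n-1$, and the special $\rho_n$, which is the only column involving $\omega_C$ and the $\delta_{\rho,\rho_n}$ terms. The rows split as $2\leq j\leq l-1$ (with $\mathsf p=l-1-j$), $j\in\{l,l+1\}$, and $j=1$. In each case the residues $\langle\mathsf p\mp n\mp c\rangle$ in \cref{lem:T_OP} are determined by comparing $c=\pm(v-1)$ with $\mathsf p$. This reproduces precisely the dichotomy $k\leq v$ versus $k>v$ on the $\cT_{\rm LR}$ side, with $j\leftrightarrow\mathsf p$ matching row $\Pi_j$ to fixed point index $j$ through $n-\mathsf p=j-1$.

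Each case is then a finite check of the form "monomial sum equals monomial sum", and I would verify it by multiplying by $1-\mathsf q^{2n}$. The main obstacle I expect is the rows $j\in\{l-1,l,l+1\}$ together with the columns $\rho_n,\rho_{l-1},\rho_l$. There the $\sigma_j$ corrections coming from the collision columns of $\cB$, and in particular the factor $(\mathsf q^2-1)/(\mathsf q^2+1)$, must match the $(\mathsf q^2+1)D_1$, $D_2$ and $\mathsf q^{-n-1}\delta_{\rho,\rho_n}$ terms of $\Delta_j$. For these I would first combine using $D_1(\mathsf q^2+1)=\frac{\mathsf q^2+1}{\mathsf q^2-1}$ and $D_2=\frac{1}{(\mathsf q^2-1)(\mathsf q^2+1)}$, so that the denominators $\mathsf q^2\pm1$ visibly cancel against $\sigma_{l+1}$. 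I would then check the residue bookkeeping on the boundary values $c=n$ and $c=0$, where $\langle\cdot\rangle$ wraps around. As a sanity check, I would confirm the $\rho_0$ column in the limit $\mathsf q\to1$ and against the numerical low-rank values.
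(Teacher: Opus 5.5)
Your plan is essentially the paper's proof: reduce to $\cT_{\rm OP}=\cT_{\rm LR}$, take \cref{lem:T_OP,lem:T_LR} as input, telescope the large-radius sums and match the residues case by case in $j$ and $\rho$. The only organisational difference is that the paper routes both sides through the common values $t_{j,\rho}$ of \cref{tab:identity_values}. One caveat for $l=3$: $\mathbb{U}$ depends on the continuation path through monodromy, so citing \cite[Thm.~4.1]{Brini:2019wts} would first require matching its mirror and path with $\rho$, and your direct check at $n=1$ is the safer route. That check needs \cref{lem:T_OP} re-derived, since the lemma is only stated for $l\geq4$.
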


\begin{table}[t]
\centering
\renewcommand{\arraystretch}{1.5}
\begin{tabular}{c|ccccc}
 $j$ & $\rho_1$ & $\rho_v$ & $\rho_{l-1}$ & $\rho_l$ & $\rho_0$\\ \hline
 $1$ & $-\mathsf{q}^{-2n}D_n$ & $-D_nA_v$ & $-\mathsf{q}^{-n}D_n$ & $-\mathsf{q}^{-n}D_n$ & $-D_n$\\
 $2,\dots,n+1$ & $-D_n$ & $-D_nA_v$ if $v\geq j$, $-D_nC_v$ if $v<j$ & $-\mathsf{q}^{-n}D_n$ & $-\mathsf{q}^{-n}D_n$ & $-D_n$\\
 $l$ & $-D_n$ & $-D_nC_v$ & $-\mathsf{q}^{n}D_n$ & $-\mathsf{q}^{-n}D_n$ & $-D_n$\\
 $l+1$ & $-D_n$ & $-D_nC_v$ & $-\mathsf{q}^{-n}\big(D_n+\frac{1}{\mathsf{q}^{2}+1}\big)$ & $-\mathsf{q}^{-n}\big(D_n+\frac{\mathsf{q}^{2}}{\mathsf{q}^{2}+1}\big)$ & $-D_n$\\
\end{tabular}
\caption{The common value $t_{j,\rho}$ of $(\cT_{\rm OP})_{j,\rho}/(2\ri\mathsf{G}\mathsf{q}^{n})$ and $(\cT_{\rm LR})_{j,\rho}/(2\ri\mathsf{G}\mathsf{q}^{n})$. Here with $D_x = (\mathsf{q}^{2x}-1)^{-1}$, $A_v = \mathsf{q}^{1-v}+\mathsf{q}^{v-1-2n}$ and $C_v = \mathsf{q}^{1-v}+\mathsf{q}^{v-1}$.}
\label{tab:identity_values}
\end{table}

\begin{proof}[Proof (sketch)]
We shall show that both $(\cT_{\rm OP})_{j,\rho}$ and $(\cT_{\rm LR})_{j,\rho}$, divided by $2\ri\mathsf{G}\mathsf{q}^{n}$, equal the entry $t_{j,\rho}$ of \cref{tab:identity_values}. On the orbifold-point side one evaluates the residues of \cref{lem:T_OP} case by case in $j$ and in $c\in\mathsf{C}(\rho)$. On the large-radius side one collapses the sums of \cref{lem:T_LR} by the telescoping identity $\sum_{k=k_1}^{k_2}(D_{n-k}-D_{n+1-k}) = D_{n-k_2}-D_{n+1-k_1}$. Both reduce to the same table entries by 
elementary identities among the $D_x$, such as
\[D_x(\mathsf{q}^{a-2x}-\mathsf{q}^{a}) = -\mathsf{q}^{a-2x}\,, \qquad \mathsf{q}^{2}D_1 = 1+D_1\,.\] We leave the case analysis to the reader.
\end{proof}

\subsubsection{Symplecticity of the analytic-continuation map}
\label{subsubsec:symplectic_general}
\cref{thm:identity_general} allows to easily check the symplectic property of $\mathbb{U}$. In the following, $\eta_Y$ and $\eta_{\cX}$ denote the Gram matrices of the $\bbC^\times$-equivariant Poincar\'e pairings in the bases of \cref{subsec:K_general}: they are implied by \eqref{eq:pairing_Y}, translated to the localised basis \eqref{eq:loc_basis_general}, and by \eqref{eq:pairing_X}.
\begin{thm}
\label{thm:symplectic_general}
We have \[
\mathbb{U}(-z)^{T}\,\eta_Y\,\mathbb{U}(z) = \eta_{\cX}\,.\]
In particular, $\mathbb{U}\colon\HH_{\cX}\to\HH_{Y}$ is a symplectic isomorphism.
\end{thm}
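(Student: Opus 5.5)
The plan is to use \cref{thm:identity_general} to replace $\mathbb{U}$ by $\mathbb{U}_K^{\sharp}=\Gamma_Y\,\mathrm{ch}_Y\,\mathsf M_l\,\mathrm{ch}_{\cX}^{-1}\,\Gamma_{\cX}^{-1}$. Symplecticity then becomes a statement about the $K$-theoretic Euler pairings. The key input is the standard property of Iritani's framing \eqref{eq:integral_structure_framing}: for $V,W\in K_{\bbC^\times}(\cZ)$ one has
\[
\big(\widehat\Psi_{\cZ}(V)(-z),\widehat\Psi_{\cZ}(W)(z)\big)_{\cZ}=\chi_{\cZ}(V,W)\big|_{\mathsf q=\re^{2\pi\ri\nu/z}}\,,
\]
up to a universal normalisation depending only on $\dim\cZ=2$, hence common to $\cX$ and $Y$. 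Here $\chi_{\cZ}$ is the equivariant Euler pairing, computed by the orbifold Hirzebruch--Riemann--Roch/localisation formula. This is the equivariant version of \cite{Iritani:2009vat}, obtained by combining the Gamma-class identity $\Gamma(1+x)\Gamma(1-x)=2\pi\ri x/(\re^{\pi\ri x}-\re^{-\pi\ri x})$ with HRR. I would verify it directly in our bases. On $Y$, the fixed-point blocks of $\Gamma_Y(-z)^T\eta_Y\Gamma_Y(z)$ collapse by reflection to $1/\big((1-\mathsf q^{-w_1})(1-\mathsf q^{-w_2})\big)$ type Todd denominators. The curve block needs the $2\times 2$ unipotent factor and the harmonic-number entry, with $\omega_C^2=0$. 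On $\cX$, each twisted sector gives $\Gamma(\theta+\nu/z)\Gamma(1-\theta-\nu/z)$-type products against $1/|C_m|$.

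Granting this, the claim $\mathbb{U}(-z)^T\eta_Y\mathbb{U}(z)=\eta_{\cX}$ is equivalent to the statement that $\mathbb{U}_K$ intertwines the Euler pairings: $\chi_Y(\mathbb U_K V,\mathbb U_K W)=\chi_{\cX}(V,W)$. Here $z\mapsto -z$ corresponds to $\mathsf q\mapsto\mathsf q^{-1}$, i.e.\ dualising $\bbC^\times$-characters in the first slot. This follows from \cref{rmk:McKay_FM}, since $\mathbb U_K$ is induced by a $\bbC^\times$-equivariant Fourier--Mukai equivalence, and equivalences preserve $\mathrm{RHom}$ and hence $\chi$.

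As a consistency check, I would also confirm the identity directly. In the bases $\mathcal E_{\cX}$ and $\{[\mathcal R_\rho]\}$, the pairing $\chi_{\cX}(\cO_\rho,\cO_{\rho'})=\dim\mathrm{Hom}_G(\rho,\rho'\otimes\mathrm{Sym}\,\bbC^2)$ graded by $\mathsf q$ is $\big(\mathrm{Id}-\mathsf q\,\mathcal{Q}+\mathsf q^2\big)^{-1}$ type. Here $\mathcal{Q}$ is the affine McKay adjacency matrix, up to the $\mathsf q\leftrightarrow\mathsf q^{-1}$ convention. The corresponding pairing on $Y$ is computed from \cref{lem:taut_localisation} by localisation. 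Their equality is a finite check using the character table and \eqref{eq:DFT2}.

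I expect the main obstacle to be getting the equivariant framing/pairing identity exactly right, including signs and phases. The twisted sectors involve $\iota^*$ and the factor $z^{-\mu}$ under $z\to -z$, which produce factors $(-1)^{\deg/2}$ and $\ri$'s. The non-isolated curve $C$ requires care with the nilpotent $\omega_C$. I would handle $\cX$ and the isolated points first by reflection formulas, then treat the $(\mathbf1_C,\omega_C)$ block by expanding to first order in $\omega_C$. The harmonic terms $H_{2\nu/z}$ and $H_{-2\nu/z}$ should combine, via $\psi(1+x)-\psi(1-x)$, into the derivative of $1/(1-\mathsf q^{-2})$ required by HRR on $C$.
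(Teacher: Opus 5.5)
Your proposal follows the paper's proof. You substitute $\mathbb U=\mathbb U_K^{\sharp}$ via \cref{thm:identity_general}, apply the equivariant Gamma-class pairing identity of \cite[Prop.~2.10(iii)]{Iritani:2009vat} on each side, and use that $\mathbb U_K$ comes from an equivariant Fourier--Mukai equivalence (\cref{rmk:McKay_FM}), so that $\mathsf M_l(\mathsf q^{-1})^T\chi_Y\mathsf M_l(\mathsf q)=\chi_{\cX}$.

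One small correction: the scalar in the framing identity is $-4\pi^2/(z^2\mathsf q)$ in the normalisation of \cref{prop:induced_cohomological_matrix}. It depends on $c_1^{\bbC^\times}(T\cZ)=2\nu$ through $\re^{\pi\ri c_1/z}=\mathsf q$, not only on $\dim\cZ$. Since $c_1^{\bbC^\times}(T\cZ)=2\nu$ for both $\cX$ and $Y$, the scalar is still common and your conclusion stands.
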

\begin{proof}
Let $\chi_Y$ and $\chi_{\cX}$ denote the matrices of the equivariant Euler pairings $\chi_{\bbC^\times}(\cZ,V^{\vee}\otimes W)$ in the chosen bases, with pushforward understood by localisation. By \cref{rmk:McKay_FM}, $\mathbb U_K$ is induced by an equivariant Fourier--Mukai equivalence and therefore preserves these pairings. In matrix form,
\beq
\label{eq:McKay_isometry}
\mathsf{M}_l(\mathsf{q}^{-1})^{T}\,\chi_Y\,\mathsf{M}_l(\mathsf{q}) = \chi_{\cX}\,.
\eeq
The Gamma-class pairing identity \cite[Prop.~2.10(iii)]{Iritani:2009vat}, obtained from the reflection formula and orbifold Riemann--Roch, applies equivariantly by localisation. In the normalisation of \cref{prop:induced_cohomological_matrix}, it reads
\beq
\label{eq:framing_isometry}
\big(\Gamma_{\cZ}\mathrm{ch}_{\cZ}\big)(-z)^{T}\,\eta_{\cZ}\,\big(\Gamma_{\cZ}\mathrm{ch}_{\cZ}\big)(z)
= -\frac{4\pi^{2}}{z^{2}\mathsf{q}}\,\chi_{\cZ}\,,\qquad \cZ\in\{Y,\cX\}\,.
\eeq
The scalar is the same for both targets, which have dimension two and $c_1^{\bbC^\times}(T\cZ)=2\nu$. By \cref{thm:identity_general}, $\mathbb{U}=\mathbb{U}_K^{\sharp}$ is given by \eqref{eq:MtoU}. The identities \eqref{eq:McKay_isometry} and \eqref{eq:framing_isometry} therefore give $\mathbb{U}(-z)^{T}\eta_Y\mathbb{U}(z)=\eta_{\cX}$, which implies preservation of Givental's symplectic form by taking residues.
\end{proof}

\begin{cor}
\label{cor:UK_McKay_general}
\cref{conj:ccit,conj:iritani-integral-structure} hold for $\cX=[\bbC^2/G]$, $G=\mathbb{BD}_{4l-8}$, and its minimal resolution $Y\to X$, with the transformation $\mathbb{U}$ of \cref{sec:Umat_typeD}.
\end{cor}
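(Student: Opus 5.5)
The corollary is an assembly of results already established, so the plan is to check each ingredient of \cref{conj:ccit,conj:iritani-integral-structure} against them.

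\textbf{Conjecture~\ref{conj:ccit}.} We need a degree-preserving, $\bbC\llbracket z^{-1}\rrbracket$-linear symplectic isomorphism $\mathbb{U}$ with $\mathbb{U}(\LL_\cX)=\LL_Y$ after analytic continuation.
\begin{itemize}
\item \emph{Definition and invertibility.} Take $\mathbb{U}=\cA\cB$ from \cref{prop:period_comparison_U}. It is invertible because $\cA$ and $\cB$ are. By \cref{thm:identity_general} it equals $\Gamma_Y\,\mathrm{ch}_Y\,\mathsf M_l\,\mathrm{ch}_\cX^{-1}\,\Gamma_\cX^{-1}$.
\item \emph{Expansion in $z^{-1}$.} The factors $\mathsf q^{\pm k}=\re^{\pm 2\pi\ri k\nu/z}$ and the $\Gamma$-entries of \eqref{eq:GammaX_general}, \eqref{eq:GammaY_general} expand in $z^{-1}$. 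The explicit powers of $z$ are tied to the grading operator $z^{-\deg/2}$, so $\mathbb{U}$ is homogeneous of degree zero. Its entries are then power series in $z^{-1}$ with no positive powers, since in the Hard Lefschetz setting all nonzero ages equal $1$.
\item \emph{Symplecticity.} This is \cref{thm:symplectic_general}.
\end{itemize}

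\textbf{Mapping the cones.} From $J^Y=\mathbb{U}J^\cX$ along $\rho$, we get that $\mathbb{U}$ carries the points $J^\cX(x,-z)$ of $\LL_\cX$ to points of $\LL_Y$, with $\tau$ and $x$ related by the flat identification of \cref{thm:msQH}. Since $\mathbb{U}$ is $z$-linear and intertwines the Dubrovin connections, it also sends the derivatives $\partial_{x_m}J^\cX$ to the corresponding derivatives on $Y$. Therefore it maps the tangent spaces $T_x$ to $T_\tau$ and the rulings $zT_x$ to $zT_\tau$. Both cones are reconstructed from their $J$-functions and rulings (genus-zero TRR, \cref{sec:givental}), which gives $\mathbb{U}(\LL_\cX)=\LL_Y$.

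\textbf{Conjecture~\ref{conj:iritani-integral-structure}.}
\begin{itemize}
\item \emph{Commuting square.} Take $\mathbb{U}_K$ as in \cref{def:McKay_convention}. It is $K_{\bbC^\times}(\mathrm{pt})$-linear, and the square \eqref{diagram:induced_U_equivariant} commutes exactly because $\mathbb{U}=\mathbb{U}_K^\sharp$, by \cref{prop:induced_cohomological_matrix} and \cref{thm:identity_general}.
\item \emph{Euler pairings.} These are preserved because $\mathbb{U}_K$ is a Fourier--Mukai equivalence (\cref{rmk:McKay_FM}); this is \eqref{eq:McKay_isometry}.
\end{itemize}

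\textbf{Expected obstacle.} The only genuinely delicate step is matching the analytic continuation in \cref{conj: BG} with the path $\rho$ of \eqref{eq:rho}. Concretely, one must check that $\rho$ ends at the Bryan--Graber point $\tau_v=2\pi\ri\dim\rho_v/|G|$. To do this, compute $\tau_j=-\langle\alpha_j,s\rangle$ at \eqref{eq: OP_DN} via \eqref{eq:logkappa_weights}, and compare with \cref{thm:Hu}.
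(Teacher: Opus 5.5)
Your proposal is correct and follows the paper's route: the corollary is obtained there simply by assembling \cref{prop:period_comparison_U} (matching of $J$-functions, hence of the cones), \cref{thm:identity_general} (commutativity of Iritani's square with the $\mathbb{U}_K$ of \cref{def:McKay_convention}), \cref{thm:symplectic_general} (symplecticity) and the Fourier--Mukai isometry \eqref{eq:McKay_isometry}. Two minor remarks: your ``expected obstacle'' is automatic, since $\mathrm{OP}$ is defined as the Bryan--Graber point and $\rho$ ends there by construction; and ``all ages equal $1$'' alone does not rule out positive powers of $z$. The ages only show that the factor $z$ carried by $\Gamma_{\cX}^{-1}$ on the twisted sectors could produce a $z^{1}$ term, and that term vanishes because $\sum_\rho\dim\rho\,\chi_\rho(g_m)=0$ for $m\neq0$.
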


\section{The higher-genus correspondence}
\label{sec:higher_genus}

By \cref{prop:period_comparison_U,thm:identity_general,thm:symplectic_general} the map $\mathbb{U}$ matches the Lagrangian cones of $\cX$ and $Y$ upon analytic continuation along $\rho$, is symplectic, and is induced through Iritani's integral structures by the equivariant McKay correspondence on $K$-groups: this is the genus-zero input of \cref{conj:ccit,conj:iritani-integral-structure}. We now establish its all-genus counterpart, the Hard Lefschetz quantised correspondence, \cref{conj:quantised_crc}. By \cite[Lemma~6.3]{Brini:2019wts}, \eqref{eq:sqcrc} is equivalent to the equality of the canonical $R$-calibrations of Givental's formula \eqref{eq:qgiv} on the semisimple locus,
\beq
\label{eq:qcrc_reduction}
R_{\cX} \;=\; R_{Y}\,,
\eeq
and by \cite[Rmk.~6.4]{Brini:2019wts} it suffices to verify \eqref{eq:qcrc_reduction} at one semisimple point.

\subsection{Reduction to a single scalar}
\label{subsec:reduction_h}

Both $R$-calibrations are computed from the \emph{same} local system, so they differ only by right multiplication by a diagonal matrix which is constant across $\cM$ \cite{Givental:2001mre}. By \cref{thm:msQH} there is a single global quantum $\DD$-module on $\cM$
with $\rm{OP}$ and $\rm{LR}$ in two charts and the gradients of the twisted periods of \cref{prop:periods_flat_coordinates} as horizontal sections. 
A different basis of horizontal sections at a semisimple point is given by integrals over Morse-theoretic cycles: fixing a Stokes chamber for $\hbar$,
let $\mathfrak{T}_i$ be the thimble of $2\nu\log\lambda$ through the saddle with critical value $u_i$, and \[\Omega_i\coloneqq\int_{\mathfrak{T}_i}\lambda^{\hbar}\,\frac{\rd\mu}{\mu}\,.\] 
By Picard--Lefschetz theory with local coefficients,
\beq
\label{eq:thimbles_vs_segments}
\Omega \;=\; \Lambda\,\Pi\,,
\eeq
with $\Lambda\in \GL_r(\bbC(\mathsf{q}))$ and constant on each Stokes chamber. 
The steepest-descent expansion of a thimble integral has the form
\beq
\label{eq:period_asymptotic_form}
\Omega_i \;\sim\; z^{1/2}\,\re^{u_i/z}\sum_{k\geq0}S_{i,k}(u)\,z^{k}\,.
\eeq
Since the quantum $\DD$-module is generated by $\mathbf{1}$, the sum in \eqref{eq:period_asymptotic_form} is the identity component of $R_{\bullet}$ on the $i$-th idempotent, for $\bullet=\cX$ or $Y$, up to a factor $\re^{h^{\bullet}_i(z)}$ constant on $\cM$. The thimbles of the chambers at the two ends of $\rho$ differ by a matrix rational in $\mathsf{q}$, and so do the exact sections across each Stokes wall along $\rho$. As neither carries odd powers of $z$, the odd part of $h^{\bullet}_i$ is the same in every chamber along $\rho$.
Hence,
\beq
\label{eq:reduction_to_h}
R_{\cX} = R_{Y} \qquad\Longleftrightarrow\qquad h^{\cX}_i = h^{Y}_i \quad\text{for all } i\,.
\eeq

\begin{defn}
\label{def:stirling_tail}
Let \[S(y)\coloneqq\sum_{j\geq1}\frac{B_{2j}}{2j(2j-1)}\,y^{1-2j}\] be the Stirling tail of $\log\Gamma(y)$, and for $\bbC^\times$-weights $w_1,\dots,w_r$ put
\begin{equation*}
\mathscr{D}_{(w_1,\dots,w_r)} \;\coloneqq\; \exp\Big[-\sum_{a=1}^{r}S\big(w_a/z\big)\Big]\,.
\end{equation*}
We write $\mathsf{Asym}[\,\cdot\,]$ for the Stirling tail of a product of $\Gamma$-functions.
\end{defn}

Note that $S(\hbar)$ is odd in $z$ and constant across $\cM$. $\mathscr{D}_{(w^+,w^-)}$ is Givental's normalisation factor at an isolated fixed point with those tangent weights \cite[Thm.~9.1]{Givental:2001mre}. As $z=0$ is an irregular singular point the expansions depend on a Stokes sector, which by analyticity in $\nu$ is no restriction \cite[Sec.~6.2.2]{Brini:2019wts}.

\begin{defn}
\label{def:stokes_typeD}
Let $\mathfrak{S}^{\mathrm{D}}_{+}$ be the sector in which $\arg\hbar$ lies in the open first quadrant and $|\hbar|$ is large, together with the phases $\arg\kappa_1<\dots<\arg\kappa_l<0$ of \cref{sec:branches}. Thimbles are those of the Stokes chamber singled out along $\rho$ by these phases and by $\arg\hbar$.
\end{defn}

There, as one checks directly, every $\Gamma$-argument in $\cA^{-1}$ and $\cB$ acquires large positive real part after applying the reflection formula, which gives factors rational in $\mathsf{q}$, with $|\mathsf{q}|<1$. The only exponentials in $1/z$ left in an entry are then a Stirling factor with constant exponent, the $\re^{u_i/z}$ of \eqref{eq:period_asymptotic_form}, and a monomial in $\mathsf{q}$ labelling the sheet. The monomials are totally ordered, so that exponentially subleading contributions are zero in Poincar\'e asymptotics. For $z$ real the ordering degenerates: cf \cite[Rmk.~6.7]{Brini:2019wts}.

\subsection{The orbifold end}
\label{subsec:DX_from_B}

At $\rm{OP}$ the calibration is pinned by Tseng's orbifold quantum Riemann--Roch theorem \cite{Tseng:2010lhb} and the identification of the theory of $BG$ with copies of that of a point, diagonalised by the character table \cite{Jarvis:2002vat}. For $G$ abelian this is \cite[Lemma~6.5]{Brini:2019wts}, whose argument translates verbatim to the non-abelian setting with group elements being replaced by conjugacy classes throughout (see also \cite[Sec.~6.1]{Zong:2016jnz}).

\begin{lem}[{\cite[Lemma~6.5]{Brini:2019wts}}]
\label{lem:rcal_nonabelian}
Let $\cX=[\bbC^2/G]$ with $G<\mathrm{SU}(2)$ finite, with the scalar $\bbC^\times$-action. Near $\rm{OP}$ the canonical $R$-calibration is uniquely determined by
\begin{equation*}
\Psi_{\cX}R_{\cX}\big|_{\rm{OP}} \;=\; \big(e^{\rm{eq}}(V^{(0)})\big)^{-1/2}\,\mathscr{D}_{\cX}\,\chi_G^{-1}\,,
\end{equation*}
with $\mathscr{D}_{\cX}$ Tseng's operator, diagonal on the sectors of the inertia stack, in the conventions of the Gamma class of \cref{subsec:integral-structure}, and $\chi_G$ the character table.
\end{lem}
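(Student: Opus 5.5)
The plan is to follow the abelian argument of \cite[Lemma~6.5]{Brini:2019wts}, working with conjugacy classes instead of group elements. The first step is to split the total descendent potential of $\cX$ at the orbifold point. Since the $\bbC^\times$-fixed locus of $\cX$ is the origin, virtual localisation identifies $\mathsf{Z}_{\cX}$ with the descendent potential of $BG$ twisted by the equivariant inverse Euler class of the rank-two bundle $V$ induced by $T_0\bbC^2$. Tseng's orbifold quantum Riemann--Roch theorem \cite{Tseng:2010lhb} then expresses this twisted potential as $\widehat{\mathscr{D}_{\cX}}$ applied to $\mathsf{Z}_{BG}$, up to a scalar. Here $\mathscr{D}_{\cX}$ acts diagonally on the sectors $\mathbf{1}_m$. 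Its entry on sector $m$ is the exponential of the Bernoulli-polynomial tail $\sum_{j}\frac{B_{2j}(\theta)}{2j(2j-1)}(w/z)^{1-2j}$ over the eigenbundles of $V|_{\cX_m}$, with fractional parts $\theta_m$ and $1-\theta_m$ and weight $w=\nu$. The Stirling-tail part of this operator is precisely $\mathsf{Asym}$ of the Gamma-class entry $(\Gamma_{\cX})_m$ of \eqref{eq:GammaX_general}, which is why the conventions match those of \cref{subsec:integral-structure}. Only the odd part in $z$ survives, so $\mathscr{D}_{\cX}$ is symplectic.

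The second step is to diagonalise the theory of $BG$. By \cite{Jarvis:2002vat}, the Frobenius algebra $H_{\rm CR}(BG)$ with pairing \eqref{eq:pairing_X} is the centre of the group algebra, and its idempotents are $e_\rho=\frac{\dim\rho}{|G|}\sum_m\chi_\rho(g_m^{-1})\,|(g_m)|\,\mathbf{1}_m$. This change of basis is the (normalised) character table $\chi_G$. In this basis $\mathsf{Z}_{BG}$ factorises as a product of $l+1$ Witten--Kontsevich potentials, rescaled by $|G|/\dim\rho$, so its $R$-calibration is trivial. I will check this directly by showing that the genus-$g$ invariants of $BG$ are computed by Frobenius--Mednykh counts of $G$-covers, which are diagonal in the representations. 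The genus-zero and untwisted-sector normalisation of $\mathbf{1}_0$ produces the factor $(e^{\rm eq}(V^{(0)}))^{-1/2}=1/\nu$, from the pairing $(\mathbf{1}_0,\mathbf{1}_0)_\cX=1/(|G|\nu^2)$. Combining this with the first step gives the stated formula for $\Psi_\cX R_\cX$ at $\rm OP$. Comparing with \eqref{eq:qgiv} then shows that the product is a flat frame of the required form with $R_\cX=\mathrm{Id}+O(z)$.

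The third step is uniqueness. An $R$-calibration is unique only up to right multiplication by $\exp(\sum_k a_kz^{2k-1})$ with diagonal $a_k$. Matching with the descendent potential fixes this ambiguity, because Tseng's theorem identifies the whole potential, not merely the Dubrovin connection. Hence the canonical calibration is the one that reproduces $\mathsf{Z}_\cX$ via \eqref{eq:qgiv} with no extra quantised diagonal factor. Semisimplicity at $\rm OP$ holds since $H_{\rm CR}(BG)$ is semisimple and the $V$-twist is invertible over $\bbC(\nu)$.

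The main obstacle is the non-abelian bookkeeping. The normalisations $|C_m|$, $|(g_m)|$ and the involution $m\mapsto\bar m$ must combine so that the character-table transform is orthogonal for \eqref{eq:pairing_X}. In addition, Tseng's operator must be checked to be diagonal on conjugacy-class sectors, which follows because the eigenvalues of $g_m$ depend only on its class. For the orthogonality I will rely on the second orthogonality relation $\sum_\rho\chi_\rho(g)\overline{\chi_\rho(h)}=|C(g)|\delta_{(g),(h)}$, which I expect to be the decisive identity.
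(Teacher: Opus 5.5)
Your outline is the argument the paper invokes: the abelian proof of \cite[Lemma~6.5]{Brini:2019wts}, with conjugacy classes in place of elements. That proof consists of localisation to a twisted theory of $BG$, Tseng's quantum Riemann--Roch at all genera, and the Jarvis--Kaufmann diagonalisation of $BG$ by characters. However, your explicit bookkeeping goes wrong exactly where you say the difficulty lies.

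\textbf{The idempotents.} With $\mathbf 1_m$ the fundamental class of $\cX_m=[\mathrm{pt}/C_m]$, the three-point invariants of $BG$ are $\frac{1}{|G|}\#\{(h_1,h_2,h_3)\in(g_{m_1})\times(g_{m_2})\times(g_{m_3}) : h_1h_2h_3=1\}$. Hence $H_{\rm CR}(BG)\cong Z(\bbC[G])$ via $\mathbf 1_m\mapsto\Sigma_m\coloneqq\sum_{h\in(g_m)}h$, i.e.\ class \emph{sums}, with pairing $\frac{1}{|G|}$ times the coefficient of the identity. It follows that $e_\rho=\frac{\dim\rho}{|G|}\sum_{g\in G}\chi_\rho(g^{-1})\,g=\frac{\dim\rho}{|G|}\sum_m\chi_\rho(g_m^{-1})\,\mathbf 1_m$, with no factor $|(g_m)|$, since the class sizes are already inside $\Sigma_m$.

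With your extra factor, the elements are neither idempotent nor orthogonal once $G$ is non-abelian. For $l=4$ ($G=Q_8$), your $e_{\rho_0}$ maps to $\frac18(1+\Sigma_2+2\Sigma_1+2\Sigma_3+2\Sigma_4)$. Its square has identity coefficient $\frac{26}{64}\neq\frac18$.

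With the correct formula, $(e_\rho,e_\rho)^{-1/2}e_\rho=\sum_m\chi_\rho(g_m^{-1})\mathbf 1_m$ is literally the character table. The class sizes enter only on the dual side, through $1/|C_m|=|(g_m)|/|G|$ in the pairing, i.e.\ in $\chi_G^{-1}$. Orthogonality of the $e_\rho$ is then the row relation $\sum_m|(g_m)|\chi_\rho(g_m)\overline{\chi_{\rho'}(g_m)}=|G|\delta_{\rho\rho'}$. This is equivalent to the column relation you quote, because the table is square.

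\textbf{The algebra at $\rm OP$ and the factor $\nu^{-1}$.} You describe $H_{\rm CR}(BG)$ with the pairing \eqref{eq:pairing_X} as the centre of the group algebra. It is not even a Frobenius algebra for the untwisted product: $(\mathbf 1_m\cdot\mathbf 1_{\bar m},\mathbf 1_0)_{\cX}=\frac{1}{|C_m|\nu^2}$, whereas $(\mathbf 1_m,\mathbf 1_{\bar m}\cdot\mathbf 1_0)_{\cX}=\frac{1}{|C_m|}$.

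The algebra at $\rm OP$ is the \emph{twisted} one. Invariants with three twisted insertions acquire $e_{\bbC^\times}(H^1)$, where $H^1$ has rank $-\chi=1$ and weight $\nu$. Correspondingly $\mathbf 1_m\star\mathbf 1_{\bar m}=\nu^2|(g_m)|\,\mathbf 1_0+\dots$. This algebra is $Z(\bbC[G])$ only after the identification $\mathbf 1_0\mapsto1$, $\mathbf 1_m\mapsto\nu\,\Sigma_m$, under which the pairing becomes $\nu^{-2}$ times the trace pairing.

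That relative rescaling of the untwisted sector is the actual source of $(e^{\rm eq}(V^{(0)}))^{-1/2}=\nu^{-1}$, rather than a bare consequence of $(\mathbf 1_0,\mathbf 1_0)_{\cX}$. In Tseng's operator it is the order-$z^0$ term $B_1(0)$ on $\cX_0$. On the twisted sectors the $z^0$ terms cancel, since $B_1(f)+B_1(1-f)=0$ at equal weights.

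With these two corrections, the rest of your plan goes through as in the paper: the per-sector Stirling tail, the factorisation into rescaled Witten--Kontsevich potentials, and uniqueness via the QRR definition of the canonical calibration.
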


In the normalisation of \eqref{eq:B_structure} all canonical coordinates vanish at $\rm{OP}$, so the right-hand side is the matrix of components of a formal solution of the quantum differential equation at $\rm{OP}$, with $(e^{\rm{eq}}(V^{(0)}))^{-1/2}$ equal to $\nu^{-1}$ on the untwisted sector and to $1$ on the twisted ones. As the $\bbC^\times$-action is by scalars, both weights of the defining representation equal $\nu$. Moreover, since $G<\mathrm{SU}(2)$, the sector $\cX_m$ carries a single age $f_m$. The two ages are complementary: 
\[
f_m\in\{\tfrac{m}{2n},1-\tfrac{m}{2n}\} \quad \hbox{for} \quad 1\leq m\leq n\,, \qquad f_{l-1}=f_l=\tfrac14\,,
\]
and $f_0=0$. The invariant eigenbundle $V^{(0)}$, whose restriction to $\cX_v$ is $(T\cX)_{v,0}$ in the notation of \cref{subsec:integral-structure}, vanishes on the twisted sectors.

\begin{prop}
\label{prop:DX_closed}
For every $0\leq m\leq l$,
\beq
\label{eq:DX_closed}
\log \mathscr{D}_{\cX}\big|_{m} \;=\; -2\sum_{j\geq1}\frac{B_{2j}(f_m)}{2j(2j-1)}\Big(\frac{z}{\nu}\Big)^{2j-1}\,.
\eeq
\end{prop}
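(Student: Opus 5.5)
We will derive \eqref{eq:DX_closed} directly from Tseng's orbifold quantum Riemann--Roch theorem \cite{Tseng:2010lhb}, applied to the twisting of the theory of $BG$ (or rather of its inertia) by the equivariant bundle $T\cX|_0=\bbC^2$ with inverse equivariant Euler class. Tseng's operator on the sector $\cX_m$ is the exponential of
\[
\sum_{k\geq 0}s_{2k-1}\ldots,\qquad \log\mathscr D_\cX\big|_m=\sum_{a}\sum_{j\geq1}\frac{B_{2j}(f_{m,a})}{2j(2j-1)}\,(\text{sign})\Big(\frac{z}{w_a}\Big)^{2j-1},
\]
where $a$ runs over the eigen-directions of $g_m$ on $\bbC^2$, with weights $w_a$ and ages $f_{m,a}$. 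Only the odd-degree Bernoulli terms survive in the normalisation compatible with the $\widehat\Gamma$-class convention of \cref{subsec:integral-structure}; the even ones are absorbed into $\big(e^{\rm eq}(V^{(0)})\big)^{-1/2}$ and the $\widehat\Gamma$-prefactor. The sign is fixed so that, for $f=0$, the expression reduces to Givental's factor $\mathscr D_{(\nu,\nu)}=\exp[-2S(\nu/z)]$ of \cref{def:stirling_tail}. This works because $B_{2j}(0)=B_{2j}$ and $S(y)=\sum_j \frac{B_{2j}}{2j(2j-1)}y^{1-2j}$ gives precisely $-2\sum_j\frac{B_{2j}}{2j(2j-1)}(z/\nu)^{2j-1}$.

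The key steps are as follows.

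\begin{enumerate}
\item Write Tseng's twisting formula for the characteristic class $e^{-1}$, with parameters $s_k$ read off from $\log(x^{-1})$ expanded equivariantly. Substituting $x=\nu+\text{(nilpotent)}$ on a point sector yields the Bernoulli-polynomial generating function $\sum_j B_{j}(f)\,(z/\nu)^{j-1}/(j(j-1))$, from the standard identity relating $\log\Gamma(y+f)$ to Bernoulli polynomials.
\item Use that both weights equal $\nu$, since the torus acts by scalars. This makes the two summands attached to the two eigen-directions on $\cX_m$ carry the same weight $\nu$, with ages $f_m$ and $1-f_m$ respectively.
\item Apply $B_{k}(1-f)=(-1)^kB_k(f)$. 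For even $k=2j$ the two contributions are equal, which produces the factor $2$ and the common polynomial $B_{2j}(f_m)$. This is also consistent with the choice of $f_m$ within $\{\tfrac{m}{2n},1-\tfrac{m}{2n}\}$ being immaterial. For odd $k$ the contributions cancel, except for the $k=1$ term, which is the usual $\Gamma$-class normalisation already accounted for.
\end{enumerate}

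The main obstacle is bookkeeping of conventions. One must match Tseng's sign and $z$-normalisation, including the $\hbar$ versus $z$ and $\nu$ scalings, with the Gamma-class conventions of \cref{prop:gamma_class_matrices}, and isolate exactly which terms are absorbed into $(e^{\rm eq}(V^{(0)}))^{-1/2}$ and $\chi_G^{-1}$.

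To fix the normalisation, we will check the result against the Stirling asymptotics of $\Gamma_\cX$ in \eqref{eq:GammaX_general}. Using the expansion
\[
\log\Gamma(y+f)\sim(y+f-\tfrac12)\log y-y+\tfrac12\log 2\pi+\sum_{j\geq1}\frac{(-1)^{j+1}B_{j+1}(f)}{j(j+1)}y^{-j}
\]
with $y=\nu/z$, the odd tail of $\log\big[\Gamma(f_m+\tfrac{\nu}{z})\Gamma(1-f_m+\tfrac{\nu}{z})\big]$ is $2\sum_j\frac{B_{2j}(f_m)}{2j(2j-1)}(z/\nu)^{2j-1}$. The inverse of this Stirling tail is exactly \eqref{eq:DX_closed}, in agreement with the principle of \cite[Lemma~6.5]{Brini:2019wts} that Tseng's operator is the inverse $\mathsf{Asym}$ of the $\widehat\Gamma$-factor. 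We will present this as the proof: the sector-by-sector inverse Stirling tail of $\Gamma_\cX$, justified by Tseng's theorem and step 3 above.
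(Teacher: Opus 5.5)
This is essentially the paper's argument: Tseng's orbifold quantum Riemann--Roch for the inverse equivariant Euler class, with both weights equal to $\nu$ and complementary ages, where the reflection $B_k(1-f)=(-1)^kB_k(f)$ doubles the odd powers of $z$ and kills the even ones; your Stirling comparison with $\Gamma_{\cX}$ is a consistent cross-check. There is one bookkeeping slip. The even powers of $z$ vanish; they are not absorbed into $(e^{\rm eq}(V^{(0)}))^{-1/2}$. On the twisted sectors the $z^0$ term also cancels, since $B_1(1-f)=-B_1(f)$, so it survives only on the untwisted sector, where both ages are $0$ and this term is the piece carried by the prefactor $(e^{\rm eq}(V^{(0)}))^{-1/2}$.
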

\begin{proof}
Substituting the equal weights and complementary ages into Tseng's formula the term of order $z^{0}$ cancels, and the Bernoulli reflection $B_{k+1}(1-x)=(-1)^{k+1}B_{k+1}(x)$ annihilates the even orders and doubles the odd ones.
\end{proof}

By \cref{lem:beta_modes} the entries of $\cB$ are built from Beta factors pairing two $\Gamma$-functions at complementary arguments shifted by $\hbar/2$.

\begin{cor}
\label{cor:hX}
$h^{\cX}_i=S(\hbar)$ for every $i$.
\end{cor}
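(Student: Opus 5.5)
The plan is to compare the formal solution of \cref{lem:rcal_nonabelian} with the steepest-descent expansion \eqref{eq:period_asymptotic_form} of the thimble integrals, expressed through $\cB$. By \eqref{eq:thimbles_vs_segments} and \cref{prop:period_comparison_U}, near $\rm OP$ we have $\Omega = \Lambda\,\cB\,J^{\cX}$. Here $\Lambda$ is rational in $\mathsf{q}$, so in the sector $\mathfrak{S}^{\mathrm{D}}_+$ it contributes no Stirling factors. Hence the factors $\re^{h^{\cX}_i(z)}$ are read off entirely from the $z$-asymptotics of the entries of $\cB$, compared with $\mathscr{D}_{\cX}$. Concretely, $\Psi_\cX R_\cX|_{\rm OP}$ is the column-wise normalisation of the flat frame $\partial J^{\cX}$ at $\rm OP$. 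Therefore $\Omega_i$ expanded at $\rm OP$ equals $\sum_m (\Lambda\cB)_{i,m}$ times the $m$-th sector component. The required identity is that the product of the entries of $\cB$ in column $m$ with $\mathscr{D}_{\cX}|_m^{-1}$ has Stirling tail $S(\hbar)$, uniformly in $m$, up to factors rational in $\mathsf{q}$.

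\emph{Step 1.} For each column $m$, I will compute $\mathsf{Asym}$ of the Gamma-function content of $\cB_{j,m}$:
\begin{itemize}
\item for $1\leq m\leq n$, by \eqref{eq:B_single_beta}, the content is $\Beta(1-\tfrac{m}{2n}-\tfrac\hbar2,\tfrac m{2n}-\tfrac\hbar2)$;
\item for the collision columns $m\in\{l-1,l\}$, by \cref{lem:collision}, it is $\beta(\hbar)=-\hbar2^\hbar\Beta(\hbar,\tfrac12-\hbar)$;
\item for $m=0$, by \cref{lem:OP_values}, it is $\Beta(-\tfrac\hbar2,1+\hbar)$ or $\Beta(-\tfrac\hbar2,-\tfrac\hbar2)$.
\end{itemize}
In each case I apply the reflection formula to the factors with negative large arguments, so that every remaining $\Gamma$ has large positive real part in $\mathfrak{S}^{\mathrm{D}}_+$. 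I then use the Hermite/Stirling expansion
\[
\log\Gamma(y+a)\sim (y+a-\tfrac12)\log y - y + \tfrac12\log 2\pi + \sum_{k\geq1}\frac{(-1)^{k+1}B_{k+1}(a)}{k(k+1)}y^{-k}.
\]
The numerator is a pair of complementary shifts $a=f_m$, $1-f_m$ at $y=-\hbar/2$ (after reflection, $y=\hbar/2$). Its odd tail is therefore exactly $-2\sum_j \frac{B_{2j}(f_m)}{2j(2j-1)}(2/\hbar)^{2j-1}$. Since $2/\hbar=z/\nu$, this reproduces $\log\mathscr{D}_\cX|_m$ of \cref{prop:DX_closed}.

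\emph{Step 2.} The denominator $\Gamma(-\hbar)$ or $\Gamma(1-\hbar)$ (reflected: $\Gamma(1+\hbar)$) is common to all columns. For the collision columns I use the duplication formula, which rewrites $2^\hbar\Beta(\hbar,\tfrac12-\hbar)$ in the same shape with $f=\tfrac14,\tfrac34$; this matches $f_{l-1}=f_l=\tfrac14$. The common denominator contributes $-S(\hbar)$ to the logarithm, hence a factor $\re^{S(\hbar)}$ after inversion. Together with the normalisation $\Gamma(1+\hbar)$ inside $\mathsf{G}$, this yields $h^\cX_i=S(\hbar)$. The $\bbC^\times$-weight normalisation $(e^{\rm eq}V^{(0)})^{-1/2}$ and the $z^{1/2}$, $\re^{u_i/z}$ prefactors carry no odd Stirling part, so they drop out.

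\emph{Step 3.} The result must be independent of $i$. Every entry of $\cB$ in a given column shares the same Gamma content and differs only by trigonometric factors, which become rational in $\mathsf{q}$. The left multiplication by $\Lambda$, with entries rational in $\mathsf{q}$, then cannot change the odd tail, because exponentially subleading monomials are discarded in $\mathfrak{S}^{\mathrm{D}}_+$.

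The main obstacle is Step 2 for the collision columns and the untwisted column. Matching $\beta(\hbar)$ with complementary ages $\tfrac14,\tfrac34$ requires the duplication formula with the correct shift, and one must check that the power $2^\hbar$ and the extra $\hbar$ are absorbed without an odd remainder. For $m=0$ one must check that $\Beta(-\tfrac\hbar2,-\tfrac\hbar2)$ against $\Gamma(1+\tfrac\nu z)^2$ gives $B_{2j}(0)$ with the same common denominator. I would verify these first, to low order in $z$.
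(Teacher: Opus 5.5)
Your proposal is correct and is essentially the paper's own argument. You compare $(\Lambda\cB)_{ia}$ with the Tseng-normalised frame of \cref{lem:rcal_nonabelian}, and reflect so that every entry of column $a$ is a factor rational in $\mathsf{q}$ times $\Gamma(1+\hbar)/\big(\Gamma(f_a+\tfrac{\hbar}{2})\Gamma(1-f_a+\tfrac{\hbar}{2})\big)$. You then read off $\mathscr{D}_{\cX}|_a$ from the complementary pair and $\re^{S(\hbar)}$ from $\Gamma(1+\hbar)$, with $\Lambda$ harmless because it is rational in $\mathsf{q}$. Your worry about the collision columns is unfounded: the $2^{\hbar}$ of $\beta(\hbar)$ cancels exactly against Legendre duplication, and $\hbar\Gamma(\hbar)=\Gamma(1+\hbar)$.

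The one thing to tidy is the remark on $\mathsf{G}$ in Step~2. The $\Gamma(1+\hbar)$ in $\mathsf{G}$ is the very same reflected common denominator: it is what survives in $\cT_{\rm OP}=\cB\,\Gamma_{\cX}\,\mathrm{ch}_{\cX}$ once $\Gamma_{\cX}$ cancels the sector $\Gamma$-functions. So it must be identified with your factor $\re^{S(\hbar)}$, not multiplied onto it.
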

\begin{proof}
By \eqref{eq:thimbles_vs_segments} and \eqref{eq:B_structure} the components of the thimble integrals at $\rm{OP}$ are $\partial_{x_a}\Omega_i|_{\rm OP}=(\Lambda\cB)_{ia}$, which are exact functions of $z$. On the other hand, by \cref{lem:rcal_nonabelian}, those of the Tseng-normalised formal solution of the quantum differential equation are $\nu^{-\delta_{a0}}\mathscr{D}_{\cX}|_a(\chi_G^{-1})_{ai}$. Comparing with \eqref{eq:period_asymptotic_form},
\[
(\Lambda\cB)_{ia} \;\sim\; \nu^{-\delta_{a0}}\,\mathscr{D}_{\cX}\big|_a\,(\chi_G^{-1})_{ai}\;\re^{h^{\cX}_i(z)}
\]
in $\mathfrak{S}^{\mathrm{D}}_{+}$. By \cref{lem:beta_modes,lem:OP_values,lem:collision} every entry of $\cB$ is, up to constants and phases, a Beta factor
\[
B_s=\frac{\Gamma\big(f_s-\tfrac{\hbar}{2}\big)\,\Gamma\big(1-f_s-\tfrac{\hbar}{2}\big)}{\Gamma(1-\hbar)}\,,
\]
whose numerator pairs two $\Gamma$-functions at complementary arguments shifted by $\hbar/2$ and whose denominator does not depend on the mode. By \cref{prop:telescoping}, the sector $m\leq n$ selects the mode pair whose two values of $f_s$ are the ages $f_m$ and $1-f_m$ of $g_m$: the collision entries of \cref{lem:collision} carry the sectors $m=l-1,l$, of age $\tfrac14$, where the Legendre duplication formula at $w=\tfrac14-\tfrac{\hbar}{2}$,
\[
\Gamma(w)\,\Gamma\big(w+\tfrac12\big)=2^{1-2w}\sqrt{\pi}\,\Gamma(2w)\,,\qquad
\Gamma\big(\tfrac14-\tfrac{\hbar}{2}\big)\,\Gamma\big(\tfrac34-\tfrac{\hbar}{2}\big)=2^{\frac12+\hbar}\sqrt{\pi}\,\Gamma\big(\tfrac12-\hbar\big)\,,
\]
collapses the numerator to a single $\Gamma$-function, and the residual factors in \eqref{eq:collision} are elementary. Reflecting the $\Gamma$-functions with negative real part writes each entry $\cB_{ja}$ as a rational function of $\mathsf{q}$ times a ratio of $\Gamma$-functions depending on the column $a$ only: we find 
\[\frac{\Gamma(1+\hbar)}{\Gamma(f_a+\tfrac{\hbar}{2})\,\Gamma(1-f_a+\tfrac{\hbar}{2})}\] for the twisted sectors and \[\frac{\hbar\,\Gamma(1+\hbar)}{\nu\,\Gamma(1+\tfrac{\hbar}{2})^2}\] for the untwisted one. Since $\Lambda$ is rational in $\mathsf{q}$ as well, $(\Lambda\cB)_{ia}$ has the same form, and its Poincar\'e expansion retains a single monomial in $\mathsf{q}$. By Stirling's formula the ratio of $\Gamma$-functions is, up to a constant, \[2^{\hbar}\hbar^{1/2}\nu^{-\delta_{a0}}\re^{S(\hbar)}\] times the exponential of minus the tail of the sector pair. By the reflection of \cref{prop:DX_closed} the latter is $\mathscr{D}_{\cX}|_a$ for every $0\leq a\leq l$. The factors $\nu^{-\delta_{a0}}\mathscr{D}_{\cX}|_a$ cancel between the two sides, $2^{\hbar}$ times the monomial in $\mathsf{q}$ is $\re^{u_i/z}$ for the critical value $2\nu\log(\pm2)$ of $2\nu\log\lambda$ at $\rm{OP}$ on the sheet of the thimble, $\hbar^{1/2}$ is the $z^{1/2}$ of \eqref{eq:period_asymptotic_form}, and $\re^{h^{\cX}_i(z)}=\re^{S(\hbar)}$ for every $i$.
\end{proof}
%
%
%
\subsection{The resolution end}
\label{subsec:DZ_from_A}

The large radius limit point is not semisimple: the exceptional curve $C=E_{l-2}$ of the trivalent node is pointwise fixed, so that the classical limit of the quantum product contains the nilpotent class $\omega_C$ with $\omega_C^2=0$.

\begin{defn}
\label{def:LRC}
Write $Q_j\coloneqq\re^{\tau_j}$ for the exponentiated K\"ahler coordinates, and recall that $E_n=C$. We take the \emph{partial large-radius stratum} $\rm{LR}_C$ to be a small punctured disk in $Q=Q_n$ near $(Q_1, \dots, Q_l) = 0$.
\end{defn}

\begin{lem}
\label{lem:LRC_semisimple}
The quantum product is semisimple on $\rm{LR}_C$. Of the $l+1$ canonical coordinates, $l-1$ tend to the classical idempotent coordinates of the isolated fixed points, while the remaining two stay distinct, with
\begin{equation*}
u_+-u_- \;=\; 8\nu\sqrt{Q}\,\big(1+O(Q)\big)\,.
\end{equation*}
\end{lem}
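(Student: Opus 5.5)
We work in the chart $\cU_Y$ and set $Q_j\to 0$ for $j\neq n$, keeping $Q=Q_n$ small but non-zero. At $Q=0$ the classical equivariant product is diagonalised by localisation. The $l-1$ isolated fixed points $P_i$, $i\neq n$, give idempotents $P_i/e(T_{p_i})$ with distinct eigenvalues of $\phi_k\star$, namely the weights $i^*\phi_k|_{p_i}$ read off from \eqref{eq:loc_phi_general}. The curve $C$ instead contributes the two-dimensional block spanned by $(\mathbf{1}_C,\omega_C)$, on which multiplication by $\phi_n$ acts through a single Jordan block, since $\omega_C^2=0$. The plan is therefore to split $H(Y)$ into the $l-1$ simple summands and the $C$-block. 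We then treat the simple summands by perturbation theory, and the $C$-block by computing the leading quantum correction.

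\emph{Step 1.} The isolated eigenvalues are simple at $Q=0$, and the quantum product is analytic in $Q$. Hence the corresponding idempotents and their canonical coordinates $u_i$ deform analytically and tend to the classical values. This gives $l-1$ of the $l+1$ canonical coordinates.

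\emph{Step 2.} On the $C$-block, we compute the first quantum correction to $\phi_n\star\phi_n$ and to $\omega_C\star\omega_C$ restricted to that block. Only curve classes $dC$ contribute at leading order in $Q$ once $Q_j=0$ for $j\neq n$. The degree-one contribution is a local invariant of $C\cong\bbP^1$ with normal bundle $\cO(-2)$ and equivariant weights $2\nu$ on the fibre. I would obtain it either from the mirror or from localisation on $\cK_{0,3}(Y,[C])$, where the fixed locus is non-isolated because $C$ is pointwise fixed. This produces a correction $\omega_C\star\omega_C = c\,\nu^2 Q\,\mathbf{1}_C+O(Q^2)$ with $c\neq0$ (up to terms in $\omega_C$). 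The block then has eigenvalues splitting as $\pm\sqrt{c}\,\nu\sqrt{Q}$, so semisimplicity holds on the punctured disk.

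A more robust route, which I would actually try first, uses the mirror of \cref{thm:msQH}. Canonical coordinates are the critical values of $2\nu\log\lambda$. In the regime of \cref{def:LRC}, $\kappa_n/\kappa_{l-1}$ is close to $1$ with $\tau_n=\log(\kappa_{l-1}/\kappa_n)$, and the other ratios are degenerate. One counts the critical points of $\lambda$: $l-1$ of them are isolated and tend to the zeros separated by the large ratios. The remaining two are trapped between the nearly colliding zeros $\kappa_n^{\pm1}$ and $\kappa_{l-1}^{\pm1}$ (equivalently, in the local model near $\kappa_{l-1}$).

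\emph{Step 3.} Rescale $\mu=\kappa_{l-1}(1+\sqrt{Q}\,w)$ in this local model, where $\lambda$ locally looks like $(\mu-\kappa_n)(\mu-\kappa_{l-1})\times(\text{regular})$. A quadratic local model then yields two critical points $w_\pm$ whose critical values differ at order $\sqrt{Q}$. Expanding $2\nu\log\lambda$ at $w_\pm$ gives $u_+-u_-=8\nu\sqrt{Q}(1+O(Q))$. The absence of an $O(Q^{1/2}\cdot Q^{1/2})$ correction follows from the symmetry $w\mapsto -w$ of the local model, which is inherited from $\varsigma$.

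\emph{Main obstacle.} The hardest step is fixing the constant $8$ and confirming that the correction is of relative order $Q$ rather than $\sqrt{Q}$. This requires carefully tracking the weights $2\nu$ and the normalisation $\eta=g/(2\nu)$ in the local expansion of $\lambda$ near the coalescing zeros, together with the identification \eqref{eq:logkappa_weights} of $\tau_n$ with $\log(\kappa_{l-1}/\kappa_n)$. Semisimplicity itself then follows, since all $l+1$ critical values are pairwise distinct for $0<|Q|\ll1$.
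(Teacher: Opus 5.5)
\textbf{There is a genuine gap: the mirror computation in Step 3 rests on the wrong degeneration.}

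Your Step 1 is fine. The A-side picture in Step 2 is also qualitatively right: a Jordan block on $(\mathbf{1}_C,\omega_C)$ split by a degree-one correction to $\omega_C\star\omega_C$. However, you only assert $c\neq0$ and never compute the invariant of the pointwise-fixed curve. So Step 2 gives neither the constant $8$ nor the $O(Q)$ error, and for those you rely on Step 3.

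Step 3 misreads the regime. Since $\tau_n=\log(\kappa_{l-1}/\kappa_n)$, on $\mathrm{LR}_C$ one has $|\kappa_n/\kappa_{l-1}|=|Q|^{-1}\gg1$. The zeros $\kappa_n^{\pm1}$ and $\kappa_{l-1}^{\pm1}$ are therefore far apart, not nearly colliding. The limit $\kappa_n/\kappa_{l-1}\to1$ would mean $Q\to1$, where these zeros merge into double zeros of $\lambda$; that is not a large-radius regime.

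A collision model could not work anyway. The factor $(\mu-\kappa_n)(\mu-\kappa_{l-1})$ times a regular function has a single critical point between the two zeros. Its $\log\lambda$-critical value tends to $-\infty$ as the zeros merge. So it cannot produce two critical values with a finite common limit differing by $8\nu\sqrt{Q}$.

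The symmetry argument also fails. The involution $\varsigma\colon\mu\mapsto1/\mu$ preserves $\lambda$ on $\mathscr{H}^{\varsigma}$. It therefore pairs critical points with \emph{equal} critical values, and cannot be what exchanges $u_+$ and $u_-$.

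\textbf{The missing idea is the trivalent-node cancellation used in the paper.}

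Consider the long annulus $|\kappa_n^{-1}|<|\mu|<|\kappa_{l-1}^{-1}|$, whose radii have ratio $|Q|^{-1}$. On it $\lambda$ has no zeros. The $n$ zeros $\kappa_1^{-1},\dots,\kappa_n^{-1}$ inside its inner circle exactly balance the pole of order $n$ at the origin, so $\log\lambda$ has no $\log\mu$ term there.

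Discard the terms that vanish as $Q_j\to0$ for $j\neq n$. Then $\lambda$ reduces, up to a constant factor, to the $\bbP^1$-type Laurent polynomial
\[
\big(1-\kappa_{l-1}\mu\big)\big(1-(\kappa_n\mu)^{-1}\big)=1+Q-\kappa_{l-1}\mu-(\kappa_n\mu)^{-1}.
\]
Its critical points $\mu_\pm=\pm(\kappa_n\kappa_{l-1})^{-1/2}$ sit in the middle of the annulus, at ratio $|Q|^{-1/2}$ from both boundary zeros. Its critical values are $(1\mp\sqrt{Q})^2$. This gives directly
\[
u_+-u_-=4\nu\log\frac{1+\sqrt{Q}}{1-\sqrt{Q}}=8\nu\sqrt{Q}\,\big(1+O(Q)\big).
\]
The oddness in $\sqrt{Q}$, and hence the relative $O(Q)$ error, comes from the $\mu\mapsto-\mu$ symmetry of this model, not from $\varsigma$. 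These two values are distinct for $0<|Q|\ll1$, and together with your Step 1 this gives semisimplicity on $\mathrm{LR}_C$.
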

\begin{proof}
On the annulus $\kappa_n^{-1}<|\mu|<\kappa_{l-1}^{-1}$, whose ratio of radii is $\kappa_n/\kappa_{l-1}=Q^{-1}$ by \eqref{eq:logkappa_weights}, the superpotential has no zero, and the $n$ zeros inside its inner circle are balanced by the pole of order $n$ at the origin, so the coefficient of $\log\mu$ in $\log\lambda$ vanishes there. This cancellation is the trivalent-node condition, and distinguishes $C$ from the other exceptional curves. What survives on $\rm{LR}_C$ is, up to a constant factor, the Laurent polynomial $1+Q-\kappa_{l-1}\mu-(\kappa_n\mu)^{-1}$, with critical points $\mu_\pm=\pm(\kappa_n\kappa_{l-1})^{-1/2}$ in the annulus and critical values $(1\mp\sqrt{Q})^2$. The canonical coordinates in \eqref{eq:period_asymptotic_form} are the critical values of $2\nu\log\lambda$, so the two attached to the annulus differ by $4\nu\log\frac{1+\sqrt{Q}}{1-\sqrt{Q}}=8\nu\sqrt{Q}\,\big(1+O(Q)\big)$.

\end{proof}


\begin{prop}
\label{prop:block_LRC}
On $\rm{LR}_C$,
\beq
\label{eq:block_LRC}
R_Y\big|_{\rm{LR}_C} \;=\; \bigoplus_{k}\mathscr{D}_{(w_k^+,w_k^-)}\ \oplus\ R_C\,,
\eeq
over the isolated fixed points $p_k$, with $(w_k^{+},w_k^{-})$ the tangent weights there, namely $(2\nu(n+1-k),-2\nu(n-k))$ for $k\leq n-1$ and $(4\nu,-2\nu)$ at $p_{l-1}$ and $p_l$ by \eqref{eq:localisation_normal_weights} and (W1), and with $R_C$ the block of \cref{prop:curve_block}, in the localised basis \eqref{eq:loc_basis_general} in which \eqref{eq:Ainv_general} is written.
\end{prop}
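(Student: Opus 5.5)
The statement is the $R$-matrix analogue, on $\rm{LR}_C$, of Givental's fixed-point formula; I would derive it from the localisation structure of the $\bbC^\times$-equivariant theory of $Y$. The plan is to run the same asymptotic comparison as in \cref{cor:hX}, but at the resolution end and with $\cA^{-1}$ in place of $\cB$.

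\textbf{Step 1: reduction to fixed-point data.} By virtual localisation \cite{Graber:1999eki}, the equivariant genus-zero theory of $Y$ is a graph sum over the fixed locus $Y^{\bbC^\times}$. The vertex contributions are the theories of the isolated points $p_k$, twisted by their normal bundles, together with the theory of the curve $C$, twisted by $\cN_{C/Y}$. Givental's quantum Riemann--Roch and fixed-point formula \cite[Thm.~9.1]{Givental:2001mre} then express $R_Y$ as a product of two factors:
\begin{itemize}
\item the edge/gluing factor, which is absorbed into $\Psi_Y$ and the canonical coordinates;
\item a block-diagonal vertex factor in the localised basis \eqref{eq:loc_basis_general}.
\end{itemize}
At an isolated point $p_k$ the vertex block is the twisted point theory. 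Its $R$-matrix is the Stirling factor $\mathscr{D}_{(w_k^+,w_k^-)}$ of \cref{def:stirling_tail}, with the tangent weights read off from \eqref{eq:localisation_normal_weights} and (W1); this is exactly how they entered $\Gamma_Y$ in \cref{prop:gamma_class_matrices}. The curve block $R_C$ is, by definition, the $2\times2$ block of \cref{prop:curve_block} acting on $(\mathbf{1}_C,\omega_C)$. What remains is to show two things: the decomposition is compatible with the semisimple frame on $\rm{LR}_C$, and the normalisation constants $h^Y_i$ are the ones dictated by quantum Riemann--Roch.

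\textbf{Step 2: asymptotic check via $\cA^{-1}$.} I would verify the decomposition directly through the twisted periods, using \eqref{eq:thimbles_vs_segments} and \eqref{eq:period_asymptotic_form}.
\begin{enumerate}
\item By \cref{lem:LRC_semisimple}, on $\rm{LR}_C$ the $l-1$ canonical coordinates attached to the points $p_k$ separate from the two coordinates $u_\pm$ attached to the annulus. Correspondingly, the thimbles split into $l-1$ thimbles localised near the zeros $\kappa_k^{-1}$ and two thimbles in the annulus.
\item Because $\cA^{-1}$ is lower triangular (\cref{lem:LR_general}), its entries in the $P_k$ columns are the Beta factors $a_k$ and $b_i$. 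Each of these is a pair of $\Gamma$-functions at the arguments $\pm w_k^\pm/z$.
\item After reflection in $\mathfrak{S}^{\mathrm{D}}_{+}$, Stirling's formula turns each such Beta factor into $\re^{u_k/z}z^{1/2}$ times $\mathscr{D}_{(w_k^+,w_k^-)}$. This yields the isolated-point blocks, exactly as for the $\Gamma_Y$ entries.
\item The $(\mathbf{1}_C,\omega_C)$ columns involve the entries $c$, $d$ and $-\mathsf{q}^n$, with their harmonic numbers. Their Poincaré asymptotics, together with the curve block of $\Gamma_Y$, reproduce $R_C$ once the two annular thimbles are diagonalised with respect to $u_\pm$.
\end{enumerate}
Finally, the absence of mixing terms between the $P_k$ blocks and the $C$ block should follow from two facts: the total ordering of the monomials in $\mathsf{q}$, which kills exponentially subleading cross terms, and the triangular shape of $\cA^{-1}$.

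\textbf{Main obstacle.} The hard part is the curve block. Since $u_+-u_-=8\nu\sqrt{Q}\,(1+O(Q))$, the $R$-matrix entries in that block carry negative powers of $\sqrt{Q}$. One therefore has to check that $R_C$ is well defined on the punctured disk, not at $Q=0$, and that its odd part is still the Stirling tail fixed by the normal weights $(0,2\nu)$ of $C$. I would try first to compute $R_C$ from the two annular thimble integrals of the Laurent polynomial $1+Q-\kappa_{l-1}\mu-(\kappa_n\mu)^{-1}$. These are Bessel-type integrals, whose $R$-matrix is explicit. I would then match their $Q$-independent normalisation with the $\cA^{-1}$ entries $c$ and $d$.
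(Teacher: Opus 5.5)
Your Step 1 is the paper's argument, and it is all the proposition needs. At the isolated points, $R_Y$ is pinned by Givental's fixed-point normalisation: each $J^{Y}_j$, $j\neq l-2$, sits on a single saddle and by \eqref{eq:asymp_points} reduces to the bare exponential as $Q\to0$, so it is the canonically normalised solution times $\mathscr{D}^{-1}_{(w_j^+,w_j^-)}$. On the curve block, $R_Y$ is pinned by quantum Riemann--Roch for the theory of $C\simeq\bbP^1$ twisted by $e^{\rm eq}(\cN_{C/Y})$, with $c_1^{\bbC^\times}(\cN_{C/Y})=2\nu-2\omega_C$; this is \cref{prop:curve_block}. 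Of the weights $(0,2\nu)$ at points of $C$, only the normal one (deformed by $-2\omega_C$) contributes a Stirling factor. The weight $0$ is along $C$ and is carried by $R_{\bbP^1}$. That factor is singular at $Q=0$, which is why one works on the punctured disk, and it cancels in \cref{thm:R_equal}. Block-diagonality also comes from this localisation. It does not come from the triangular shape of $\cA^{-1}$, which encodes the change from segments to components of $J^{Y}$, not $R_Y$.

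The gap is Step 2, which you present as what ``remains to show''. It cannot play that role. By \cref{subsec:reduction_h}, steepest-descent expansions of thimble integrals fix the calibration only up to the constant diagonal factor $\re^{h_i(z)}$. Removing that ambiguity is exactly the content of the proposition, so period asymptotics cannot verify it. Concretely, item 3 is false. Each diagonal entry of $\cA^{-1}$ carries a third $\Gamma$-function at $\pm\hbar$ besides the two at the tangent weights, e.g.
\[
b_i=\frac{\mathsf{q}^{i-1}}{z}\,\frac{\Gamma(1+\hbar)\,\Gamma\big(-(n+1-i)\hbar\big)}{\Gamma\big(1-(n-i)\hbar\big)}\,,\qquad 1\leq i\leq n-1\,.
\]
After reflection its Stirling tail is $S(\hbar)+S\big((n-i)\hbar\big)-S\big((n+1-i)\hbar\big)=\log\mathscr{D}_{(w_i^+,w_i^-)}+S(\hbar)$, as in \eqref{eq:DZ_uniform}, not $\log\mathscr{D}_{(w_i^+,w_i^-)}$. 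Also, $\re^{u_k/z}$ comes from $J^{Y}_k$, not from the constant entries of $\cA^{-1}$.

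In the paper this computation runs the other way. \cref{prop:block_LRC} is an input; combined with \cref{thm:DZ_from_A} it yields $h^{Y}_k=S(\hbar)$ in \cref{thm:R_equal}, which is then matched with $h^{\cX}_i=S(\hbar)$ from \cref{cor:hX}. Your sentence ``the normalisation constants $h^Y_i$ are the ones dictated by quantum Riemann--Roch'' conflates $R_Y$, which quantum Riemann--Roch fixes, with $h^{Y}$, the offset between $R_Y$ and the thimble normalisation. Requiring item 3 to reproduce $\mathscr{D}$ exactly amounts to asserting $h^{Y}=0$, which is wrong. For the same reason, the normalisation of $R_C$ should not be matched against the entries $c,d$. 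Drop Step 2, or recast it as the computation of $h^{Y}$.
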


The components \eqref{eq:JYcomps} of $J^{Y}$ in the localised basis \eqref{eq:loc_basis_general} triangularise the monodromy at $\rm{LR}$ (\cref{sec:Umat_typeD}). Each $J^{Y}_j$ with $j\neq l-2$ is localised on a single saddle of \cref{lem:LRC_semisimple}: on $\rm{LR}_C$ it is asymptotic to the $j^{\rm th}$-component of the canonically normalised formal solution of the QDE times $\mathscr{D}^{-1}_{(w_j^+,w_j^-)}$ (up to the normalisation of \eqref{eq:period_asymptotic_form}), since as $Q\to0$ it reduces to the bare exponential $\re^{u_j/z}$ by \eqref{eq:asymp_points}. 

A tubular neighbourhood of $C$ is diffeomorphic to the total space of $\cO_{\bbP^1}(-2)$, the torus acting trivially on the base and along the fibre with the weight of \eqref{eq:localisation_normal_weights},
\begin{equation*}
c_1^{\bbC^\times}\big(\cN_{C/Y}\big) \;=\; 2\nu-2\omega_C\,,\qquad \omega_C^2=0\,.
\end{equation*}

\begin{prop}
\label{prop:curve_block}
In the basis $(\mathbf{1}_C,\omega_C)$, $R_C$ is the product of the calibration $R_{\bbP^1}$ of the untwisted theory of $C$ and of
\begin{equation*}
\mathscr{D}_{(2\nu-2\omega_C)}
\;=\; \mathscr{D}_{(2\nu)}\,\exp\Big[-\frac{2\omega_C}{z}\sum_{j\geq1}\frac{B_{2j}}{2j}\,\hbar^{-2j}\Big]\,,
\end{equation*}
the latter lower-triangular, with diagonal $\mathscr{D}_{(2\nu)}$ and off-diagonal entry beginning at order $z$; in particular $R_C$ is not a multiple of the identity.
\end{prop}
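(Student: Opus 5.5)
We will obtain \cref{prop:curve_block} from Givental's quantum Riemann--Roch theorem, in the form of Coates--Givental and Tseng \cite{Givental:2001mre,Tseng:2010lhb}, applied to the fixed component $C$. The first step is virtual localisation. It writes the contribution of $C$ to the $\bbC^\times$-equivariant theory of $Y$ near $\rm{LR}_C$ as the Gromov--Witten theory of $C\simeq\bbP^1$ twisted by the inverse equivariant Euler class of $\cN_{C/Y}$. The only other input is the contribution of the isolated points, which has already been split off in \cref{prop:block_LRC}. The $\bbC^\times$-action on $C$ is trivial. Hence the untwisted theory of $C$ is the ordinary, non-equivariant quantum cohomology of $\bbP^1$, with equivariant parameter $\nu$ inert and with Novikov variable $Q=Q_n$. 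By \cref{lem:LRC_semisimple} this theory is semisimple on the punctured disk. Its canonical $R$-calibration $R_{\bbP^1}$ is fixed by the requirement that it has no odd-order diagonal ambiguity at the two critical points, which matches the point-normalisation $\mathscr{D}$ of each idempotent at $Q\to0$.

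The second step applies quantum Riemann--Roch to the twisting class. The twisting class is the multiplicative class $\mathbf{c}(\cdot)=e^{\bbC^\times}(\cdot)^{-1}$, applied to the rank-one bundle $\cN_{C/Y}$ with Chern root $x=2\nu-2\omega_C$. The theorem states that the twisted Lagrangian cone is the image of the untwisted one under multiplication by
\[
\Delta(z)=\exp\Big[\sum_{k\geq0}s_{2k-1}\frac{B_{2k}}{(2k)!}z^{2k-1}\Big]\,,
\]
acting as multiplication by a function of $x$ and $z$. The reduction to the quantum-Riemann--Roch form gives the Stirling-type formula
\[
\Delta=\exp\Big[-\sum_{j\geq1}\frac{B_{2j}}{2j(2j-1)}\Big(\frac{z}{x}\Big)^{2j-1}\Big]\,,
\]
which is $\mathscr{D}_{(x)}$ in the notation of \cref{def:stirling_tail}. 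Up to the even-order and $z^0$ pieces it coincides with the asymptotics of $\Gamma(1+x/z)$, consistent with the Gamma class of \cref{prop:gamma_class_matrices}. It follows that $R_C=\mathscr{D}_{(x)}\,R_{\bbP^1}$, where the scalar-valued class $\mathscr{D}_{(x)}$ acts by cup product on $H(C)$. The ordering of the two factors is the usual one: $\Delta$ multiplies the loop space on the left, and the $R$-calibrations compose accordingly.

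The third step expands $\mathscr{D}_{(2\nu-2\omega_C)}$ using $\omega_C^2=0$. Since $S(y)=\sum_j\frac{B_{2j}}{2j(2j-1)}y^{1-2j}$, we have $S'(y)=-\sum_j\frac{B_{2j}}{2j}y^{-2j}$. Therefore
\[
S\big(\tfrac{2\nu-2\omega_C}{z}\big)=S(\tfrac{2\nu}{z})-\tfrac{2\omega_C}{z}S'(\tfrac{2\nu}{z})\,,
\]
and substituting $\hbar=2\nu/z$ gives exactly the stated factorisation. Cup product with $\omega_C$ sends $\mathbf 1_C\mapsto\omega_C$ and $\omega_C\mapsto0$, so the factor is lower-triangular in the basis $(\mathbf1_C,\omega_C)$ with diagonal entries $\mathscr{D}_{(2\nu)}$. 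Its off-diagonal entry is
\[
\frac{2}{z}\sum_j\frac{B_{2j}}{2j}\Big(\frac{z}{2\nu}\Big)^{2j}\,,
\]
whose leading term, at $j=1$, is of order $z$. Since the diagonal part is scalar, the nonzero off-diagonal entry shows that $R_C$ is not a multiple of the identity.

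The main obstacle is the first step. The fixed locus is not isolated, so we must justify that Givental's fixed-point formula and the factorisation into $R_{\bbP^1}$ extend to a positive-dimensional component, and that the result is compatible with the localised basis \eqref{eq:loc_basis_general}. Two further points need care. We must check that the Stokes sector $\mathfrak{S}^{\mathrm D}_+$ makes the Poincar\'e asymptotics legitimate. We must also check that no residual diagonal constant ambiguity $\exp(\sum a_kz^{2k-1})$ survives. For this we would compare, as in \cref{cor:hX}, with the $Q\to0$ limit \eqref{eq:asymp_curve}, where the twisted $\bbP^1$-theory degenerates to the classical one.
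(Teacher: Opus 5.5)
Your proposal is correct and follows the paper's own argument: quantum Riemann--Roch for the twist of the theory of the trivially acted-on curve $C$ by the equivariant Euler class of $\cN_{C/Y}$ gives the factorisation, and expanding the Stirling tail to first order in $\omega_C$ (using $\omega_C^2=0$) gives the formula, the lower-triangularity and the order-$z$ off-diagonal entry. There are two harmless slips: the off-diagonal entry is $-\frac{2}{z}\,\mathscr{D}_{(2\nu)}\sum_{j\geq1}\frac{B_{2j}}{2j}\hbar^{-2j}$ (you dropped the sign and the diagonal factor), and $R_{\bbP^1}$ is fixed by homogeneity of the conformal quantum cohomology of $\bbP^1$ (or, as the paper notes, by the thimble asymptotics of the Laurent polynomial of \cref{lem:LRC_semisimple}), not by a normalisation at $Q\to0$, where that theory is not semisimple.
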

\begin{proof}
The first factor is quantum Riemann--Roch for the twist by $e^{\rm eq}(\cN_{C/Y})$; it is also the asymptotic expansion of the thimble integrals of the Laurent polynomial of \cref{lem:LRC_semisimple}, and cancels out of \cref{thm:R_equal}. For the second, expand the single-weight factor at the deformed weight to first order in $\omega_C$.
\end{proof}

\begin{prop}
\label{thm:DZ_from_A}
With the ordering of the columns of $\cA^{-1}$ as in \eqref{eq:Ainv_general}, we have
\beq
\label{eq:DZ_uniform}
\mathsf{Asym}\,\big[(\cA^{-1})_{kk}\big] - \log\mathscr{D}_{(w_k^+,w_k^-)} \;=\; S(\hbar)
\eeq
at every isolated fixed point $p_k$, with the same $k$-independent $S(\hbar)$.
\end{prop}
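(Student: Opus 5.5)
The plan is a direct computation from the closed forms \eqref{eq:LR_entries_general}--\eqref{eq:Ainv_general} of \cref{lem:LR_general}. The diagonal entry attached to an isolated fixed point $p_k$ is $(\cA^{-1})_{kk}=b_k$ for $1\leq k\leq n-1$. For the leg points $p_{l-1},p_l$ it is the corresponding diagonal entry $e$ or $g$, after matching the rows $\Pi_l,\Pi_{l+1}$ to the columns $P_{l-1},P_l$ in the lower-triangular ordering. First I will rewrite each such entry as a ratio of $\Gamma$-functions, times a monomial in $\mathsf{q}$ and powers of $z,\hbar$. I will then use the reflection formula so that, in the sector $\mathfrak{S}^{\mathrm{D}}_+$ of \cref{def:stokes_typeD}, every remaining $\Gamma$-argument has large positive real part. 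The discarded sine factors are rational in $\mathsf{q}$ and contribute nothing to $\mathsf{Asym}$, the Stirling tail of \cref{def:stirling_tail}.

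For $k\leq n-1$, take $b_k=\mathsf{q}^{k-1}z^{-1}\Gamma(1+\hbar)\Gamma(-(n+1-k)\hbar)/\Gamma(1-(n-k)\hbar)$. Reflecting $\Gamma(-(n+1-k)\hbar)$ gives, up to elementary factors, $\Gamma(1+\hbar)\,\Gamma((n-k)\hbar)/\Gamma(1+(n+1-k)\hbar)$. Its Stirling tail is $S(\hbar)+S((n-k)\hbar)-S((n+1-k)\hbar)$, modulo the $\log$ and linear terms, which are absorbed in $\re^{u_k/z}$ and $z^{1/2}$ as in \eqref{eq:period_asymptotic_form}. Here I use $S(1+y)=S(y)-\log(1+1/y)\cdot(\ldots)$, more precisely the shift $\log\Gamma(1+y)=\log y+\log\Gamma(y)$, which only changes non-tail terms. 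Since $\hbar=2\nu/z$, the tangent weights $w_k^+=2\nu(n+1-k)$ and $w_k^-=-2\nu(n-k)$ give $\log\mathscr{D}_{(w_k^+,w_k^-)}=-S((n+1-k)\hbar)-S(-(n-k)\hbar)$. Because $S$ is odd, this equals $-S((n+1-k)\hbar)+S((n-k)\hbar)$, and the difference with $\mathsf{Asym}[b_k]$ is exactly $S(\hbar)$.

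For the leg points I will carry out the same check on $e=\mathsf{q}^{n+1}z^{-1}\Beta(1+\hbar,\hbar)$ and on $g$. For $e$, the tail is $S(\hbar)+S(\hbar)-S(1+2\hbar)\equiv 2S(\hbar)-S(2\hbar)$. Against $\log\mathscr{D}_{(4\nu,-2\nu)}=-S(2\hbar)+S(\hbar)$, this leaves $S(\hbar)$, as required. For $g$, reflect $\Gamma(1-2\hbar)$ and treat $\Gamma(1+\hbar)/\Gamma(2-\hbar)$ likewise; the tail should again reduce to $2S(\hbar)-S(2\hbar)$.

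The main obstacle is bookkeeping, not any conceptual point. One issue is to pin down which diagonal entries correspond to $p_{l-1}$ and $p_l$: row $\Pi_l$ versus row $\Pi_{l+1}$ carries the entries $e,f,g$. The other is to verify that reflections producing arguments like $-(n-k)\hbar$ or $1-2\hbar$ land in the right half-plane in $\mathfrak{S}^{\mathrm{D}}_+$, so that Stirling applies with the stated signs. I would settle both by checking the case $n=2$ (i.e.\ $\mathrm{D}_4$) explicitly before writing the general argument.
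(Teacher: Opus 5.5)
Your proposal is correct and follows the paper's route. You reflect the negative-argument $\Gamma$-factors of the diagonal entries $b_k$ ($k\le n-1$), $e$ and $g$ into the form $\Gamma(1+\hbar)\Gamma(m'\hbar)/\Gamma(1+m\hbar)$, times factors rational in $\mathsf{q}$, and let the Stirling tails telescope against $\log\mathscr{D}_{(w_k^+,w_k^-)}$, which leaves the common $S(\hbar)$ coming from $\Gamma(1+\hbar)$. Two bookkeeping points: $S(1+2\hbar)$ should be read as the tail of $\log\Gamma(1+2\hbar)$, i.e.\ $S(2\hbar)$, since $S$ at a shifted argument does not literally agree with it; and for $g$ you must absorb the prefactor $(1-\hbar)/(2\hbar)$ via $\Gamma(2-\hbar)=(1-\hbar)\Gamma(1-\hbar)$ and $2\hbar\,\Gamma(2\hbar)=\Gamma(1+2\hbar)$, which gives exactly the same $\Gamma$-ratio as $e$.
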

\begin{proof}
Reflection brings the entries of \eqref{eq:LR_entries_general} to the stated form, the two $\Gamma$-factors cancelling outright on the curve block, and the asymptotic tails then telescope. The Stirling tail is the same for every entry of a column, since $a_k$ and $b_k$, as well as $e$, $f$ and $g$, reflect to the same ratio of $\Gamma$-functions up to elementary factors, so \eqref{eq:DZ_uniform} holds entrywise and no entry has to be selected by dominance. On the curve block the entries have no Stirling tail, and $\mathbf{1}_C$ and $\omega_C$ share a leading exponential.
\end{proof}

\begin{thm}
\label{thm:R_equal}
In the sector $\mathfrak{S}^{\mathrm{D}}_{+}$ and under the analytic continuation along $\rho$ of \cref{prop:period_comparison_U}, the two canonical $R$-calibrations agree, as formal power series in $z$ with coefficients in $\bbC(\nu)$.
\end{thm}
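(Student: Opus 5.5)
By \eqref{eq:reduction_to_h}, it is enough to show that the scalar normalisations satisfy $h^{\cX}_i=h^{Y}_i$ for every idempotent direction $i$. Both are read off from the same thimble sections $\Omega=\Lambda\Pi$. These are continued along $\rho$ in the fixed sector $\mathfrak{S}^{\mathrm{D}}_+$ of \cref{def:stokes_typeD}. Since both sides are constant across $\cM$ and we only need one semisimple point (\cite[Rmk.~6.4]{Brini:2019wts}), the plan is to compute $h^{\cX}_i$ at ${\rm OP}$ and $h^{Y}_i$ on the punctured disk ${\rm LR}_C$. We then compare the two results, using the fact that the odd parts of $h^\bullet_i$ do not change across the Stokes walls crossed by $\rho$. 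The orbifold end is already done: \cref{cor:hX} gives $h^{\cX}_i=S(\hbar)$ for all $i$.

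At the resolution end we first use \cref{lem:LRC_semisimple} to ensure we are on the semisimple locus. There we compare the expansion \eqref{eq:period_asymptotic_form} of $\Omega_i=\sum_j(\Lambda\cA^{-1})_{ij}J^Y_j$ with the canonical calibration \eqref{eq:block_LRC}. For the $l-1$ saddles attached to isolated fixed points we proceed as follows.
\begin{enumerate}
\item Each $J^Y_k$ reduces to the bare exponential $\re^{u_k/z}$ by \eqref{eq:asymp_points}, and it differs from the canonically normalised solution by $\mathscr{D}^{-1}_{(w_k^+,w_k^-)}$.
\item The only remaining $z$-dependence in $\Omega_i$ comes from the entries of $\cA^{-1}$ and from $\Lambda$.
\item $\Lambda$ is rational in $\mathsf{q}$, so it contributes no Stirling tail; and in $\mathfrak{S}^{\mathrm{D}}_+$ the ordering of the $\mathsf{q}$-monomials makes subleading terms vanish in Poincaré asymptotics.
\item \Cref{thm:DZ_from_A} then gives $h^Y_k=\mathsf{Asym}[(\cA^{-1})_{kk}]-\log\mathscr{D}_{(w_k^+,w_k^-)}=S(\hbar)$, uniformly in $k$. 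Its proof also shows that every entry of a column has the same tail, so no ambiguity comes from which entry dominates.
\end{enumerate}

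The two remaining idempotents, $u_\pm$, live on the curve block, and we expect this to be the main obstacle. The difficulty is that $R_C$ is not scalar (\cref{prop:curve_block}), and that ${\rm LR}$ is not semisimple. So we must work at $Q\neq0$ and check that the diagonal ambiguity $\re^{h^Y_\pm}$ is the same constant for both saddles $\mu_\pm$. We would split $R_C$ as in \cref{prop:curve_block}:
\begin{enumerate}
\item The factor $R_{\bbP^1}$ is the asymptotic expansion of the thimble integrals of the Laurent polynomial $1+Q-\kappa_{l-1}\mu-(\kappa_n\mu)^{-1}$. It therefore matches the $\Omega_\pm$ directly, up to a $\pm$-independent constant.
\item The twist $\mathscr{D}_{(2\nu-2\omega_C)}$ must be matched with the $(\mathbf 1_C,\omega_C)$ block of $\Gamma_Y$ and $\cA^{-1}$. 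On that block, the harmonic numbers in the entries $c,d$ cancel against the off-diagonal entry of \eqref{eq:GammaY_general}, by the reflection identity used in \cref{lem:T_LR}.
\item What survives is a Stirling tail $S(\hbar)$ times $\mathscr{D}_{(2\nu)}$, whose off-diagonal part is exactly the $\omega_C$-expansion in \cref{prop:curve_block}.
\end{enumerate}
This gives $h^Y_\pm=S(\hbar)$. Here one must take care that the $\mathbf 1_C$ and $\omega_C$ components share a leading exponential, so the logarithm in $J^Y_C$ must not produce an extra tail.

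Putting the two ends together, $h^{\cX}_i=S(\hbar)=h^Y_i$ for all $i$. Hence $R_\cX=R_Y$ by \eqref{eq:reduction_to_h}, with coefficients in $\bbC(\nu)$ by analyticity in $\nu$.
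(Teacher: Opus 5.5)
Your proposal is correct and follows the paper's proof essentially step for step. It uses the reduction to the scalars $h_i$ via \eqref{eq:reduction_to_h}, $h^{\cX}_i=S(\hbar)$ from \cref{cor:hX}, $h^Y_k=S(\hbar)$ at the isolated fixed points from \eqref{eq:DZ_uniform} together with \eqref{eq:block_LRC}, and the cancellation of $R_{\bbP^1}$ against the annulus thimbles on the curve block. The one divergence is on the curve block: the paper just notes that the curve-block entries of $\cA^{-1}$ carry no Stirling tail, so $h^Y_\pm=S(\hbar)$ comes from the diagonal $\mathscr{D}_{(2\nu)}$ of $R_C$ alone; your detour through the $\Gamma_Y$ block and the harmonic-number cancellation of \cref{lem:T_LR}, which does not enter $\Omega_\pm$ directly, needs the extra remark that the Stirling tail of that block is $-\log\mathscr{D}_{(2\nu-2\omega_C)}$, and matching the off-diagonal part is unnecessary since $h$ only sees the diagonal.
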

\begin{proof}
On $\rm{LR}_C$ the thimble integrals of the chamber are \[\Omega_k=\sum_{j}(\Lambda\cA^{-1})_{kj}\,J^{Y}_j\] by \eqref{eq:thimbles_vs_segments}, \cref{prop:period_comparison_U} and \eqref{eq:JYcomps}. The sum here runs over the localised basis \eqref{eq:loc_basis_general} and $\Lambda_{kj}\in\bbC(\mathsf{q})$. The real parts of the $u_j/z$ are strictly ordered in $\mathfrak{S}^{\mathrm{D}}_{+}$, so only the term at the saddle of the thimble survives in the Poincar\'e expansion of $\Omega_k$. At a fixed point $p_k$, since each $J^{Y}_j$, $j\neq l-2$, carries the single exponential $\re^{u_j/z}$, we may assume that $J^{Y}_j$ corresponds to the $j$-th saddle point by a permutation of indices. Comparing with \eqref{eq:period_asymptotic_form}, $\re^{h^Y_k}$ is the product of $(\Lambda\cA^{-1})_{kk}$ at that fixed point with the ratio of $J^{Y}_k$ to the canonically normalised (in the sense of \cite{Givental:2001mre}) formal solution of the QDE of $Y$. The Stirling tail of the former is 
\[S(\hbar)+\log\mathscr{D}_{(w_k^+,w_k^-)}\] by \eqref{eq:DZ_uniform}, since $\Lambda$ has none, while that of the latter is $-\log\mathscr{D}_{(w_k^+,w_k^-)}$ by \eqref{eq:block_LRC}: hence \[h^Y_k=S(\hbar).\]
 
On the curve block, the entries of $\cA^{-1}$ have no Stirling tail, $R_{\bbP^1}$ cancels against the annulus thimbles, and the diagonal $\mathscr{D}_{(2\nu)}$ of \cref{prop:curve_block} gives again \[h^Y_{\pm}=S(\hbar)\,.\] The claim then follows from \cref{cor:hX} and \eqref{eq:reduction_to_h}.
\end{proof}

\begin{cor}
\label{cor:cqcrc_typeD}
\Cref{conj:quantised_crc} holds for $\cX=[\bbC^2/G]$, $G=\mathbb{BD}_{4l-8}$, and its minimal resolution $Y\to X$, with the transformation $\mathbb{U}$ of \cref{sec:Umat_typeD}.
\end{cor}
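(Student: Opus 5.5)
The plan is to assemble the corollary from the genus-zero and $R$-calibration results already established, through the reduction of \cite[Lemma~6.3]{Brini:2019wts}.

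\textbf{Step 1: genus zero.} By \cref{prop:period_comparison_U}, \cref{thm:identity_general} and \cref{thm:symplectic_general}, $\mathbb{U}=\cA\cB$ is a $\bbC\llbracket z^{-1}\rrbracket$-linear symplectomorphism. It intertwines $J^{\cX}$ and $J^{Y}$ under analytic continuation along $\rho$. By the genus-zero reconstruction of the cones from the $J$-functions (\cref{sec:givental}), it follows that $\mathbb{U}(\LL_\cX)=\LL_Y$. The Hard Lefschetz property of $\cX$ gives the factorisation $\mathbb{U}=\mathbb{U}_-\mathbb{U}_0$, with $\mathbb{U}_0$ the isometry \eqref{eq:McKay_cohomology}. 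I would check this by taking the $z\to\infty$ limit of the explicit $\cA$, $\cB$, which should reproduce \cref{thm:Hu}. Consequently $S_Y\,\mathbb{U}\,S_\cX^{-1}=\mathbb{U}_0$ on $\HH_+$. So $\widehat{\mathbb{U}}$ is a well-defined lower-triangular quantised operator acting on the Fock space.

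\textbf{Step 2: reduction to ancestors.} Using \eqref{eq:desc_anc} for both targets, \eqref{eq:sqcrc} becomes the equality $\mathcal{A}_{Y,\tau}=\mathcal{A}_{\cX,x}\circ\mathbb{U}_0^{-1}$ up to a constant, at corresponding points. The genus-one factors $\re^{F_1}$ must also match; I would get this from the fact that $F_1$ is determined by $R_1$ and the canonical coordinates through Givental's formula. At a semisimple point, \eqref{eq:qgiv} expresses each ancestor potential as $\widehat{\Psi}\widehat{R}\re^{\widehat{u/z}}\prod\mathsf{Z}_{\rm pt}$. The canonical coordinates and the $\Psi$-matrices are intrinsic to the common Frobenius manifold $\cM$ of \cref{thm:msQH}, and $\mathbb{U}_0$ identifies them. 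Hence the statement reduces to $R_\cX=R_Y$, which is \eqref{eq:qcrc_reduction}. By \cite[Rmk.~6.4]{Brini:2019wts}, it is enough to verify this at a single semisimple point of the global $\DD$-module.

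\textbf{Step 3: equality of $R$-matrices.} This is exactly \cref{thm:R_equal}. It holds in the sector $\mathfrak{S}^{\mathrm{D}}_+$, and extends to all $\nu$ by analyticity as in \cite[Sec.~6.2.2]{Brini:2019wts}. Combining this with Step 2 gives $\mathsf{Z}_Y=\widehat{\mathbb{U}}\mathsf{Z}_\cX$. By \cite[Prop.~5.3]{Givental:2001mre}, this is equivalent to the identification of the full descendent potentials under $\mathbf{t}_\cX=[\mathbb{U}^{-1}\mathbf{t}_Y]_+$, up to genus-zero terms of degree at most two and a genus-one constant. This is the content of \cref{thm:B}.

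\textbf{Main obstacle.} The delicate point is Step 2 near $\rm LR$, which is not semisimple. The Givental--Teleman formula for $Y$ is only available on the semisimple locus, such as $\rm{LR}_C$ from \cref{lem:LRC_semisimple}. One must therefore check that analytically continuing the ancestor identity from $\rm{LR}_C$ to the chart $\cU_Y$, and from there to the large-radius descendent potential, is legitimate. I would handle this by noting that the descendent potentials are analytic in $Q$ on the chart by \cref{thm:msQH}, and that both sides of \eqref{eq:sqcrc} are determined by their restriction to an open semisimple subset. The identity on the punctured disk then extends across $Q=0$.
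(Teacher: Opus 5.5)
Your proposal is correct and follows the paper's route. The paper likewise reduces \eqref{eq:sqcrc} to the equality \eqref{eq:qcrc_reduction} of canonical $R$-calibrations at a single semisimple point; it cites \cite[Lemma~6.3, Rmk.~6.4]{Brini:2019wts} for what you re-derive in Step~2 through the ancestor--descendent relation, and then invokes \cref{thm:R_equal}, which is proved on the semisimple punctured disk $\mathrm{LR}_C$. Your extra step of extending across $Q=0$ is harmless but unnecessary: the descendent potential does not depend on the base point of the ancestor expansion, and the statement only concerns analytic continuation.
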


\section{The Gromov--Witten/Donaldson--Thomas correspondence for \texorpdfstring{$\cX\times\bbC$}{X x C}}
\label{sec:gwdt}

Write $\cX_3 \coloneqq \cX\times\bbC$ and $Y_3\coloneqq Y\times\bbC$, so that $Y_3 \to X\times\bbC$ is the crepant resolution of the coarse moduli space of $\cX_3$. We lift the $\bbC^\times$-action on $\cX$ and $Y$ to the threefolds, with weight $\sigma=-2\nu$ along the $\bbC$ factor. The action then preserves the holomorphic volume forms of $\cX_3$ and $Y_3$.

The $R$-calibrations of $\cX_3$ and $Y_3$ differ from those of $\cX$ and $Y$ by the same factor $\mathscr{D}_{(\sigma)}$. Both therefore cancel out of \cref{thm:R_equal}, and \cite[Lemma~6.3]{Brini:2019wts} gives the following.
\begin{cor}
\label{cor:crc_threefold}
\Cref{conj:quantised_crc} holds for $\cX_3$ and $Y_3$.
\end{cor}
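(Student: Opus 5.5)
The plan is to reduce \cref{cor:crc_threefold} to the surface statement \cref{cor:cqcrc_typeD}, through the reduction \eqref{eq:qcrc_reduction} of \cite[Lemma~6.3]{Brini:2019wts} applied to the threefolds. The first step is to record the classical data of $\cX_3$ and $Y_3$. Since the $\bbC$ factor carries weight $\sigma=-2\nu$ and its only fixed point is the origin, the fixed loci of $\cX_3$ and $Y_3$ are those of $\cX$ and $Y$. The inertia stack is $I\cX_3 = I\cX\times\bbC$, and the ages are unchanged, since the stabilisers act trivially on the $\bbC$ factor. Hence $H(\cZ_3)\cong H(\cZ)$ as modules, and the equivariant pairings are rescaled by the common factor $1/\sigma$. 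Moreover, $\cX_3$ still satisfies the Hard Lefschetz condition. Every stable map to $\cZ_3$ is $\bbC^\times$-equivalent to one landing in $\cZ\times\{0\}$, so virtual localisation on the $\bbC$ factor identifies the invariants of $\cZ_3$ with those of $\cZ$ twisted by the equivariant Euler class of the trivial bundle $\cO\otimes\chi_\sigma$.

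The second step applies twisted quantum Riemann--Roch \cite{Givental:2001mre,Tseng:2010lhb} to this twist. For a trivial line bundle of weight $\sigma$, with trivial action of the stabilisers, the twisting operator is the scalar $\mathscr{D}_{(\sigma)}$ on every sector of $I\cX$ and at every fixed component of $Y$. This includes the curve block on $C$: there the bundle is trivial, with no $\omega_C$-dependence, so it multiplies $R_C$ of \cref{prop:curve_block} by the same scalar. In genus zero the twist rescales the pairing and the Lagrangian cone by the same constant. The symplectomorphism $\mathbb{U}$, the analytic continuation along $\rho$ and the change of variables of \cref{conj: BG} therefore carry over unchanged, and \cref{conj:ccit} holds for $\cX_3$ and $Y_3$. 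The canonical calibrations satisfy $R_{\cZ_3}=R_{\cZ}\,\mathscr{D}_{(\sigma)}$ for $\cZ\in\{\cX,Y\}$, with the same diagonal constant factor on both sides.

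The final step combines these with \cref{thm:R_equal}, which gives $R_{\cX}=R_Y$ in the sector $\mathfrak{S}^{\mathrm{D}}_+$ after continuation along $\rho$. It follows that $R_{\cX_3}=R_{Y_3}$ at one semisimple point, and by \cite[Rmk.~6.4]{Brini:2019wts} this suffices. Then \cite[Lemma~6.3]{Brini:2019wts} yields \eqref{eq:sqcrc} for the threefolds.

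The main obstacle is to make sure that the twist by $\mathscr{D}_{(\sigma)}$ is genuinely scalar and sector-independent. On the orbifold side one needs the fibre of the $\bbC$ factor to have age $0$ in every sector, which holds because $G$ acts trivially on it. On the resolution side one must check that no $\omega_C$ correction appears in the curve block, and that the scalar is compatible with the $(e^{\rm eq})^{-1/2}$ normalisation in \cref{lem:rcal_nonabelian}. The last point should contribute only a factor $\sigma^{-1/2}$ on both sides. Once this is verified, the factor commutes with everything and cancels in \eqref{eq:reduction_to_h}.
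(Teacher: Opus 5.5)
Your proposal is correct and follows the paper's argument. The canonical $R$-calibrations of $\cX_3$ and $Y_3$ are those of $\cX$ and $Y$ multiplied by the same scalar quantum Riemann--Roch factor $\mathscr{D}_{(\sigma)}$, which cancels out of \cref{thm:R_equal}, and \cite[Lemma~6.3]{Brini:2019wts} then gives \eqref{eq:sqcrc} for the threefolds. There is one harmless wording slip: the $\bbC$ factor is a normal direction, so the twist is by the inverse Euler class $e^{-1}\big(R\pi_*f^*(\cO\otimes\chi_\sigma)\big)=e(\mathbb{E}^\vee\otimes\chi_\sigma)/\sigma$, with $\mathbb{E}$ the Hodge bundle, rather than by the Euler class. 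This is what your $1/\sigma$ rescaling of the pairings and the factor $\mathscr{D}_{(\sigma)}$ already reflect, and it does not affect the cancellation.
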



The Gromov--Witten partition functions of $\cX_3$ and $Y_3$ are the total descendent potentials \eqref{eq:descendent_potential} of \cref{sec:quantisation}, restricted to primary insertions:
\begin{equation*}
{\rm GW}^{\bullet}_{\cX_3}(x,\varepsilon) \;\coloneqq\; \mathsf{Z}_{\cX_3}(\mathbf{t})\Big|_{t^{\alpha,k}= x_\alpha\delta_{k0}}\,,\qquad 
{\rm GW}^{\bullet}_{Y_3}(\tau,\varepsilon) \;\coloneqq\; \mathsf{Z}_{Y_3}(\mathbf{t})\Big|_{t^{\alpha,k}=\tau_\alpha \delta_{k 0}}\,.
\end{equation*}
We set $x_0= \tau_0 =0$ in the partition functions, and define their normalisations as follows. For $\cX_3$, we define
\beq
\label{eq:gw_reduced_orb}
{\rm GW}'_{\cX_3}(x,\varepsilon)\;\coloneqq\;\frac{{\rm GW}^{\bullet}_{\cX_3}(x,\varepsilon)}{{\rm GW}^{\bullet}_{\cX_3}(0,\varepsilon)}\,,\qquad {\rm GW}^{\bullet}_{\cX_3}(0,\varepsilon) = \exp\bigg(\sum_{g\geq2}\varepsilon^{2g-2}\big\langle 1 \big\rangle^{\cX_3,\bbC^\times}_{g,0,0}\bigg)\,,
\eeq
the denominator being the contribution of primary invariants without twisted insertions.

On $Y_3$ the compact curve classes are the exceptional ones, $\beta=\sum_v d_v[E_v]$, and by the divisor equation the part of ${\rm GW}^{\bullet}_{Y_3}$ of non-zero degree depends on $\tau_1,\dots,\tau_l$ only through the Novikov variables $\mathbf{Q}^{\beta}=\prod_v Q_v^{d_v}$, with $Q_v\coloneqq \re^{\tau_v}$. We define the reduced series 
\begin{equation}
\label{eq:gw_reduced}
{\rm GW}'_{Y_3}(\tau,\varepsilon)\;\coloneqq\;\frac{{\rm GW}^{\bullet}_{Y_3}(\tau,\varepsilon)}{{\rm GW}^{\bullet}_{Y_3}(\tau,\varepsilon)\big|_{\mathbf{Q}=0}} \;=\; \exp\bigg(\sum_{g\geq0}\sum_{\beta\neq0}\big\langle 1 \big\rangle^{Y_3,\bbC^\times}_{g,0,\beta}\,\varepsilon^{2g-2}\,\mathbf{Q}^{\beta}\bigg)\,.
\end{equation}
 
\begin{rmk}
The denominator in \eqref{eq:gw_reduced} is the exponential of the degree-zero part of $\log{\rm GW}^{\bullet}_{Y_3}$: the classical cubic term in genus zero, the term linear in $\tau$ in genus one, and the constant map contributions in genus $g\geq2$. Note that the two normalisations in \eqref{eq:gw_reduced_orb} and \eqref{eq:gw_reduced} remove different terms: \cref{cor:crc_threefold} identifies ${\rm GW}'_{Y_3}$ and ${\rm GW}'_{\cX_3}$ under \eqref{eq: BG_affine}, after analytic continuation, only up to genus-zero terms polynomial in $x$, genus-one terms linear in $x$, and a factor depending on $\varepsilon$ alone.
\label{rmk:norm_red} 
\end{rmk}
Explicitly, write $\mathbf{Q}(x)$ for the Novikov variables at the image of $x$ under \eqref{eq: BG_affine},
\beq
\label{eq:Q_of_x}
Q_v(x) \;=\; \,\exp\Big(\frac{\pi\ri\dim\rho_v}{2(l-2)}+\sum_{m=1}^{l}D_{v,m}\,x_m\Big)\,,\qquad 1\leq v\leq l\,,
\eeq
and regard ${\rm GW}'_{Y_3}$ as a function of $\mathbf{Q}$. Then \cref{cor:crc_threefold} gives
\beq
\label{eq:gw_crc_reduced}
{\rm GW}'_{\cX_3}(x,\varepsilon) \;\doteq\; {\rm GW}'_{Y_3}\big(\mathbf{Q}(x),\varepsilon\big)\,,
\eeq
where $\doteq$ denotes equality up to the classical terms of \cref{rmk:norm_red}, and the right-hand side is the analytic continuation of the large-radius series to the orbifold point, as in \cref{conj: BG}. The inversion of the Novikov variables lifts through \eqref{eq:Q_of_x} to the affine involution
\beq
\label{eq:x_dual}
x^{\vee} \coloneqq -x+s\,,\qquad s_m\coloneqq\frac{4\pi\ri}{\sqrt{\chi_Q(g_m)-2}}\,,
\eeq
with the branch of the square root fixed in \cref{sec:equivariant_cohomology}. By \eqref{eq:McKay_cohomology} and the orthogonality of characters, we have \[\sum_{m}D_{v,m}s_m = \frac{4\pi\ri}{|G|}\sum_{g\neq1}\chi_{\rho_v}(g) = -\frac{\pi\ri\dim\rho_v}{l-2}\] for $1\leq v\leq l$, so that $\mathbf{Q}(x^{\vee})=\mathbf{Q}(x)^{-1}$.

On the sheaf side, for $\cZ_3$ either $\cX_3$ or $Y_3$, Donaldson--Thomas invariants of $\cZ_3$ are Behrend-weighted Euler characteristics of Hilbert schemes of substacks. As in \cite[Sec.~1.1]{Beentjes:2022eao}, we let $N(\cZ_3)$ be the numerical Grothendieck group of compactly supported sheaves, with $N_0(\cZ_3)$ and $N_{\leq1}(\cZ_3)$ the classes of sheaves supported in dimension zero and at most one, and write $N_1(\cZ_3) \coloneqq N_{\leq1}(\cZ_3)/N_0(\cZ_3)$.
 
Since $\cX_3$ carries no compact curves, we have $N_0(\cX_3)\cong R(G)$, spanned by the classes $\gamma_v$ of the skyscrapers at a point of the stacky line $BG\times\bbC$ with isotropy representation $\rho_v$, and
\beq
\label{eq:dt_orb}
{\rm DT}_{\cX_3}(\mathbf{q}) \;\coloneqq\; \sum_{c\,\in\, N_0(\cX_3)} e\big({\rm Hilb}_{c}(\cX_3),\mathfrak{b}\big)\,\mathbf{q}^{c}\,,
\eeq
with $\mathfrak{b}$ Behrend's constructible function. We will write \[\mathrm{reg}\coloneqq\sum_{v}\dim\rho_v\,\gamma_v\] for the class of a skyscraper at a non-stacky point.

On $Y_3$ we split $N_{\leq1}(Y_3) = N_0(Y_3)\oplus N_1(Y_3)$ by the holomorphic Euler characteristic $n$ and the curve class $\beta$. Every compact curve of $Y_3$ lies in $E\times\{\mathrm{pt}\}$ and is contracted by $Y_3\to X\times\bbC$, and the McKay correspondence identifies $N_{\leq1}(Y_3)$ with $N_0(\cX_3)$, sending the point class to $\mathrm{reg}$ and the class of $\cO_{E_v}(-1)$ to $\gamma_v$ \cite{Ito:1999kyz,Bridgeland:2001ygu}. Accordingly we set
\beq
\label{eq:dt_variables}
q \;\coloneqq\; \mathbf{q}^{\mathrm{reg}}\,,\qquad Q_v\;\coloneqq\;\mathbf{q}^{\gamma_v}\,,\quad 1\leq v\leq l\,,
\eeq
so that $\mathbf{q}^{c} = q^{n}\mathbf{Q}^{\beta}$ for $c\leftrightarrow(n,\beta)$, and
\beq
\label{eq:dt_Y}
{\rm DT}_{Y_3}(q,\mathbf{Q}) \;\coloneqq\; \sum_{(n,\beta)} {\rm DT}_{n,\beta}(Y_3)\,q^{n}\mathbf{Q}^{\beta}\,,\qquad
{\rm PT}_{Y_3}(q,\mathbf{Q}) \;\coloneqq\; \sum_{(n,\beta)} {\rm PT}_{n,\beta}(Y_3)\,q^{n}\mathbf{Q}^{\beta}
\eeq
for the Donaldson--Thomas and stable-pair series of $Y_3$. Their degree-zero part is \[{\rm DT}_{Y_3}(q,\mathbf{Q})|_{\mathbf{Q}=0} = M(-q)^{\chi(Y_3)} = M(-q)^{l+1}\,,\]  where as usual \[M(q)\coloneqq\prod_{k\geq1}(1-q^k)^{-k}\] is the MacMahon function. The DT/PT correspondence \cite{Toda:2010vph,Bridgeland:2011bgy} reads
\beq
\label{eq:dtpt_Y}
{\rm DT}_{Y_3}(q,\mathbf{Q}) \;=\; M(-q)^{l+1}\,{\rm PT}_{Y_3}(q,\mathbf{Q})\,.
\eeq
The series \eqref{eq:dt_orb} and \eqref{eq:dt_Y} are related by the degree-zero form of the DT Crepant Resolution Conjecture, formulated in \cite{Young:2010gfc,Bryan:2012rcb} and proved in \cite[Cor.~2.8]{Calabrese:2016crc} (see also \cite[Rmk.~1.3]{Beentjes:2022eao}):
\beq
\label{eq:dtcrc_degzero}
{\rm DT}_{\cX_3}(\mathbf{q}) \;=\; \frac{{\rm DT}_{Y_3}(q,\mathbf{Q})\,{\rm DT}_{Y_3}(q,\mathbf{Q}^{-1})}{M(-q)^{l+1}} \;=\; M(-q)^{l+1}\,{\rm PT}_{Y_3}(q,\mathbf{Q})\,{\rm PT}_{Y_3}(q,\mathbf{Q}^{-1})\,.
\eeq 

\subsection{The correspondence}
\label{subsec:gwdt_statement}

The four theories are organised by the square of Bryan--Cadman--Young \cite[Sec.~1]{Bryan:2012rcb},
\beq
\label{eq:bcy_square}
\begin{tikzcd}[column sep=13em, row sep=5em]
{\rm DT}(Y_3) \ar[r, dash, "\text{GW/DT}"] \ar[d, dash, "\text{DT CRC}"'] & {\rm GW}(Y_3) \ar[d, dash, "\text{GW CRC}"] \\
{\rm DT}(\cX_3) \ar[r, dash, dashed, "\text{orbifold GW/DT}"] & {\rm GW}(\cX_3)
\end{tikzcd}
\eeq
whose edges are, in our setting, the Gromov--Witten/stable-pairs correspondence for $Y_3$ at the top, the degree-zero crepant resolution correspondence \eqref{eq:dtcrc_degzero} on the left, \eqref{eq:gw_crc_reduced} at $x$ and at $x^{\vee}$ on the right. The bottom edge is the statement we want to prove. Composing the three solid edges, we obtain the following.

\begin{thm}
\label{thm:gwdt_orbifold}
Identify
\beq
\label{eq:gwdt_variables}
q=\mathbf{q}^{\mathrm{reg}} \;\longleftrightarrow\; -\re^{\ri\varepsilon}\,,\qquad
\mathbf{q}^{\gamma_v} \;\longleftrightarrow\; Q_v(x) = \exp\Big(\frac{\pi\ri\dim\rho_v}{2l-4}+\sum_{m=1}^{l}D_{v,m}\,x_{m}\Big)\,,
\eeq
with $D_{v,m}$ given in \eqref{eq:McKay_cohomology}. Then,
\[{\rm DT}_{\cX_3}(\mathbf{q}) \doteq M(-q)^{l+1}\,{\rm GW}'_{\cX_3}(x,\varepsilon)\,{\rm GW}'_{\cX_3}(x^{\vee},\varepsilon)\,,\] up to the classical terms of \eqref{eq:gw_crc_reduced}, where ${\rm GW}'_{\cX_3}(x^{\vee},\varepsilon)$ is the analytic continuation of ${\rm GW}'_{\cX_3}$ to the point $x^{\vee}$ of \eqref{eq:x_dual}.
\end{thm}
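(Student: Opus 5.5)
The plan is to traverse the Bryan--Cadman--Young square \eqref{eq:bcy_square} through its three solid edges. I would start from the left edge, the degree-zero DT crepant resolution correspondence \eqref{eq:dtcrc_degzero}, which writes ${\rm DT}_{\cX_3}(\mathbf{q})$ as $M(-q)^{l+1}\,{\rm PT}_{Y_3}(q,\mathbf{Q})\,{\rm PT}_{Y_3}(q,\mathbf{Q}^{-1})$ under the McKay identification \eqref{eq:dt_variables}. The first factor $M(-q)^{l+1}$ is already the prefactor claimed, so the task reduces to identifying each PT factor with a reduced GW series.

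Second, I would apply the top edge, the GW/PT correspondence of Pardon \cite{Pardon:2023cxy} for $Y_3$ with the Calabi--Yau torus action. Under $q=-\re^{\ri\varepsilon}$ it gives ${\rm PT}_{Y_3}(q,\mathbf{Q}) = {\rm GW}'_{Y_3}(\mathbf{Q},\varepsilon)$. Here the normalisation \eqref{eq:gw_reduced} is exactly what matches the reduced stable-pair series: the degree-zero GW contributions (constant maps) correspond to the MacMahon factor already stripped off by \eqref{eq:dtpt_Y}. The same identity at $\mathbf{Q}^{-1}$ requires that ${\rm PT}_{Y_3}$ be the expansion of a rational function in the $Q_v$. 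For $Y_3$, with curves in a contractible configuration, this follows from the explicit product formula of the form $\prod_{\beta>0}\prod_k(1-(-q)^k\mathbf{Q}^{\beta})^{k}$ over positive roots. By the same token, ${\rm GW}'_{Y_3}$ continues analytically, so it can be evaluated at $\mathbf{Q}^{-1}$ compatibly on both sides. I would fix this continuation along the path $\rho$ of \cref{sec:branches}.

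Third, I would use the right edge \eqref{eq:gw_crc_reduced}, a consequence of \cref{cor:crc_threefold}, at $x$ and at $x^{\vee}$. Since $\mathbf{Q}(x^\vee)=\mathbf{Q}(x)^{-1}$ by the computation following \eqref{eq:x_dual}, this gives ${\rm GW}'_{Y_3}(\mathbf{Q}(x)^{\pm1},\varepsilon)\doteq{\rm GW}'_{\cX_3}(x\text{ resp. }x^\vee,\varepsilon)$. Substituting $\mathbf{q}^{\gamma_v}\leftrightarrow Q_v(x)$ as in \eqref{eq:gwdt_variables} assembles the stated identity, with $\doteq$ inheriting the classical ambiguities of \cref{rmk:norm_red}.

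The main obstacle is the compatibility of the analytic continuations. One must check that the continuation of ${\rm GW}'_{Y_3}$ from large radius to $\mathbf{Q}(x)$ and to $\mathbf{Q}(x)^{-1}$ via \cref{cor:crc_threefold}, along $\rho$ and along its image under $x\mapsto x^\vee$, agrees with the rational-function continuation used on the PT side. One must also check that the $\varepsilon$-only and low-genus polynomial discrepancies do not spoil the equality beyond ``classical terms''. I would first try to settle this by noting that the PT side is single-valued rational in $\mathbf{Q}$, so any two paths give the same result. The genus-$g$ GW potentials on $Y_3$ are then polylogarithm-type sums over roots, whose monodromy only shifts polynomial (classical) terms; these are absorbed into $\doteq$.
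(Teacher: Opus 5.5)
Your proposal is correct and is essentially the paper's proof: compose the degree-zero DT crepant resolution correspondence \eqref{eq:dtcrc_degzero}, Pardon's identity ${\rm GW}'_{Y_3}(\mathbf{Q},\varepsilon)={\rm PT}_{Y_3}(q,\mathbf{Q})$, and \eqref{eq:gw_crc_reduced} at $x$ and $x^{\vee}$ using $\mathbf{Q}(x^{\vee})=\mathbf{Q}(x)^{-1}$; your extra discussion of the continuation to $\mathbf{Q}^{-1}$ goes beyond what the paper spells out. One correction there: the PT series $\prod_{\alpha>0}\prod_{k\geq1}(1-(-q)^k\mathbf{Q}^{\alpha})^{-k}$ (exponent $-k$, since $n_{0,\alpha}=-1$) is an infinite product, so it is single-valued and meromorphic in $\mathbf{Q}$ for $|q|<1$ (equivalently, each $q$-coefficient is a Laurent polynomial in $\mathbf{Q}$) but not rational in $\mathbf{Q}$; single-valuedness is all your argument actually uses.
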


\begin{proof}
The Gromov--Witten/stable-pairs correspondence for $Y_3$ \cite[Thm.~1.7]{Pardon:2023cxy} gives \[{\rm GW}'_{Y_3}(\mathbf{Q},\varepsilon) = {\rm PT}_{Y_3}(q,\mathbf{Q})\] order by order in $\mathbf{Q}$, as an identity of (resummed asymptotic expansions near $\varepsilon=0$ of) rational functions of $q=-\re^{\ri\varepsilon}$. Substituting in \eqref{eq:dtcrc_degzero}, we get
\[{\rm DT}_{\cX_3}(\mathbf{q}) = M(-q)^{l+1}\,{\rm GW}'_{Y_3}(\mathbf{Q},\varepsilon)\,{\rm GW}'_{Y_3}(\mathbf{Q}^{-1},\varepsilon)\,.\] 
The claim follows from \eqref{eq:gw_crc_reduced} at $x$ and at $x^{\vee}$, since $\mathbf{Q}(x^{\vee})=\mathbf{Q}(x)^{-1}$ by \eqref{eq:x_dual}.
\end{proof}

\begin{rmk}
\label{rmk:square}
For the Calabi--Yau torus, the Gopakumar--Vafa invariants of $Y_3$ are $n_{g,\beta}=-\delta_{g,0}$ for $\beta$ a positive root of $\mathrm{D}_l$ and vanish otherwise \cite{Mozgovoy:2011ps}. Then,
\[
\log{\rm GW}'_{Y_3}(\mathbf{Q},\varepsilon) \;=\; \sum_{g\geq0}c_g\,\varepsilon^{2g-2}\sum_{\alpha>0}\mathrm{Li}_{3-2g}\big(\mathbf{Q}^{\alpha}\big)\,,
\]
with $c_0=-1$ and \[c_g = \frac{(-1)^{g}B_{2g}}{2g\,(2g-2)!}\] for $g\geq1$. Since $\mathrm{Li}_{3-2g}(z^{-1}) = \mathrm{Li}_{3-2g}(z)$ for $g\geq2$, while for $g=0,1$ the two sides differ by a polynomial in $\log z$, the two Gromov--Witten factors in \cref{thm:gwdt_orbifold} agree genus by genus up to classical terms, and
\[
{\rm DT}_{\cX_3}(\mathbf{q}) \;\doteq\; M(-q)^{l+1}\,{\rm GW}'_{\cX_3}(x,\varepsilon)^{2}\,.
\]
\end{rmk}

\begin{rmk}
In \cite{Bryan:2008xra}, the authors compute the Gromov--Witten theory of the Calabi--Yau threefolds obtained from the $G$-Hilb resolution of the quotient singularity $\bbC^3/G$, where $G<\mathrm{SO}(3;\bbC)$ is a polyhedral group. In terms of the associated ADE root system, the authors show that the resulting Gromov--Witten potentials are obtained from those of the resolution of the corresponding ADE surface singularity upon restriction to the kernel of the set of \emph{binary roots} (see \cite[Sec.~1]{Bryan:2008xra} for the details). This is a stratum in $\mathrm{Discr}\,\mathrm{QH}_{\bbC^\times}(Y)$, which is furthermore a natural Frobenius submanifold \cite{Brini:2026mzo}. As \cite{Hu:2013rfh} establishes the Ruan/Bryan--Graber form of the crepant resolution conjecture for type $\mathrm{D}_l$ polyhedral threefold singularities, the analysis of the previous Sections lends itself to an immediate generalisation (or rather, in the sense above, a restriction) of the proofs of \cref{conj:ccit,conj:iritani-integral-structure,conj:quantised_crc} to that setting. The same chain of arguments used in this Section then implies the orbifold GW/DT correspondence for $[\bbC^3/G]$. 
\end{rmk}

\begin{appendix}

\section{Twisted periods of Hurwitz Frobenius manifolds}
\label{sec: twisted_periods}
\subsection{Generalities on deformed flat coordinates}
Let $\mathscr{H}$ be an $N$-dimensional genus-zero Hurwitz space \cite{Dubrovin:1996onf}, with universal curve $\pi\colon\mathscr{C}\to\mathscr{H}$ and superpotential $\lambda\colon\mathscr{C}\to\bbP^1$:
\begin{equation*}
\xymatrix
{
\mathscr{C}\ar[rr]^{\lambda}\ar[d]^{\pi}&&\mathbb{P}^1\\
\mathscr{H}&
}.
\end{equation*}
Choose a coordinate $\mu$ on each fibre $\bbP^1$, varying locally with the base, with two marked poles of $\lambda$ at $\mu=0,\infty$. Take the corresponding third-kind primary differential $\phi = \rd\log\mu$, and set $y\coloneqq\log\mu$, so that $\phi = \rd y$. Derivatives with respect to the base coordinates are taken at fixed $y$. Associated to the pair $(\lambda,\phi)$ are the Hurwitz Frobenius manifold $\mathrm{LG}(\lambda,\phi)$ and its Dubrovin dual $\mathrm{LG}(\log\lambda,\phi)$ \cite{Dubrovin:1996onf,Dubrovin:2004dho}, with metrics $\eta$, $g$ and products $\circ$, $\star$ given by
\begin{align*}
    \eta(X,Y) &= \sum_{p \in \mathrm{cr}(\lambda)} \mathop{\rm {Res}}_{\mu = p} \frac{X(\lambda) Y(\lambda)}{\partial\lambda/\partial y}\,\rd y\,, \\
    \eta(X\circ Y, Z) &= \sum_{p \in \mathrm{cr}(\lambda)} \mathop{\rm {Res}}_{\mu = p} \frac{X(\lambda) Y(\lambda) Z(\lambda)}{\partial\lambda/\partial y}\,\rd y\,, \\
    g(X,Y) &= \sum_{p \in \mathrm{cr}(\lambda)} \mathop{\rm {Res}}_{\mu = p} \frac{X(\log\lambda) Y(\log\lambda)}{\partial\log\lambda/\partial y}\,\rd y\,, \\
    g(X\star Y, Z) &= \sum_{p \in \mathrm{cr}(\lambda)} \mathop{\rm {Res}}_{\mu = p} \frac{X(\log\lambda) Y(\log\lambda) Z(\log\lambda)}{\partial\log\lambda/\partial y}\,\rd y\,,
\end{align*}
where $\mathrm{cr}(\lambda)$ is the set of critical points of $\lambda$.

Consider the associated pencils of connections
\begin{align*}
    \nabla_X^{\hbar,I} &= \nabla^{\mathrm{LC},\eta}_X + \hbar X\circ\,, \\
    \nabla_X^{\hbar,II} &= \nabla^{\mathrm{LC},g}_X + \hbar X\star\,.
\end{align*}
By \cite{Dubrovin:1996onf} these are flat for every $\hbar\in\bbC^\times$ (an arbitrary parameter throughout this appendix, specialised to $2\nu/z$ in the body of the paper), and the gradient of a deformed flat coordinate solves the associated linear system: if $\Pi$ is a deformed flat coordinate of $\nabla^{\hbar,I}$ and $t_1,\dots,t_N$ are flat coordinates of $\eta$, then $\xi_\beta = \partial\Pi/\partial t_\beta$ satisfies
\begin{equation}
    \partial_{\alpha}\xi_{\beta} = \hbar C_{\alpha\beta}^{\sigma}\xi_{\sigma},
    \label{eq: qDE_origin}
\end{equation}
where $C^{\sigma}_{\alpha\beta}$ are the structure constants of $\circ$, $\partial_\alpha\circ\partial_\beta = C^{\sigma}_{\alpha\beta}\partial_{\sigma}$. Throughout this appendix structure constants are written in the flat coordinates $t_\alpha$ of $\eta$, repeated indices are summed over, and we set
\begin{align*}
    (C_{\alpha})^{\sigma}_{\beta} & = C_{\alpha\beta}^{\sigma},\\
    E &= E^{\rho}\partial_{\rho},\\
    \mathcal{U}^{\alpha}_{\beta} &= E^{\rho} C^{\alpha}_{\rho\beta},\\
    \mathcal{V} &= \frac{2-d}{2} - \nabla E ,
\end{align*}
where $d$ is the charge of $\mathrm{LG}(\lambda,\phi)$, $\mathcal{L}_E\eta=(2-d)\eta$.

\begin{prop}[\cite{Dubrovin:2004dho}]
In flat coordinates of $\eta$, the system of differential equations for (gradients of) deformed flat coordinates of the dual Frobenius manifold is given by
\begin{equation}
    \partial_\alpha \xi \cdot \mathcal{U} = \xi \cdot (\mathcal{V} + \hbar - 1/2)\cdot C_{\alpha},
    \label{eq: qDE_dual}
\end{equation}
where $\xi = (\xi_1, \dots, \xi_N)$ is viewed as a row vector and $\cdot$ is the matrix multiplication.
\end{prop}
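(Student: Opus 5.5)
The approach is to start from the flatness of the second pencil $\nabla^{\hbar,II}$ and rewrite it in the flat coordinates $t_\alpha$ of $\eta$ rather than those of $g$. The dictionary between the two structures is
\[
g(X,Y)=\eta(E^{-1}\circ X,Y)\,,\qquad X\star Y=E^{-1}\circ X\circ Y\,.
\]
In the $t$-coordinates this means $g^{-1}$ has components $g^{\alpha\beta}=E^\rho C^{\alpha\beta}_\rho=(\mathcal{U}\eta^{-1})^{\alpha\beta}$, and the dual product has structure constants $(\mathcal{U}^{-1}C_\alpha)$. Let $\Pi$ be a deformed flat coordinate of the dual manifold, and set $\xi_\beta=\partial\Pi/\partial t_\beta$. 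Flatness of $\nabla^{\hbar,II}$ says that the covector $\rd\Pi$ satisfies
\[
\nabla^{\mathrm{LC},g}_\alpha\,\rd\Pi=\hbar\,(\partial_\alpha\star)^{*}\rd\Pi\,,
\]
with a suitable sign and normalisation convention for $\hbar$, which I will fix at the end.

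The first step is to compute the Christoffel symbols of $g$ in the coordinates $t$. Because $g^{\alpha\beta}$ is linear in $E$, and $\eta$ is constant in $t$, the standard formula for the contravariant Levi-Civita connection of a flat pencil applies. The pair $(\eta^{-1},g^{-1})$ is Dubrovin's flat pencil, so
\[
\Gamma^{\alpha\beta}_{\gamma}(g)=\big(\tfrac{d-1}{2}\,\delta+\mathcal{V}\big)\text{-type term}\cdot C^{\alpha\beta}_{\gamma}\,.
\]
More precisely, the contravariant Christoffel symbols are $g^{\alpha\sigma}\Gamma^\beta_{\sigma\gamma}=C^{\alpha\sigma}_{\gamma}\big(\tfrac12-\mu_\sigma\big)^{\beta}_{\sigma}$, up to the index placement. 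Here $\mu=\mathcal{V}$ is diagonal in the graded flat basis, since $\nabla E$ is constant. I will take this from Dubrovin's lecture on flat pencils (Lemma 3.4 of \cite{Dubrovin:1996onf}) rather than rederive it. Quasi-homogeneity, $\mathcal{L}_E C=C$, is what makes the connection coefficients come out proportional to the structure constants.

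The second step is to raise an index with $g^{-1}=\mathcal{U}\eta^{-1}$ and multiply the flatness equation through by $\mathcal{U}$. This clears the $E^{-1}$ hidden in $\star$. The term $\hbar(\partial_\alpha\star)^*$ becomes $\hbar\,\xi\cdot C_\alpha$. The Christoffel term becomes $\xi\cdot(\mathcal{V}-\tfrac12)\cdot C_\alpha$. The derivative term becomes $\partial_\alpha\xi\cdot\mathcal{U}$. Collecting these gives \eqref{eq: qDE_dual}.

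The remaining bookkeeping is twofold. First, I must use $\partial_\alpha\mathcal{U}=C_\alpha+[\,\cdot\,]$-type identities, namely $\partial_\alpha(E^\rho C_\rho)=C_\alpha\cdot(\nabla E)+E^\rho\partial_\alpha C_\rho$, together with the symmetry $\partial_\alpha C_\rho=\partial_\rho C_\alpha$. Second, I must use the $\eta$-skewness $\mathcal{V}^T\eta=-\eta\mathcal{V}$ to move $\mathcal{V}$ to the correct side.

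I expect the main obstacle to be this sign and shift bookkeeping, in particular getting exactly $\mathcal{V}+\hbar-\tfrac12$ rather than $\mathcal{V}-\hbar+\tfrac12$. That depends on conventions: whether the deformed flat coordinates are defined with $\nabla^{\hbar,II}$ acting on $\rd\Pi$ or on the vector $g^{-1}\rd\Pi$, and on the charge $d$ entering through $\tfrac{2-d}{2}$. To settle it I would check the formula on the one-dimensional example $\lambda=\mu$, where $\Pi=\int\lambda^\hbar\phi\propto t^{\hbar}$. As a consistency check, $\Pi_\gamma=\int_\gamma\lambda^\hbar\phi$ should satisfy the system, via $E\Pi=\hbar\Pi$ and $\partial_\alpha\partial_\beta\Pi$ computed from the residue formulas. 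That is the form in which \cref{prop: integral_dual} will use the result.
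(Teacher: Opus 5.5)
Your strategy is the standard derivation of this system, and the paper gives no argument of its own beyond the citation to Dubrovin. The steps are: write flatness of $\nabla^{\mathrm{LC},g}+\hbar\star$ in $\eta$-flat coordinates, insert the flat-pencil Christoffel symbols of the intersection form, and clear $E^{-1}$ by multiplying by $\mathcal{U}$. Your convention $\nabla^{\mathrm{LC},g}_\alpha\rd\Pi=\hbar(\partial_\alpha\star)^*\rd\Pi$ matches \eqref{eq:qde_M}, and the sign of $\hbar$ is the only genuine convention here.

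The gap is that the coefficient $\mathcal{V}+\hbar-\tfrac12$, which is the whole content of the statement, is asserted rather than derived, and the input you quote for it is wrong. With Dubrovin's normalisation $\Gamma^{\alpha\beta}_\gamma\coloneqq-g^{\alpha\sigma}\Gamma^\beta_{\sigma\gamma}$ one has
\[
\Gamma^{\alpha\beta}_\gamma=C^{\alpha\sigma}_\gamma\big(\tfrac12-\mathcal{V}\big)^\beta_\sigma=C^{\alpha\sigma}_\gamma\big(\tfrac{d-1}{2}+\nabla E\big)^\beta_\sigma\,.
\]
You can check this directly. The identity $\partial_\gamma g^{\alpha\beta}=\Gamma^{\alpha\beta}_\gamma+\Gamma^{\beta\alpha}_\gamma$ follows from $\mathcal{L}_E\circ=\circ$ and $\mathcal{L}_E\eta=(2-d)\eta$. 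Symmetry of $g^{\alpha\sigma}\Gamma^{\beta\gamma}_\sigma$ in $\alpha,\beta$ follows from associativity. Diagonalisability of $\nabla E$ plays no role.

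Your two guesses both fail. The first, $\tfrac{d-1}{2}+\mathcal{V}$, should read $\tfrac{d-1}{2}+\nabla E$. Your ``more precise'' formula $g^{\alpha\sigma}\Gamma^\beta_{\sigma\gamma}=+C^{\alpha\sigma}_\gamma(\tfrac12-\mu)^\beta_\sigma$ has the wrong overall sign. This is visible already for $F=t^3/6$, $E=t\partial_t$, where $g=\rd t^2/t$ and $-g^{tt}\Gamma^t_{tt}=+\tfrac12$. Followed literally, it yields $\hbar+\tfrac12-\mathcal{V}$.

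Your proposed one-dimensional check cannot settle this. $\mathcal{V}$ is $\eta$-skew, hence zero in dimension one, so such an example fixes only the sign of $\tfrac12$, not the sign of $\mathcal{V}$, which is where your two guesses disagree. Moreover $\lambda=\mu$ has no critical points and carries no Frobenius structure. The simplest genuine example, $F=t^3/6$, has $d=0$ and $\Pi\propto t^{\hbar+1/2}$, not $t^{\hbar}$.

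With the correct symbols the argument closes in two lines, and none of the bookkeeping you list is needed. Since $g_{\alpha\epsilon}=\eta_{\alpha\sigma}(\mathcal{U}^{-1})^\sigma_\epsilon$, total symmetry of $\eta(E^{-1}\circ X\circ Y,Z)$ gives $g_{\alpha\epsilon}C^{\epsilon\sigma}_\beta=(\mathcal{U}^{-1}C_\alpha)^\sigma_\beta$. Hence $\Gamma^\gamma_{\alpha\beta}\xi_\gamma=-\big(\xi\cdot(\tfrac12-\mathcal{V})\cdot\mathcal{U}^{-1}C_\alpha\big)_\beta$, and flatness becomes
\[
\partial_\alpha\xi=\xi\cdot\big(\mathcal{V}+\hbar-\tfrac12\big)\cdot\mathcal{U}^{-1}C_\alpha\,.
\]
Right multiplication by $\mathcal{U}$, which commutes with $C_\alpha$, gives \eqref{eq: qDE_dual}. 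Since $\mathcal{U}$ is invertible wherever the dual structure is defined, the steps reverse, so you obtain the characterisation and not only a necessary condition.

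Derivatives of $\mathcal{U}$ and the $\eta$-skewness of $\mathcal{V}$ would enter only if you rederived the Christoffel symbols. Even there, the first term of $\partial_\alpha(E^\rho C_\rho)$ is $(\partial_\alpha E^\rho)\,C_\rho$, not $C_\alpha\cdot\nabla E$.
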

We will prove the following two statements:
\begin{prop}
\label{prop: integral_origin}
For any cycle $\gamma$ on which the integral converges, the gradient of $\int_{\gamma} \re^{\hbar \lambda} \phi$ solves \eqref{eq: qDE_origin}.
\end{prop}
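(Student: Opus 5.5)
We will prove that $\xi_\beta=\partial_{t_\beta}\int_\gamma \re^{\hbar\lambda}\phi$ satisfies \eqref{eq: qDE_origin} by differentiating under the integral sign and then using the residue formulas for $\eta$ and $\circ$. Throughout we use that the $t_\alpha$ are flat coordinates of $\eta$ and that derivatives are taken at fixed $y=\log\mu$, as in the conventions above.

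The first step is to compute the gradient directly. Since the cycle $\gamma$ is chosen so that $\re^{\hbar\lambda}$ decays at its ends, or so that $\gamma$ is closed in the twisted homology, the boundary contributions vanish. We therefore get
\[\xi_\beta=\hbar\int_\gamma \partial_\beta\lambda\,\re^{\hbar\lambda}\,\rd y\,,\qquad \partial_\alpha\xi_\beta=\int_\gamma\big(\hbar\,\partial_\alpha\partial_\beta\lambda+\hbar^2\,\partial_\alpha\lambda\,\partial_\beta\lambda\big)\re^{\hbar\lambda}\,\rd y\,.\]
It thus suffices to establish the ``thermodynamic'' identity
\[\partial_\alpha\lambda\,\partial_\beta\lambda=C^\sigma_{\alpha\beta}\,\partial_\sigma\lambda+\partial_y\big(\cdots\big)\,,\]
together with $\partial_\alpha\partial_\beta\lambda=\partial_y(\cdots)$, up to total $y$-derivatives of expressions of the form $f\,\partial_y\lambda$.

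The key input is the standard Hurwitz-space fact, from \cite[Lect.~5]{Dubrovin:1996onf}, that in flat coordinates
\[\partial_\alpha\lambda\,\partial_\beta\lambda=C^\sigma_{\alpha\beta}\,\partial_\sigma\lambda+\partial_y\lambda\;\partial_y K_{\alpha\beta}\,,\qquad \partial_\alpha\partial_\beta\lambda=\partial_y\big(\partial_y\lambda\,\widetilde K_{\alpha\beta}\big)\,,\]
where $K_{\alpha\beta}$ and $\widetilde K_{\alpha\beta}$ are suitable functions on the fibre with controlled poles. We obtain the first relation as follows. Both sides, viewed as meromorphic functions of $\mu$, have the same residue pairings against every $\partial_\gamma\lambda/\partial_y\lambda$ at the critical points; this is exactly the definition of $\circ$ by the residue formula for $\eta(X\circ Y,Z)$. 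The difference therefore lies in the ideal generated by $\partial_y\lambda$, modulo terms killed by the pairing. The second relation expresses flatness of the $t_\alpha$, i.e. the vanishing of the Christoffel symbols of $\eta$, and follows in the same way.

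With these identities in hand, we substitute them into the expression for $\partial_\alpha\xi_\beta$. Each correction term is a total derivative, since
\[\hbar^2\,\partial_y\lambda\,\partial_yK\,\re^{\hbar\lambda}=\hbar\,\partial_y\big(\re^{\hbar\lambda}\big)\,\partial_yK\,.\]
We integrate by parts, again with no boundary terms by the choice of $\gamma$. Combining this with the $\widetilde K$ term and using once more that $\int_\gamma\partial_y(\cdots)\,\re^{\hbar\lambda}\,\rd y$ reduces, the remainder is $\hbar\int_\gamma \partial_y\big(\re^{\hbar\lambda}\,\partial_y\lambda\,F\big)\,\rd y$, which vanishes. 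What survives is
\[\hbar\,C^\sigma_{\alpha\beta}\,\hbar\int_\gamma\partial_\sigma\lambda\,\re^{\hbar\lambda}\,\rd y=\hbar\,C^\sigma_{\alpha\beta}\,\xi_\sigma\,,\]
which is \eqref{eq: qDE_origin}.

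The main obstacle is making the polynomial-division identity precise: we must check that the correction terms really combine into a single exact form $\partial_y(\re^{\hbar\lambda}\,\cdot)$. We would first try to handle this using the known construction of the deformed flat coordinates of $\mathrm{LG}(\lambda,\phi)$ as oscillatory integrals, verifying the needed identities via the residue formulas and the flatness of $t_\alpha$.
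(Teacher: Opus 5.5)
Your reduction is the right one, and it is the paper's: after differentiating under the integral sign, everything hinges on showing that
\[
\big(\partial_\alpha\partial_\beta\lambda+\hbar\,\partial_\alpha\lambda\,\partial_\beta\lambda-\hbar\,C^\sigma_{\alpha\beta}\partial_\sigma\lambda\big)\re^{\hbar\lambda}
\]
is a total $y$-derivative. Your division identity for $\partial_\alpha\lambda\,\partial_\beta\lambda$ is also fine. The gap is that you treat $K_{\alpha\beta}$ and $\widetilde K_{\alpha\beta}$ as unrelated, and then the corrections do not cancel.

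Your claim that each correction term is a total derivative is false. The term $\hbar\,\partial_y(\re^{\hbar\lambda})\,\partial_yK_{\alpha\beta}$ is not exact: integrating it by parts leaves $-\hbar\int_\gamma\re^{\hbar\lambda}\,\partial_y^2K_{\alpha\beta}\,\rd y$. Meanwhile your $\widetilde K$-term integrates to $-\hbar^2\int_\gamma(\partial_y\lambda)^2\widetilde K_{\alpha\beta}\,\re^{\hbar\lambda}\,\rd y$. The sum vanishes for all $\gamma$ and $\hbar$ only if the two identities are coupled, namely $\partial_y\lambda\,\widetilde K_{\alpha\beta}=\partial_yK_{\alpha\beta}+c_{\alpha\beta}(t)\lambda$ up to $y$-independent terms. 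In the paper's normalisation, where its $K_{\alpha\beta}$ is your $\partial_yK_{\alpha\beta}$, this is the second identity of \cref{prop:qde_firstconnection_primitiveforms}: $\partial_\alpha\partial_\beta\lambda-\partial_yK_{\alpha\beta}=c_{\alpha\beta}(t)\,\partial_y\lambda$, with the \emph{same} $K_{\alpha\beta}$ as in the division identity. Given this, the correction is exactly $\hbar\int_\gamma\partial_y\big((K_{\alpha\beta}+c_{\alpha\beta}/\hbar)\re^{\hbar\lambda}\big)\rd y=0$, which is the entire proof. So your closing claim that ``the remainder is a total derivative'' is unjustified as written.

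The missing idea is where this coupling comes from. It does not follow from flatness of the $t_\alpha$ via residue pairings ``in the same way'' as the first identity. Residue pairings only see functions modulo the ideal of $\partial_y\lambda$ on the critical scheme, whereas the coupling is an exact identity of meromorphic functions on the whole fibre.

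The paper derives it from the fact that $\phi=\rd y$ is a primitive form. In flat coordinates, $\nabla^{\rm GM}_\alpha\nabla^{\rm GM}_\beta\phi=\hbar\,C^\sigma_{\alpha\beta}\nabla^{\rm GM}_\sigma\phi$ holds in twisted de Rham cohomology. After substituting the division identity, this says that $(\partial_\alpha\partial_\beta\lambda-\partial_yK_{\alpha\beta})\,\rd y$ is exact for $\rd_y+\hbar\,\rd_y\lambda\wedge$. Milanov's lemma then makes the primitive a finite sum $\sum_{k\geq1}\hbar^{-k}g_k$. A pole-order comparison at $\mu=0$ forces $g_1=c_{\alpha\beta}(t)$ and $g_k=0$ for $k\geq2$. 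Your last paragraph correctly flags this as the obstacle, but without it the argument is not a proof.
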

\begin{prop}
\label{prop: integral_dual}
Suppose that
\begin{enumerate}
    \item[(1)]  $E(\lambda) = \lambda$, with differentiation at fixed $y$;
    \item[(2)]  $\nabla E$ is diagonal in the chosen $\eta$-flat coordinates $t_\alpha$.
\end{enumerate}
Then for every $\gamma\in H^{\rm lf}_1(\bbP^1\setminus(\lambda);\lambda^{\hbar})$ the gradient of $\int_{\gamma} \lambda^{\hbar} \phi$ solves \eqref{eq: qDE_dual}.
\end{prop}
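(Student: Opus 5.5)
The plan is to prove the identity by differentiating under the integral sign. The resulting products of $\partial_\alpha\lambda$'s are reduced using the algebraic structure of $\mathrm{LG}(\lambda,\phi)$ together with integration by parts in $y=\log\mu$. Finally, the two hypotheses are used to convert the Euler-field terms into the matrices $\mathcal{U}$ and $\mathcal{V}$.

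\emph{Gradient and Euler relation.} Write $\Pi=\int_\gamma\lambda^\hbar\,\rd y$. Then
\[
\xi_\beta=\hbar\int_\gamma\lambda^{\hbar-1}\partial_\beta\lambda\,\rd y
\]
and
\[
\partial_\alpha\xi_\sigma=\hbar\int_\gamma\lambda^{\hbar-2}\big((\hbar-1)\partial_\alpha\lambda\,\partial_\sigma\lambda+\lambda\,\partial_\alpha\partial_\sigma\lambda\big)\rd y .
\]
Moving endpoints contribute nothing: the cycle is locally finite with coefficients in $\lambda^\hbar$, so its ends lie at zeros or poles of $\lambda$, where the boundary terms vanish after analytic continuation in $\hbar$. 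Hypothesis (1) gives $E^\rho\partial_\rho\lambda=\lambda$, hence $E(\Pi)=\hbar\Pi$. Since $[E,\partial_\beta]=-\partial_\beta E^\rho\,\partial_\rho$ and $\nabla E$ is diagonal by (2), say $\partial_\beta E^\rho=\delta^\rho_\beta\,q_\beta$, we obtain
\[
E(\xi_\beta)=(\hbar-q_\beta)\,\xi_\beta .
\]
This is the origin of the $\mathcal{V}+\hbar$ term on the right-hand side of \eqref{eq: qDE_dual}.

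\emph{Fibrewise product identity.} From the residue formulas for $\eta$ and $\circ$, together with flatness of the $t_\alpha$, we have, as rational functions in $\mu$,
\[
\partial_\alpha\lambda\,\partial_\beta\lambda=C^\sigma_{\alpha\beta}\,\partial_\sigma\lambda\,\partial_y\lambda^{\,0}+\partial_y\lambda\cdot h_{\alpha\beta}+\ldots
\]
More precisely, the identity to use is Dubrovin's Lecture~5 relation
\[
\partial_\alpha\lambda\,\partial_\beta\lambda-C^\sigma_{\alpha\beta}\,\partial_\sigma\lambda\,\partial_y(\cdot)\equiv 0 \quad\text{modulo the ideal generated by } \partial_y\lambda ,
\]
which is an identity in the fibre algebra. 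Its concrete form is
\[
\partial_\alpha\partial_\beta(\lambda\,\rd y)=\partial_y\Big(\frac{\partial_\alpha\lambda\,\partial_\beta\lambda}{\partial_y\lambda}\Big)\rd y,
\qquad
\partial_\alpha\lambda\,\partial_\beta\lambda=C^\sigma_{\alpha\beta}\,\partial_\sigma\lambda+\partial_y\lambda\,h_{\alpha\beta},
\]
with $h_{\alpha\beta}$ regular at the critical points. I will first establish these two relations, which are the genus-zero Hurwitz analogues of the classical Saito identities. I will then insert them into the twisted integrals. Each term containing $\partial_y\lambda$ times $\lambda^{\hbar-k}$ is integrated by parts as
\[
\lambda^{\hbar-2}\,\partial_y\lambda\,h=\tfrac{1}{\hbar-1}\,\partial_y\big(\lambda^{\hbar-1}\big)\,h ,
\]
and again no boundary terms appear.

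\emph{Contraction with $\mathcal{U}$.} Contract $\partial_\alpha\xi_\sigma$ with $\mathcal{U}^\sigma_\beta=E^\rho C^\sigma_{\rho\beta}$. Associativity and the product identity give
\[
E^\rho C^\sigma_{\rho\beta}\,\partial_\sigma\lambda\equiv E(\lambda)\,\partial_\beta\lambda=\lambda\,\partial_\beta\lambda ,
\]
again modulo $\partial_y\lambda$-terms. This absorbs one power of $\lambda^{-1}$ and turns $\partial_\alpha\xi\cdot\mathcal{U}$ into integrals of the form $\int\lambda^{\hbar-1}(\cdots)\,\rd y$, which are linear in $\xi_\sigma$. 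Applying the product identity once more to $\partial_\alpha\lambda\,\partial_\beta\lambda$ produces $C^\sigma_{\alpha\beta}\xi_\sigma$ with coefficient $(\hbar-1)$. The leftover pieces, namely the $\partial_\alpha\partial_\beta\lambda$ term and the terms produced by integration by parts, are then rewritten as $E$-derivatives using $E(\lambda)=\lambda$ and the Euler relation from the first step, which converts them into $q_\beta$ and constant shifts.

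\emph{Assembling the coefficient and the main obstacle.} Collecting everything should give
\[
\big(\partial_\alpha\xi\cdot\mathcal{U}\big)_\beta=\sum_\sigma\xi_\sigma\big(\tfrac{2-d}{2}-q_\sigma+\hbar-\tfrac12\big)C^\sigma_{\alpha\beta},
\]
which is exactly \eqref{eq: qDE_dual}. Here the charge $d$ enters through $\mathcal{L}_E\eta=(2-d)\eta$, applied to the residue formula for $\eta$. The main obstacle is this last bookkeeping: showing that the integration-by-parts remainders and the $\partial_\alpha\partial_\beta\lambda$ term combine into precisely $\big(\mathcal{V}+\hbar-\tfrac12\big)$, with no residual $h_{\alpha\beta}$-dependence. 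My first attempt would be to avoid handling $h_{\alpha\beta}$ explicitly: I would compare with Proposition~\ref{prop: integral_origin} via the Laplace transform
\[
\lambda^\hbar\;\propto\;\int_0^\infty s^{-\hbar-1}e^{s\lambda}\,\rd s ,
\]
which carries the system \eqref{eq: qDE_origin} in the spectral parameter $s$ to \eqref{eq: qDE_dual}, and then use the diagonal $\nabla E$ to identify the homogeneity shift with $\mathcal{V}$.
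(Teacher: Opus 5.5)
Your first route follows the same skeleton as the paper's proof, but it does not close, and it fails exactly at the step you call the main obstacle.

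The identity you propose for second derivatives, $\partial_\alpha\partial_\beta\lambda=\partial_y\big(\partial_\alpha\lambda\,\partial_\beta\lambda/\partial_y\lambda\big)$, is false. Substitute it into your own formula for $\partial_\alpha\xi_\beta$, with the vanishing boundary terms you already assume. You get
\[
\partial_\alpha\partial_\beta\Pi=\hbar\int_\gamma\partial_y\Big(\frac{\partial_\alpha\lambda\,\partial_\beta\lambda}{\partial_y\lambda}\,\lambda^{\hbar-1}\Big)\rd y=0
\]
for every twisted cycle, so all twisted periods would be affine in $t$. A concrete counterexample: for $\lambda=\mu+t_1+\re^{t_2}\mu^{-1}$, with $t_1,t_2$ $\eta$-flat, the left side vanishes for $\alpha=\beta=1$ while the right side does not.

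The correct statement, which the paper uses, is the second identity of \cref{prop:qde_firstconnection_primitiveforms}: $\partial_\alpha\partial_\beta\lambda=\partial_y h_{\alpha\beta}+c_{\alpha\beta}(t)\,\partial_y\lambda$. Here $h_{\alpha\beta}$ is the \emph{same} function as in your product identity, and $c_{\alpha\beta}$ is independent of $y$. This identity encodes flatness of the $t_\alpha$ and does not follow from the product identity. The paper obtains it from $\phi=\rd y$ being a primitive form, $\nabla^{\rm GM}_\alpha\nabla^{\rm GM}_\beta\phi=\hbar\,C^\sigma_{\alpha\beta}\nabla^{\rm GM}_\sigma\phi$, plus Milanov's lemma and a pole-order argument that identifies the primitive as $h_{\alpha\beta}+c_{\alpha\beta}/\hbar$.

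With this identity the bookkeeping is short:
\begin{enumerate}
\item The $\partial_\alpha\partial_\beta\lambda$ term and the $h_{\alpha\beta}$-part of $\partial_\alpha\lambda\,\partial_\beta\lambda$ combine into $\partial_y\big(h_{\alpha\beta}\lambda^{\hbar-1}+\hbar^{-1}c_{\alpha\beta}\lambda^{\hbar}\big)$. Hence $\partial_\alpha\partial_\beta\Pi=\hbar(\hbar-1)C^\delta_{\alpha\beta}\int_\gamma\partial_\delta\lambda\,\lambda^{\hbar-2}\rd y$.
\item Contract with $\mathcal U$ using $E^\rho C^\sigma_{\rho\delta}\partial_\sigma\lambda=\lambda\,\partial_\delta\lambda-E^\rho h_{\rho\delta}\,\partial_y\lambda$, then integrate by parts. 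The only leftover is $C^\delta_{\alpha\beta}\int_\gamma E^\rho\partial_y h_{\rho\delta}\,\lambda^{\hbar-1}\rd y$.
\item Apply $\partial_\delta$ to $E^\rho\partial_\rho\lambda=\lambda$ and use the corrected identity. Pointwise, $E^\rho\partial_y h_{\rho\delta}=(1-q_\delta)\,\partial_\delta\lambda-(E^\rho c_{\rho\delta})\,\partial_y\lambda$, which gives the coefficient $\hbar-q_\delta$.
\end{enumerate}

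Your integrated Euler relation cannot replace step 3. The relation $E(\xi_\delta)=(\hbar-q_\delta)\xi_\delta$ only says $\hbar\int_\gamma E^\rho\partial_\rho\partial_\delta\lambda\,\lambda^{\hbar-1}\rd y=(1-q_\delta)\xi_\delta$. Turning $\partial_\rho\partial_\delta\lambda$ into $\partial_y h_{\rho\delta}$ needs the second identity again. Note also that the outcome is $\hbar-q_\delta$ with no $d$. Agreement with \eqref{eq: qDE_dual} therefore uses $d=1$, which follows from (1) via the residue formula for $\eta$.

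The Laplace-transform fallback is a genuinely different route. Passing between deformed flat coordinates of a Frobenius manifold and of its almost dual by a Mellin--Laplace transform in the spectral parameter is standard, and you may cite \cref{prop: integral_origin}. As sketched, though, it is not yet an argument, for three reasons.
\begin{enumerate}
\item \eqref{eq: qDE_origin} has no equation in $s$. You need $s\partial_s\xi=\xi\cdot(s\,\mathcal U+\nabla E)$, which does follow from $E(\Phi)=s\partial_s\Phi$ for $\Phi=\int_\gamma\re^{s\lambda}\rd y$. One integration by parts in $s$ then gives $\partial_\alpha\Xi\cdot\mathcal U=\Xi\cdot(\hbar-\nabla E)\cdot C_\alpha$ for the transform $\Xi$, again needing $d=1$.
\item More seriously, the cycles here end at zeros of $\lambda$, where $\re^{s\lambda}\to1$, so for them $\Phi$ does not satisfy \eqref{eq: qDE_origin}. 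The integration by parts in the proof of \cref{prop: integral_origin} leaves the boundary term $\pm\big(s\,h_{\alpha\beta}+c_{\alpha\beta}\big)$ evaluated at the zero. Its transforms are the divergent integrals $\int_0^\infty s^{-\hbar}\rd s$ and $\int_0^\infty s^{-\hbar-1}\rd s$. These, the endpoint terms at $s=0,\infty$, and the exchange of the $s$- and $\mu$-integrals would all need a regularisation you have not supplied.
\item \cref{prop: integral_origin} itself rests on the same corrected second identity. This route therefore relocates the missing ingredient rather than avoiding it.
\end{enumerate}
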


\begin{rmk}
\label{rmk:charge_one_from_homogeneity}
The condition $d=1$ in \cite[Prop.~3.4.3]{Ma:2026ths} can be omitted, since it follows from condition (1) in the present setting. Indeed, if $F_s$ denotes the flow of $E$, then at fixed $y$ one has $\lambda(F_s(t),y)=\re^s\lambda(t,y)$. The numerator of the residue formula for $F_s^*\eta$ and $\partial_y\lambda$ are multiplied by $\re^{2s}$ and $\re^s$, respectively, while $\phi=\rd y$ remains unchanged. Consequently,
\[
    F_s^*\eta=\re^s\eta,
    \qquad \mathcal L_E\eta=\eta.
\]
Comparing with $\mathcal L_E\eta=(2-d)\eta$ gives $d=1$.
\end{rmk}

The proofs follow the theory of primitive forms for Hurwitz Frobenius manifolds developed by Milanov \cite{Milanov:2017gzt}.
\subsection{Primitive form and flat coordinates to first structure connection}
\begin{prop}
\label{prop:qde_firstconnection_primitiveforms}
    In the present genus-zero Hurwitz setting there are locally functions $K_{\alpha\beta}(t,y)$ and $c_{\alpha\beta}(t)$, the latter independent of $y$, such that
    \begin{align*}
        \partial_{\alpha}\lambda\,\partial_{\beta}\lambda &= C_{\alpha\beta}^{\sigma}\,\partial_{\sigma}\lambda + K_{\alpha\beta}\,\partial_y\lambda\,,\\
        \partial_\alpha\partial_\beta \lambda-\partial_yK_{\alpha\beta} &= c_{\alpha\beta}(t)\,\partial_y\lambda\,.
    \end{align*}
    Equivalently, for $\hbar\neq0$ and $G_{\alpha\beta}\coloneqq\hbar^{-1}c_{\alpha\beta}$, the one-form $\big(\partial_\alpha\partial_\beta\lambda+\hbar\,\partial_\alpha\lambda\,\partial_\beta\lambda-\hbar\,C^{\sigma}_{\alpha\beta}\partial_\sigma\lambda\big)\rd y$ is exact for the twisted differential $D_\hbar\coloneqq\rd_y+\hbar\,\rd_y\lambda\wedge$, with primitive $K_{\alpha\beta}+G_{\alpha\beta}$.

\end{prop}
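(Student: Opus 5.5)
The plan is to derive both identities from the residue formula for $\circ$, viewing everything as rational functions of $\mu$ on the fibre $\bbP^1$ with the $\eta$-flat coordinates $t_\alpha$ as parameters. Throughout, derivatives in $t$ are taken at fixed $y=\log\mu$, so that $\partial_\alpha\lambda$, $\partial_\alpha\partial_\beta\lambda$ and $\partial_y\lambda$ are all meromorphic in $\mu$ with poles only at the poles of $\lambda$.

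\emph{First identity.} In a genus-zero Hurwitz space, the tangent space $T_t\mathscr{H}$ is identified with the space $W$ of rational functions with poles bounded by the pole divisor of $\lambda$, modulo the span of $\partial_y\lambda$. This span records the infinitesimal reparametrisations fixing $0,\infty$, which are exactly the $\mu\partial_\mu$ direction. Under this identification $X\mapsto X(\lambda)$ is injective, and $\circ$ is multiplication in the Jacobian-type algebra
\[
\mathcal{O}(\text{poles of }2\lambda)\big/\big(\partial_y\lambda\big),
\]
restricted to the critical points. This is the standard statement that $\circ$ is multiplication of functions modulo $\lambda'$, and it is exactly what the residue formula for $\eta(X\circ Y,Z)$ encodes, since residues at the critical points only see classes modulo $\partial_y\lambda$.

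To make this concrete, I would argue as follows.
\begin{itemize}
\item The product $\partial_\alpha\lambda\,\partial_\beta\lambda$ has poles of at most twice the order of those of $\lambda$.
\item Its remainder after dividing by $\partial_y\lambda$ lands in the $N$-dimensional span of the $\partial_\sigma\lambda$.
\item Comparing residues at the critical points identifies the coefficients of that remainder with $C^\sigma_{\alpha\beta}$.
\item The quotient is $K_{\alpha\beta}$, which is rational in $\mu$. One must check that $K_{\alpha\beta}$ has poles only at the poles of $\lambda$, with controlled order, and is regular at the zeros of $\partial_y\lambda$; the latter follows because the difference vanishes there.
\end{itemize}

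\emph{Second identity.} I would differentiate the first identity and use flatness of the $t_\alpha$ together with associativity and symmetry of $C$, following Milanov. The quantity
\[
\partial_\alpha\partial_\beta\lambda-\partial_y K_{\alpha\beta}
\]
is a rational function with poles at the poles of $\lambda$. The key claim is that it pairs to zero, via the residue pairing $\eta$, with every $\partial_\gamma\lambda$ modulo multiples of $\partial_y\lambda$. Flatness of $t$ means that the $\eta$-Christoffel symbols vanish, and Dubrovin's formula for these symbols is precisely a residue of $\partial_\alpha\partial_\beta\lambda\,\partial_\gamma\lambda/\partial_y\lambda$. The $\partial_yK_{\alpha\beta}$ term contributes a total $y$-derivative. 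The resulting equation has residues at the poles of $\lambda$ that must be handled by the choice of flat coordinates. Nondegeneracy of $\eta$ then forces the quantity to be a multiple $c_{\alpha\beta}\,\partial_y\lambda$ of $\partial_y\lambda$, with $c_{\alpha\beta}$ a function of $t$ only: $\partial_y\lambda$ is the unique direction killed in $W$, and a $y$-dependent coefficient would change the pole order.

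\emph{Equivalent form.} This part is pure algebra: compute
\[
D_\hbar\big(K_{\alpha\beta}+\hbar^{-1}c_{\alpha\beta}\big)=\partial_yK_{\alpha\beta}\,\rd y+\hbar\,K_{\alpha\beta}\,\partial_y\lambda\,\rd y+c_{\alpha\beta}\,\partial_y\lambda\,\rd y,
\]
and substitute the two identities.

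\emph{Main obstacle.} I expect the hard part to be the $y$-independence of $c_{\alpha\beta}$, that is, controlling the pole orders at the marked poles $0$ and $\infty$, where $\phi=\rd y$ itself has poles. I would settle it by examining the Laurent expansions at each pole of $\lambda$ in local uniformisers $\lambda^{1/k}$, in which the flat coordinates are defined. Flatness then says that the singular parts of $\partial_\alpha\partial_\beta\lambda-\partial_yK_{\alpha\beta}$ match those of a constant multiple of $\partial_y\lambda$.
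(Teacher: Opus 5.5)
\paragraph{Where the gap is.} Your first identity is essentially the paper's argument. The numerator $\partial_\alpha\lambda\,\partial_\beta\lambda-C^\sigma_{\alpha\beta}\partial_\sigma\lambda$ vanishes on $\mathrm{cr}(\lambda)$ because $X\mapsto X(\lambda)|_{\mathrm{cr}(\lambda)}$ is an algebra isomorphism, so $K_{\alpha\beta}$ is regular there. The closing algebraic equivalence is also fine. The gap is in the second identity, specifically in the $y$-independence of $c_{\alpha\beta}$, which is the entire content of the proposition.

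First, the residue pairing only sees values on $\mathrm{cr}(\lambda)$. So even granting that $R_{\alpha\beta}\coloneqq\partial_\alpha\partial_\beta\lambda-\partial_yK_{\alpha\beta}$ is $\eta$-orthogonal to every $\partial_\gamma\lambda$, nondegeneracy gives only $R_{\alpha\beta}|_{\mathrm{cr}(\lambda)}=0$. That is, $R_{\alpha\beta}=g\,\partial_y\lambda$ with $g$ rational in $\mu$ and possibly singular at the poles of $\lambda$; nothing forces $g$ to be constant. Pole-order counting does not close this. At the moving double pole $\mu_0$ one only gets $R_{\alpha\beta}=O((\mu-\mu_0)^{-4})$ against $\partial_y\lambda\sim(\mu-\mu_0)^{-3}$, so $g$ may a priori have a simple pole there, and you would need a cancellation you do not exhibit.

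Second, your route to the orthogonality is not correct as stated. The Christoffel symbols are not the critical-point residues of $\partial_\alpha\partial_\beta\lambda\,\partial_\gamma\lambda/\partial_y\lambda$. Nor is $\partial_yK_{\alpha\beta}\,\partial_\gamma\lambda/\partial_y\lambda$ a total $y$-derivative: $\partial_\gamma\lambda/\partial_y\lambda$ has simple poles on $\mathrm{cr}(\lambda)$ while $\partial_yK_{\alpha\beta}$ need not vanish there, so its residues are generically non-zero.

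\paragraph{How the paper closes it, and how to repair your route.} The paper uses a stronger input than flatness. Since $\rd y$ is a primitive form (Milanov), $[R_{\alpha\beta}\,\rd y]=0$ in twisted de Rham cohomology, and Milanov's lemma provides a primitive $\sum_{k=1}^{M}\hbar^{-k}g_k$. The recursion in $\hbar$ forces $g_1=R_{\alpha\beta}/\partial_y\lambda$ to be a polynomial in $\lambda$, and boundedness at the single fixed pole $\mu=0$ makes it constant.

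If you want to keep your flatness route, the missing idea is to work at fixed $\lambda$. Set $p_\sigma\coloneqq\partial_\sigma y|_\lambda=-\partial_\sigma\lambda/\partial_y\lambda$. Then the first identity gives
\[
R_{\alpha\beta}\,\rd y=-\partial_\alpha\partial_\beta y|_\lambda\,\rd\lambda-C^\sigma_{\alpha\beta}\,\rd p_\sigma\,,\qquad\text{so}\qquad g=-\partial_\alpha\partial_\beta y|_\lambda-C^\sigma_{\alpha\beta}\,\partial_\lambda p_\sigma\,.
\]
At each pole of $\lambda$, $y$ is analytic in the local parameter $\lambda^{-1/n}$ up to a $t$-independent logarithm. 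Hence $g$ is bounded at every pole, including $\mu_0$, with no flatness needed.

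Flatness is then used at the critical points. Write $\eta_{\beta\gamma}=\sum_{\mathrm{cr}(\lambda)}\operatorname{Res}\,p_\beta p_\gamma\,\rd\lambda$, move the contour to the poles, and differentiate at fixed $\lambda^{-1/n}$. This gives the Christoffel symbols of the first kind as
\[
\Gamma_{\alpha\beta,\gamma}=\sum_{\mathrm{cr}(\lambda)}\operatorname{Res}\,p_\gamma\,\partial_\alpha\partial_\beta y|_\lambda\,\rd\lambda\,.
\]
In this sum the contribution $C^\sigma_{\alpha\beta}\sum_{\mathrm{cr}(\lambda)}\operatorname{Res}\,p_\gamma\,\rd p_\sigma$ vanishes, because $p_\gamma\,\rd p_\sigma$ is regular at the poles of $\lambda$. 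So $\Gamma=0$ gives $R_{\alpha\beta}|_{\mathrm{cr}(\lambda)}=0$. Then $g$ is holomorphic on $\bbP^1$, hence $g=c_{\alpha\beta}(t)$.

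Your ``main obstacle'' is therefore misplaced. The marked poles $0,\infty$ are the easy part; the real work sits at the critical points and at the moving poles.
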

\begin{proof}
The first equation says that the Kodaira--Spencer map $T\mathscr{H}\to\pi_*\mathcal{O}_{\mathrm{cr}(\lambda)}$, $v\mapsto\tilde{v}(\lambda)|_{\mathrm{cr}(\lambda)}$, is an isomorphism, see \cite[Sec.~3.1]{Milanov:2017gzt}.

The second equation follows from the fact that $\phi = \rd\log\mu = \rd y$ is a primitive form \cite[Prop.~6.4]{Milanov:2017gzt} and that the $t_\alpha$ are flat coordinates, as follows. Let $\mathcal{H}_{\rm dR}\coloneqq \pi_*H^1_{\rd+\hbar\,\rd\lambda\wedge}(\Omega^\bullet_{\mathscr{C}/\mathscr{H}})$ be the bundle of twisted de Rham cohomology, where $\Omega^\bullet_{\mathscr{C}/\mathscr{H}} = \Omega^\bullet_{\mathscr{C}}/\big(\pi^*\Omega^1_{\mathscr{H}}\wedge \Omega^{\bullet-1}_{\mathscr{C}}\big)$ is the sheaf of relative differential forms. On $\mathcal{H}_{\rm dR}((\hbar^{-1}))$ there is the Gauss--Manin connection
\begin{equation*}
    \nabla^{\rm GM}_{v}[\omega] = \big[\mathrm{rel}\circ \iota_{\tilde{v}}(\rd_{\mathscr{C}} + \hbar\, \rd_{\mathscr{C}} \lambda\wedge)\tilde{\omega}\big],
\end{equation*}
where $\tilde{v}$ and $\tilde{\omega}$ are lifts of $v$ and $\omega$ to a vector field and a differential form on $\mathscr{C}$, and $\mathrm{rel}$ is the projection to relative forms. One of the defining conditions of a primitive form (see \cite[Sec.~5.1]{Milanov:2017gzt} or \cite[Sec.~4.6]{Milanov:0000yza}) is that, in flat coordinates of $\eta$, where the Christoffel symbols of $\eta$ vanish,
\begin{equation*}
   \nabla^{\rm GM}_{\alpha} \nabla^{\rm GM}_{\beta} \phi = \hbar\, C_{\alpha \beta}^{\sigma} \nabla^{\rm GM}_{\sigma} \phi\,.
\end{equation*}
Since $\nabla^{\rm GM}_{\sigma}\phi = \hbar\,\partial_{\sigma}\lambda\,\rd y$, the right-hand side is $\hbar^2 C_{\alpha\beta}^{\sigma}\partial_{\sigma}\lambda\,\rd y$, while the left-hand side is
\begin{equation*}
    \nabla^{\rm GM}_{\alpha} \nabla^{\rm GM}_{\beta} \rd y = \nabla^{\rm GM}_{\alpha}( \hbar\,\partial_{\beta}\lambda\, \rd y) = \hbar\big[(\partial_{\alpha}\partial_{\beta}\lambda + \hbar\,\partial_{\alpha}\lambda \partial_{\beta}\lambda ) \rd y\big].
\end{equation*}
Using the first equation,
\begin{equation*}
\mathrm{LHS} - \mathrm{RHS} = \hbar\big[(\partial_{\alpha}\partial_{\beta}\lambda + \hbar K_{\alpha\beta} \partial_y\lambda) \rd y\big].
\end{equation*}
As $(\rd_y + \hbar\,\rd_y\lambda\wedge ) K_{\alpha\beta} = (\partial_y K_{\alpha \beta} + \hbar K_{\alpha\beta} \partial_y\lambda) \rd y$ is exact,
\begin{equation*}
    0 = \mathrm{LHS} - \mathrm{RHS} = \hbar \big[(\partial_\alpha\partial_\beta\lambda - \partial_y K_{\alpha \beta}) \rd y\big].
\end{equation*}
Put $R_{\alpha\beta}\coloneqq\partial_\alpha\partial_\beta\lambda-\partial_yK_{\alpha\beta}$. By the first identity, $K_{\alpha\beta}$ and $R_{\alpha\beta}\rd y$ have poles only over $\lambda=\infty$. The preceding vanishing and \cite[Lem.~6.1]{Milanov:2017gzt}, with Milanov's parameter equal to $\hbar^{-1}$, give a finite meromorphic primitive $G_{\alpha\beta}=\sum_{k=1}^{M}\hbar^{-k}g_k(t,\mu)$. Comparing powers of $\hbar$ yields
\[
    R_{\alpha\beta}=(\partial_y\lambda)g_1,
    \qquad \partial_y g_k=-(\partial_y\lambda)g_{k+1}\quad(1\leq k<M),
    \qquad \partial_y g_M=0.
\]
By induction, $g_1$ is a polynomial in $\lambda$. At $\mu=0$, where $\lambda$ has a pole of order $m$, the first identity gives $K_{\alpha\beta},R_{\alpha\beta}=O(\mu^{-m})$, while $\partial_y\lambda$ has pole order $m$. Thus $g_1=R_{\alpha\beta}/\partial_y\lambda$ is bounded there and must be constant in $\lambda$. Writing $g_1=c_{\alpha\beta}(t)$, the recursion gives $g_k=0$ for $k\geq2$, hence
\[
    G_{\alpha\beta}=\frac{c_{\alpha\beta}(t)}{\hbar},
    \qquad R_{\alpha\beta}=c_{\alpha\beta}(t)\,\partial_y\lambda.\qedhere
\]

\end{proof}

\begin{rmk}
\label{rmk: primitive_eq2}
The additional term $c_{\alpha\beta}(t)\,\partial_y\lambda$ corrects the identity in \cite[Prop.~3.4.4, Eq.~(3.29)]{Ma:2026ths}. The resulting adjustments are carried through this appendix. The extra contributions are total derivatives whose integrals vanish for the cycles considered here, leaving the conclusions of \cref{prop: integral_origin,prop: integral_dual} unchanged.
\end{rmk}

\begin{proof}[Proof of \cref{prop: integral_origin}]
Let
\[
\Pi(t,\hbar)=\int_{\gamma}\re^{\hbar\lambda}\,\rd y
\qquad\text{and}\qquad
\xi_\beta=\partial_\beta\Pi.
\]
Transport $\gamma$ by the Gauss--Manin connection. We perform integration by parts where the boundary terms vanish, as for closed or rapid-decay cycles, and extend by analytic continuation where appropriate. Using the two identities in \cref{prop:qde_firstconnection_primitiveforms}, a straightforward computation gives
\begin{align*}
\partial_\alpha\xi_\beta
&=\hbar\int_\gamma
\left(\partial_\alpha\partial_\beta\lambda
+\hbar\partial_\alpha\lambda\partial_\beta\lambda\right)
\re^{\hbar\lambda}\,\rd y\\
&=\hbar C_{\alpha\beta}^{\sigma}\xi_\sigma
+\hbar\int_\gamma
\frac{\partial}{\partial y}
\left((K_{\alpha\beta}+G_{\alpha\beta})\re^{\hbar\lambda}\right)\,\rd y\\
&=\hbar C_{\alpha\beta}^{\sigma}\xi_\sigma.
\end{align*}
Thus $\xi$ satisfies \eqref{eq: qDE_origin}.
\end{proof}

\subsection{Twisted periods of dual Frobenius manifolds}
\begin{proof}[Proof of \cref{prop: integral_dual}]
    This argument is based on \cite[Prop.~5.1]{Dubrovin:2004dho}. The difference here is that the Euler vector field can be more general. We use the two identities established in \cref{prop:qde_firstconnection_primitiveforms}.
    Let
    \begin{equation*}
        \Pi(t,\hbar) = \int_{\gamma} \lambda^\hbar \rd y,
    \end{equation*}
    and transport $\gamma$ by the Gauss--Manin connection. We work with $\hbar\ne0$ and integrate total derivatives where their boundary terms vanish, extending the identities by analytic continuation where appropriate. Taking derivatives gives
    \begin{align*}
        \partial_{\alpha}\partial_{\beta} \Pi/\hbar &= \partial_{\alpha}\int_{\gamma}  \partial_{\beta}\lambda \lambda^{\hbar -1}\rd y\\
        &= \int_{\gamma}\big[(\partial_{\alpha}\partial_{\beta}\lambda ) \lambda^{\hbar -1} + (\hbar -1)\partial_{\alpha}\lambda\partial_{\beta}\lambda \lambda^{\hbar -2}\big] \rd y \\
        &= (\hbar-1) \sum_{\delta}C_{\alpha\beta}^{\delta}\int_{\gamma} \partial_{\delta}\lambda \lambda^{\hbar-2} \rd y\\
        &\quad + \int_{\gamma}\frac{\partial}{\partial y}
        \left(K_{\alpha\beta}\lambda^{\hbar-1}+\frac{c_{\alpha\beta}(t)}{\hbar}\lambda^\hbar\right)\rd y\,.
    \end{align*}
    Here we used the two identities from \cref{prop:qde_firstconnection_primitiveforms} and the independence of $c_{\alpha\beta}$ from $y$. The last integral is a total derivative and vanishes by integration by parts. Thus
    \begin{equation*}
        \partial_{\alpha}\partial_{\beta} \Pi/\hbar = (\hbar-1) \sum_{\delta}C_{\alpha\beta}^{\delta}\int_{\gamma} \partial_{\delta}\lambda \lambda^{\hbar-2} \rd y\,.
    \end{equation*}

    We now use the quasi-homogeneity of $\lambda$ to show that $\Pi$ satisfies \eqref{eq: qDE_dual}. By (2) and the Frobenius identity $\nabla\nabla E=0$, we can write
    
    \begin{equation*}
        E = \sum_{\alpha} (a_{\alpha}+b_{\alpha} t_{\alpha})\frac{\partial}{\partial t_{\alpha}},
    \end{equation*}
    with constants $a_\alpha,b_\alpha$. Then $E(\lambda) = \lambda$ implies
    \begin{equation*}
        \lambda = \sum_{\alpha}(a_{\alpha}+b_{\alpha} t_{\alpha})\partial_{\alpha}\lambda.
    \end{equation*}
    Multiplying by $\partial_{\beta}\lambda$ and using the first equation, we get
    \begin{align*}
        \lambda \partial_{\beta}\lambda &= \sum_{\alpha}(a_{\alpha}+b_{\alpha} t_{\alpha}) \partial_{\alpha}\lambda\partial_{\beta}\lambda\\
        &=\sum_{\alpha}(a_{\alpha}+b_{\alpha} t_{\alpha})(\sum_{\sigma}C_{\alpha\beta}^{\sigma} \partial_{\sigma}\lambda + K_{\alpha\beta} \frac{\partial \lambda}{\partial y}).
    \end{align*}
    Recalling the definition of the matrix $\mathcal{U}$, this gives
    \begin{equation}
        \sum_{\sigma}\mathcal{U}^{\sigma}_{\beta} \partial_{\sigma}\lambda  = \lambda \partial_{\beta} \lambda - \sum_{\alpha}(a_{\alpha}+b_{\alpha} t_{\alpha}) K_{\alpha\beta} \frac{\partial \lambda}{\partial y}.
        \label{eq: quasi-homo}
    \end{equation}

    Now, take $\xi = (\partial_{1}\Pi, \dots, \partial_{N}\Pi)$, then the left-hand side of the equation \eqref{eq: qDE_dual} reads
    \begin{align*}
        (\partial_{\beta}\xi \cdot \mathcal{U})_{\alpha}/\hbar &= \mathcal{U}^{\rho}_{\alpha}\partial_{\beta}\xi_{\rho}/\hbar\\
        &= \mathcal{U}^{\rho}_{\alpha} (\hbar-1)C_{\rho\beta}^{\sigma}\int_{\gamma} \partial_{\sigma}\lambda \lambda^{\hbar-2} \rd y \, ,
    \end{align*}
    using associativity of $E\circ \partial_{\alpha}\circ \partial_{\beta}$, this further equals
    \begin{equation*}
        (\hbar-1) C_{\alpha\beta}^{\delta}\int_{\gamma} (\mathcal{U}_{\delta}^{\sigma}\partial_{\sigma}\lambda )\lambda^{\hbar-2} \rd y.
    \end{equation*}

    Now using \eqref{eq: quasi-homo} we just obtained, this equals
    \begin{align}
        &(\hbar-1) C_{\alpha\beta}^{\delta}\int_{\gamma} \partial_{\delta}\lambda \lambda^{\hbar -1} \rd y - (\hbar-1)C_{\alpha\beta}^{\delta}\int_{\gamma}\sum_{\rho}(a_{\rho} + b_{\rho} t_{\rho})K_{\rho\delta} \frac{\partial \lambda}{\partial y}\lambda^{\hbar -2} \rd y\nonumber \\
        =& (\hbar-1) C_{\alpha\beta}^{\delta}\int_{\gamma} \partial_{\delta}\lambda \lambda^{\hbar -1} \rd y + C_{\alpha\beta}^{\delta}\int_{\gamma}\sum_{\rho}(a_{\rho} + b_{\rho} t_{\rho}) \frac{\partial K_{\rho\delta}}{\partial y} \lambda^{\hbar-1} \rd y,
        \label{eq: dual_period_eq1}
    \end{align}
    where integration by parts is used in the second line.

    Applying $\partial_{\delta}$ to  
    \begin{equation*}
        \lambda = \sum_{\rho}(a_{\rho} + b_{\rho} t_{\rho}) \partial_{\rho} \lambda
    \end{equation*}
    and using the second identity of \cref{prop:qde_firstconnection_primitiveforms}, we get
    \begin{equation*}
        \sum_{\rho}E^\rho\frac{\partial K_{\rho\delta}}{\partial y}
        = (1-b_\delta)\partial_\delta\lambda
        -\left(\sum_{\rho}E^\rho c_{\rho\delta}(t)\right)\partial_y\lambda.
    \end{equation*}
    The coefficient of $\partial_y\lambda$ is independent of $y$, and
    \[
        \int_\gamma\partial_y\lambda\,\lambda^{\hbar-1}\rd y
        =\frac{1}{\hbar}\int_\gamma\rd_y(\lambda^\hbar)=0.
    \]
    Consequently, after integration we have
    \[
        \int_\gamma\sum_{\rho}E^\rho\frac{\partial K_{\rho\delta}}{\partial y}\lambda^{\hbar-1}\rd y
        =(1-b_\delta)\int_\gamma\partial_\delta\lambda\,\lambda^{\hbar-1}\rd y.
    \]
    Using this integrated identity, \eqref{eq: dual_period_eq1} is just
    \begin{align*}
        & \sum_{\delta} (\hbar -1 + 1- b_{\delta}) C_{\alpha\beta}^{\delta}\int_{\gamma}\partial_{\delta}\lambda \lambda^{\hbar-1}\rd y\\
        =& 1/\hbar\sum_{\delta} (\hbar - b_{\delta}) C_{\alpha\beta}^{\delta}\xi_{\delta}.
    \end{align*}

    Now recalling that 
    \[\mathcal{V} = \frac{2-d}{2} - \nabla E = 1/2 - \mathrm{diag}\{b_1,\cdots,b_N\},\]
    so 
    \begin{equation*}
        \mathcal{V} - 1/2 = -\nabla E = - \mathrm{diag}\{b_1,\cdots,b_N\}\, , 
    \end{equation*}
    therefore
    \begin{equation*}
        (\partial_{\beta}\xi \cdot \mathcal{U})_{\alpha} = \sum_{\delta}(\hbar + \mathcal{V} - 1/2)^{\delta}_{\delta}C_{\alpha\beta}^{\delta}\xi_{\delta} = (\xi \cdot (\hbar + \mathcal{V} - 1/2) \cdot C_{\beta})_{\alpha}.
    \end{equation*}
    This is exactly \eqref{eq: qDE_dual}.
    \end{proof}

\begin{rmk}
    The equality $d=1$, deduced from (1) in \cref{rmk:charge_one_from_homogeneity}, guarantees $\mathcal{V}-1/2 = -\nabla E$ in the proof. Other homogeneity assumptions can lead to analogous period formulas, as in \cite[Prop.~5.1]{Dubrovin:2004dho}.
\end{rmk}
\subsection{Restricting to Frobenius submanifolds}
The superpotential \eqref{eq:lambdaD} parametrises a submanifold of a Hurwitz space, so we need to know how the systems above restrict to submanifolds. It suffices to consider a submanifold cut out by flat coordinates, $t_{r+1}=\dots=t_N=0$, with $t_1,\dots,t_r$ as coordinates on it.

\begin{prop}
\label{prop:qDE_dual_sub_Hurwitz}
Let $(M,\eta,\circ)$ be a Frobenius manifold with flat coordinates $t_1,\dots,t_N$ and structure constants $C^{k}_{ij}$, and let $\Pi$ satisfy $\partial_i\partial_j\Pi = \hbar\sum_{k=1}^{N}C^{k}_{ij}\partial_k\Pi$ for all $1\leq i,j\leq N$. Suppose that $\partial_{r+1},\dots,\partial_N$ span the $\eta$-orthogonal complement of $\partial_1,\dots,\partial_r$, and that $C_{ijk}\coloneqq\eta(\partial_i\circ\partial_j,\partial_k)$ vanishes for all $1\leq i,j\leq r$ and $k>r$. Then the restriction of $\Pi$ to the locus $t_{r+1}=\dots=t_N=0$ satisfies
\begin{equation*}
    \partial_i\partial_j \Pi = \hbar \sum_{k=1}^r C_{ij}^{k}\,\partial_k \Pi\,,\qquad 1\leq i,j \leq r\,.
\end{equation*}
\end{prop}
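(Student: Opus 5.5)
The plan is a direct computation on the submanifold $M'\coloneqq\{t_{r+1}=\dots=t_N=0\}$. Everything reduces to showing that, along $M'$, the structure constants $C^k_{ij}$ with $1\leq i,j\leq r$ and $k>r$ vanish. Once this is known, the equation $\partial_i\partial_j\Pi=\hbar\sum_{k=1}^N C^k_{ij}\partial_k\Pi$ restricts term by term.

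First, note that restriction commutes with the derivatives involved. For $1\leq i,j\leq r$ the vector fields $\partial_i,\partial_j$ are tangent to $M'$. Hence $(\partial_i\partial_j\Pi)|_{M'}=\partial_i\partial_j(\Pi|_{M'})$ and $(\partial_k\Pi)|_{M'}=\partial_k(\Pi|_{M'})$ for $k\leq r$. The components $\partial_k\Pi$ with $k>r$, which are normal derivatives, are not intrinsic to $\Pi|_{M'}$, so they must drop out. This is why the vanishing of $C^k_{ij}$ for $k>r$ is needed.

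Second, raise indices with the constant Gram matrix $\eta_{ab}$, which is constant because the $t$'s are flat. Since $\partial_{r+1},\dots,\partial_N$ span the $\eta$-orthogonal complement of $\partial_1,\dots,\partial_r$, the Gram matrix is block diagonal, $\eta=\eta'\oplus\eta''$, with both blocks nondegenerate. The nondegeneracy holds because $\eta$ is nondegenerate and the two spans are complementary and orthogonal. So $\eta^{-1}=\eta'^{-1}\oplus\eta''^{-1}$, and
\[
C^k_{ij}=\sum_{a=1}^N C_{ija}\,\eta^{ak}=\sum_{a>r}C_{ija}\,(\eta''^{-1})_{ak}\,,\qquad k>r\,.
\]
This vanishes on $M'$ by the hypothesis $C_{ija}=0$ for $i,j\leq r<a$. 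For $k\leq r$ one similarly gets $C^k_{ij}=\sum_{a\leq r}C_{ija}(\eta'^{-1})_{ak}$. These are therefore the structure constants of the induced product on $TM'$ with respect to the induced metric $\eta'$.

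Third, substituting these two facts into the restricted equation gives $\partial_i\partial_j\Pi|_{M'}=\hbar\sum_{k\leq r}C^k_{ij}\partial_k\Pi|_{M'}$, which is the claim.

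The only point needing care is that the hypothesis $C_{ijk}=0$ is meant pointwise along $M'$, not globally. The argument uses the hypothesis only at points of $M'$, and it requires no differentiation in the normal directions, so this causes no difficulty. In the application of \cref{lem:periods_restrict}, where $(g,\star)$ replaces $(\eta,\circ)$, the flat coordinates of $g$ adapted to the $\pm1$-eigenspaces of $\rd\varsigma$ supply exactly this setting. There is no genuine obstacle.
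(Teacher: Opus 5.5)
Your proposal is correct and follows essentially the same route as the paper: the orthogonality makes $\eta$, and hence $\eta^{-1}$, block diagonal, so $C^k_{ij}=\sum_{a>r}C_{ija}\eta^{ak}=0$ for $i,j\leq r<k$, and the sum in the system truncates to $k\leq r$. Your remarks that tangential derivatives commute with restriction and that both blocks of $\eta$ are nondegenerate spell out what the paper's one-line proof leaves implicit.
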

\begin{proof}
By the orthogonality assumption $\eta^{kl}$ is block diagonal, so for $i,j\leq r<k$ one has $C^k_{ij} = \sum_{l>r}\eta^{kl}C_{ijl} = 0$, and the sum in the system truncates to $k\leq r$.
\end{proof}

\begin{prop}
\label{prop:rank_twisted_homology}
    For generic $\hbar$, the rank of $H_1^{\rm lf}(\mathbb{P}^1\setminus(\lambda);\lambda^{\hbar})$ is equal to the number of distinct finite poles and zeros of $\lambda$, minus one.
\end{prop}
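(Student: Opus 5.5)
We compute the rank of $H_1^{\rm lf}(\bbP^1\setminus(\lambda);\lambda^{\hbar})$ via an Euler characteristic. Let $S\subset\bbC^\times\cup\{0\}$ be the set of distinct finite zeros and poles of $\lambda$, say $|S|=p$. The point $\infty$ is either a zero or pole of $\lambda$ (for the Hurwitz spaces in question, $\lambda$ has a marked pole at $\infty$) or a regular point; in the present setting it is a marked pole. Set $U\coloneqq\bbP^1\setminus(S\cup\{\infty\})$, so $U$ is $\bbP^1$ minus $p+1$ points and $\chi(U)=2-(p+1)=1-p$. Let $\LL_\hbar$ be the rank-one local system of branches of $\lambda^{\hbar}$ on $U$. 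Its monodromy around a point $q\in S\cup\{\infty\}$ is $\re^{2\pi\ri\hbar\,\mathrm{ord}_q\lambda}$. For generic $\hbar$ (outside a countable set), this monodromy is $\neq1$ at every point of $S\cup\{\infty\}$, since each order $\mathrm{ord}_q\lambda$ is a nonzero integer.

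First, use that the Euler characteristic of (co)homology with coefficients in a rank-one local system equals $\mathrm{rank}\cdot\chi(U)=1-p$. Next, show that $H^{\rm lf}_0$ and $H^{\rm lf}_2$ vanish for generic $\hbar$. By Poincaré duality on the open surface, $H^{\rm lf}_k(U;\LL_\hbar)\cong H^{2-k}(U;\LL_\hbar)$. The group $H^0(U;\LL_\hbar)$ consists of global flat sections, which vanish because the monodromy is nontrivial. The group $H^2(U;\LL_\hbar)$ vanishes because $U$ is a non-compact surface that retracts onto a graph. Hence $\mathrm{rank}\,H^{\rm lf}_1=-\chi(U)=p-1$, which is the claim.

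The main point requiring care is making "generic" precise, and justifying that the locally finite homology of the open surface behaves as above. Alternatively, one can use the standard comparison $H^{\rm lf}_\bullet(U;\LL)\cong H_\bullet(U;\LL)$ when all local monodromies are nontrivial, the regularisation of Kita--Yoshida/Aomoto. Either route gives the rank as $-\chi(U)$. The case where $\infty$ is a regular point of $\lambda$ is not needed here, since the statement counts only finite zeros and poles with $\infty$ always removed; there the monodromy at $\infty$ is the product of the inverses of the others and is still generically nontrivial, so the same count holds. As a consistency check, the result gives $2l+2$ for the Hurwitz space $\mathscr{H}$ used in \cref{prop:period_coordinates}: there $\lambda$ has $2l$ simple zeros and finite poles at $0,-1,\mu_0$, so $p=2l+3$.
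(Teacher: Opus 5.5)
Your argument is correct in the setting where the statement is used. The appendix fixes $\infty$ as a marked pole of $\lambda$, so $\bbP^1\setminus(\lambda)=\bbC\setminus S$. Then Poincar\'e duality $H^{\rm lf}_k(U;\LL_\hbar)\cong H^{2-k}(U;\LL_\hbar)$, the identity $\chi(U;\LL_\hbar)=\chi(U)$ for a rank-one local system, and the vanishing of $H^0$ (nontrivial local system) and $H^2$ (homotopy type of a graph) give rank $p-1$.

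The paper gives no argument of its own. It simply cites \cite[VIII, Prop.~3.1]{Vassiliev:2002okr}, which is formulated for $\bbC$ minus finitely many points, and your Euler-characteristic computation is a self-contained proof of that fact. What your route adds over the citation is precision. It shows that a single nontrivial local monodromy suffices: for $\lambda$ with a simple zero, as in the paper, \emph{generic} means exactly $\hbar\notin\bbZ$, and for the trivial local system the rank jumps to $p$. It also makes visible that the count depends on $\infty\in(\lambda)$.

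That dependence is where your closing remark goes wrong. If $\infty$ is a regular point of $\lambda$, then $\mathrm{ord}_\infty\lambda=0$, hence $\sum_{q\in S}\mathrm{ord}_q\lambda=0$ and the monodromy at $\infty$ is identically $1$, not generically nontrivial. Moreover $\infty\notin(\lambda)$, so $\bbP^1\setminus(\lambda)$ is $\bbP^1$ minus only $p$ points and its rank is $p-2$; for example, $\lambda=(\mu-1)/(\mu-2)$ gives rank $0$.

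Deleting $\infty$ by fiat would give $p-1$ again by your Euler-characteristic count, but for a different space than the one in the statement. The comparison $H^{\rm lf}_\bullet\cong H_\bullet$ you offer as an alternative would also fail there, since it needs every local monodromy, including the one at $\infty$, to be nontrivial. None of this affects the case the paper actually needs, but the remark should be dropped or corrected.
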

\begin{proof}
    See \cite[VIII, Prop.~3.1]{Vassiliev:2002okr}.
\end{proof}
\begin{prop}
\label{prop:period_map_full_Hurwitz}
Let $\gamma_1,\dots,\gamma_N$ be linearly independent local sections of the local system over $\mathscr{H}$ with fibre $H^{\rm lf}_1(\bbP^1\setminus(\lambda);\lambda^{\hbar})$. Then, for generic $\hbar$, the periods $\Pi_i\coloneqq\int_{\gamma_i}\lambda^{\hbar}\phi$, $1\leq i\leq N$, are local coordinates on $\mathscr{H}$ near any semisimple point.
\end{prop}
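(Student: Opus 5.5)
The claim is local: near a semisimple point $t_*\in\mathscr{H}$ the map $t\mapsto(\Pi_1,\dots,\Pi_N)$ should have invertible Jacobian $\big(\partial\Pi_i/\partial t_\alpha\big)$. I will reduce this to linear independence of the $N$ gradient vectors $\xi^{(i)}=\mathrm{d}\Pi_i$ as solutions of the dual system \eqref{eq: qDE_dual}. By \cref{prop: integral_dual}, each $\xi^{(i)}$ is a row-vector solution of a first-order linear system. The matrix $\mathcal{U}=E\circ$ is invertible at a semisimple point away from the discriminant, so on a small ball around $t_*$ the system can be written as $\partial_\alpha\xi=\xi\cdot(\mathcal{V}+\hbar-\tfrac12)C_\alpha\mathcal{U}^{-1}$. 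This is a flat connection in Frobenius form. A family of solutions that is linearly independent at one point is therefore independent at every point of the ball, and the Jacobian is nonsingular precisely when the $\xi^{(i)}$ are linearly independent as functions.

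The key step is to show that the linear map $\gamma\mapsto \mathrm{d}\!\int_\gamma\lambda^{\hbar}\phi$ from the twisted homology, of rank $N$ by \cref{prop:rank_twisted_homology} (the Hurwitz dimension $N$ matching this rank, as in \cref{prop:period_coordinates}), to the solution space of \eqref{eq: qDE_dual} is injective for generic $\hbar$. I would first show it cannot kill more than constants. If $\mathrm{d}\Pi_\gamma\equiv0$, then $\Pi_\gamma$ is constant on $\mathscr{H}$. However, $E\Pi_\gamma=\hbar\Pi_\gamma$, since $E(\lambda)=\lambda$ at fixed $y$, so for $\hbar\neq0$ a constant $\Pi_\gamma$ forces $\Pi_\gamma\equiv0$. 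It therefore suffices to show that the period pairing between $H^{\rm lf}_1(\bbP^1\setminus(\lambda);\lambda^\hbar)$ and the span of the forms $\lambda^\hbar\phi$, $\partial_\alpha(\lambda^\hbar)\phi$ in twisted de Rham cohomology is nondegenerate. I would get this from the Kodaira--Spencer isomorphism in \cref{prop:qde_firstconnection_primitiveforms}. The classes $\partial_\alpha\lambda\,\lambda^{\hbar-1}\rd y$ restrict at the critical points to a basis of $\pi_*\mathcal{O}_{\mathrm{cr}(\lambda)}$, and for generic $\hbar$ the twisted cohomology $H^1(\bbP^1\setminus(\lambda),\nabla_\hbar)$ has dimension $N$ (Aomoto--Kita). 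I would check that these $N$ forms are independent in cohomology, for instance via their asymptotics as $\hbar\to\infty$ along a ray. There, steepest descent at the $N$ distinct critical values $\lambda(p_k)$ of $\log\lambda$ gives leading terms $\lambda(p_k)^{\hbar}\,\partial_\alpha\lambda(p_k)\cdot(\text{nonzero})$. The vectors $(\partial_\alpha\lambda(p_k))_\alpha$ are independent by Kodaira--Spencer, and the exponentials $\lambda(p_k)^\hbar$ are distinct by semisimplicity. Nondegeneracy of the twisted period pairing (twisted Poincaré duality, generic $\hbar$) then yields injectivity.

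The main obstacle is making this genericity argument rigorous: justifying the thimble (steepest descent) basis and the nonvanishing Hessian factors for the multivalued integrand $\lambda^\hbar$. The cleanest route is to note that $\det(\partial_\alpha\Pi_i)$ is meromorphic in $\hbar$, so it suffices to show it is not identically zero. For that I would use the large-$\hbar$ thimble asymptotics, where the determinant behaves like $\prod_k\lambda(p_k)^\hbar\cdot\det(\partial_\alpha\lambda(p_k))\cdot\prod_k(\text{Hessian})^{-1/2}\neq0$. A change of basis from thimbles to $\gamma_i$ multiplies the determinant by the nonzero determinant of a basis change in homology, which completes the proof.
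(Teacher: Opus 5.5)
Your proposal is correct and essentially the paper's argument. Once you reach your "cleanest route", you do what the paper does: pass to a thimble basis, use the large-$\hbar$ steepest-descent leading terms $\propto u_k^{\hbar}$ to show the period Jacobian is non-vanishing, then conclude for generic $\hbar$ by analyticity and invariance under change of basis. Your factor $\det\big(\partial_\alpha\lambda(p_k)\big)=\det\big(\partial_\alpha u_k\big)$ is just the canonical-to-flat coordinate change in which the paper phrases the same computation; the preliminary reduction through flatness of the dual system and twisted Poincar\'e duality is sound but unnecessary.
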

\begin{proof}
It suffices to prove this for one choice of sections. Near a semisimple point, in canonical coordinates $u_1,\dots,u_N$, take for $\gamma_i$ the steepest-descent cycles through the critical points: as $\hbar\to\infty$ each $\Pi_i$ has leading term proportional to $u_i^{\hbar}$, so the Jacobian of $(\Pi_i)$ with respect to $(u_i)$ is non-zero for $|\hbar|$ large, hence for generic $\hbar$ by analyticity.
\end{proof}

\section{Lauricella functions and their analytic continuation}
\label{appendix: Lauricella_func}
\subsection{Lauricella functions}
\label{sec: lauricella_intro}
We recall some standard facts about Lauricella functions, following \cite{Exton:1976oki}.
The Lauricella function $F_D^{(N)}$ is the analytic continuation of the generalised hypergeometric series defined by:
\begin{align}
    &F_D^{(N)}(a;b_1,\dots,b_N;c\big|x_1,\dots,x_N) \nonumber\\
    &:= \sum_{i_1,\dots,i_N\geq 0}\frac{(a)_{\sum i_k}}{(c)_{\sum i_k}}\prod_{k=1}^N\frac{(b_k)_{i_k}}{(i_k)!}x_k^{i_k}.
     \label{eq:lau_series}
\end{align}
    
An important property of $F_D^{(N)}$ is its one-dimensional Euler-type integral representation
\begin{align*}
    &F_D^{(N)}(a;b_1,\dots,b_N;c\,\big|\,x_1,\dots,x_N)\nonumber \\
    &= \frac{1}{\Beta(a,c-a)}\int_0^1 \rd t\, t^{a-1}(1-t)^{c-a-1}\prod_{k=1}^N(1-x_k t)^{-b_k}\,.
\end{align*}

The Lauricella series reduce to the Gauss hypergeometric series when $N=1$ and to the Appell hypergeometric series when $N=2$.

Fixing the variables $x_1,\dots,x_{N-1}$ and re-expanding the series in $x_N$, one gets the following useful formula:
\begin{align}
    & F_D^{(N)}(a;b_1,\dots,b_N;c\big|x_1,\dots,x_N) \nonumber\\
    &= \sum_{i_1,\dots,i_{N-1}}\frac{(a)_{\sum_{j=1}^{N-1}i_j}}{(c)_{\sum_{j=1}^{N-1}i_j}}\prod_{j=1}^{N-1}\frac{(b_j)_{i_j}x_j^{i_j}}{i_j!} {}_2F_1\Big(a+\sum_{j=1}^{N-1}i_j,b_N;c+\sum_{j=1}^{N-1}i_j\,\Big|\, x_N \Big)
    \label{eq:re-expand}
\end{align}

\subsection{Analytic continuation of Gauss hypergeometric functions}
When $N=1$, the Lauricella function is the Gauss hypergeometric function, $F^{(1)}_D(a;b;c\,|\,z) = {}_2F_1(a,b;c\,|\,z)$, and the following connection formulas hold.

If $a-b\notin \mathbb{Z}$,
\begin{align}
    {}_2F_1(a,b;c\,|\,z) =& \frac{\Beta(b-a,a)}{\Beta(c-a,a)}(-z)^{-a}\,{}_2F_1\Big(a,a-c+1;a-b+1\,\Big|\,\frac{1}{z}\Big)\nonumber\\
    & +\frac{\Beta(a-b,b)}{\Beta(c-b,b)}(-z)^{-b}\,{}_2F_1\Big(b,b-c+1;-a+b+1\,\Big|\,\frac{1}{z}\Big)\,,
\label{eq:hyper_geo_conn}
\end{align}
with the principal determinations of $(-z)^{-a}$ and $\log(-z)$, $|\arg(-z)|<\pi$. In the applications of \cref{sec:Umat_typeD} the arguments satisfy $\arg x\in(-\pi,0)$, so that $(-x)^{-a}=\re^{-\pi\ri a}x^{-a}$ and $\log(-x)=\log x+\pi\ri$, $x^{-a}$ and $\log x$ being defined through the flat logarithms \eqref{eq:logkappa_weights}.

On the other hand, if $a=b$ (we won't need the general resonant case $b-a\in\mathbb{Z}$ in the following),
\begin{align}
    {}_2F_1(a,a;c\,|\,z) =& \frac{(-z)^{-a}}{\Beta(a,c-a)} \sum_{k\geq 0}\frac{(a)_{k}(1-c+a)_{k}}{(k!)^2}z^{-k}\big[\log(-z) + 2 \psi (k+1) - \psi(a+k) - \psi(c-a-k)\big]\,.
\label{eq:hyper_geo_conn_log}
\end{align}

\subsection{Analytic continuation of multi-variable Lauricella functions}
\label{sec: Lauricella_func_AC}
The multi-variable case is less well understood. Here and below, $\sim$ denotes equality up to terms of relative order $O(1/x)$ within each branch, where $x$ is the variable sent to infinity ($x_N$ unless otherwise stated) and the remaining variables are fixed. When $a;b_1,\dots,b_N;c$ are generic, applying the one-variable connection formula \eqref{eq:hyper_geo_conn} to the re-expansion \eqref{eq:re-expand} and keeping the leading terms gives the following.
\begin{prop}[{\cite[(243)]{Brini:2019wts}}]
\begin{align*}
        &F_D^{(N)}(a;b_1,\dots,b_N;c\big|x_1,\dots,x_N) \nonumber\\
\sim    &(-x_N)^{-a}\frac{\Beta(b_N-a,a)}{\Beta(c-a,a)}\nonumber\\
        &\times F_D^{(N)}(a;b_1,\dots,b_{N-1},1-c+a;1-b_N+a\big |\frac{x_1}{x_N},\dots, \frac{1}{x_N})\nonumber\\
+       &(-x_N)^{-b_N}\frac{\Beta(a-b_N,b_N)}{\Beta(c-b_N,b_N)}\nonumber\\
        &\times F_D^{(N-1)}(a-b_N;b_1,\dots,b_{N-1};c-b_N\big |x_1,\dots,x_{N-1}).
\end{align*}
\label{prop: acformula_generic}
\end{prop}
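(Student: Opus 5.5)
The plan is to derive \cref{prop: acformula_generic} from the one-variable connection formula \eqref{eq:hyper_geo_conn}, applied termwise to the re-expansion \eqref{eq:re-expand}. Write $I\coloneqq\sum_{j<N}i_j$, so the $I$-th summand is ${}_2F_1(a+I,b_N;c+I\,|\,x_N)$. Since the parameters are generic, $a+I-b_N\notin\bbZ$ for every multi-index. Hence \eqref{eq:hyper_geo_conn} applies to each summand and splits it into a $(-x_N)^{-a-I}$ branch and a $(-x_N)^{-b_N}$ branch. The branch of $(-x_N)$ is the principal one, as fixed in the text after \eqref{eq:hyper_geo_conn}.

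\textbf{First branch.} The coefficient is
\[
\frac{\Gamma(b_N-a-I)\,\Gamma(c+I)}{\Gamma(b_N)\,\Gamma(c-a)}\,(-x_N)^{-a-I}\,{}_2F_1\Big(a+I,\,a-c+1;\,a+I-b_N+1\,\Big|\,\tfrac1{x_N}\Big).
\]
Using $\Gamma(b_N-a-I)=(-1)^I\Gamma(b_N-a)/(1-b_N+a)_I$ and $\Gamma(c+I)=(c)_I\Gamma(c)$, the factor $(c)_I$ cancels the denominator in \eqref{eq:re-expand}. The sign $(-1)^I$ combines with $(-x_N)^{-I}$ to give $x_N^{-I}$, which converts $x_j^{i_j}$ into $(x_j/x_N)^{i_j}$. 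Next, expand the ${}_2F_1$ in $1/x_N$ with summation index $i_N$ and use $(a)_I(a+I)_{i_N}=(a)_{I+i_N}$ and $(1-b_N+a)_I(1-b_N+a+I)_{i_N}=(1-b_N+a)_{I+i_N}$. The result is exactly the $N$-variable Lauricella series $F_D^{(N)}(a;b_1,\dots,b_{N-1},1-c+a;1-b_N+a\,|\,x_1/x_N,\dots,1/x_N)$, with overall prefactor $\Gamma(b_N-a)\Gamma(c)/(\Gamma(b_N)\Gamma(c-a))=\Beta(b_N-a,a)/\Beta(c-a,a)$. This branch is therefore exact, not merely asymptotic.

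\textbf{Second branch.} The $I$-th summand carries
\[
\frac{\Gamma(a+I-b_N)\,\Gamma(c+I)}{\Gamma(a+I)\,\Gamma(c+I-b_N)}\,(-x_N)^{-b_N}\,\Big(1+O(1/x_N)\Big).
\]
Keep only the leading term of its ${}_2F_1(\cdots|1/x_N)$; this truncation is the source of the ``$\sim$''. Rewriting the Gamma ratio gives $\frac{\Beta(a-b_N,b_N)}{\Beta(c-b_N,b_N)}\cdot\frac{(a-b_N)_I(c)_I}{(a)_I(c-b_N)_I}$. Substituting back into \eqref{eq:re-expand}, the factors $(a)_I/(c)_I$ cancel, leaving $(a-b_N)_I/(c-b_N)_I$ times $\prod_{j<N}(b_j)_{i_j}x_j^{i_j}/i_j!$. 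Summed over the multi-index, this is $F_D^{(N-1)}(a-b_N;b_1,\dots,b_{N-1};c-b_N\,|\,x_1,\dots,x_{N-1})$.

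\textbf{Main obstacle.} The delicate step is justifying the termwise continuation and the interchange of sums. The re-expansion converges only for $|x_j|<1$ with $x_N$ small, while the target region has $|x_N|$ large. I would handle this through the Euler integral representation instead. With $x_1,\dots,x_{N-1}$ small and fixed, both sides solve the same ODE in $x_N$ (the restricted Lauricella system). One can then continue in $x_N$ along a path avoiding $[1,\infty)$ from the principal sector. Uniform convergence of the series in $x_j/x_N$ and $1/x_N$ for $|x_N|>\max(1,|x_j|)$ legitimises the identification, and the neglected subleading terms of the second branch are uniformly $O(1/x_N)$ relative to the kept term.
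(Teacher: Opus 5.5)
Your proposal is correct and takes the same route as the paper, which simply applies the one-variable connection formula \eqref{eq:hyper_geo_conn} termwise to the re-expansion \eqref{eq:re-expand} and keeps the leading terms; your bookkeeping is right, with the $(-x_N)^{-a}$ branch resumming exactly and the truncation occurring only in the $(-x_N)^{-b_N}$ branch. For the interchange of sums, the Euler integral already makes \eqref{eq:re-expand} valid for all $x_N\notin[1,\infty)$ when $|x_j|<1$, so you can apply the connection formula termwise for $|x_N|>1$ and split directly into the two absolutely convergent branch sums, without the ODE-uniqueness detour (the truncated right-hand side is not itself a solution of that ODE).
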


When $a;b_1,\dots,b_N;c$ are not generic, the analytic continuation is more complicated, as in the case $N=1$, \eqref{eq:hyper_geo_conn_log}. The key result we need is the following.
\begin{prop}[{\cite[(A.11)]{Akerblom:2005tho}}]
\label{prop:acformula_resonant}
For $M=(m_1,\dots,m_{N-1})\in\mathbb{Z}_{\geq0}^{N-1}$ write $|M|\coloneqq\sum_{i=1}^{N-1}m_i$ and
\begin{align*}
    h_{M,m_N} \coloneqq \psi(1+ |M| + m_N) + \psi (1+m_N) - \psi(a+ |M| +m_N) - \psi(c-a - m_N)\,.
\end{align*}
Then
    \begin{align*}
        &F_D^{(N)}(a;b_1,\dots,b_{N-1},a;c\,\big|\,x_1,\dots,x_N)\nonumber\\
        =&\frac{(-x_N)^{-a}}{\Beta(a,c-a)}\sum_{M}\sum_{m_N=0}^{\infty}\frac{(a)_{|M|+m_N}(1-c+a)_{m_N}}{(|M|+m_N)!\,m_N!}\prod_{i=1}^{N-1}\frac{(b_i)_{m_i}}{m_i!}\times\nonumber\\
        &\big(\log(-x_N)+ h_{M,m_N}\big)\left(\frac{x_1}{x_N}\right)^{m_1}\cdots \left(\frac{x_{N-1}}{x_N}\right)^{m_{N-1}}\left(\frac{1}{x_N}\right)^{m_N} \nonumber\\
        &+ \frac{(-x_N)^{-a}}{\Beta(a,c-a)}\sum_{M}\sum_{m_N = 0}^{|M|-1}\frac{(a)_{m_N}\Gamma(|M|-m_N)}{m_N!\,(c-a)_{|M|-m_N}}\prod_{i=1}^{N-1}\frac{(b_i)_{m_i}}{m_i!}\,x_1^{m_1}\cdots x_{N-1}^{m_{N-1}}\left(\frac{1}{x_N}\right)^{m_N}\,.
    \end{align*}
\end{prop}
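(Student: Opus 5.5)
The plan is to reduce the multivariable statement to the one-variable resonant connection formula, term by term in the re-expansion \eqref{eq:re-expand}. Taking $b_N=a$ there, for each multi-index $M=(m_1,\dots,m_{N-1})$ we get the coefficient
\[
\frac{(a)_{|M|}}{(c)_{|M|}}\prod_{i=1}^{N-1}\frac{(b_i)_{m_i}x_i^{m_i}}{m_i!}
\]
multiplying ${}_2F_1\big(a+|M|,a;c+|M|\,\big|\,x_N\big)$. The two upper parameters of this Gauss function differ by the non-negative integer $|M|$. So we need the general resonant case $b-a\in\bbZ$, not only \eqref{eq:hyper_geo_conn_log}, which covers $|M|=0$. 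I would take the classical formula for ${}_2F_1(A,A-p;C\,|\,z)$ with $p\in\bbZ_{\geq0}$ (Abramowitz--Stegun 15.3.14, DLMF 15.8.8). It produces two kinds of terms. The first is a finite sum of $p$ terms, carried by $(-z)^{-(A-p)}$, with coefficients involving $\Gamma(p-k)$. The second is a logarithmic series carried by $(-z)^{-A}$, whose bracket contains $\log(-z)+\psi(1+k)+\psi(1+p+k)-\psi(A+k)-\psi(C-A-k)$. I would derive it, if needed, as the limit $b\to a-p$ of \eqref{eq:hyper_geo_conn}, where the poles of the two Beta prefactors cancel.

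Next, I would substitute $A=a+|M|$, $p=|M|$, $C=c+|M|$ and assemble the pieces.
\begin{itemize}
\item For the prefactors, $\frac{(a)_{|M|}}{(c)_{|M|}}\cdot\frac{\Gamma(c+|M|)}{\Gamma(a+|M|)\Gamma(c-a)}=\frac{1}{\Beta(a,c-a)}$. This gives the common normalisation in both lines of the claimed formula.
\item In the logarithmic part, $(-x_N)^{-a-|M|}x^{M}$ regroups as $(-x_N)^{-a}\prod_i(x_i/x_N)^{m_i}$. The sign $(-1)^{|M|}$ is absorbed by the alternating Pochhammer conventions of DLMF.
\item The digamma arguments become $\psi(1+|M|+m_N)$, $\psi(1+m_N)$, $\psi(a+|M|+m_N)$ and $\psi(c+|M|-a-|M|-m_N)=\psi(c-a-m_N)$, which is exactly $h_{M,m_N}$.
\item The finite part, carried by $(-x_N)^{-a}$, with $m_N$ running from $0$ to $|M|-1$ and the factor $\Gamma(|M|-m_N)/(c-a)_{|M|-m_N}$, reproduces the second line once the Pochhammer symbols are rewritten in terms of $a$ and $c$.
\end{itemize}
As in the body of the paper, the principal branches of $(-x_N)^{-a}$ and $\log(-x_N)$ are used throughout.

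Finally, I would justify interchanging the sum over $M$ with the one-variable continuation. I expect this to be the main technical point, with the coefficient bookkeeping being mere tedium. The plan is to work in the region $|x_N|>1$ and $|x_i|<\varepsilon|x_N|$, with $|x_i|$ small, where both sides converge absolutely and uniformly. Convergence there follows from the usual bounds on Pochhammer ratios, the digamma terms growing only logarithmically. Both sides are solutions of the Lauricella $F_D$ system continued along the same path, so they agree on an open set. The identity then extends by analytic continuation to the full domain of validity.
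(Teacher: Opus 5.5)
The paper gives no proof of this statement. It imports it from Akerblom's (A.11), with a remark correcting one factor in the logarithmic series. Your derivation is the natural self-contained route: put $b_N=a$ in the re-expansion \eqref{eq:re-expand}, then continue each ${}_2F_1(a+|M|,a;c+|M|\,|\,x_N)$ by the integer-difference formula (Abramowitz--Stegun 15.3.14, DLMF 15.8.8). The algebra does close.

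You should, however, write out the one coefficient you leave implicit, since that is exactly where the cited source went wrong. For the logarithmic term with $m_N=k$,
\[
\frac{(a)_{|M|}}{(c)_{|M|}}\cdot\frac{\Gamma(c+|M|)}{\Gamma(a)}\cdot\frac{(a+|M|)_{k}}{\Gamma(c-a-k)}=\frac{(a)_{|M|+k}}{\Beta(a,c-a)}\cdot\frac{\Gamma(c-a)}{\Gamma(c-a-k)}=\frac{(-1)^{k}\,(a)_{|M|+k}\,(1-c+a)_{k}}{\Beta(a,c-a)}\,.
\]
This $(-1)^k$ cancels the $(-1)^k$ in DLMF's logarithmic series, which is normalised as $(-z)^{-a}(-1)^k z^{-k-|M|}$ rather than as $(-z)^{-a-|M|-k}$. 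Consequently $(-x_N)^{-a}x_N^{-|M|}\prod_i x_i^{m_i}=(-x_N)^{-a}\prod_i(x_i/x_N)^{m_i}$ with no leftover sign. Your ``$(-1)^{|M|}$ absorbed by Pochhammer conventions'' is therefore a misdescription, though a harmless one. You land exactly on the paper's corrected factor $(1-c+a)_{m_N}$. The finite part works as you say, via $\frac{(a)_{|M|}}{(c)_{|M|}}\frac{\Gamma(c+|M|)}{\Gamma(a+|M|)\,\Gamma(c+|M|-a-k)}=\frac{1}{\Beta(a,c-a)\,(c-a)_{|M|-k}}$.

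The genuine gap is in the analytic step you flag as the main difficulty; your justification does not work as stated.
\begin{itemize}
\item The Lauricella series on the left diverges for $|x_N|>1$, so ``both sides converge'' there says nothing about the left-hand side.
\item Two functions solving the same $F_D$ system need not coincide, since its solution space has dimension $N+1$.
\item What must be shown is that the right-hand side is the continuation of the left-hand side from $|x_N|<1$. The region $|x_N|>1$ where you propose to compare is disjoint from the one where the re-expansion is known.
\end{itemize}
The missing ingredient is a bound uniform in $M$ along the continuation path. For $\mathrm{Re}(c-a)>0$ and $|M|$ large, Euler's integral
\[
{}_2F_1(a,a+|M|;c+|M|\,|\,z)=\frac{\Gamma(c+|M|)}{\Gamma(a+|M|)\,\Gamma(c-a)}\int_0^1t^{a+|M|-1}(1-t)^{c-a-1}(1-zt)^{-a}\,\rd t
\]
bounds these Gauss functions polynomially in $|M|$, uniformly for $z$ in compact subsets of $\bbC\setminus[1,\infty)$. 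Meanwhile the re-expansion coefficients are $O(\mathrm{poly}(|M|)\,r^{|M|})$ for $\max_i|x_i|\le r<1$. Hence the re-expanded series converges locally uniformly on $\{\max_i|x_i|<1\}\times(\bbC\setminus[1,\infty))$, and \emph{is} the analytic continuation of the left-hand side there.

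Only then may you insert DLMF 15.8.8 termwise for $|x_N|>1$, $x_N\notin(1,\infty)$, and split into the logarithmic and finite parts. Each part converges absolutely in that region: the coefficients are polynomially bounded, the digammas grow logarithmically, and $|x_i/x_N|<1$, $|1/x_N|<1$. This licenses rearranging them into the two displayed lines. The remaining generic parameter values follow by analytic continuation in $(a,b_i,c)$.
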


\begin{rmk}
\Cref{prop:acformula_resonant} corrects an error in \cite[Eq.~(A.11)]{Akerblom:2005tho}, also present in \cite[Prop.~A.3.2, Eq.~(A.8)]{Ma:2026ths}: the factor $\frac{\Gamma(c-a-|M|)}{\Gamma(c-a+|M|)}(1-c+a)_{2|M|+m_N}$ in the logarithmic series should read $(1-c+a)_{m_N}$.
\end{rmk}

\begin{cor}
\label{cor:acformula_resonant_leading}
    \begin{align*}
        &F_D^{(N)}(a;b_1,\dots,b_{N-1},a;c\,\big|\,x_1,\dots,x_N) \nonumber\\
\sim       &\frac{(-x_N)^{-a}}{\Beta(a,c-a)}(\log(-x_N)+h_{0,0}) \nonumber\\
+       &\frac{(-x_N)^{-a}}{\Beta(a,c-a)}\sum_{|M|\geq 1} \frac{\Gamma(|M|)}{(c-a)_{|M|}}\prod_{j=1}^{N-1}\frac{(b_j)_{m_j}}{(m_j)!}x_j^{m_j}\,,
\end{align*}
where $h_{0,0}=2\psi(1)-\psi(a)-\psi(c-a)$.
\end{cor}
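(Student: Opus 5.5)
The statement to prove is \cref{cor:acformula_resonant_leading}: the leading asymptotics of the resonant Lauricella function as $x_N\to\infty$, obtained from the exact expansion of \cref{prop:acformula_resonant}. My plan is to start from that expansion and keep only the terms of relative order $O(1)$ in $1/x_N$. Here $x_1,\dots,x_{N-1}$ are held fixed, and $\sim$ means equality up to relative $O(1/x_N)$ within the branch $(-x_N)^{-a}$.

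The first step treats the logarithmic double series. Each term carries the monomial
\[
(x_1/x_N)^{m_1}\cdots(x_{N-1}/x_N)^{m_{N-1}}(1/x_N)^{m_N}=x_1^{m_1}\cdots x_{N-1}^{m_{N-1}}\,x_N^{-|M|-m_N}.
\]
With the $x_j$ fixed, this is $O(1/x_N)$ unless $|M|=m_N=0$. Only the term $M=0$, $m_N=0$ survives. Its coefficient is $(a)_0(1-c+a)_0/(0!\,0!)=1$, so it contributes $\frac{(-x_N)^{-a}}{\Beta(a,c-a)}\big(\log(-x_N)+h_{0,0}\big)$. From the definition of $h_{M,m_N}$ one reads off $h_{0,0}=\psi(1)+\psi(1)-\psi(a)-\psi(c-a)$, as claimed. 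I should check that the discarded tail really is uniformly $O(1/x_N)$ relative to the kept terms. The double series converges for $|x_N|$ large compared with $\max(1,|x_j|)$, so its remainder after the constant term is bounded by $C/|x_N|$ times a convergent majorant. The logarithm only adds a factor $\log|x_N|$, which is harmless up to the usual $\log$-relative convention.

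The second step treats the finite inner sum. Its terms carry $x_1^{m_1}\cdots x_{N-1}^{m_{N-1}}\,x_N^{-m_N}$ with $0\le m_N\le |M|-1$, so only $m_N=0$ survives. That requires $|M|\ge1$, since for $M=0$ the sum is empty. At $m_N=0$ the coefficient is
\[
\frac{(a)_0\,\Gamma(|M|)}{0!\,(c-a)_{|M|}}\prod_j\frac{(b_j)_{m_j}}{m_j!}.
\]
This gives exactly the second displayed line of the corollary, summed over $|M|\ge1$. The terms with $m_N\ge1$ are again $O(1/x_N)$ after resumming over $M$.

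The only genuine point is convergence of the retained series over $M$. I expect this to be the main obstacle, since the estimates above already lean on it. The series $\sum_{|M|\ge1}\frac{\Gamma(|M|)}{(c-a)_{|M|}}\prod_j\frac{(b_j)_{m_j}}{m_j!}x_j^{m_j}$ is, up to the factor $\Gamma(|M|)/(c-a)_{|M|}$, a Lauricella-type series in $x_1,\dots,x_{N-1}$. It converges for $\max_j|x_j|<1$ by comparison with $\prod_j(1-|x_j|)^{-|b_j|}$, because $\Gamma(|M|)/(c-a)_{|M|}$ grows at most polynomially. Under this condition, exchanging the order of summation in the discarded terms is justified by absolute convergence. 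Outside this region the statement is understood by analytic continuation in $x_1,\dots,x_{N-1}$, as in its uses in \cref{sec:Umat_typeD}.
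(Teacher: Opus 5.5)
Your proposal is correct and is exactly the argument the paper leaves implicit: the corollary is read off from \cref{prop:acformula_resonant} by keeping the $M=0$, $m_N=0$ term of the logarithmic series, which gives $\log(-x_N)+h_{0,0}$, together with the $m_N=0$ terms of the finite sum. All other terms are of relative order $O(1/x_N)$ for fixed $|x_j|<1$. The only blemish is cosmetic: your majorant $\prod_j(1-|x_j|)^{-|b_j|}$ does not by itself absorb the polynomially growing factor $\Gamma(|M|)/(c-a)_{|M|}$, so you should enlarge the exponents (or use the ratio test) to dominate it; this does not affect the conclusion.
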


\begin{lem}
    \begin{align*}
        &\sum_{|M|\geq 1} \frac{\Gamma(|M|)}{(c-a)_{|M|}}\prod_{j=1}^{N-1}\frac{(b_j)_{m_j}}{(m_j)!}x_j^{m_j}\\
       =& \frac{d}{d\epsilon}\bigg|_{\epsilon =0}  F_D^{(N-1)}(\epsilon;b_1,\dots,b_{N-1};c-a\big|x_1,\dots,x_{N-1}).
    \end{align*}
\end{lem}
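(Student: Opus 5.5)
The plan is to differentiate the Lauricella series term by term in its first parameter. Write
\[
F_D^{(N-1)}(\epsilon;b_1,\dots,b_{N-1};c-a\,|\,x_1,\dots,x_{N-1})=\sum_{M}\frac{(\epsilon)_{|M|}}{(c-a)_{|M|}}\prod_{j=1}^{N-1}\frac{(b_j)_{m_j}}{m_j!}\,x_j^{m_j}
\]
using \eqref{eq:lau_series}, with $M=(m_1,\dots,m_{N-1})$ and $|M|=\sum_j m_j$ as in \cref{prop:acformula_resonant}. Only the Pochhammer symbol $(\epsilon)_{|M|}$ depends on $\epsilon$, so the claim reduces to a one-variable identity for its derivative at $\epsilon=0$.

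For $|M|=0$ we have $(\epsilon)_0=1$, whose derivative vanishes; this accounts for the restriction $|M|\geq1$ on the left-hand side. For $k=|M|\geq1$ we factor $(\epsilon)_k=\epsilon\,(\epsilon+1)\cdots(\epsilon+k-1)$. Since the first factor vanishes at $\epsilon=0$, the product rule gives
\[
\frac{d}{d\epsilon}\Big|_{\epsilon=0}(\epsilon)_k=\prod_{r=1}^{k-1}r=(k-1)!=\Gamma(k).
\]
Substituting this back produces exactly the summand $\Gamma(|M|)/(c-a)_{|M|}\prod_j (b_j)_{m_j}x_j^{m_j}/m_j!$.

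The remaining step is to justify exchanging $d/d\epsilon$ with the multiple sum. Assume $(c-a)_k\neq0$ for all $k$, which holds for the generic $\hbar$ used throughout. In a polydisc $|x_j|<r<1$ the series converges locally uniformly in $(\epsilon,x)$ for $\epsilon$ in a compact neighbourhood of $0$. This follows from the bound $|(\epsilon)_k/(c-a)_k|\leq C\,k^{A}$ together with the absolute convergence of $\prod_j\sum_{m_j}|(b_j)_{m_j}|\,r^{m_j}/m_j!$. The series is therefore holomorphic in $\epsilon$, and Weierstrass' theorem permits termwise differentiation. The identity then extends by analytic continuation to wherever both sides are defined, as needed for its use in \cref{cor:acformula_resonant_leading}. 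No step poses a real obstacle; the only care needed is this convergence justification.
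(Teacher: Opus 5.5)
Your proposal is correct and is the paper's own argument. Both note that the $|M|=0$ term is constant in $\epsilon$, that $(\epsilon)_k=\epsilon\,\Gamma(k)+O(\epsilon^2)$ for $k\geq1$, and that termwise differentiation is justified by uniform convergence for $|x_j|<1$; your convergence argument is just more explicit, with the polynomial factor $k^{A}$ absorbed by replacing $r$ with some $r'\in(r,1)$.
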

\begin{proof}
For $|M|\geq1$ one has $(\epsilon)_{|M|}=\epsilon\,\Gamma(|M|)+O(\epsilon^2)$, while the term $|M|=0$ of \eqref{eq:lau_series} is $1$; differentiating the series termwise at $\epsilon=0$, which is legitimate for $|x_j|<1$ by uniform convergence, gives the claim.
\end{proof}

Differentiating \cref{prop: acformula_generic} with $a=\epsilon$ at $\epsilon=0$, with $\sim$ now referring to $|x_{N-1}|\to\infty$, we arrive at the following.
\begin{lem}
\begin{align*}
    &\frac{d}{d\epsilon}\bigg|_{\epsilon =0}  F_D^{(N-1)}(\epsilon;b_1,\dots,b_{N-1};c-a\big|x_1,\dots,x_{N-1})\nonumber\\
    &\sim -\log(-x_{N-1}) + (-\psi(b_{N-1})+\psi(-a+c)) \nonumber\\
    & + (- x_{N-1})^{-b_{N-1}} \Beta(-b_{N-1},c-a)\nonumber\\
    &   \times F_D^{(N-2)}(-b_{N-1};b_1,\dots,b_{N-2};c-a-b_{N-1}\,\big|\, x_1,\dots, x_{N-2}),
\end{align*}
where, if $N=2$, the $F_D^{(N-2)}$ term is $1$.
\end{lem}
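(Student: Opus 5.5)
The plan is to differentiate the generic connection formula of \cref{prop: acformula_generic}, applied to $F_D^{(N-1)}(\epsilon;b_1,\dots,b_{N-1};c-a\,|\,x_1,\dots,x_{N-1})$ with $x_{N-1}$ the large variable, with respect to $\epsilon$ at $\epsilon=0$. For small $\epsilon\neq0$ the parameters are generic (we assume $b_{N-1}\notin\bbZ$ and $c-a$ generic), so \cref{prop: acformula_generic} gives, up to terms of relative order $O(1/x_{N-1})$, a sum of two branches,
\[
(-x_{N-1})^{-\epsilon}\frac{\Beta(b_{N-1}-\epsilon,\epsilon)}{\Beta(c-a-\epsilon,\epsilon)}\,F_D^{(N-1)}\big(\epsilon;\dots;1-b_{N-1}+\epsilon\,\big|\,\tfrac{x_1}{x_{N-1}},\dots,\tfrac{1}{x_{N-1}}\big)
\]
and
\[
(-x_{N-1})^{-b_{N-1}}\frac{\Beta(\epsilon-b_{N-1},b_{N-1})}{\Beta(c-a-b_{N-1},b_{N-1})}\,F_D^{(N-2)}\big(\epsilon-b_{N-1};b_1,\dots,b_{N-2};c-a-b_{N-1}\,\big|\,x_1,\dots,x_{N-2}\big).
\]

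For the first branch, note that $\Beta(b_{N-1}-\epsilon,\epsilon)/\Beta(c-a-\epsilon,\epsilon)=\frac{\Gamma(b_{N-1}-\epsilon)\Gamma(c-a)}{\Gamma(b_{N-1})\Gamma(c-a-\epsilon)}$, which is $1+\epsilon(\psi(c-a)-\psi(b_{N-1}))+O(\epsilon^2)$. The Lauricella factor has first parameter $\epsilon$, so it equals $1+O(\epsilon)$. Its $O(\epsilon)$ part is a series in the small variables $x_j/x_{N-1}$, $1/x_{N-1}$ with no constant term, hence of relative order $O(1/x_{N-1})$, and it is dropped under $\sim$. The factor $(-x_{N-1})^{-\epsilon}=1-\epsilon\log(-x_{N-1})+O(\epsilon^2)$. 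The $\epsilon$-derivative of this branch is therefore $-\log(-x_{N-1})+\psi(c-a)-\psi(b_{N-1})$ modulo the neglected terms. Since $F_D^{(N-1)}$ equals $1$ identically at $\epsilon=0$, the $\epsilon^0$ parts of the two branches must cancel; this is consistent because the second branch vanishes at $\epsilon=0$.

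For the second branch, write $\Beta(\epsilon-b_{N-1},b_{N-1})=\Gamma(\epsilon-b_{N-1})\Gamma(b_{N-1})/\Gamma(\epsilon)$, with $1/\Gamma(\epsilon)=\epsilon+O(\epsilon^2)$. The whole prefactor is then $\epsilon\,\Gamma(-b_{N-1})\Gamma(c-a)/\Gamma(c-a-b_{N-1})+O(\epsilon^2)=\epsilon\,\Beta(-b_{N-1},c-a)+O(\epsilon^2)$. Differentiating at $\epsilon=0$ therefore yields $(-x_{N-1})^{-b_{N-1}}\Beta(-b_{N-1},c-a)\,F_D^{(N-2)}(-b_{N-1};b_1,\dots,b_{N-2};c-a-b_{N-1}\,|\,x_1,\dots,x_{N-2})$. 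When $N=2$ there are no remaining variables and $F_D^{(0)}=1$.

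The main obstacle is justifying that the $\epsilon$-derivative commutes with the asymptotic relation $\sim$. To handle it, I would argue that the connection formula holds exactly, not merely asymptotically, as an identity of functions analytic in $\epsilon$ near $0$. The neglected pieces are convergent series in the small variables whose coefficients are analytic in $\epsilon$, and each of them carries at least one power of $1/x_{N-1}$. Differentiating termwise, justified by locally uniform convergence, preserves this order. Summing the two branch contributions gives the claimed formula.
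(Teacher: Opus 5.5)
Your proposal is correct and is the paper's proof. You apply \cref{prop: acformula_generic} with $x_{N-1}$ large, differentiate the first branch through $(-x_{N-1})^{-\epsilon}$ and the Gamma ratio (the Lauricella factor only contributing $O(1/x_{N-1})$), and use the $1/\Gamma(\epsilon)$ zero of the second branch to extract $\Beta(-b_{N-1},c-a)$. Your added justification for commuting $\frac{d}{d\epsilon}$ with $\sim$ is a welcome supplement; note one minor slip: at $\epsilon=0$ the two branches sum to $1$ (first branch $=1$, second $=0$) rather than cancel.
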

\begin{proof}
Apply \cref{prop: acformula_generic} to $F_D^{(N-1)}(\epsilon;b_1,\dots,b_{N-1};c-a\,|\,x_1,\dots,x_{N-1})$ with $x_{N-1}$ as the largest variable and differentiate at $\epsilon=0$. In the first branch, $(-x_{N-1})^{-\epsilon}$ contributes $-\log(-x_{N-1})$ and the ratio $\Beta(b_{N-1}-\epsilon,\epsilon)/\Beta(c-a-\epsilon,\epsilon)=\Gamma(b_{N-1}-\epsilon)\Gamma(c-a)/\big(\Gamma(b_{N-1})\Gamma(c-a-\epsilon)\big)$ contributes $-\psi(b_{N-1})+\psi(c-a)$, the Lauricella factor being $1+O(1/x_{N-1})$. The second branch vanishes at $\epsilon=0$ because of the factor $1/\Gamma(\epsilon)$ in $\Beta(\epsilon-b_{N-1},b_{N-1})$, and its derivative is $\Beta(-b_{N-1},c-a)$ times the stated Lauricella function.
\end{proof}

\begin{cor}
    \begin{align}
        &F_D^{(N)}(a;b_1,\dots,b_{N-1},a;c\big|x_1,\dots,x_N) \nonumber\\
\sim       &\frac{(-x_N)^{-a}}{\Beta(a,c-a)}(\log(-x_N)- \log(-x_{N-1})+h'_{0,0}) \nonumber\\
+       &
\frac{\Beta(c-a,-b_{N-1})}{\Beta(c-a,a)}
            (-x_N)^{-a} (- x_{N-1})^{-b_{N-1}}\nonumber\\
    &   \times F_D^{(N-2)}(-b_{N-1};b_1,\dots,b_{N-2};c-a-b_{N-1}\big| x_1,\dots, x_{N-2})
\label{eq:acformula_log}
\end{align}
where
\begin{align*}
    h_{0,0}' &\coloneqq h_{0,0} - \psi(b_{N-1})+\psi(c-a) = 2 \psi(1) - \psi(a) - \psi(b_{N-1})\,,
\end{align*}

and if $N=2$, then the $F_D^{(N-2)}$ term is $1$.

\end{cor}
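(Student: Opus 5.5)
The plan is to derive \eqref{eq:acformula_log} by substituting the preceding lemma into \cref{cor:acformula_resonant_leading}. Start from the leading resonant asymptotics of \cref{cor:acformula_resonant_leading}. It has two pieces: the logarithmic branch $\frac{(-x_N)^{-a}}{\Beta(a,c-a)}(\log(-x_N)+h_{0,0})$, and the multi-sum $\frac{(-x_N)^{-a}}{\Beta(a,c-a)}\sum_{|M|\geq1}\cdots$. By the lemma identifying that multi-sum with $\frac{d}{d\epsilon}\big|_{\epsilon=0}F_D^{(N-1)}(\epsilon;b_1,\dots,b_{N-1};c-a\,|\,x_1,\dots,x_{N-1})$, the second piece is $(-x_N)^{-a}/\Beta(a,c-a)$ times this $\epsilon$-derivative. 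The derivative is then continued to the regime where $|x_{N-1}|$ is large by the last lemma.

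Substituting, the $\epsilon$-derivative contributes two terms. The first is $-\log(-x_{N-1})-\psi(b_{N-1})+\psi(c-a)$. It combines with the logarithmic branch into $\log(-x_N)-\log(-x_{N-1})+h'_{0,0}$, where
\[
h'_{0,0}=h_{0,0}-\psi(b_{N-1})+\psi(c-a).
\]
Inserting $h_{0,0}=2\psi(1)-\psi(a)-\psi(c-a)$, the $\psi(c-a)$ terms cancel and one gets the stated closed form $2\psi(1)-\psi(a)-\psi(b_{N-1})$. The second term is $(-x_{N-1})^{-b_{N-1}}\Beta(-b_{N-1},c-a)F_D^{(N-2)}(\cdots)$. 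Multiplied by the prefactor $(-x_N)^{-a}/\Beta(a,c-a)$, it gives exactly the second line of \eqref{eq:acformula_log}, using symmetry of $\Beta$. For $N=2$ the $F_D^{(0)}$ factor is $1$ by convention.

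The only delicate point is bookkeeping of the asymptotic regimes. One must justify applying the second lemma inside a term already carrying $(-x_N)^{-a}$, i.e.\ work in the nested regime $|x_N|\gg|x_{N-1}|\gg1$ with the other variables fixed. The corrections of relative order $O(1/x_N)$ and $O(1/x_{N-1})$ are discarded consistently within each branch, as in the definition of $\sim$. One must also check that the termwise differentiation in $\epsilon$ commutes with the analytic continuation in $x_{N-1}$. I expect to handle this by noting that both sides are analytic in $\epsilon$ near $0$, uniformly on compact subsets of the relevant sector $\arg x\in(-\pi,0)$, so differentiation and continuation commute.
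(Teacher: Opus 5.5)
Your proposal is correct and is exactly the route the paper takes: identify the $|M|\geq1$ sum in \cref{cor:acformula_resonant_leading} with $\frac{d}{d\epsilon}\big|_{\epsilon=0}F_D^{(N-1)}(\epsilon;b_1,\dots,b_{N-1};c-a\,|\,x_1,\dots,x_{N-1})$, expand that derivative at large $x_{N-1}$ by the subsequent lemma, then collect the $\psi$-constants into $h'_{0,0}$ and match the Beta prefactors using the symmetry of $\Beta$. The paper leaves implicit both the nested regime $|x_N|\gg|x_{N-1}|\gg1$ and the interchange of $\epsilon$-differentiation with analytic continuation, so your remarks on these two points simply make explicit what is tacitly assumed there.
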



\end{appendix}

\bibliographystyle{plain}
\bibliography{refs}

\end{document}